\numberwithin{equation}{section}
\newtheorem{theorem}{Theorem}[section]
\newtheorem{conjecture}[theorem]{Conjecture}
\newtheorem{lemma}[theorem]{Lemma}
\newtheorem{proposition}[theorem]{Proposition}
\theoremstyle{definition}
\newtheorem{assumption}[theorem]{Assumption}
\newtheorem{definition}[theorem]{Definition}
\newtheorem{problems}[theorem]{Problems}
\newtheorem{remark}[theorem]{Remark}
\makeatletter\renewenvironment{proof}[1][\proofname] {\par\pushQED{\qed}\normalfont\topsep6\p@\@plus6\p@\relax\trivlist\item[\hskip\labelsep\bfseries#1\@addpunct{.}]\ignorespaces}{\popQED\endtrivlist}
\newcommand\al{\alpha}
\newcommand\be{\beta}
\newcommand\dd{\mathrm d}
\newcommand\De{\Delta}
\newcommand\de{\delta}
\newcommand\deq{\stackrel{\mathrm{distr.}}{=}}
\newcommand\eps{\varepsilon}
\newcommand\ga{\gamma}
\newcommand\ka{\kappa}
\newcommand\la{\lambda}
\newcommand{\om}{\omega}
\newcommand\Si{\Sigma}
\newcommand\si{\sigma}
\newcommand\ze{\zeta}
\renewcommand\d{~\mathrm d}
\renewcommand\phi{\varphi}
\newcommand\bs{\boldsymbol}
\newcommand\mbb{\mathbb}
\newcommand\mbf{\mathbf}
\newcommand\mc{\mathcal}
\newcommand\mf{\mathfrak}
\newcommand\mr{\mathrm}
\newcommand\ms{\mathscr}
\newcommand\msf{\mathsf}
\begin{document}

\title{Moment Intermittency in the PAM with Asymptotically Singular Noise}
\author{Pierre Yves Gaudreau Lamarre, Promit Ghosal, and Yuchen Liao}
\address{}
\email{}
\subjclass[2010]{}
\keywords{}
\maketitle

\begin{abstract}
Let $\xi$ be a singular Gaussian noise on $\mbb R^d$
that is either white, fractional, or with the Riesz covariance kernel; in particular,
there exists a scaling parameter $\om>0$ such that
$c^{\om/2}\xi(c\cdot)$ is equal in distribution to $\xi$ for all $c>0$.
Let $(\xi_\eps)_{\eps>0}$ be a
sequence of smooth mollifications such that $\xi_\eps\to\xi$ as $\eps\to0$.
We study the asymptotics of the moments
of the parabolic Anderson model (PAM) with noise $\xi_\eps$ as $\eps\to0$,
both for large (i.e., $t\to\infty$) and fixed times $t$.
This approach makes it possible to study the moments of
the PAM with regular and singular noises in a unified fashion,
as well as interpolate between the two settings.
As corollaries of our main results, we obtain the following:

\noindent\textbf{(1)} When $\xi$ is subcritical (i.e., $0<\om<2$),
our results extend the known large-time moment
and tail asymptotics for the Stratonovich PAM with noise $\xi$.
Our method of proof clarifies
the role of the maximizers of the variational problems (known as Hartree ground states) that appear in these moment asymptotics 
in describing the geometry of intermittency. We take this opportunity to prove
the existence and study the properties of the Hartree ground
state with a fractional kernel, which we believe is of independent interest.

\noindent \textbf{(2)} When $\xi$ is critical or supercritical (i.e., $\om=2$ or $\om>2$), our results provide
a new interpretation of the moment blowup phenomenon observed in the Stratonovich PAM with noise $\xi$.
That is, we uncover that the latter is related to an intermittency effect that occurs in the
PAM with noise $\xi_\eps$ as $\eps\to0$ for {\it fixed finite times} $t>0$.
\end{abstract}

\section{Introduction}

In this paper, we are interested in the continuous parabolic Anderson model (PAM)
with a time-independent Gaussian noise. That is, the partial differential equation with random coefficients
\begin{align}
\label{Equation: PAM}
\begin{cases}\partial_t u(t,x)=\big(\ka\De+\xi(x)\big)u(t,x)\\
u(0,x)=1
\end{cases}
,\qquad t\geq0,~x\in\mbb R^d,
\end{align}
where $d\in\mbb N$ and $\ka>0$ are fixed, $\De$ is the Laplacian operator,
and $\xi:\mbb R^d\to\mbb R$ is a centered stationary Gaussian process. A
fundamental problem involving the PAM consists of understanding the
occurrence of intermittency at large times. Informally, the latter refers to the observation
that the random field $x\mapsto u(t,x)$ tends to develop tall and narrow peak-like formations
as $t\to\infty$. We refer to \cite[Chapter 1]{KonigBook} and references therein
for a recent overview.

As explained in \cite{Molchanov},
arguably the most elementary rigorous indication that intermittency occurs in
\eqref{Equation: PAM} is a growth property of its moments: More specifically,
suppose that there exists a scale function $A$ such that $A(t)\to\infty$ as $t\to\infty$
and such that the limits
\[\ell_p:=\lim_{t\to\infty}\frac{\log\langle u(t,x)^p\rangle}{A(t)},\]
which are called Lyapunov exponents,
exist and are finite for some values of $p>0$.

\begin{remark}
Here, as is common in a subset of the PAM literature, we use $\langle\cdot\rangle$
to denote the expectation with respect to $\xi$.
Also, since $x\mapsto\xi(x)$ is stationary, so is $x\mapsto u(t,x)$;
hence $\ell_p$ does not depend on the choice of $x$.
\end{remark}

\begin{definition}
\label{Definition: Intermittency}
We say that \eqref{Equation: PAM} is intermittent
if there exists a subset of $(0,\infty)$---which contains at least two points---on which
the function $p\mapsto\ell_p/p$ is strictly increasing.
\end{definition}

We refer to \cite[Pages 210--212]{Molchanov} for an explanation of
the geometric significance of this condition.
The classical theory of the continuous PAM concerns the case where
$\xi$ is a regular random process (e.g., H\"older continuous).
In this setting, the intermittency phenomenon is very well understood,
as shown in the works of Carmona, G\"artner, K\"onig, and Molchanov \cite{CarmonaMolchanov,GartnerKonigMolchanov,GartnerKonig}.
In contrast to that, in recent years there has been a growing interest in extending this theory to
singular noises. In this paper, our aim is to contribute to the understanding of intermittency in the PAM with a
singular Gaussian noise---where the product $\xi\cdot u(t,\cdot)$ is interpreted in the Stratonovich sense---via a study of the moments.
We discuss our approach and state our main results in Section \ref{Section: Main Results}.

In the remainder of this section,
we introduce the singular noises that we are interested in (Section \ref{Section: PAM with Singular Noise}),
we briefly recall the current state of the art concerning the moments
of the regular and singular PAM (Section \ref{Section: Moments of the Singular PAM}),
we provide some motivation for our investigations and a brief overview of our results (Section \ref{Section: Two Problems}),
we discuss open questions that arise from our results (Section \ref{Section: Discussion}),
and we give an outline of the organization of this paper (Section \ref{Section: Organization}).

\subsection{Stratonovich PAM with Singular Noise}
\label{Section: PAM with Singular Noise}

We make the following assumption on $\xi$:
\begin{assumption}
\label{Assumption: Noise}
Informally, we view $\xi$ as a centered Gaussian process on $\mbb R^d$ with covariance
\[\langle\xi(x)\xi(y)\rangle=\ga(x-y),\qquad x,y\in\mbb R^d,\]
where $\ga$ is one of the following three kernels:
Letting $\si>0$ be a fixed constant,
\begin{enumerate}
\item ({\bf White}) $\ga(x)=\si^2\de_0(x)$, where $\de_0$ denotes the Dirac delta distribution.
\item ({\bf Riesz}) $\ga(x)=\si^2|x|^{-\om}$ for some $0<\om<d$.
\item ({\bf Fractional}) $\ga(x)=\si^2\prod_{i=1}^d|x_i|^{-\om_i}$ for some $\om_i\in(0,1)$.
\end{enumerate}
Since $\ga$ is either a Schwartz distribution or has a singularity at zero,
$\xi$ must be defined rigorously as a random Schwartz distribution. More specifically,
$\xi$ is a centered Gaussian process on $C_0^\infty(\mbb R^d)$ (the smooth and compactly
supported functions) with covariance
\[\langle\xi(f)\xi(g)\rangle=\iint_{(\mbb R^d)^2}f(x)\ga(x-y)g(y)\d x\dd y,\qquad f,g\in C_0^\infty(\mbb R^d).\]
\end{assumption}

\begin{remark}
\label{Remark: Scaling Parameter}
The three covariance kernels listed in Assumption \ref{Assumption: Noise} all satisfy
\[\ga(c x)=c^{-\om}\ga(x),\qquad c>0,~x\in\mbb R^d\]
for some scaling exponent $\om>0$
($\om=\sum_{i=1}^d\om_i$ in the case of fractional noise and $\om=d$ in the case of white noise).
$\om$ can be viewed as a parameter that quantifies the extent to which
$\xi$ is singular, i.e., a larger $\om$ corresponds to a more singular covariance/noise.
\end{remark}

The main difficulty in dealing with the PAM with singular noise is that $\xi$ is not
actually a function that can be evaluated pointwise. Thus, extending the results on the 
intermittency of the PAM with regular noise to the singular setting poses serious technical challenges. In fact, the
very definition of the singular PAM is nontrivial, as the pointwise product $\xi(x)u(t,x)$
in \eqref{Equation: PAM} requires a careful justification.
One way to get around this issue,
which leads to the Stratonovich solution,
is to proceed as follows:

\begin{definition}
\label{Definition: Mollifier}
For every $\eps>0$, denote the Gaussian kernel with variance $\eps^2$ as
\[p_\eps(x):=\frac{\mr e^{-|x|^2/2\eps^2}}{(2\pi \eps^2)^{d/2}},\qquad x\in\mbb R^d,\]
noting that $p_\eps(x)=\eps^{-d}p_1(x/\eps)$. We define the mollified noise
$\xi_\eps:=\xi*p_{\eps/\sqrt 2}$, where $*$ denotes the convolution. In particular,
$\xi_\eps$ is a centered Gaussian process on $\mathbb R^d$ with covariance
\[\langle\xi_\eps(x)\xi_\eps(y)\rangle=\ga_\eps(x-y),\qquad x,y\in\mbb R^d,\]
where $\ga_\eps:=\ga*p_\eps$.
\end{definition}
With this in hand, for every $\eps>0$, we may now consider the smoothed PAM
\begin{align}
\label{Equation: Smoothed PAM}
\begin{cases}\partial_t u_\eps(t,x)=\big(\ka\De+\xi_\eps(x)\big)u_\eps(t,x)\\
u_\eps(0,x)=1
\end{cases}
,\qquad t\geq0,~x\in\mbb R^d.
\end{align}
Since $\xi_\eps$ has smooth sample paths, the solution of \eqref{Equation: Smoothed PAM} is classically well defined.
Then, one hopes that the solution of \eqref{Equation: PAM} can be constructed by
taking the limit
\begin{align}
\label{Equation: Smoothed PAM Limit}
u(t,x):=\lim_{\eps\to0}u_\eps(t,x).
\end{align}

On the one hand, if the noise is not too singular---that is, if the condition $0<\om<2$ holds---then the limit
\eqref{Equation: Smoothed PAM Limit} is nontrivial
and gives rise to a meaningful notion of solution
(e.g., \cite[Section 5]{HuHuangNualartTindel}). On the other hand, when $\om\geq2$,
the limit \eqref{Equation: Smoothed PAM Limit} blows up. In some such
cases, the divergence of $u_\eps(t,\cdot)$ can be compensated by subtracting diverging
renormalization constants from the noise,
thus allowing to salvage a meaningful notion of nontrivial solution \cite{ChenDeyaOuyangTindel2,ChenRosen,GubinelliImkellerPerkowski,Hairer,HairerLabbe,HairerLabbe2}.
However, in some cases, the noise is so singular that it is not clear that any meaningful notion of
Stratonovich solution can be salvaged (e.g., the white noise with
$\om=d\geq4$; see \cite[Section 1.1, (PAM)]{Hairer}).

\begin{remark}
The procedure described in the above paragraph works for more general
mollifiers than the Gaussian kernel used in Definition \ref{Definition: Mollifier}.
For simplicity and definiteness, in this paper we use the Gaussian kernel.
\end{remark}

\begin{remark}
There are alternate ways of defining a solution of the singular PAM, such as the
Skorokhod solution (e.g., \cite{ChenSkorokhod,ChenDeyaOuyangTindel} and \cite[Section 3]{HuHuangNualartTindel}).
The present paper deals exclusively with the Stratonovich solution.
\end{remark}

\subsection{Moments of the PAM - Known Results}
\label{Section: Moments of the Singular PAM}

In order to motivate our investigations, we review the known results regarding
the moments of the continuous PAM with Gaussian noise.

\subsubsection{Regular PAM}
\label{Section: State of the art - Reg}

Consider $u_1(t,\cdot)$ (i.e., \eqref{Equation: Smoothed PAM} in the case $\eps=1$),
which is the PAM with smooth Gaussian noise $\xi_1$.
In this case, \cite[Theorem 1 and Section 4.1]{GartnerKonig} states that
\begin{align}
\label{Equation: GK 2nd Order}
\log\langle u_1(t,x)^p\rangle=\left(\frac{p^2\ga_1(0)}{2}\right)t^2-p^{3/2}\chi\,t^{3/2}\big(1+o(1)\big)\qquad \text{as }t\to\infty
\end{align}
for every $x\in\mbb R^d$ and $p\geq1$.
In \eqref{Equation: GK 2nd Order}, we define 
\begin{align}
\label{Equation: GK Variational}
\chi:=\inf_{\substack{f\in H^1(\mbb R^d)\\\|f\|_2=1}}\left(\ka\int_{\mbb R^d}|\nabla f(x)|_2^2\d x+\frac14\iint_{(\mbb R^d)^2}f(x)^2\big((x-y)^\top\Si(x-y)\big)f(y)^2\d x\dd y\right),
\end{align}
where $H^1(\mbb R^d)$ denotes the order 1 Sobolev Hilbert
space, $\|\cdot\|_p$ denotes the Lebesgue $L^p$ norm,
and $-\Si$ is the Hessian matrix of the covariance kernel $\ga_1(x)$ evaluated at $x=0$.
In particular, intermittency in the sense of Definition \ref{Definition: Intermittency}
occurs with scale function $A(t)=t^2$ and Lyapunov exponents $\ell_p=\frac{p^2\ga_1(0)}{2}$
for all $p\geq1$.

The first order in the asymptotic \eqref{Equation: GK 2nd Order} (i.e., the term $\ell_pt^2$)
was proved by Carmona and Molchanov \cite{CarmonaMolchanov} for all $p\in\mbb N$.
Later, G\"artner and K\"onig \cite{GartnerKonig} extended the result to every $p\geq1$ and 
also provided the second-order asymptotic $p^{3/2}\chi\,t^{3/2}$.
As explained in
\cite[Section 0.4.3]{GartnerKonig}, the derivation of this second order---and more specifically the minimizers of $\chi$---provide a precise description of the geometry of intermittent peaks in $u_1(t,\cdot)$ for large $t$.

\begin{remark}
The setting considered in \cite{CarmonaMolchanov,GartnerKonig} is more general
than stationary Gaussian noises with a covariance of the form $\ga_1=\ga*p_1$ for a
$\ga$ satisfying Assumption \ref{Assumption: Noise}. Since the latter is the
setting that is of interest to us in this paper, we do not state the moment asymptotics
of the PAM with regular noise on $\mbb R^d$ in its full generality.
\end{remark}

\subsubsection{Singular PAM - Subcritical Regime}
\label{Section: State of the art - Sub}

If $\xi$ is as in Assumption \ref{Assumption: Noise}
with scaling parameter $0<\om<2$, then we say that it is subcritical.
In this regime, it is known that intermittency in the sense of Definition \ref{Definition: Intermittency} occurs.
More specifically, define the variational constant
\begin{align}
\label{Equation: Sub Variational}
\mbf M:=\sup_{f\in H^1(\mbb R^d),~\|f\|_2=1}\left(\frac12\iint_{(\mbb R^d)^2}f(x)^2\ga(x-y)f(y)^2\d x\dd y-\ka\int_{\mbb R^d}|\nabla f(x)|_2^2\d x\right).
\end{align}
Arguing as in \cite[Lemma A.2 with $\al_0=0$]{ChenHuSongXing},
it can be shown that $\mbf M\in(0,\infty)$.
In the case of Riesz and Fractional noise,
\cite[Theorem 1.1, $\be_0=\rho=0$
and $\al=2$]{ChenHuSongSong} states that
\begin{align}
\label{Equation: Sub Known}
\lim_{t\to\infty}\frac{\log\langle u(t,x)^p\rangle}{t^{(4-\om)/(2-\om)}}=p^{(4-\om)/(2-\om)}\mbf M=\ell_p
\end{align}
for all $x\in\mbb R^d$ and $p\in\{1\}\cup[2,\infty)$.
For white noise with $d=\om=1$, \cite[(2)]{Mansmann}
proves \eqref{Equation: Sub Known} for $p=1$, and \cite[Theorem 6.9 (ii)]{HuHuangNualartTindel}
shows that there exists constants $0<C<C'$ such that
\begin{align}
\label{Equation: Sub Known - WN}
C(pt)^{(4-\om)/(2-\om)}\leq\log\langle u(t,x)^p\rangle\leq C'(pt)^{(4-\om)/(2-\om)}
\end{align}
for $p\in\mbb N$ and $t>0$.

\begin{remark}
While we did not find such a statement in the literature,
we expect that the general methodology developed in \cite{ChenHuSongSong,ChenHuSongXing} could be suitably adapted
to prove \eqref{Equation: Sub Known} for $p\geq2$ in the case of one-dimensional white noise.
That being said, if $p\in(0,1)\cup(1,2)$, then \eqref{Equation: Sub Known} appears to be out of reach of
the currently-known techniques for all three noises considered in Assumption \ref{Assumption: Noise};
see \cite[(6.2)]{ChenHuSongSong}.
\end{remark}

\subsubsection{Singular PAM - Critical Regime}
\label{Section: State of the art - Crt}

If $\xi$ is as in Assumption \ref{Assumption: Noise}
with $\om=2$, then we say that it is critical.
In this regime, it is expected that there exist moment blowup thresholds $t_0(p)\in(0,\infty)$ such that
\begin{align}
\label{Equation: Moment Blowup Threshold}
\langle u(t,x)^p\rangle
\begin{cases}
<\infty&\text{if }t<t_0(p)\\
=\infty&\text{if }t>t_0(p)
\end{cases}
\end{align}
for all $p>0$; see, e.g., \cite[Theorem 1.7]{AllezChouk},
\cite{ChenDeyaOuyangTindel,GuXu}, \cite[(1.26)]{ChenRosen},
and \cite[Corollary 1.2]{Matsuda}.
In particular, if there is intermittency in this regime, then it does not follow from Definition \ref{Definition: Intermittency}; the Lyapunov exponents do not exist when the moments blow up in finite time.

We refer to \cite[Contribution 1 and Remarks 1.1 and 1.2]{ChenDeyaOuyangTindel}
for a related result concerning the Skorokhod PAM when $p\in\{1\}\cup[2,\infty)$,
and to \cite[Page 5]{ChenDeyaOuyangTindel}
for the mention of a conjecture regarding the value of $t_0(p)$ in the Stratonovich setting considered in this paper.
In both cases, it is known/conjectured that $t_0(p)$ is related to the best constant
\begin{multline}
\label{Equation: GNS}
\mc G:=\inf\bigg\{C>0:\iint_{(\mbb R^d)^2}f(x)^2\ga(x-y)f(y)^2\d x\dd y\\\leq C\int_{\mbb R^d}|\nabla f(x)|_2^2\d x\quad\forall f\in H^1(\mbb R^d)\text{ s.t. }\|f\|_2=1\bigg\};
\end{multline}
see \cite[(1.9), (3.21), Notation 3.12, and Theorem 3.14]{ChenDeyaOuyangTindel}.

\begin{remark}
The fact that $\mc G$ is finite for the covariance kernels in
Assumption \ref{Assumption: Noise} when $\om=2$ is established in
\cite[(C.1) for $p=d=2$]{ChenBook} for white noise,
Proposition \ref{Proposition: Riesz Best Constant} below for Riesz noise,
and \cite[Remark 3.11]{ChenDeyaOuyangTindel} for fractional noise.
\end{remark}

\subsubsection{Singular PAM - Supercritical Regime}
\label{Section: State of the art - Sup}

If $\xi$ is as in Assumption \ref{Assumption: Noise} with
$\om>2$, then we say that it is supercritical.
In this regime, it is expected that $\langle u(t,x)^p\rangle=\infty$ for all $p,t>0$;
see, e.g., \cite[Remark 1.7]{GuXu} and \cite[Theorem 2 for $d=3$]{Labbe}.
Thus, intermittency cannot be established by studying Lyapunov exponents in this regime either.

\subsection{Motivational Problems and Outline of Results}
\label{Section: Two Problems}

The results discussed above
raise a number of interesting problems. For instance:

\begin{problems}
\label{Problem: 1}
In the subcritical regime:
\begin{itemize}
\item Can the maximizers of the variational problem $\mbf M$ in \eqref{Equation: Sub Variational}
be argued to describe the geometry of intermittent peaks in the singular PAM, in similar fashion to the
minimizers of $\chi$ in \eqref{Equation: GK 2nd Order} for the smooth PAM (e.g., the heuristic derivation in \cite[Section 0.4.3]{GartnerKonig})?
\item If the answer to the above is affirmative, then how/why does the contribution of the geometry of peaks
(i.e., the variational constant) undergo a transition from the second- to the first-order
asymptotic when going from a regular to a singular noise?
\item Can the exact asymptotic \eqref{Equation: Sub Known} be established for every $p>0$
and every noise in Assumption \ref{Assumption: Noise}
(in particular, including $p\in(0,1)\cup(1,2)$)?
\end{itemize}
\end{problems}

\begin{problems}
\label{Problem: 2}
In the critical and supercritical regimes:
\begin{itemize}
\item What is the geometric significance (if any) of the blowup of the moments
(either in finite time
in the critical regime or for all times in the supercritical regime)?
\item What is the geometric significance (if any) of the
variational constant $\mc G$ in \eqref{Equation: GNS}
that is expected to characterize the moment blowup thresholds $t_0(p)$ in \eqref{Equation: Moment Blowup Threshold}?
\end{itemize}
\end{problems}

Our main results, which we state in full in Section \ref{Section: Main Results},
shed new light on all of these problems. More specifically, our
results fit into three main categories:

Firstly, the main theorems proved in this paper are
{\bf Theorems \ref{Theorem: Sub}, \ref{Theorem: Crt},
and \ref{Theorem: Sup}}. The latter provide moment asymptotics for the PAM with a noise that becomes
increasing singular (i.e., $\xi_\eps$ as $\eps\to0$) in the subcritical, critical, and supercritical regimes respectively.
The setting that we consider in this paper,
which is explained in details in Section \ref{Section: Asymptotically Singular}, allows us to treat the
PAM with smooth and singular noises in a unified
way, as well as interpolate between the two settings.
In particular,
our method of proof is mostly inspired by the methodology developed in
\cite{GartnerKonig} for the smooth setting, and thus
provides the same geometric interpretation of intermittency (see Sections \ref{Section: Moment Heuristics Details} and \ref{Section: Motivation for Maximizers} for the details).

Secondly, we have three corollaries of our main theorems, which improve known
results and provide new insights on the behavior of the moments of the PAM with singular noise:
\begin{itemize}
\item As a corollary of Theorem \ref{Theorem: Sub},
in {\bf Theorem \ref{Theorem: Subcritical Improvement}} we extend
\eqref{Equation: Sub Known} and \eqref{Equation: Sub Known - WN}
by establishing precise moment asymptotics
in the subcritical regime {\it for all $p>0$},
which also allow us to obtain precise tail asymptotics for the solution of the PAM.
Combining this with the fact that
our moment asymptotics in Theorem \ref{Theorem: Sub}
interpolate between the classical and singular asymptotics in
\eqref{Equation: GK 2nd Order} and
\eqref{Equation: Sub Known}/\eqref{Equation: Sub Known - WN}, as well as the
geometric interpretation of our asymptotics in Sections
\ref{Section: Moment Heuristics Details} and \ref{Section: Motivation for Maximizers},
this provides a complete answer to Problems \ref{Problem: 1} (see Section \ref{Section: Main Results Sub}
for more details).
\item As corollaries of Theorems \ref{Theorem: Crt} and \ref{Theorem: Sup},
in {\bf Theorems \ref{Theorem: Intermittency for Finite t Crt} and \ref{Theorem: Intermittency for Finite t Sup}}
we make progress on Problems \ref{Problem: 2} by uncovering that
the moment blowup phenomenon discussed in Sections \ref{Section: State of the art - Crt}
and \ref{Section: State of the art - Sup} is related
to a {\it finite time} intermittency effect, which
we can detect in the moment asymptotics of $u_\eps(t,\cdot)$ as $\eps\to0$ with $t>0$ fixed.
Moreover, we explain how the best constant $\mc G$ in \eqref{Equation: GNS} characterizes
the occurrence of this intermittency phenomenon in the critical regime.
To the best of our knowledge, these are the first results that establish the existence of an intermittency
phenomenon in the PAM with an asymptotically singular noise and a fixed finite time.
\end{itemize}

Thirdly, in {\bf Theorem \ref{Theorem: Fractional Minimizers}},
we prove the existence and study the properties of the maximizers of the variational problem
$\mbf M$ in \eqref{Equation: Sub Variational} in the case of the fractional kernel $\ga(x)=\si^2\prod_{i=1}^d|x_i|^{-\om_i}$; the relevance of these maximizers to intermittency is discussed in Section \ref{Section: Motivation for Maximizers}.
Maximizers of variational problems
of the form $\mbf M$ for general $\ga$ are known in the mathematical physics literature as Hartree ground states
and are important objects in quantum mechanics; see, e.g., \cite{FrohlichLenzmann} and references therein. (In
that interpretation, the kernel $\ga$ represents an interaction potential between distinct particles.)
Thus, the existence and properties of these maximizers have been studied for various choices of
$\ga$, including the white and Riesz noises (see Theorems \ref{Theorem: White Hartree} and \ref{Theorem: Riesz Hartree}).
To the best of our knowledge, the standard results in this theory concern the case where
$\ga$ is symmetric decreasing and has integrable singularities (i.e., the measure of 
level sets $\{x\in\mbb R^d:\ga(x)>t\}$ is finite); see, e.g., \cite[Theorem 2]{FrohlichLenzmann}.
Neither of these properties hold for the fractional covariance kernel. Thus, we believe that Theorem \ref{Theorem: Fractional Minimizers}
may be of independent interest, as it studies the Hartree ground state problem in a situation where
there is less symmetry and integrability than the usual setting.

\subsection{Discussion}
\label{Section: Discussion}

\subsubsection{Extensions}

Many of the moment asymptotics stated in
Sections \ref{Section: State of the art - Sub}--\ref{Section: State of the art - Sup}
admit a generalization if
\begin{itemize}
\item $\xi$ is replaced by a possibly time-dependent noise of the form
\[\langle\xi(s,x)\xi(t,y)\rangle=|s-t|^{\om_0}\ga(x-y)\]
for some $\om_0\in[0,1)$; and/or
\item the term $\xi(x)\cdot u(t,x)$ or $\xi(t,x)\cdot u(t,x)$ in the definition
of the PAM is interpreted as a Wick product, which leads to the Skorokhod solution; and/or
\item the Laplacian operator $\De$ is replaced by 
a fractional Laplacian of the form $-(-\De)^{s}$ for some $0<s<1$.
\end{itemize}
See, for instance, \cite{ChenEM3,ChenEM2,ChenEM1,ChenDeyaOuyangTindel,ChenHuSongSong,ChenRosen}. It is thus natural to ask the following:

\begin{problems}
\label{Problems: Open 1}
Is it possible to prove analogues of Theorems \ref{Theorem: Sub},
\ref{Theorem: Crt}, and \ref{Theorem: Sup}
for the PAM with a time-independent noise and/or Skorokhod solution and/or fractional Laplacian?
\end{problems}

Among other things, an affirmative answer to these problems could improve the known precise moment asymptotics in \cite[Theorem 1.1]{ChenHuSongSong} to all $p>0$ in the subcritical case,
and it could show that the moment blowup phenomenon proved in \cite[Contribution 1]{ChenDeyaOuyangTindel} for the
critical Skorokhod/time-dependent case is also related to an intermittency effect
that occurs in the mollified PAM when $\eps\to0$ for fixed times.

However, many specifics of the techniques employed in this paper cannot be directly extended to the
time-independent noise, Skorokhod, or fractional Laplacian settings. Most notably, our frequent
reliance on the spectral expansion \eqref{Equation: Spectral Expansion}, which is exclusive to the
Stratonovich and time-independent settings. Thus, we expect that solving
Problems \ref{Problems: Open 1} would require a number of new ideas (such as replacing spectral
expansions with the hypercontractivity trick developed in \cite{Le}), which we leave open for future works.

\subsubsection{Intermittency in the Critical and Supercritical Regimes}

While Theorems \ref{Theorem: Intermittency for Finite t Crt} and
\ref{Theorem: Intermittency for Finite t Sup} shed new light on
the relationship between intermittency and the moment blowup phenomenon
in the critical and supercritical regimes, they do not appear to say
anything definitive about the occurrence (or not) of intermittency
in the usual large-time setting.
At most, these results suggest that {\it if} intemittency occurs
in the critical or supercritical PAM at large times, then it is likely explained by
a different mechanism than the PAM with smooth or subcritical noise.

More specifically, the phase transition in
Theorem \ref{Theorem: Sub}
shows that the large-time intermittency in the subcritical PAM comes
from the same mechanism as the PAM with smooth noise:
As illustrated in the transition from Figure
\ref{Figure: 2 Orders} to Figure \ref{Figure: Coalesce 2},
as we make $\eps$ increasingly small compared to $t$, the geometry of peaks in
the noise and the PAM at large times undergoes a transition from the classical asymptotics,
and  eventually stabilizes to a size and shape independent of $\eps$.

In contrast, Theorems \ref{Theorem: Intermittency for Finite t Crt} and \ref{Theorem: Intermittency for Finite t Sup}
show that the intermittency in the PAM with smooth noise
cannot transform into an intermittency effect that occurs at large times
in the critical or supercritical PAM. Indeed, if we define the renormalized subcritical/critical PAM as
\[u(t,x)=\lim_{\eps\to0}u_\eps(t,x)\mr e^{-tc_\eps}\]
(for an appropriate choice of diverging renormalization constants $c_\eps$), then the tall and narrow
peaks that generate the moment asymptotics in Theorem \ref{Theorem: Intermittency for Finite t Crt} and \ref{Theorem: Intermittency for Finite t Sup} disappear once we take the $\eps\to0$ limit, and thus have no
contribution in the geometry of $u(t,x)$ as $t\to\infty$.

As a final remark, we note the work of K\"onig, Perkowski, and Van Zuijlen \cite{KonigPerkowskiZuijlen},
who studied the almost-sure asymptotics of the critical Stratonovich PAM in the case of white noise.
In particular, \cite[Theorem 1.1 (b)]{KonigPerkowskiZuijlen} states that, from the point of view of
almost-sure logarithmic asymptotics of the PAM, an intermittency effect cannot
be observed (at least on boxes near the origin of side-length $\sim t^a$ for $0<a<1$). This is in sharp contrast to the case of smooth noise, as explained in
\cite[Section 1.6]{GartnerKonigMolchanov}. Therefore, it appears that the occurrence
of an intermittency phenomenon at large times (as well as its exact nature if it does occur) in the critical and supercritical
PAM remains open.

\subsection{Organization of this Paper}
\label{Section: Organization}

The remainder of this paper is organized as follows:
In Section \ref{Section: Main Results}, we explain the
approach developed in this paper
to tackle Problems \ref{Problem: 1} and \ref{Problem: 2},
and then we state our main results.
In Section \ref{Section: Moment Heuristics},
we introduce some notations, present a heuristic derivation
of our main theorems, and discuss the geometric significance of the
latter.
Finally, in Sections \ref{Section: Moments Part 1}--\ref{Section: Variational},
we prove our main results.

\section{Main Results}
\label{Section: Main Results}

Throughout this section (and for the remainder of this paper), we assume that $\xi$
is a singular noise that satisfies Assumption \ref{Assumption: Noise}.
We also recall that $\langle\cdot\rangle$ denotes the expectation with respect
to $\xi$, that $\om$ is the scaling parameter introduced in Remark \ref{Remark: Scaling Parameter},
and that $\xi_\eps$ and $u_\eps(t,x)$ ($\eps>0$) are the mollified noise and
PAM introduced in Definition \ref{Definition: Mollifier} and \eqref{Equation: Smoothed PAM}.

This section is organized as follows. In Section \ref{Section: Asymptotically Singular},
we discuss the general philosophy behind the approach used in this paper.
In Sections \ref{Section: Main Results Sub}--\ref{Section: Main Results Sup},
we provide precise statements of the results outlined in Section \ref{Section: Two Problems}.
Then, in Section \ref{Section: Roadmap}, we provide an index of where the proofs
of each of our main results are located.

\subsection{Our Approach: Asymptotically Singular Noise}
\label{Section: Asymptotically Singular}

%
%
%
Instead of studying $u_1(t,x)$ as $t\to\infty$ (i.e., \eqref{Equation: GK 2nd Order})
or studying $u_\eps(t,x)$ by first sending $\eps\to0$ and then $t\to0$ (i.e.,
Sections \ref{Section: State of the art - Sub}--\ref{Section: State of the art - Sup}),
we consider the setting where both
$\eps$ and $t$ can vary simultaneously:

\begin{definition}
We introduce the master parameter
\[\msf m\in[0,\infty).\]
We assume that $\msf e=\msf e(\msf m)$ and $\msf t=\msf t(\msf m)$
are positive functions of this parameter
such that
\begin{align}
\label{Equation: e bounded and t away from zero}
\sup_{\msf m\geq0}\msf e\leq 1\qquad\text{and}\qquad\inf_{\msf m\geq0}\msf t>0.
\end{align}
The function $\msf e$ represents the dependence of $\eps$ on the master parameter $\msf m$,
and $\msf t$ represents the dependence of $t$ on $\msf m$.
\end{definition}

\begin{remark}
As we have done in \eqref{Equation: e bounded and t away from zero},
in order to improve readability, throughout this paper we mostly keep the
dependence of $\msf e$ and $\msf t$ (and various other functions) on the master parameter
implicit. For instance, if we state an asymptotic of the form
\[\log\langle u_\msf e(\msf t,x)^p\rangle=F(\msf e,\msf t)+o(1)\qquad\text{as }\msf m\to\infty\]
for some functional $F$, then it should be understood as
\[\log\big\langle u_{\msf e(\msf m)}\big(\msf t(\msf m),x\big)^p\big\rangle=F\big(\msf e(\msf m),\msf t(\msf m)\big)+o(1)\qquad\text{as }\msf m\to\infty.\]
Limits of the form $\lim_{\msf m\to\infty}F(\msf e,\msf t)=l$ should be interpreted in the same way.
\end{remark}

We are mainly interested in taking $\msf e\to0$ as $\msf m\to\infty$.
In doing so, our hope is twofold:
\begin{enumerate}
\item Since $u_\eps(t,\cdot)$ has a smooth noise for every fixed $\eps>0$,
some of the techniques used for the case
$\eps=1$ in \cite{GartnerKonig} that are not applicable to the singular case remain available.
\item If $\eps$ is small, then $u_\eps(t,\cdot)$'s geometry is similar to that of
$u(t,\cdot)$ (possibly up to a renormalization).
Thus, if $\msf e\to0$ at a fast enough
rate as $\msf m\to\infty$, then the asymptotics
\[u_{\msf e}(\msf t,\cdot)=u_{\msf e(\msf m)}\big(\msf t(\msf m),\cdot\big)\]
might shed new light on the intermittency of the PAM with singular noise.
\end{enumerate}

Since intermittency is a phenomenon that typically occurs at large times,
we will in many cases consider the large-$\msf m$ asymptotics of the moments $\langle u_\msf e(\msf t,x)^p\rangle$
under the assumption that both $\msf e\to0$ and $\msf t\to\infty$ as $\msf m\to\infty$.
However, as we explain in the coming sections, one of the main insights
of this paper is that it is also interesting to consider the case where $\msf t$ remains bounded
(see Theorems \ref{Theorem: Intermittency for Finite t Crt} and \ref{Theorem: Intermittency for Finite t Sup}).

\begin{remark}
A similar approach was implemented in \cite{GLPT}, wherein the main focus was to
establish a phase transition in the
almost-sure asymptotics of the PAM with asymptotically singular noise
(see Theorems 1.7 and 1.11 therein).
The moments in the case of asymptotically singular white noise
were studied in \cite[Theorem 1.17]{GLPT}. However, the latter is
weaker and less general than the results in this paper,
it is proved using different techniques,
and it does not provide
any insight on Problems \ref{Problem: 1} and \ref{Problem: 2} (which are the main focus of this paper).
\end{remark}

We now proceed to an exposition of our results.

\subsection{Main Results Part 1 - Subcritical Regime}
\label{Section: Main Results Sub}

Consider the subcritical regime where $0<\om<2$, which we henceforth abbreviate as Sub.

\begin{assumption}
\label{Assumption: Sub}
In the Sub regime, in addition to \eqref{Equation: e bounded and t away from zero},
we always assume that
\[\lim_{\msf m\to\infty}\msf t=\infty.\]
We consider three cases of the Sub regime, depending on the behavior of $\msf e$ relative
to $\msf t$:
\begin{enumerate}[\qquad\text{$\bullet$ Case Sub-}1:]
\item $\displaystyle\lim_{\msf m\to\infty}\msf e\,\msf t^{1/(2-\om)}=\infty$.
\item $\displaystyle\lim_{\msf m\to\infty}\msf e\,\msf t^{1/(2-\om)}=\mf c$ for some constant $\mf c\in(0,\infty)$.
\item $\displaystyle\lim_{\msf m\to\infty}\msf e\,\msf t^{1/(2-\om)}=0$.
\end{enumerate}
\end{assumption}

Our first main result is as follows:

\begin{theorem}
\label{Theorem: Sub}
Suppose that Assumption \ref{Assumption: Sub} holds.
Let $p>0$ and $x\in\mbb R^d$ be fixed.\begin{itemize}
\item In case Sub-1, as $\msf m\to\infty$, one has
\begin{align}
\label{Equation: Sub-1}
\log\langle u_\msf e(\msf t,x)^p\rangle=\left(\frac{p^2\ga_1(0)}{2}\right)\msf e^{-\om}\msf t^2-p^{3/2}\chi\,\msf e^{-(2+\om)/2}\msf t^{3/2}\big(1+o(1)\big),
\end{align}
where we recall that $\chi$ is defined in \eqref{Equation: GK Variational}.
\item In case Sub-2,
\begin{align}
\label{Equation: Sub-2}
\lim_{\msf m\to\infty}\frac{\log\langle u_\msf e(\msf t,x)^p\rangle}{\msf t^{(4-\om)/(2-\om)}}=p^{(4-\om)/(2-\om)}\mbf M_{\mf c,p},
\end{align}
where we define the variational constant
\begin{align}
\label{Equation: Sub-2 Variational}
\mbf M_{\mf c,p}:=\sup_{\substack{f\in H^1(\mbb R^d)\\\|f\|_2=1}}\left(\frac12\iint_{(\mbb R^d)^2}f(x)^2\ga_{p^{1/(2-\om)}\mf c}(x-y)f(y)^2\d x\dd y-\ka\int_{\mbb R^d}|\nabla f(x)|_2^2\d x\right).
\end{align}
\item In case Sub-3,
\begin{align}
\label{Equation: Sub-3}
\lim_{\msf m\to\infty}\frac{\log\langle u_\msf e(\msf t,x)^p\rangle}{\msf t^{(4-\om)/(2-\om)}}=p^{(4-\om)/(2-\om)}\mbf M,
\end{align}
where we recall that $\mbf M$ is defined in \eqref{Equation: Sub Variational}.
\end{itemize}
\end{theorem}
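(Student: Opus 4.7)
The proof proceeds via the Gaussian moment identity
\[
\langle u_\eps(\msf t,x)^p\rangle
= E^{\otimes p}\bigg[\exp\bigg(\tfrac12\sum_{i,j=1}^p\int_0^{\msf t}\!\int_0^{\msf t}\ga_\eps(B^i_s - B^j_r)\,ds\,dr\bigg)\bigg],
\]
where $B^1,\ldots,B^p$ are independent $\ka\Delta$-Brownian motions from $x$; because $\xi_\eps$ is smooth for every $\eps>0$, the spectral expansion \eqref{Equation: Spectral Expansion} of $\ka\De+\xi_\eps$ remains available throughout, and I would treat the three cases within a single Donsker--Varadhan-type large-deviation framework. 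The trichotomy in Assumption \ref{Assumption: Sub} reflects the competition between two concentration strategies for the paths $B^i$: a G\"artner--K\"onig peak of width $\sim\msf e^{(\om+2)/4}$ at a local maximum of $\xi_\eps$, producing $\exp\!\big(\tfrac12 p^2\ga_\eps(0)\msf t^2-p^{3/2}\chi(\Si_\eps)\msf t^{3/2}\big)$, and a singular Hartree peak of width $\sim(p\msf t)^{1/(2-\om)}$, producing $\exp\!\big((p\msf t)^{(4-\om)/(2-\om)}\mbf M\big)$. These two quantities are of the same order precisely when $\msf e\msf t^{1/(2-\om)}=O(1)$.

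For Case Sub-3 I would rescale the paths by $L=(p\msf t)^{1/(2-\om)}$ and exploit the identity $\ga_\eps(x)=\msf e^{-\om}\ga_1(x/\msf e)$ together with $\msf e/L\to 0$; the rescaled kernel appearing in the Feynman--Kac exponent converges (modulo the overall factor $(p\msf t)^{(4-\om)/(2-\om)}$) to $\ga$, and the exponent itself becomes the Hartree functional $\tfrac12\iint f(x)^2\ga(x-y)f(y)^2\,dx\,dy-\ka\int|\nabla f|^2\,dx$ over $\|f\|_2=1$. The upper bound follows from the variational representation of the principal eigenvalue of $\ka\De+\xi_\eps$ together with lower semicontinuity and compactness of this functional in $H^1(\mbb R^d)$; the lower bound is obtained by restricting the expectation to BM configurations whose empirical density approximates $f_*^2$ for $f_*$ a near-maximizer of $\mbf M$ (existence via Theorem \ref{Theorem: Fractional Minimizers} in the fractional case and the classical analogues for white and Riesz noise). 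Case Sub-2 is identical, but the rescaled kernel converges to $\ga_{p^{1/(2-\om)}\mf c}$ rather than $\ga$, directly yielding the truncated constant $\mbf M_{\mf c,p}$.

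For Case Sub-1 the singular rescaling no longer captures the leading behavior, and I would instead adapt the G\"artner--K\"onig proof of \eqref{Equation: GK 2nd Order} to the mollified noise $\xi_\eps$. The leading order $\tfrac12 p^2\ga_1(0)\msf e^{-\om}\msf t^2=\tfrac12 p^2\ga_\eps(0)\msf t^2$ is immediate from $\ga_\eps(0)=\msf e^{-\om}\ga_1(0)$. The $\msf t^{3/2}$ correction comes from the quadratic Taylor expansion of $\xi_\eps$ near its peak, with $\Si_\eps=-\operatorname{Hess}\ga_\eps(0)=\msf e^{-(\om+2)}\Si_1$; applying the homogeneity $\chi(c\Si)=\sqrt{c}\,\chi(\Si)$---obtained via the dilation $f_\lambda(x)=\lambda^{d/2}f(\lambda x)$, which scales the Dirichlet energy by $\lambda^2$ and the $\Si$-interaction by $\lambda^{-2}$---then produces the $\msf e^{-(\om+2)/2}$ factor in \eqref{Equation: Sub-1}. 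The principal obstacle is uniformity in $\msf e$: the G\"artner--K\"onig argument must be made quantitative enough that the remainder terms in the spectral expansion and in the Taylor expansion of $\xi_\eps$, as well as the various localization estimates, all remain strictly subleading once $\msf e\msf t^{1/(2-\om)}\to\infty$. An analogous uniformity issue in Sub-2 and Sub-3 requires continuity of the Hartree functional in the mollification parameter, which I would establish via $H^1$-compactness and the subcritical integrability of $\ga$ against $f^4$ for $f\in H^1(\mbb R^d)$.
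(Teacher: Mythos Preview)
Your opening identity
\[
\langle u_\eps(\msf t,x)^p\rangle
= E^{\otimes p}\bigg[\exp\bigg(\tfrac12\sum_{i,j=1}^p\int_0^{\msf t}\!\int_0^{\msf t}\ga_\eps(B^i_s - B^j_r)\,ds\,dr\bigg)\bigg]
\]
is only valid for \emph{integer} $p$, whereas the theorem is stated for all $p>0$. This is not a cosmetic issue: for non-integer $p$ there is no replica expansion, and nothing in your outline explains how the Donsker--Varadhan machinery would then be applied. The paper is organized precisely to avoid this obstruction. It works with a single Brownian path, localizes to a box $Q_{\msf R}$, and uses the spectral expansion \eqref{Equation: Spectral Expansion} together with Jensen's inequality (in opposite directions for $p\ge1$ and $0<p<1$) to replace $\big\langle (u^{\xi_\msf e}_{\msf R}(\msf t,\cdot),\mathbbm 1)^p\big\rangle$ by the \emph{first} moment $\big\langle (u^{\xi_\msf e}_{\msf R}(p\msf t,\cdot),\mathbbm 1)\big\rangle$ at the dilated time $p\msf t$; only after this reduction does the large-deviation argument enter, for a single occupation measure (Lemmas \ref{Lemma: Averaged Asymptotics 1}--\ref{Lemma: Averaged Asymptotics 2}). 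The range $0<p<1$ needs further devices---reverse H\"older, a short-time truncation $\msf d$, and in cases Sub-1/Crt-1/Sup a competing term controlled via \eqref{Equation: Faster than H and beta}---none of which appear in your sketch (Lemma \ref{Lemma: Moment Lower Bound for 0<p<1 - 1}).

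A second gap concerns Sub-3. You propose to let the rescaled kernel converge to $\ga$ and invoke continuity of the Hartree functional, but once the limiting kernel is singular the map $\mu\mapsto J_0(\mu)$ is \emph{not} weakly continuous on $\ms P(Q_R)$, so Varadhan's lemma does not apply directly. The paper handles this (Proposition \ref{Proposition: Sub-3 LDP}) by sandwiching $-J_\eps(L_t)\le -J_{\eps'}(L_t)\le -J_0(L_t)$ via Fourier monotonicity, using the smooth $J_\eps$ for the lower bound and, crucially, invoking the already-known $p=1$ asymptotic \eqref{Equation: Sub Known} for the upper bound rather than a direct LDP. Your Sub-1 outline, by contrast, is essentially what the paper does: the quadratic Taylor expansion of $\ga_1$ and the scaling of $\chi$ are exactly the content of Proposition \ref{Proposition: Smooth LDP} in that case.
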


\subsubsection{Interpretation of Theorem \ref{Theorem: Sub}}
\label{Section: Interpretation of Sub}

In case Sub-1, $\msf t$ blows up quickly relative to $\msf e$'s size.
Thus, it is natural to expect that $u_\msf e(\msf t,\cdot)$ should behave similarly to $u_1(\msf t,\cdot)$ as $\msf m\to\infty$.
The asymptotic \eqref{Equation: Sub-1} confirms this:
Apart from the additional factors of $\msf e^{-\om}$ and $\msf e^{-(2+\om)/2}$ in the first- and second-order terms respectively,
this is the same as the classical result \eqref{Equation: GK 2nd Order}.
(In fact, in the case where $\msf e=1$ is constant, then we
recover exactly \eqref{Equation: GK 2nd Order}, with the addition
that we also prove moment asymptotics for $0<p<1$.)

Next, by carefully examining \eqref{Equation: Sub-1}, 
it is to be expected that a transition occurs when $\msf e$ is so small
that the first- and second-order terms are the same size. This occurs
when $\msf e\approx\msf t^{-1/(2-\om)}$,
which implies that $\msf e^{-\om}\msf t^2$ and $\msf e^{-(2+\om)/2}\msf t^{3/2}$
are both on the order of $\msf t^{(4-\om)/(2-\om)}$.
In cases Sub-2 and Sub-3, we show that this transition does indeed
occur, and eventually gives rise to the subcritical asymptotics
stated earlier in \eqref{Equation: Sub Known}.

Referring back to Problems \ref{Problem: 1},
we note that Theorem \ref{Theorem: Sub} provides an
affirmative answer to the first two items raised therein:
Since our proof of Theorem \ref{Theorem: Sub}
mostly follows the general methodology developed in \cite{GartnerKonig}, we can infer that
the variational problems that appear in \eqref{Equation: Sub-2} and \eqref{Equation: Sub-3}
(and their maximizers)
have the same geometric interpretation as the constant $\chi$
that appears in the second-order term in \eqref{Equation: GK 2nd Order}.
We refer to Sections \ref{Section: Moment Heuristics Details} and
\ref{Section: Motivation for Maximizers} for a detailed geometric
interpretation of the following from the point of view of intermittency:
\begin{enumerate}
\item The maximizers of the variational problems in \eqref{Equation: Sub-2} and \eqref{Equation: Sub-3}; and
\item the coalescence phenomenon whereby the first- and second-order terms
in \eqref{Equation: Sub-1} merge into a single term when $\msf e$ is small enough
compared to $\msf t$.
\end{enumerate}

\subsubsection{Precise Asymptotics for all $p>0$}

Regarding the third item raised in Problems \ref{Problem: 1},
unlike the results stated in Section \ref{Section: State of the art - Sub}, the asymptotic \eqref{Equation: Sub-3}
holds for all $p>0$. Thus, as a corollary of \eqref{Equation: Sub-3}, we obtain
the following improvement of \cite[Theorem 1.1 with $\be_0=\rho=0$
and $\al=2$]{ChenHuSongSong}, \cite[Theorem 6.9 (ii)]{HuHuangNualartTindel}, and \cite[(2)]{Mansmann},
extending the precise asymptotics to all $p>0$:

\begin{theorem}
\label{Theorem: Subcritical Improvement}
Let $\xi$ be one of the noises in Assumption \ref{Assumption: Noise},
assuming that $0<\om<2$. For every $p>0$ and $x\in\mbb R^d$, it holds that
\[\lim_{t\to\infty}\frac{\log\langle u(t,x)^p\rangle}{t^{(4-\om)/(2-\om)}}=p^{(4-\om)/(2-\om)}\mbf M.\]
In particular, for every $\theta>0$ and $x\in\mbb R^d$, we have the tail asymptotic
\begin{multline*}
\lim_{t\to\infty}t^{-(4-\om)/(2-\om)}\log\mbf P\left[u(t,x)\geq\exp\Big(\theta t^{(4-\om)/(2-\om)}\Big)\right]\\
=-\sup_{p>0}\left(\theta p-p^{(4-\om)/(2-\om)}\mbf M\right)
=-2\theta^{(4-\om)/2}(4-\om)^{-(4-\om)/2}\left(\frac{\mbf M}{2-\om}\right)^{-(2-\om)/2}.
\end{multline*}
\end{theorem}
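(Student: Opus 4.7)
The theorem consists of a moment asymptotic and a tail asymptotic for $u(t,x)$. The plan is to deduce the moment asymptotic directly from Theorem~\ref{Theorem: Sub} Case Sub-3 via a diagonal passage from the mollified to the limiting solution, and then to extract the tail asymptotic from it by Chernoff/Legendre duality.

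For the moment asymptotic, I would use the standard $L^p$-convergence fact $u_\eps(t,x)\to u(t,x)$ as $\eps\to 0$ (valid in the subcritical regime), which gives $\lim_{\eps\to 0}\langle u_\eps(t,x)^p\rangle=\langle u(t,x)^p\rangle$ for each fixed $t>0$ and $p>0$. Hence, for each $t$, one may fix $\eps^*(t)>0$ so small that $\bigl|\log\langle u_\eps(t,x)^p\rangle-\log\langle u(t,x)^p\rangle\bigr|\le t^{-2}$ for every $\eps<\eps^*(t)$. Setting $\eps(t):=\min\bigl(\eps^*(t),\,t^{-2/(2-\om)}\bigr)$, one obtains both $\eps(t)\,t^{1/(2-\om)}\le t^{-1/(2-\om)}\to 0$ (so that Case Sub-3 applies with the master-parameter choice $\msf m=t$, $\msf e=\eps(t)$, $\msf t=t$) and $\log\langle u_{\eps(t)}(t,x)^p\rangle=\log\langle u(t,x)^p\rangle+O(t^{-2})$. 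Invoking \eqref{Equation: Sub-3},
\[\log\langle u(t,x)^p\rangle=\log\langle u_{\eps(t)}(t,x)^p\rangle+O(t^{-2})=p^\beta\mbf M\,t^\beta\bigl(1+o(1)\bigr),\qquad\beta:=\frac{4-\om}{2-\om},\]
completing the moment asymptotic.

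For the tail asymptotic, the upper bound is a direct Chernoff estimate: for any $p>0$, by Markov's inequality and the moment asymptotic just obtained,
\[t^{-\beta}\log\mbb P\bigl[u(t,x)\ge\mr e^{\theta t^\beta}\bigr]\le -p\theta+p^\beta\mbf M+o(1),\]
and infimizing over $p>0$ yields $-\sup_{p>0}(p\theta-p^\beta\mbf M)$. The matching lower bound I would obtain by an exponential-tilting argument: with $p^*:=(\theta/(\beta\mbf M))^{(2-\om)/2}$ the Legendre optimizer, tilt by $\mr d\tilde P/\mr dP:=u(t,x)^{p^*}/\langle u(t,x)^{p^*}\rangle$, so that under $\tilde P$ the random variable $\log u(t,x)$ concentrates around $\theta\,t^\beta(1+o(1))$ (computed using the moment asymptotic applied to $p=p^*$ and $p=2p^*$); a Paley--Zygmund-type second-moment argument then turns this into the required lower bound on $\mbb P\bigl[u(t,x)\ge\mr e^{\theta t^\beta}\bigr]$. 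The explicit rate is a routine Legendre computation: $p^*$ solves $\theta=\beta(p^*)^{\beta-1}\mbf M$, so $\sup_{p>0}(p\theta-p^\beta\mbf M)=p^*\theta(\beta-1)/\beta$, which combined with $(\beta-1)/\beta=2/(4-\om)$ and $\beta\mbf M=(4-\om)\mbf M/(2-\om)$ yields the stated $2\theta^{(4-\om)/2}(4-\om)^{-(4-\om)/2}(\mbf M/(2-\om))^{-(2-\om)/2}$. I expect the tail lower bound to be the main technical obstacle, since the moment half is essentially an unpacking of Theorem~\ref{Theorem: Sub}, whereas the tilting step requires verifying that the change-of-measure is well-defined and produces sufficiently sharp concentration of $\log u(t,x)$; a direct G\"artner--Ellis alternative is plausible given the convexity and smoothness of $p\mapsto p^\beta\mbf M$ on $(0,\infty)$, but would still require a steepness / essential smoothness check.
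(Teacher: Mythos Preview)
Your moment argument is exactly the paper's: choose $\msf e(\msf m)$ small enough relative to $\msf t(\msf m)$ so that \eqref{Equation: Subcritical Convergence} forces $\log\langle u_{\msf e}(\msf t,x)^p\rangle/\msf t^{(4-\om)/(2-\om)}=\log\langle u(\msf t,x)^p\rangle/\msf t^{(4-\om)/(2-\om)}+o(1)$, and then quote \eqref{Equation: Sub-3}.

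For the tail asymptotic the paper does not build the tilting/Paley--Zygmund machinery you outline; it simply invokes the G\"artner--Ellis theorem (citing \cite{ChenHuSongSong}). Your tilting plan is of course the mechanism behind G\"artner--Ellis, so the two are morally the same, but the off-the-shelf citation is shorter and avoids the somewhat delicate step you flag (getting concentration of $\log u(t,x)$ under the tilted law from the asymptotics at $p^*$ and $2p^*$ alone). Your worry about steepness/essential smoothness is easily dispatched here: with $\beta=(4-\om)/(2-\om)>1$ the limiting log-MGF $\Lambda(p)=p^{\beta}\mbf M$ is $C^\infty$ and strictly convex on $(0,\infty)$, with $\Lambda'(p)=\beta p^{\beta-1}\mbf M$ sweeping all of $(0,\infty)$ as $p$ does; hence every $\theta>0$ is an exposed point with exposing hyperplane in the interior of the domain, and the one-sided G\"artner--Ellis lower bound applies without further work. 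So the direct G\"artner--Ellis route you mention as an alternative is in fact the preferable one, and is what the paper uses.
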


The moment asymptotics in Theorem \ref{Theorem: Subcritical Improvement}
are an immediate consequence of \eqref{Equation: Sub-3} and
\begin{align}
\label{Equation: Subcritical Convergence}
\lim_{\eps\to0}\langle u_\eps(t,x)^p\rangle=\langle u(t,x)^p\rangle\qquad\text{for every fixed }p,t>0
\end{align}
in the subcritical regime for the noises considered in this paper
(see, e.g., \cite{HuHuangNualartTindel}):
Indeed, \eqref{Equation: Subcritical Convergence} means that in case Sub-3, for each $\msf m$ we can choose
$\msf e(\msf m)$ small enough relative to $\msf t(\msf m)$ so that
\[\frac{\log\langle u_{\msf e}(\msf t,x)^p\rangle}{\msf t^{(4-\om)/(2-\om)}}=
\frac{\log\langle u(\msf t,x)^p\rangle}{{\msf t^{(4-\om)/(2-\om)}}}+o(1)\qquad\text{as }\msf m\to\infty.\]
Once the moment asymptotics are established, the tail asymptotics follow from standard large deviations
theory (i.e., the G\"artner-Ellis theorem; see, e.g., item (i) on page 4 of
\cite{ChenHuSongSong}).

\subsubsection{Geometry of Maximizers}
\label{Section: Sub Maximizers}

The significance of the variational constants $\mbf M_{\mf c,p}$ and $\mbf M$ in describing
the geometry of intermittency (as explained in Sections \ref{Section: Moment Heuristics Details}  and \ref{Section: Motivation for Maximizers})
motivates studying the existence and properties of its maximizers.

For the white noise, $0<\om<2$ implies that $d=1$. In this case, we have the following
well-known result (e.g., \cite[Lemma 3.1]{FrankGeisinger} and \cite[(3)]{Mansmann}):

\begin{theorem}[\cite{FrankGeisinger,Mansmann}]
\label{Theorem: White Hartree}
Suppose that $d=1$ and $\ga=\si^2\de_0$.
The set of maximizers of $\mbf M$ consists of all functions $f_\star$ of the form
\[f_\star(x)=\pm\frac{\si}{2^{3/2}\ka^{1/2}}\mr{sech}\left(\frac{\si^2}{4\ka}(x-z)\right),\]
where $z\in\mbb R$ can be any real number.
\end{theorem}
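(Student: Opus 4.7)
The plan is to reduce the variational problem to the sharp one-dimensional Gagliardo-Nirenberg inequality and then to trace the equality cases. Substituting $\ga(x) = \si^2 \de_0(x)$ collapses the double integral in the definition of $\mbf M$ to $\si^2 \int_{\mbb R} f(x)^4 \d x$, so the problem reads
\[
\mbf M = \sup_{\substack{f \in H^1(\mbb R)\\ \|f\|_2 = 1}} \left( \frac{\si^2}{2} \|f\|_4^4 - \ka \|f'\|_2^2 \right).
\]

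The key analytic input is the sharp one-dimensional Gagliardo-Nirenberg inequality
\[
\|f\|_4^4 \leq \frac{1}{\sqrt{3}}\,\|f\|_2^3\,\|f'\|_2, \qquad f \in H^1(\mbb R),
\]
together with the identification of its extremals: equality (for nontrivial $f$) holds if and only if $f(x) = c\,\mr{sech}\bigl(\al(x-z)\bigr)$ for some $c \in \mbb R \setminus \{0\}$, $\al > 0$, and $z \in \mbb R$. I would substitute this bound into the functional and, writing $t = \|f'\|_2$, maximize the resulting upper bound over $t \geq 0$ to obtain
\[
\mbf M \leq \sup_{t \geq 0} \left( \frac{\si^2}{2\sqrt{3}}\, t - \ka\, t^2 \right) = \frac{\si^4}{48\ka},
\]
the strictly concave quadratic attaining its unique maximum at $t_\star = \si^2/(4\sqrt{3}\,\ka)$. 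To identify all maximizers, I would trace the equality chain backwards: a function $f$ attains $\mbf M$ precisely when it saturates Gagliardo-Nirenberg and satisfies both $\|f\|_2 = 1$ and $\|f'\|_2 = t_\star$. Writing $f(x) = c\,\mr{sech}(\al(x-z))$, elementary integrals yield $\|f\|_2^2 = 2c^2/\al$ and $\|f'\|_2^2 = \al^2/3$, so imposing the two norm constraints forces $\al = \si^2/(4\ka)$ and $c^2 = \si^2/(8\ka)$, leaving only $z \in \mbb R$ and the sign of $c$ free. This recovers the form stated in the theorem.

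The main obstacle is the sharp Gagliardo-Nirenberg inequality together with the classification of its extremals, which is the nontrivial analytic input. A standard route is to pass to the symmetric nonincreasing rearrangement (which decreases $\|f'\|_2$ while preserving every $L^p$ norm), then derive the Euler-Lagrange equation $\ka f'' + \si^2 f^3 = \la f$ for any optimizer via a Lagrange multiplier argument, multiply by $f'$ to obtain the energy conservation law $\ka (f')^2 + \tfrac{\si^2}{2} f^4 - \la f^2 = \mathrm{const}$, and finally use a phase-plane analysis to show that the only positive, even, decaying solution of this ODE is a translate and dilate of $\mr{sech}$. Once this classification is in hand (as in the cited works of Frank-Geisinger and Mansmann), the computations above become essentially algebraic.
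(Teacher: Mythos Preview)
The paper does not prove this theorem; it is stated as a well-known result with citations to \cite{FrankGeisinger} (Lemma 3.1) and \cite{Mansmann} (equation (3)), so there is no in-paper argument to compare against.

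Your approach is correct. The reduction to maximizing $\tfrac{\si^2}{2}\|f\|_4^4-\ka\|f'\|_2^2$ under the constraint $\|f\|_2=1$ is immediate, and the sharp one-dimensional Gagliardo--Nirenberg inequality with constant $1/\sqrt{3}$ (whose extremals are exactly the dilated and translated $\mr{sech}$ profiles) gives the upper bound $\mbf M\leq\si^4/(48\ka)$ after optimizing in $t=\|f'\|_2$. Your equality-tracing is also correct: a maximizer must simultaneously saturate Gagliardo--Nirenberg and have $\|f'\|_2=t_\star$, and the two norm conditions on $c\,\mr{sech}(\al(\cdot-z))$ indeed force $\al=\si^2/(4\ka)$ and $c^2=\si^2/(8\ka)$, leaving only the sign and the translation $z$ free. (One minor presentational point: your formula $\|f'\|_2^2=\al^2/3$ already uses $\|f\|_2=1$; without that normalization it is $2c^2\al/3$.) The substantive input is, as you acknowledge, the sharp inequality together with the classification of its extremals, and the rearrangement-plus-ODE route you sketch is the standard one.
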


\begin{remark}
The importance of the function $\mr{sech}=1/\mr{cosh}$ in the description of
the geometry of intermittency of the PAM with one-dimensional white noise
was also pointed out in \cite{DumazLabbe}, in the context of the localization of the
associated Schr\"odinger operator on a large interval.
\end{remark}

For the Riesz noise, the minimizers are not explicitly known, but their existence and
basic properties are nevertheless well-understood and classical
(e.g., \cite[Theorem 2]{FrohlichLenzmann}):

\begin{theorem}[\cite{FrohlichLenzmann}]
\label{Theorem: Riesz Hartree}
Suppose that $\ga(x)=\si^2|x|^{-\om}$ for some $0<\om<\min\{2,d\}$.
There exists maximizers of $\mbf M$. Suppose that $f_\star:\mbb R^d\to\mbb R$ is such a maximizer.
\begin{enumerate}
\item $f_\star$ is either positive or negative.
\item $f_\star$ is smooth and decays exponentially at infinity; that is,
there exists some constants $a,b>0$ such that $|f_\star(x)|\leq a\mr e^{-b|x|}$ for all $x\in\mbb R^d$.
\item $|f_\star|$ is symmetric decreasing about a point; that is, there
exists some $z\in\mbb R^d$ and a nonincreasing function $\rho:[0,\infty)\to[0,\infty)$
such that $|f_\star(x-z)|=\rho(|x|)$ for all $x\in\mbb R^d$.
\end{enumerate}
\end{theorem}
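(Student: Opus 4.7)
The plan is to treat the four assertions in turn, following the standard Hartree-type variational strategy, adapted to the subcritical Riesz kernel $\gamma(x)=\sigma^2|x|^{-\omega}$ with $0<\omega<\min\{2,d\}$.

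For \textbf{existence}, I would apply Lions's concentration-compactness principle to a maximizing sequence $(f_n)\subset H^1(\mathbb{R}^d)$ with $\|f_n\|_2=1$. First, the Hardy--Littlewood--Sobolev and Gagliardo--Nirenberg inequalities together with $\omega<2$ make the Hartree term
\[
D(f):=\tfrac12\iint f(x)^2|x-y|^{-\omega}f(y)^2\d x\dd y
\]
continuous on $H^1$ and strictly subcritical compared to $\int|\nabla f|^2$, so the functional $D-\kappa\|\nabla\cdot\|_2^2$ is bounded above on the unit $L^2$-sphere and maximizing sequences are bounded in $H^1$. Next, vanishing is ruled out by a simple concentration argument: if $\sup_{z\in\mathbb{R}^d}\int_{B(z,1)}f_n^2\d x\to 0$, then by a standard Lions lemma $D(f_n)\to 0$, contradicting $\mathbf{M}>0$. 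Dichotomy is ruled out by the strict superadditivity inequality $\mathbf{M}(\alpha)+\mathbf{M}(1-\alpha)<\mathbf{M}(1)$ for $\alpha\in(0,1)$, where $\mathbf{M}(m)$ denotes the supremum on the sphere of radius $\sqrt{m}$; this strict inequality is a consequence of the scaling identity $\mathbf{M}(m)=m^{(4-\omega)/(2-\omega)}\mathbf{M}(1)$, which in turn follows from the homogeneity $\gamma(cx)=c^{-\omega}\gamma(x)$ after optimizing a rescaling $f\mapsto \lambda^{d/2}f(\lambda\cdot)$. Thus, up to translations $f_n(\cdot-z_n)$, the sequence is tight, and weak $H^1$ convergence combined with Rellich's local compactness plus the tightness yields a nontrivial limit $f_\star$ with $\|f_\star\|_2=1$ and $\mathbf{M}(1)=D(f_\star)-\kappa\|\nabla f_\star\|_2^2$.

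For \textbf{definite sign and symmetry}, I would use the rearrangement inequalities of Brascamp--Lieb--Luttinger and Pólya--Szegő. Replacing $f_\star$ by $|f_\star|$ does not change $\|f\|_2$ or $D(f)$ and does not increase $\|\nabla f\|_2$ (Kato inequality), so $|f_\star|$ is still a maximizer. Passing to the symmetric decreasing rearrangement $|f_\star|^*$ strictly increases $D$ unless $|f_\star|$ already equals a translate of its rearrangement, by the Riesz rearrangement inequality applied to the strictly symmetric decreasing kernel $|x|^{-\omega}$, while $\|\nabla\cdot\|_2^2$ is non-increased by Pólya--Szegő. This forces $|f_\star|$ to be a translate of a radial decreasing function, giving item~(3). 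Elliptic regularity applied to the Euler--Lagrange equation
\[
-\kappa\Delta f_\star+\mu f_\star=\bigl(\sigma^2|x|^{-\omega}*f_\star^2\bigr)f_\star,
\]
together with the strong maximum principle, shows $f_\star$ is smooth and strictly of one sign, giving items~(1)--(2) aside from decay.

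Finally, for \textbf{exponential decay}, I would extract the Lagrange multiplier $\mu$ by testing the Euler--Lagrange equation against $f_\star$: a short computation yields $\mu=2D(f_\star)/d_\star-\cdots>0$ where positivity follows from Pohozaev-type identities combined with $\omega<2$. Since the Hartree potential $V_\star:=\sigma^2|x|^{-\omega}*f_\star^2$ tends to zero as $|x|\to\infty$ (by dominated convergence, using that $f_\star^2\in L^1$ and $|x|^{-\omega}$ is integrable at infinity after localization away from the origin), for $|x|$ large one has $-\kappa\Delta f_\star+(\mu-V_\star)f_\star=0$ with $\mu-V_\star\geq\mu/2>0$, and an Agmon-type comparison with the barrier $a\,\mr e^{-b|x|}$ for any $0<b<\sqrt{\mu/(2\kappa)}$ yields the claimed exponential decay.

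The main obstacle is the existence step: ruling out dichotomy via the strict subadditivity of $\mathbf{M}(m)$ is delicate because one must exploit the precise scaling of the Riesz kernel, and the proper tightness of the maximizing sequence after translation must be extracted without compactness of the embedding $H^1(\mathbb{R}^d)\hookrightarrow L^2(\mathbb{R}^d)$. Once existence and one-sign positivity are in hand, symmetry, regularity and decay proceed along classical lines.
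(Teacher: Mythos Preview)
The paper does not actually prove this theorem: it is stated as a known result and attributed to \cite{FrohlichLenzmann} (with existence handled by \cite[Theorem~III.2]{Lions}). The only related arguments the paper carries out are for the \emph{fractional} kernel (Theorem~2.6), where it explicitly says it follows the outline of \cite[Theorem~2]{FrohlichLenzmann}. So there is no ``paper's own proof'' to compare against in the strict sense; your proposal is a correct self-contained sketch of the classical result.

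That said, it is worth comparing your sketch to the outline the paper records for the fractional case, since that is the Fr\"ohlich--Lenzmann template. For existence, the approaches coincide: concentration-compactness with vanishing excluded because $\mbf M>0$ and dichotomy excluded by strict subadditivity $\mbf M(a)+\mbf M(1-a)<\mbf M$. For properties (1)--(2), the paper takes a more black-box route: it observes that $f_\star$ is the ground state of the Schr\"odinger operator $-\ka\De-W_\star$ with $W_\star=\ga*f_\star^2$ locally integrable, and then cites \cite[Theorem~XIII.48(a)]{ReedSimonIV} for strict positivity/negativity and \cite[Theorem~C.3.3]{SimonSemigroup} for exponential decay, followed by elliptic bootstrapping for smoothness. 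Your route---maximum principle for strict sign, and an Agmon-type barrier for decay once you know the Lagrange multiplier is positive and $W_\star(x)\to 0$---is the analytic content underlying those citations and is equally valid. One small simplification: you do not need Pohozaev identities to get $\mu>0$; the Euler--Lagrange computation gives the eigenvalue of $-\ka\De-W_\star$ on $f_\star$ as $-\mbf M-\mc J_0(f_\star)<0$ directly, since $\mbf M>0$ and $\mc J_0\geq 0$. For property (3), your use of the full symmetric decreasing rearrangement (Riesz/Brascamp--Lieb--Luttinger plus P\'olya--Szeg\H{o}) is exactly right for the radially symmetric kernel $|x|^{-\om}$; the paper's Steiner-symmetrization argument is tailored to the anisotropic fractional kernel and is not needed here.
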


\begin{remark}
\label{Remark: Riesz Unique}
To the best of our knowledge, the uniqueness of the maximizer $f_\star$
in Theorem \ref{Theorem: Riesz Hartree}
(up to a $\pm$ sign and/or a shift $f_\star(\cdot-z)$)
is only known when $d=3$ and $\om=1$ \cite{Lieb}. Otherwise, the uniqueness
appears to be a hard open problem (e.g. \cite[Problem 3]{FrohlichLenzmann}).
\end{remark}

Next, the existence
and basic properties of the maximizers of $\mbf M$ in the case of the fractional
noise do not appear to be known. One of the contributions of this paper is to prove the existence and study the geometry of such maximizers:

\begin{theorem}
\label{Theorem: Fractional Minimizers}
Let $\ga(x)=\si^2\prod_{i=1}^d|x_i|^{-\om_i}$, where $\om_i\in(0,1)$ and $\om=\sum_{i=1}^d\om_i\in(0,2)$.
There exists maximizers of $\mbf M$. Suppose that $f_\star:\mbb R^d\to\mbb R$ is such a maximizer.
\begin{enumerate}
\item $f_\star$ is either positive or negative.
\item $f_\star$ is smooth and decays exponentially at infinity.
\item $|f_\star|$ is symmetric decreasing along every axis about a point; that is, there exists some $z\in\mbb R^d$
such that for every fixed $1\leq i\leq d$ and $x_j\in\mbb R$ (for all $j\neq i$),
there exists a nonincreasing function $\rho:[0,\infty)\to[0,\infty)$ such that
\[\big |f_\star\big((x_1,\ldots,x_{i-1},r,x_{i+1},\ldots,x_d)-z\big)\big |=\rho(|r|)\qquad\text{for every }r\in\mbb R.\]
\end{enumerate}
\end{theorem}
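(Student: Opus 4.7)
The plan is to adapt the standard concentration-compactness proof of existence of Hartree ground states (as in \cite{FrohlichLenzmann}) to the fractional kernel $\ga(x) = \si^2 \prod_{i=1}^d |x_i|^{-\om_i}$. The main new difficulty is that $\ga$ is not symmetric decreasing on $\mbb R^d$ as a whole, and its superlevel sets $\{x : \ga(x) > t\}$ have infinite Lebesgue measure because $\ga$ blows up along each coordinate hyperplane $\{x_i = 0\}$. Hence neither the full radial rearrangement argument of \cite{FrohlichLenzmann} nor a naive compactness bound applies directly. The key observation exploited throughout is that $\ga$ \emph{is} symmetric decreasing along each individual coordinate axis (with the other coordinates held fixed), which makes it perfectly adapted to Steiner symmetrization in each direction.

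\textbf{Step 1: $\mbf M \in (0, \infty)$.} A Gagliardo--Nirenberg-type estimate of the form
\[
\iint_{(\mbb R^d)^2} f(x)^2 \ga(x - y) f(y)^2 \d x \dd y \leq C \|f\|_2^{4 - \om} \|\nabla f\|_2^{\om}
\]
is obtained by iterated one-dimensional Hardy--Sobolev inequalities applied separately in each coordinate; since $\om < 2$, this yields $\mbf M < \infty$. Positivity of $\mbf M$ follows from a scaling computation applied to any explicit smooth test function.

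\textbf{Step 2: Existence (main step).} I would take a maximizing sequence $(f_n)$ with $\|f_n\|_2 = 1$ and iteratively apply Steiner symmetrization $f \mapsto f^{*_i}$ in each coordinate direction $i = 1, \ldots, d$. This preserves $\|f_n\|_2$, decreases $\int |\nabla f_n|^2$ by the Pólya--Szegő inequality for Steiner rearrangement, and increases the interaction integral by the Steiner form of the Riesz rearrangement inequality (valid because $\ga$ is symmetric decreasing in each coordinate direction). Thus one may assume each $f_n$ is coordinate-wise symmetric decreasing about the origin. The coordinate-wise monotonicity together with the $L^2$ normalization gives a uniform pointwise bound of the form $f_n(x)^2 \leq \|f_n\|_2^2 \prod_i (2|x_i|)^{-1}$, which allows one to rule out the vanishing and dichotomy alternatives in Lions' concentration-compactness lemma (mass cannot escape to infinity or split among distant symmetry centers). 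A subsequence then converges weakly in $H^1$ and almost everywhere to some $f_\star \geq 0$; a Brezis--Lieb argument upgrades this to convergence of the interaction integral, and lower semicontinuity of the Dirichlet energy shows $f_\star$ achieves $\mbf M$.

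\textbf{Step 3: Properties (1), (2), (3).} The maximizer satisfies the Euler--Lagrange equation $-\ka \De f_\star - (\ga * f_\star^2) f_\star = -\la f_\star$ for some $\la > 0$ (whose sign is determined by a scaling computation). Since $|f_\star|$ is also a maximizer, we may assume $f_\star \geq 0$, and the strong maximum principle yields $f_\star > 0$; a standard elliptic bootstrap using smoothness of $\ga * f_\star^2$ (via convolution with $f_\star^2 \in L^1 \cap L^\infty$) yields smoothness of $f_\star$, and an Agmon-type argument using $(\ga * f_\star^2)(x) \to 0$ as $|x| \to \infty$ yields exponential decay. For the axis-wise symmetry (item (3)), given a positive smooth maximizer $f_\star$, the Steiner-symmetrized $f_\star^{*_i}$ is also a maximizer with the same Dirichlet energy and interaction integral; the equality case of Brothers--Ziemer for Steiner rearrangement, combined with the absence of plateaus in $f_\star$ (from unique continuation for the Euler--Lagrange equation), forces $f_\star$ to be symmetric decreasing in direction $i$ about some $z_i$. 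Translation invariance of the functional lets us align the $z_i$'s to a common center $z$.

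\textbf{Main obstacle.} I expect Step 2 to be the real difficulty. The non-integrability of $\ga$ on the hyperplanes $\{x_i = 0\}$ means that $\sup_{|z| > R} \ga(z) = \infty$ for every $R$, so the standard concentration-compactness estimates need to be reworked. The essential new input is that Steiner symmetrization in the $d$ coordinate directions is compatible with the product structure of $\ga$, and that coordinate-wise symmetric decreasing sequences in $H^1$ with fixed $L^2$ mass are tight modulo translations in each direction---this is what replaces the full spherical rearrangement used in the Riesz setting of \cite{FrohlichLenzmann}.
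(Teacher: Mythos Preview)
Your outline is sound and would yield a valid proof, but it diverges from the paper's argument in two places worth flagging.

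\textbf{Existence (your Step 2).} The paper does \emph{not} pre-symmetrize the maximizing sequence. It runs Lions' concentration-compactness on an arbitrary maximizing sequence and rules out vanishing and dichotomy using the pointwise estimate $\int \ga(x-y)f(y)^2\,dy \le C\|f\|_2^{2-\om}\|\nabla f\|_2^{\om}$ (essentially your Step~1 inequality in local form), combined with a near/far splitting of the interaction coordinate by coordinate: the ``far'' piece $\{|x_j-y_j|>\theta\}$ is $O(\theta^{-\om_j})$, and the ``near'' piece is shown to vanish under the vanishing hypothesis via the same bound applied on cut-off balls. Your pre-symmetrization route is a legitimate alternative and makes dichotomy transparent (superlevel sets of a coordinate-wise symmetric-decreasing function are connected and contain the origin, so mass cannot split), but be aware that the pointwise bound $f_n(x)^2 \le \prod_i (2|x_i|)^{-1}$ by itself does \emph{not} exclude vanishing: the dilated family $n^{-d/2}g(\cdot/n)$ satisfies it and still vanishes. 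You will still need to feed in the maximizing property, at which point you are essentially back to estimates of the paper's type.

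\textbf{Property (3).} The paper's argument is more direct than yours. Instead of invoking the equality case of P\'olya--Szeg\H{o} (Brothers--Ziemer, or rather its Steiner analogue) together with unique continuation to rule out plateaus, the paper applies the \emph{strict} one-dimensional Riesz rearrangement inequality (Lieb--Loss, Theorem 3.9) to the interaction term one coordinate at a time. Because each factor $|x_i|^{-\om_i}$ is strictly symmetric decreasing, if $f_\star$ is not already equal to its iterated coordinate Steiner symmetrization $f_{\mr{coord}}(\cdot-z)$ for some $z$, the interaction term strictly increases under symmetrization while the Dirichlet term weakly decreases by ordinary P\'olya--Szeg\H{o}; this contradicts maximality outright. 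No equality-case analysis, no plateaus, no unique continuation. Your route works but is longer and requires sourcing a Steiner version of Brothers--Ziemer carefully.
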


\begin{remark}
The fractional covariance kernel $\ga(x)=\si^2\prod_{i=1}^d|x_i|^{-\om_i}$ is symmetric decreasing
along every axis about the point $z=0$. Thus, much like the white and Riesz noises,
the maximizers of $\mbf M$ for fractional noise inherit some of the same symmetries (up to a translation) as the
associated covariance kernel.
\end{remark}

\begin{remark}
We do not attempt to settle the question of the
uniqueness (up to change of sign or translations) of the maximizers in Theorem \ref{Theorem: Fractional Minimizers},
which we also expect to be a hard problem (see Remark \ref{Remark: Riesz Unique}).
\end{remark}

Finally, in Section \ref{Section: Motivation for Maximizers} we showcase
how the maximizers of $\mbf M_{\mf c,p}$ for $\mf c\in(0,\infty)$ describe
the transition of the geometry of intermittency from the smooth
asymptotics in \eqref{Equation: Sub-1} to the singular ones in \eqref{Equation: Sub-3}.
This can be made more formal with the following statement:

\begin{proposition}
\label{Proposition: Convergence of Minimizers}
Let $0<\om<2$.
For every $p,\mf c>0$,
the constant $\mbf M_{\mf c,p}$ is finite and positive,
and it has maximizers.
For every fixed $p>0$,
we have the limit
\begin{align}
\label{Equation: Convergence to M}
\lim_{\mf c\to0}\mbf M_{\mf c,p}=\mbf M.
\end{align}
Moreover, if
$(f_\star^{(\mf c)})_{\mf c>0}$ is a sequence such that $f_\star^{(\mf c)}$ is a maximizer of $\mbf M_{\mf c,p}$
for all $\mf c>0$,
then every vanishing sequence of $\mf c$'s has a
subsequence $(\mf c_n)_{n\in\mbb N}$ along which
\begin{align}
\label{Equation: Convergence to fstar}
\lim_{n\to\infty}\|f_\star^{(\mf c_n)}(\cdot-z_n)-f_\star\|_{H^1(\mbb R^d)}=0
\end{align}
for some $z_n\in\mbb R^d$ and $f_\star$ that is a maximizer of $\mbf M$.
\end{proposition}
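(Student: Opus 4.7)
The plan is a concentration-compactness analysis powered by two structural properties of the mollified kernel $\ga_\eta$, where $\eta := p^{1/(2-\om)}\mf c$. First, since $\hat\ga \geq 0$ for each of the three noises in Assumption \ref{Assumption: Noise},
\begin{equation*}
\hat\ga_\eta(\xi) = \hat\ga(\xi)\,\mr e^{-\eta^2|\xi|^2/4}\;\uparrow\;\hat\ga(\xi)\qquad\text{as }\eta \downarrow 0,
\end{equation*}
so the Hartree form $\iint f^2\ga_\eta f^2$ increases monotonically to $\iint f^2\ga f^2$ for each fixed $f \in H^1(\mbb R^d)$. Second, by homogeneity, $\ga_\eta(x) = \eta^{-\om}\ga_1(x/\eta)$, which yields a uniform far-field bound $\ga_\eta(x) \leq C|x|^{-\om}$ with $C$ independent of $\eta$; this controls how the smoothing scale enters the estimates.

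With this in hand, Steps 1 and 2 (finiteness, positivity, and existence of maximizers for $\mbf M_{\mf c, p}$, together with $\mbf M_{\mf c, p} \to \mbf M$) are routine. Since $\ga_\eta$ is smooth, bounded, and decays at infinity, the arguments yielding $\mbf M \in (0,\infty)$ and the existence of maximizers (Theorems \ref{Theorem: White Hartree}--\ref{Theorem: Fractional Minimizers}) adapt with less difficulty to $\ga_\eta$, producing a maximizer $f_\star^{(\mf c)}$. The Fourier monotonicity above gives $\mbf M_{\mf c, p} \leq \mbf M$ directly, while inserting any maximizer of $\mbf M$ into the $\mbf M_{\mf c, p}$-functional and applying monotone convergence produces $\liminf_{\mf c \to 0}\mbf M_{\mf c, p} \geq \mbf M$, proving \eqref{Equation: Convergence to M}.

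Step 3, the convergence of maximizers, is the main obstacle. Let $f_n := f_\star^{(\mf c_n)}$ with $\eta_n := p^{1/(2-\om)}\mf c_n \to 0$. Combining $\mbf M_{\mf c_n, p} \to \mbf M > 0$ with the scaling relations that underpin the finiteness of $\mbf M$ at maximizers confines $\|\nabla f_n\|_2$ and $\iint f_n^2 \ga_{\eta_n} f_n^2$ to compact subsets of $(0,\infty)$, so $(f_n)$ is bounded in $H^1$. I would then apply Lions' concentration-compactness principle to $|f_n|^2$. Vanishing is incompatible with $\mbf M_{\mf c_n, p} \to \mbf M > 0$, since $L^2$-vanishing along an $H^1$-bounded sequence forces $\|f_n\|_q \to 0$ for suitable $q > 2$, which together with the uniform far-field bound on $\ga_{\eta_n}$ annihilates the Hartree form. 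Dichotomy is ruled out by the strict super-additivity inequality $\mbf M(\la) + \mbf M(1-\la) < \mbf M(1)$ for $\la \in (0,1)$, a consequence of the scaling identity $\mbf M(m) := \sup_{\|f\|_2^2 = m}F(f) = m^{(4-\om)/(2-\om)}\mbf M$ together with the exponent $(4-\om)/(2-\om) > 1$; the asymptotic splitting $F_{\eta_n}(f_n) \approx F_{\eta_n}(g_n) + F_{\eta_n}(h_n)$ along a would-be dichotomy is justified by the uniform far-field bound on $\ga_{\eta_n}$, and the mass-$m$ analogue of Step 2 gives $\mbf M_{\mf c_n, p}(m) \to \mbf M(m)$. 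Consequently, after translations $z_n$, $f_n(\cdot - z_n)$ converges strongly in $L^2$ and weakly in $H^1$ to a limit $\tilde f$. Splitting $\iint f_n^2 \ga_{\eta_n} f_n^2$ into a short-distance piece (controlled by interpolation estimates uniform in $n$) and a long-distance piece (where $\ga_{\eta_n} \to \ga$ pointwise and $f_n(\cdot - z_n) \to \tilde f$ in $L^2_{\mr{loc}}$), then combining weak lower semi-continuity of $\|\nabla \cdot\|_2^2$ with the previous steps, forces $\tilde f$ to attain $\mbf M$; this in turn forces $\|\nabla f_n(\cdot - z_n)\|_2 \to \|\nabla \tilde f\|_2$, upgrading the weak $H^1$-convergence to strong.

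The delicate point of the proof is passing to the limit in $\iint f_n^2 \ga_{\eta_n} f_n^2$ throughout the concentration-compactness analysis, because both the test function $f_n$ and the kernel $\ga_{\eta_n}$ vary simultaneously with $n$, and the near-origin portion of $\ga_{\eta_n}$ concentrates as $\eta_n \to 0$. The required uniformity in $n$ is supplied by the Fourier-side domination $\hat\ga_{\eta_n} \leq \hat\ga$ together with the subcritical interpolation estimates for $\ga$ already invoked in the existence/finiteness proof for $\mbf M$.
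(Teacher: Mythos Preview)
Your approach is workable but takes a harder route than the paper. The paper exploits the Fourier domination $\hat\ga_\eta\le\hat\ga$ (which you invoke only in your final paragraph) \emph{at the outset} of Step 3, not at the end: it immediately yields $\mc J_\eta(f)\le\mc J_0(f)$ for every $f$, so
\[
\mbf M_{\mf c,p}=\mc J_{\eta}(f_\star^{(\mf c)})-\ms S(f_\star^{(\mf c)})\;\le\;\mc J_{0}(f_\star^{(\mf c)})-\ms S(f_\star^{(\mf c)})\;\le\;\mbf M.
\]
Since $\mbf M_{\mf c,p}\to\mbf M$, the middle expression is squeezed to $\mbf M$, i.e.\ $(f_\star^{(\mf c)})$ is a maximizing sequence for the \emph{limiting} functional with the fixed singular kernel $\ga$. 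The paper then runs concentration-compactness \emph{once}, for $\ga$ alone (its Lemma~\ref{Lemma: M compactness}), and obtains strong $H^1$ subsequential convergence (after translations) to a maximizer of $\mbf M$ directly. This sidesteps entirely the ``delicate point'' you identify: there is never a need to pass to the limit in $\iint f_n^2\ga_{\eta_n}f_n^2$ with both $f_n$ and $\ga_{\eta_n}$ varying, nor to rule out vanishing/dichotomy with a moving kernel.

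One caution about your version: the pointwise far-field bound $\ga_\eta(x)\le C|x|^{-\om}$ you invoke fails for the fractional kernel, since $\ga_1(R,0,\dots,0)\sim R^{-\om_1}\gg R^{-\om}$ along coordinate axes. The cross-term and vanishing estimates can still be made to work---essentially by bounding $\mc J_{\eta_n}(\cdot)\le\mc J_0(\cdot)$ via Fourier and then reusing the anisotropic estimates the paper develops for $\ga$ in its proof of Lemma~\ref{Lemma: M compactness}---but at that point you have reproduced the paper's argument with extra bookkeeping. The sandwich trick buys you a clean decoupling of the kernel limit from the compactness step.
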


\begin{remark}
The limit $f_\star$ in \eqref{Equation: Convergence to fstar} will
in general depend on the subsequence $\mf c_n$.
\end{remark}

\subsection{Main Results Part 2 - Critical Regime}
\label{Section: Main Results Crt}

Consider the critical regime $\om=2$, which we henceforth abbreviate as Crt.

\begin{assumption}
\label{Assumption: Crt}
For Crt, in addition to \eqref{Equation: e bounded and t away from zero},
we assume that one of the following two cases holds:
\begin{enumerate}[\qquad\text{$\bullet$ Case Crt-}1:]
\item $\displaystyle\lim_{\msf m\to\infty}\msf t=\infty$.
\item $\displaystyle\lim_{\msf m\to\infty}\msf e=0$ and $\displaystyle\lim_{\msf m\to\infty}\msf t=t$
for some fixed time $t\in(0,\infty)$.
\end{enumerate}
In particular, in this regime $\msf t$ does not necessarily blow up as
$\msf m\to\infty$.
\end{assumption}

Our second main result is as follows:

\begin{theorem}
\label{Theorem: Crt}
Suppose that Assumption \ref{Assumption: Crt} holds.
Let $p>0$ and $x\in\mbb R^d$ be fixed.
\begin{itemize}
\item In case Crt-1, as $\msf m\to\infty$, one has
\begin{align}
\label{Equation: Crt-1}
\log\langle u_\msf e(\msf t,x)^p\rangle=\left(\frac{p^2\ga_1(0)}{2}\right)\msf e^{-2}\msf t^2-p^{3/2}\chi\,\msf e^{-2}\msf t^{3/2}\big(1+o(1)\big),
\end{align}
where we recall that $\chi$ is defined in \eqref{Equation: GK Variational}.
\item In case Crt-2,
\begin{align}
\label{Equation: Crt-2}
\lim_{\msf m\to\infty}\frac{\log\langle u_\msf e(\msf t,x)^p\rangle}{\msf e^{-2}}=pt\,\mbf M^{\mr{crt}}_{t,p},
\end{align}
where we define the variational constant
\begin{align}
\label{Equation: Crt-2 Variational}
\mbf M^{\mr{crt}}_{t,p}:=\sup_{\substack{f\in H^1(\mbb R^d)\\\|f\|_2=1}}\left(\frac{p t}2\iint_{(\mbb R^d)^2}f(x)^2\ga_1(x-y)f(y)^2\d x\dd y-\ka\int_{\mbb R^d}|\nabla f(x)|_2^2\d x\right).
\end{align}
\end{itemize}
\end{theorem}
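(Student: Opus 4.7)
The proof plan is to reduce both cases to an asymptotic analysis of a \emph{smooth} PAM with weak noise over a long time horizon. Starting from the Feynman-Kac moment formula
\[\langle u_\msf e(\msf t,x)^p\rangle=\mbb E_x^{\otimes p}\!\left[\exp\!\left(\tfrac12\sum_{i,j=1}^p\int_0^{\msf t}\!\int_0^{\msf t}\ga_\msf e(B_s^i-B_r^j)\d s\dd r\right)\right]\!,\]
the critical scaling $\om=2$ gives $\ga_\msf e(x)=\msf e^{-2}\ga_1(x/\msf e)$, and the Brownian rescaling $B_s^i\mapsto\msf e\,\tilde B_{s/\msf e^2}^i$ transforms the double integral into $\tfrac{\msf e^2}{2}\iint\ga_1(u-v)\tilde L_\tau(u)\tilde L_\tau(v)\d u\dd v$, where $\tau:=\msf t/\msf e^2$ and $\tilde L_\tau=\sum_i\tilde L_\tau^i$ is the aggregated occupation density of $p$ independent Brownians run up to time $\tau$. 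Equivalently, $\langle u_\msf e(\msf t,x)^p\rangle$ coincides with the $p$-th moment of the PAM driven by the smooth noise $\msf e\,\tilde\xi_1$ of covariance $\msf e^2\ga_1$ at time $\tau$.

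In case Crt-1, since $\msf t\to\infty$ we also have $\tau\to\infty$, and the plan is to invoke the G\"artner-K\"onig second-order asymptotic \eqref{Equation: GK 2nd Order} with the rescaled smooth noise. The variance entering the first-order term becomes $\msf e^2\ga_1(0)$, while the GK variational constant associated with the rescaled Hessian $\msf e^2\Si$ can be evaluated by the dilation $f\mapsto\alpha^{d/2}f(\alpha\cdot)$ with $\alpha$ optimized, yielding $\chi(\msf e)=\msf e\,\chi$. Substituting these into \eqref{Equation: GK 2nd Order} and replacing $\tau=\msf t/\msf e^2$ produces \eqref{Equation: Crt-1}. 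The delicate point is that GK's asymptotic is pointwise in the noise covariance, so one must revisit the argument to confirm the $o(1)$ error is uniform as $\msf e\to 0$ together with $\tau\to\infty$; this is tractable because the proof of \eqref{Equation: GK 2nd Order} already localizes the optimizing Brownian paths to a box of size explicitly controlled by $\tau$, and the rescaling simply transfers this control to the $\msf e$-scale.

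In case Crt-2, $\msf t$ is bounded while $\tau\to\infty$, so the plan is to exploit a large-deviation argument rather than GK. Writing the normalized aggregated occupation measure $\mu_\tau:=\tilde L_\tau/(p\tau)$ turns the exponent into $\tfrac{p^2t^2}{2\msf e^2}F(\mu_\tau)$, where $F(\mu):=\iint\ga_1(u-v)\mu(u)\mu(v)\d u\dd v$. Since $\mu_\tau$ satisfies the Donsker-Varadhan LDP at speed $p\tau$ with rate function $I(\mu)=\ka\int|\nabla\sqrt{\mu}|_2^2\d x$, Varadhan's lemma applied along the diverging speed $p\tau=pt/\msf e^2$ gives
\[\log\langle u_\msf e(\msf t,x)^p\rangle=p\tau\sup_\mu\!\Bigl(\tfrac{pt}{2}F(\mu)-I(\mu)\Bigr)(1+o(1))=\tfrac{pt}{\msf e^2}\mbf M^{\mr{crt}}_{t,p}(1+o(1)),\]
matching \eqref{Equation: Crt-2}. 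The principal obstacle is the upper bound: the Donsker-Varadhan LDP on $\mbb R^d$ is degenerate because Brownian paths can diffuse to infinity, so one must localize $\tilde L_\tau$ to a ball $B_R$ whose radius $R=R(\msf m)$ is chosen to grow slowly enough that the restricted LDP remains uniform yet fast enough that the restricted variational constants converge to $\mbf M^{\mr{crt}}_{t,p}$. Combining this truncation with a compactness argument for maximizing sequences of $\mbf M^{\mr{crt}}_{t,p}$, in the spirit of what is required to establish Theorem \ref{Theorem: Fractional Minimizers}, closes the argument.
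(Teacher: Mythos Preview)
Your Brownian rescaling $B_s\mapsto\msf e\tilde B_{s/\msf e^2}$ correctly yields the distributional identity $\langle u_\msf e(\msf t,x)^p\rangle=\langle u^{\msf e\xi_1}(\tau,x/\msf e)^p\rangle$ with $\tau=\msf t/\msf e^2$, and this is in fact equivalent to the noise rescaling $\Xi^p_{\msf e,\msf t}$ that the paper uses in case Crt-2 (Definition~\ref{Definition: Table} gives $\al_\msf e(pt)=\msf e$, so $\Xi^p_{\msf e,\msf t}\stackrel{d}{=}\msf e\,\xi_1$). The approaches diverge in how they handle the $p$-th power.

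Your Crt-2 argument has a genuine gap: the Feynman--Kac representation with $p$ independent Brownian paths and the aggregated occupation measure $\mu_\tau=\tilde L_\tau/(p\tau)$ only make sense for $p\in\mbb N$, yet the theorem is stated for all $p>0$. The paper avoids this by a different mechanism that works for every $p>0$: it restricts to a box $Q_{\msf R}$, invokes the spectral expansion \eqref{Equation: Spectral Expansion}, and uses Jensen's inequality with respect to the probability weights $k\mapsto(e_k,\mathbbm 1)^2/(2\msf R)^{2d}$ to pass between $(u_\msf R(\msf t,\cdot),\mathbbm 1)^p$ and $(u_\msf R(p\msf t,\cdot),\mathbbm 1)$ (Propositions~\ref{Proposition: Average Lower Bound for 0<p<1} and~\ref{Proposition: Averaged Upper Bound}, Lemma~\ref{Lemma: p to 1 Moment Lower Bound}). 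This reduces the problem to a \emph{first} moment at time $p\msf t$, for which the single-path Donsker--Varadhan LDP suffices; the case $0<p<1$ requires further reverse-H\"older and truncation tricks (Lemma~\ref{Lemma: Moment Lower Bound for 0<p<1 - 1}). Even restricted to integer $p$, your claim that $\mu_\tau$ satisfies the LDP at speed $p\tau$ with rate $I$ needs the convexity of the Donsker--Varadhan functional and a contraction argument, which are correct but should be stated.

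For Crt-1 your plan is essentially the paper's: rather than citing \eqref{Equation: GK 2nd Order} as a black box, the paper reruns the G\"artner--K\"onig machinery with the $\msf e$-dependent scaling functions of Definition~\ref{Definition: Table} (Proposition~\ref{Proposition: Smooth LDP}, case Crt-1), which is exactly the uniformity verification you flag. Note also that \eqref{Equation: GK 2nd Order} as quoted only covers $p\geq1$; the range $0<p<1$ again needs the separate arguments of Section~\ref{Section: Outline for 0<p<1}.
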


\subsubsection{Interpretation of Theorem \ref{Theorem: Crt}}
\label{Section: Interpretation of Ctr}

In \eqref{Equation: Crt-1},
the first order term $\msf e^{-2}\msf t^2$ always dominates the second order term
$\msf e^{-2}\msf t^{3/2}$ when $\msf t\to\infty$. Thus, no matter how quickly $\msf e$ vanishes
compared to $\msf t$'s size, we do not observe
a phase transition in the large-time asymptotics similar to Theorem \ref{Theorem: Sub} in the Crt regime.

Instead, in order for the
the first- and second-order terms in \eqref{Equation: Crt-1} to be of the same
size and coalesce, $\msf t$ must remain bounded, in which case both
terms grow on the order of $\msf e^{-2}$.
Case Crt-2 describes one aspect of this situation---when $\msf t$
converges to a constant time $t$---which yields the asymptotic
\eqref{Equation: Crt-2}. As it turns out, the latter result provides
unexpected insights into Problems \ref{Problem: 2}. In order to understand
why that is, we state some properties of the variational constant therein:

\begin{proposition}
\label{Proposition: Finite Time Intermittency}
The constant $\mbf M^{\mr{crt}}_{t,p}$ is finite for every $p>0$ and $t>0$.
Moreover, letting $\mc G$ be defined as in \eqref{Equation: GNS}, we have that
\begin{enumerate}
\item if $p<\frac{2\ka}{t\mc G}$ (equivalently $t<\frac{2\ka}{p\mc G}$),
then $\mbf M^{\mr{crt}}_{t,p}=0$; and
\item if $p>\frac{2\ka}{t\mc G}$ (equivalently $t>\frac{2\ka}{p\mc G}$) then $\mbf M^{\mr{crt}}_{t,p}$ is
positive and strictly increasing in $p$.
\end{enumerate}
\end{proposition}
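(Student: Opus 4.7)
The plan is to exploit the critical scale invariance $\ga(cx)=c^{-2}\ga(x)$ together with the positive-definiteness of the mollified kernel $\ga_1$ and the Gagliardo--Nirenberg--Sobolev-type inequality defining $\mc G$. For brevity, write $I(f):=\iint f(x)^2\ga_1(x-y)f(y)^2\d x\dd y$ and $D(f):=\int|\nabla f(x)|_2^2\d x$.

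First I would establish finiteness and nonnegativity. Since $\ga_1$ is the covariance of the smooth Gaussian process $\xi_1$, Bochner's theorem gives $\widehat{\ga_1}\geq 0$ and $\ga_1(x)\leq\ga_1(0)<\infty$; hence $I(f)\leq\ga_1(0)\|f\|_2^4=\ga_1(0)$, so $\mbf M^{\mr{crt}}_{t,p}\leq\frac{pt\ga_1(0)}{2}<\infty$. For nonnegativity, testing against the $L^2$-preserving spreading family $f_n(x):=n^{-d/2}f_0(x/n)$ (with a fixed $f_0\in H^1$, $\|f_0\|_2=1$) sends both $I(f_n)$ and $D(f_n)$ to $0$, giving $\mbf M^{\mr{crt}}_{t,p}\geq 0$.

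For item (1), the key step is a Fourier comparison. Since $\ga_1=\ga*p_1$, one has $\widehat{\ga_1}=\widehat{\ga}\cdot\widehat{p_1}$ with $0\leq\widehat{p_1}\leq 1$ and $\widehat{\ga}\geq 0$ by Bochner, so $\widehat{\ga_1}\leq\widehat{\ga}$. Plancherel applied to $g:=f^2$ then yields $I(f)\leq\iint f^2\ga f^2$. Combined with the GNS bound $\iint f^2\ga f^2\leq\mc G\,D(f)$ (valid for $\|f\|_2=1$ by definition of $\mc G$), the functional in \eqref{Equation: Crt-2 Variational} is bounded above by $\bigl(\tfrac{pt\mc G}{2}-\ka\bigr)D(f)$, which is nonpositive when $pt<2\ka/\mc G$. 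Together with nonnegativity, this forces $\mbf M^{\mr{crt}}_{t,p}=0$.

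For item (2), I would exploit the critical scaling: a direct computation using $\ga(cx)=c^{-2}\ga(x)$ gives $\ga_1(x/\la)=\la^2\ga_\la(x)$ with $\ga_\la:=\ga*p_\la$, and then the $L^2$-isometric rescaling $f_\la(x):=\la^{d/2}f(\la x)$ satisfies
\[\frac{pt}{2}I(f_\la)-\ka D(f_\la)=\la^2\left(\frac{pt}{2}\iint f^2\ga_\la f^2-\ka D(f)\right).\]
Given $pt\mc G>2\ka$, fix $\eps>0$ with $pt(\mc G-\eps)>2\ka$ and an $\eps$-near-maximizer $f\in H^1$ of the GNS ratio with $\|f\|_2=1$. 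Since $\widehat{\ga_\la}=\widehat{\ga}\cdot\widehat{p_\la}$ increases monotonically to $\widehat{\ga}$ as $\la\downarrow 0$, the monotone convergence theorem gives $\iint f^2\ga_\la f^2\to\iint f^2\ga f^2$, so the inner bracket becomes strictly positive for all sufficiently small $\la$, yielding $\mbf M^{\mr{crt}}_{t,p}>0$. Strict monotonicity in $p$ then follows from a standard near-optimizer argument: for $p_1<p_2$ and an $\eta$-near-maximizer $f$ of $\mbf M^{\mr{crt}}_{t,p_1}$, rearranging $\frac{p_1 t}{2}I(f)-\ka D(f)\geq\mbf M^{\mr{crt}}_{t,p_1}-\eta$ forces $I(f)\geq\frac{2(\mbf M^{\mr{crt}}_{t,p_1}-\eta)}{p_1 t}$, and substituting into the $p_2$-functional gives $\mbf M^{\mr{crt}}_{t,p_2}\geq\frac{p_2}{p_1}(\mbf M^{\mr{crt}}_{t,p_1}-\eta)$; sending $\eta\to 0$ closes it. The main obstacle is the scaling step: it is essential that both terms in the bracket rescale by the same factor $\la^2$, which is precisely why $\om=2$ plays a distinguished role and why $\mc G$ (rather than $\ga_1(0)$) is the threshold governing the phase transition.
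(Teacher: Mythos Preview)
Your proposal is correct and follows essentially the same route as the paper: the finiteness bound via $\ga_1\leq\ga_1(0)$, the Fourier comparison $\widehat{\ga_1}\leq\widehat{\ga}$ to reduce to the GNS constant $\mc G$ for item (1), the critical scaling identity $\frac{pt}{2}I(f_\la)-\ka D(f_\la)=\la^2\bigl(\frac{pt}{2}\iint f^2\ga_\la f^2-\ka D(f)\bigr)$ together with monotone convergence $\ga_\la\to\ga$ for positivity, and the near-optimizer argument yielding $\mbf M^{\mr{crt}}_{t,p_2}\geq\frac{p_2}{p_1}\mbf M^{\mr{crt}}_{t,p_1}$ for strict monotonicity, all match the paper's proof. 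The only cosmetic difference is that the paper packages nonnegativity into the same scaling relation rather than invoking a separate spreading family, but your $f_n$ is exactly the paper's $f_\eps$ with $\eps=1/n$.
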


In particular, \eqref{Equation: Crt-2} and Proposition \ref{Proposition: Finite Time Intermittency} (2) imply the following surprising result:

\begin{theorem}
\label{Theorem: Intermittency for Finite t Crt}
Let $\xi$ be one of the noises in Assumption \ref{Assumption: Noise}
in the critical regime $\om=2$.
For every fixed $t>0$, the random field
$x\mapsto u_\eps(t,x)$ is intermittent
as $\eps\to0$ with rate function $A(\eps)=\eps^{-2}$, in the sense that the Lyapunov exponents
\[\ell_p=pt\mbf M^{\mr{crt}}_{t,p}=\lim_{\eps\to0}\frac{\log\langle u_\eps(t,x)^p\rangle}{\eps^{-2}}\]
satisfy Definition \ref{Definition: Intermittency} for $p\in(\frac{2\ka}{t\mc G},\infty)$.
\end{theorem}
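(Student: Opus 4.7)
The plan is to derive Theorem \ref{Theorem: Intermittency for Finite t Crt} as a direct corollary of Theorem \ref{Theorem: Crt} (case Crt-2) and Proposition \ref{Proposition: Finite Time Intermittency}. Fix $t>0$ and specialize the master-parameter setup to the constant trajectories $\msf t(\msf m)\equiv t$ and $\msf e(\msf m)=1/\msf m$, which satisfies Assumption \ref{Assumption: Crt} case Crt-2 since $\msf e\to0$ and $\msf t\to t\in(0,\infty)$. After reparametrizing $\eps=1/\msf m$ (so that $\msf m\to\infty$ is the same as $\eps\to0$), the asymptotic \eqref{Equation: Crt-2} reads
\[\lim_{\eps\to0}\frac{\log\langle u_\eps(t,x)^p\rangle}{\eps^{-2}}=p\,t\,\mbf M^{\mr{crt}}_{t,p}\]
for every fixed $p>0$, which is precisely the claimed existence and value of the Lyapunov exponents $\ell_p$ with rate function $A(\eps)=\eps^{-2}$.

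To verify intermittency in the sense of Definition \ref{Definition: Intermittency}, it then remains to show that $p\mapsto\ell_p/p=t\,\mbf M^{\mr{crt}}_{t,p}$ is strictly increasing on a subset of $(0,\infty)$ containing at least two points. Proposition \ref{Proposition: Finite Time Intermittency} (2) asserts exactly this on the open interval $\bigl(\tfrac{2\ka}{t\mc G},\infty\bigr)$, which is nonempty (and so of infinite cardinality) because $\ka$, $t$, and $\mc G$ are all finite and positive; multiplication by the fixed positive constant $t$ preserves strict monotonicity. There is no genuine obstacle at this stage: all of the substantive work has been absorbed into the proofs of Theorem \ref{Theorem: Crt} and Proposition \ref{Proposition: Finite Time Intermittency}, and the present statement is essentially a bookkeeping assembly of those two ingredients into the framework of Definition \ref{Definition: Intermittency}.
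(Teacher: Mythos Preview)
Your proposal is correct and matches the paper's approach exactly: the paper states explicitly that Theorem \ref{Theorem: Intermittency for Finite t Crt} is an immediate consequence of \eqref{Equation: Crt-2} and Proposition \ref{Proposition: Finite Time Intermittency} (2), and does not discuss it further.
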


Given the first main result of \cite{ChenDeyaOuyangTindel} (see Contribution 1 therein)
regarding the moment blowup threshold for the critical Skorokhod PAM,
and the blowup threshold \cite[Corollary 1.2]{Matsuda} for the first moment of the Stratonovich solution
with white noise on $\mbb R^2$, it is natural
to conjecture the following:

\begin{conjecture}
$t_0(p)$ in \eqref{Equation: Moment Blowup Threshold}
is equal to $\frac{2\ka}{p\mc G}$ for all $p>0$.
\end{conjecture}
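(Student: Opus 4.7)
The plan is to prove both inclusions $t_0(p)\geq 2\ka/(p\mc G)$ (finite moments for $t<2\ka/(p\mc G)$) and $t_0(p)\leq 2\ka/(p\mc G)$ (blowup for $t>2\ka/(p\mc G)$) via the Feynman-Kac representation of the renormalized $p$-th moment. Concretely, I would start from
\[\langle u(t,x)^p\rangle = E_B\exp\bigl(H_t^{(p)}\bigr),\quad H_t^{(p)}:=\tfrac12\sum_{i\neq j}\int_0^t\int_0^t\ga(B^{(i)}_s-B^{(j)}_r)\,ds\,dr,\]
where $B^{(1)},\ldots,B^{(p)}$ are independent Brownian motions with generator $\ka\De$ and the diagonal self-energy divergences are absorbed into the Stratonovich renormalization; this reduces the conjecture to the exponential integrability of $H_t^{(p)}$.

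For the finite-moment direction, I would apply a Khas'minskii-type lemma together with the GNS inequality defining $\mc G$ to control the pairwise Brownian interactions. Using Brownian scaling and the homogeneity of $\ga$, the high moments $E_B[(H_t^{(p)})^k]$ should be controllable by a factorial times a geometric factor involving $pt\mc G/(2\ka)$, summing into finite exponential moments precisely when $pt\mc G<2\ka$. This is in the spirit of the moment and hypercontractivity techniques of \cite{ChenHuSongSong,ChenBook}, adapted to the critical scaling regime.

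For the blowup direction, the key observation is the homogeneity of $\ga$ of degree $-2$ (common to all three noises in Assumption \ref{Assumption: Noise} at $\om=2$). The functional $\tfrac{pt}{2}\int\int f^2\ga f^2-\ka\int|\nabla f|^2$ transforms as $\la^2$ under $f\mapsto\la^{d/2}f(\la\,\cdot\,)$, so whenever it is positive for some $f\in H^1(\mbb R^d)$ (equivalently $pt\mc G>2\ka$), it can be driven to $+\infty$ by rescaling. I would then implement a path-localization argument: Brownian trajectories concentrated near such rescaled profiles accumulate an exponential interaction gain that outweighs their entropic cost, forcing $E_B\exp(H_t^{(p)})=\infty$. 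Alternatively, Theorem \ref{Theorem: Intermittency for Finite t Crt} can be used via a reverse Fatou / uniform integrability argument: the $\eps^{-2}$-growth rate of $\log\langle u_\eps(t,x)^p\rangle$ in the blowup range cannot be compensated by any additive renormalization independent of $t$ and $p$, transferring the divergence to $\langle u(t,x)^p\rangle$.

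The hardest part will be rigorously defining the renormalized $H_t^{(p)}$ in the critical regime for all three noises: this is well understood for two-dimensional white noise via mutual intersection local times, but less developed for general critical Riesz and fractional kernels, and one must verify that the Stratonovich renormalization precisely cancels the diagonal divergences without affecting the off-diagonal terms. A secondary difficulty is matching the sharp constant $2\ka/(p\mc G)$, which requires the GNS extremals to align with the extremals of the underlying Brownian large-deviations variational problem---delicate given that uniqueness of GNS maximizers is largely open (cf.\ Remark \ref{Remark: Riesz Unique}).
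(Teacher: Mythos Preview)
The statement you are attempting to prove is labeled a \emph{Conjecture} in the paper and is explicitly left open; the paper motivates it by analogy with the Skorokhod result in \cite{ChenDeyaOuyangTindel} and the first-moment threshold in \cite{Matsuda}, and then writes ``If true, then\ldots''. There is therefore no proof in the paper to compare your proposal against.

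Regarding the proposal on its own terms, there is a structural gap: the Feynman-Kac representation
\[
\langle u(t,x)^p\rangle = \mbb E_B\exp\Big(\tfrac12\sum_{i\neq j}\int_0^t\!\!\int_0^t\ga(B^{(i)}_s-B^{(j)}_r)\,\dd s\,\dd r\Big)
\]
via $p$ independent Brownian motions is only available for \emph{integer} $p$, whereas the conjecture is stated for all $p>0$. The paper's own devices for reaching non-integer moments (spectral expansions plus the Jensen and reverse-H\"older manipulations of Sections~\ref{Section: Moments Part 2}--\ref{Section: Moments Part 3}) lean on the smoothed noise $\xi_\msf e$ and the finiteness of $\ga_\msf e(0)$, and do not obviously transfer to the $\eps\to0$ limit in the critical regime. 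A second issue is the renormalization claim: the constant $c_\eps$ is chosen so that $u_\eps\mr e^{-tc_\eps}$ converges, and subtracting $ptc_\eps$ from $\log\langle u_\eps(t,x)^p\rangle$ does not automatically leave exactly the off-diagonal Hamiltonian $H_t^{(p)}$; the diagonal self-intersection terms $\tfrac12\int_0^t\!\int_0^t\ga_\eps(B_s-B_r)\,\dd s\,\dd r$ are random and their fluctuations around $tc_\eps$ must themselves be controlled in the critical scaling. You flag both of these as ``the hardest part'', but they are not merely technical---each is a place where the known arguments (e.g., \cite{ChenDeyaOuyangTindel} for Skorokhod, \cite{Matsuda} for $p=1$) genuinely stop short, which is precisely why the paper records this as a conjecture rather than a theorem.
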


If true, then combining this conjecture with
Theorem \ref{Theorem: Intermittency for Finite t Crt} answers parts of Problems \ref{Problem: 2} in
the following surprising fashion:
In the critical Stratonovich PAM,
for every $p>0$,
the moment blowup threshold $t_0(p)$
corresponds to the smallest fixed time $t>0$
for which we observe an intermittency phenomenon as $\eps\to0$
in the moments $\langle u_\eps(t,x)^q\rangle$ of order $q>p$.

\subsection{Main Results Part 3 - Supercritical Regime}
\label{Section: Main Results Sup}

Consider the critical regime $\om>2$,
which we henceforth abbreviate as Sup.

\begin{assumption}
\label{Assumption: Sup}
In the Sup regime, in addition to \eqref{Equation: e bounded and t away from zero},
we assume that at least one of the following holds:
\[\lim_{\msf m\to\infty}\msf e=0\qquad\text{or}\qquad\lim_{\msf m\to\infty}\msf t=\infty.\]
\end{assumption}

\begin{theorem}
\label{Theorem: Sup}
Suppose that Assumption \ref{Assumption: Sup} holds.
Let $p>0$ and $x\in\mbb R^d$ be fixed.
As $\msf m\to\infty$, one has
\begin{align}
\label{Equation: Sup}
\log\langle u_\msf e(\msf t,x)^p\rangle=\left(\frac{p^2\ga_1(0)}{2}\right)\msf e^{-\om}\msf t^2-p^{3/2}\chi\,\msf e^{-(2+\om)/2}\msf t^{3/2}\big(1+o(1)\big),
\end{align}
where we recall that $\chi$ is defined in \eqref{Equation: GK Variational}.
\end{theorem}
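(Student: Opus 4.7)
The plan is to exploit the scaling $\ga_\eps(x)=\eps^{-\om}\ga_1(x/\eps)$ satisfied by the mollified covariance, combined with Brownian scaling, in order to reduce Theorem~\ref{Theorem: Sup} to the classical Gärtner--König asymptotic \eqref{Equation: GK 2nd Order} applied to a PAM whose time and noise intensity are rescaled together.

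Starting from the Feynman--Kac moment identity
\[
\langle u_\msf e(\msf t,x)^p\rangle=\mbb E_x^{\otimes p}\!\left[\exp\!\left(\tfrac12\int_0^{\msf t}\!\!\int_0^{\msf t}\sum_{i,j=1}^p\ga_\msf e\bigl(B^{(i)}_s-B^{(j)}_r\bigr)\,ds\,dr\right)\right],
\]
I would apply the Brownian rescaling $\tilde B_s=\msf e^{-1}B_{\msf e^2 s}$ and use $\ga_\msf e(\cdot)=\msf e^{-\om}\ga_1(\cdot/\msf e)$, so that after a change of variables in the double integral,
\[
\langle u_\msf e(\msf t,x)^p\rangle=\mbb E^{\otimes p}\!\left[\exp\!\left(\tfrac{c^2}{2}\int_0^{s}\!\!\int_0^{s}\sum_{i,j=1}^p\ga_1\bigl(\tilde B^{(i)}_\sigma-\tilde B^{(j)}_\tau\bigr)\,d\sigma\,d\tau\right)\right]
\]
with $s:=\msf t/\msf e^2$ and $c:=\msf e^{(4-\om)/2}$. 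The right-hand side is precisely $\langle w(s,0)^p\rangle$ for the PAM driven by the smooth noise $c\,\xi_1$. Assumption~\ref{Assumption: Sup}, together with $\sup\msf e\leq1$ and $\inf\msf t>0$, guarantees $s\to\infty$ as $\msf m\to\infty$.

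Next I would prove a version of \eqref{Equation: GK 2nd Order} uniform in the noise intensity: for the PAM with noise $c\,\xi_1$,
\[
\log\langle w(s,0)^p\rangle=\tfrac{p^2 c^2\ga_1(0)}{2}\,s^2-p^{3/2}c\,\chi\,s^{3/2}\bigl(1+o(1)\bigr),\qquad s\to\infty.
\]
The $c^2$ in the leading term is immediate from $\langle(c\xi_1)(x)^2\rangle=c^2\ga_1(0)$; the scaling $\chi\mapsto c\chi$ under $\Si\mapsto c^2\Si$ in \eqref{Equation: GK Variational} is obtained by applying the dilation $f\mapsto r^{-d/2}f(\cdot/r)$ inside the variational problem and optimizing over $r>0$. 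Substituting $(s,c)=(\msf t/\msf e^2,\msf e^{(4-\om)/2})$ gives $c^2s^2=\msf e^{-\om}\msf t^2$ and $cs^{3/2}=\msf e^{-(2+\om)/2}\msf t^{3/2}$, reproducing \eqref{Equation: Sup} term by term.

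The main obstacle is establishing the previous display in a form that is robust as $c=c(\msf m)$ varies with the master parameter. In the range $2<\om<4$ one has $c\to0$, for $\om=4$ one has $c\equiv1$, and for $\om>4$ one has $c\to\infty$; only the middle case is a direct appeal to \cite{GartnerKonig}. I would address this by revisiting the Gärtner--König argument---spectral expansion of $\ka\De+c\,\xi_1$ on large boxes, together with the large-deviation analysis of its principal eigenvalue---while carrying the $c$-dependence of every error explicitly. An equivalent route is to absorb $c$ through a further spatial dilation $y\mapsto c^{\al} y$ with $\al$ chosen so that the rescaled noise has $O(1)$ intensity; this reduces the question to the fixed-noise asymptotic at the cost of modifying $\ka$, and one must then check that the induced errors are $o\bigl(p^{3/2}c\chi s^{3/2}\bigr)$.
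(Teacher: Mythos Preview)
Your scaling reduction is correct and gives a genuinely different route from the paper. The paper never reduces to the fixed-noise G\"artner--K\"onig result; instead it works directly with the rescaled noise $\Xi^p_{\msf e,\msf t}(x)=\al_\msf e(p\msf t)^2\bigl(\xi_\msf e(\al_\msf e(p\msf t)x)-H_\msf e(p\msf t)/(p\msf t)\bigr)$ using the Sup-case scalings $\al_\eps(t)=\eps^{(2+\om)/4}t^{-1/4}$, $\be_\eps(t)=\eps^{-(2+\om)/2}t^{3/2}$, $H_\eps(t)=\eps^{-\om}t^2\ga_1(0)/2$, proves a uniform LDP for this object with dual rate $J_\infty$ (Proposition~\ref{Proposition: Smooth LDP}), and then runs Varadhan's lemma together with spectral and compactification arguments (Lemmas~\ref{Lemma: Averaged Asymptotics 1}--\ref{Lemma: Moment Upper Bound 2}). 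Your approach makes the connection to \cite{GartnerKonig} transparent for the Sup case in isolation; the paper's buys a single computation that simultaneously handles Sub-1, Crt-1 and Sup.

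Two caveats on your side. First, the uniformity-in-$c$ step you defer is exactly the content of Proposition~\ref{Proposition: Smooth LDP}; moreover, your proposed fix of a further dilation to normalize $c$, composed with your initial Brownian rescaling by $\msf e$, reproduces precisely the paper's $(\al_\msf e,\be_\msf e,H_\msf e)$ (one checks $\msf e\cdot(c^2s)^{-1/4}=\msf e^{(2+\om)/4}\msf t^{-1/4}$ and $cs^{3/2}=\msf e^{-(2+\om)/2}\msf t^{3/2}$), so the two arguments coalesce at that point. Second, \eqref{Equation: GK 2nd Order} as proved in \cite{GartnerKonig} covers only $p\geq1$; for $0<p<1$ you still need the separate lower- and upper-bound arguments the paper supplies in Section~\ref{Section: Outline for 0<p<1} and Lemma~\ref{Lemma: Moment Upper Bound 2}.
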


\subsubsection{Interpretation of Theorem \ref{Theorem: Sup}}

Our proposed interpretation of Theorem \ref{Theorem: Sup} is a continuation of
Theorems \ref{Theorem: Crt} and \ref{Theorem: Intermittency for Finite t Crt},
and has similar implications for Problems \ref{Problem: 2}. More specifically,
Theorem \ref{Theorem: Sup} implies the following:

\begin{theorem}
\label{Theorem: Intermittency for Finite t Sup}
Let $\xi$ be one of the noises in Assumption \ref{Assumption: Noise}
in the supercritical regime $\om>2$.
For every fixed $t>0$, the random field
$x\mapsto u_\eps(t,x)$ is intermittent as $\eps\to0$ with scale function $A(\eps)=\eps^{-\om}$
and Lyapunov exponents $\ell_p=\frac{p^2t^2\ga_1(0)}{2}$.
\end{theorem}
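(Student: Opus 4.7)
The plan is to deduce Theorem \ref{Theorem: Intermittency for Finite t Sup} directly from Theorem \ref{Theorem: Sup} by specializing the master parameter so that $\msf t$ is held constant while $\msf e\to 0$. Concretely, for each fixed $t>0$, I would parametrize by $\msf m$ so that $\msf t(\msf m)\equiv t$ and $\msf e(\msf m)=\eps(\msf m)\to 0$ as $\msf m\to\infty$. Then Assumption \ref{Assumption: Sup} is satisfied (since $\msf e\to 0$, $\msf e\leq 1$, and $\inf_{\msf m}\msf t = t>0$), so Theorem \ref{Theorem: Sup} applies and yields
\begin{equation*}
\log\langle u_\eps(t,x)^p\rangle=\left(\tfrac{p^2\ga_1(0)}{2}\right)t^2\,\eps^{-\om}-p^{3/2}\chi\, t^{3/2}\,\eps^{-(2+\om)/2}\big(1+o(1)\big)\qquad\text{as }\eps\to0.
\end{equation*}

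The key observation is that in the supercritical regime $\om>2$, one has the elementary inequality $\om>(2+\om)/2$, so $\eps^{-\om}$ dominates $\eps^{-(2+\om)/2}$ as $\eps\to 0$. Dividing both sides of the asymptotic by $\eps^{-\om}$ therefore produces
\begin{equation*}
\lim_{\eps\to0}\frac{\log\langle u_\eps(t,x)^p\rangle}{\eps^{-\om}}=\frac{p^2t^2\ga_1(0)}{2}=:\ell_p,
\end{equation*}
which identifies the scale function $A(\eps)=\eps^{-\om}$ and the Lyapunov exponents $\ell_p$ claimed in the statement.

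To confirm intermittency in the sense of Definition \ref{Definition: Intermittency}, I would then verify that $p\mapsto \ell_p/p = \tfrac{pt^2\ga_1(0)}{2}$ is strictly increasing on $(0,\infty)$; this is immediate because $\ga_1(0)>0$ (the Gaussian mollifier $p_{1/\sqrt 2}$ produces a smooth, strictly positive covariance at the origin) and $t>0$. Since strict monotonicity on all of $(0,\infty)$ certainly implies strict monotonicity on a two-point subset, Definition \ref{Definition: Intermittency} is satisfied and the theorem follows.

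There is essentially no main obstacle beyond invoking Theorem \ref{Theorem: Sup}: the whole content is the observation that, unlike in case Sub-1 or Crt-1 where the subleading correction can catch up with the leading term when $\msf t$ grows or $\msf e$ shrinks, in the Sup regime with $\om>2$ the gap between the two exponents of $\eps$ is bounded away from zero, so the first order unambiguously captures the Lyapunov rate at any fixed time $t$. The mild bookkeeping item is simply noting that $\ga_1(0)>0$ so the exponents $\ell_p$ are genuinely positive and give nontrivial intermittency.
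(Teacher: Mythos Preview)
Your proposal is correct and matches the paper's approach exactly: the paper states that Theorem \ref{Theorem: Intermittency for Finite t Sup} is an immediate consequence of Theorem \ref{Theorem: Sup} and does not discuss it further, and your argument---specializing to constant $\msf t\equiv t$ with $\msf e\to0$, noting that $\om>(2+\om)/2$ so the leading term dominates, and checking that $p\mapsto\ell_p/p$ is strictly increasing---is precisely how one unpacks that implication.
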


Moreover,
the second-order term in \eqref{Equation: Sup} suggests that the local geometry of
$u_\eps(t,\cdot)$'s intermittent peaks as $\eps\to0$
is determined by the same mechanism as
the PAM with smooth Gaussian noise. This reinforces the idea that the moment blowup
phenomenon in the singular PAM can be explained by the occurrence of intermittency
in smooth approximations $u_\eps(t,\cdot)$ for fixed $t$ and small $\eps$
(with the difference that, unlike in the Crt regime, in the Sup regime there is no limitation on
the moments for which intermittency holds).

\begin{remark}
Looking at the specific form of \eqref{Equation: Sup}, it is natural to suspect that
a phase transition in the asymptotics occurs when $\msf t$ goes to
zero at the same rate or faster than $\msf e^{\om-2}$. We do
not pursue this direction in this paper, as it is not clear to us that small-time asymptotics have an interesting interpretation
in the PAM.
\end{remark}

\subsection{Proof Index}
\label{Section: Roadmap}

Theorems \ref{Theorem: Sub}, \ref{Theorem: Crt},
and \ref{Theorem: Sup} are proved in Sections \ref{Section: Moment Heuristics}--\ref{Section: Moments Part 3}.
More specifically:
\begin{itemize}
\item in Section \ref{Section: Moment Heuristics} we provide some notations, a heuristic derivation,
and a geometric interpretation of our moment asymptotics; and
\item in Sections \ref{Section: Moments Part 1}--\ref{Section: Moments Part 3}, we provide a rigorous proof
of every step of the heuristic derivation. 
\end{itemize}
Proposition \ref{Proposition: Finite Time Intermittency} is proved in Section \ref{Section: Finite Time Intermittency},
and Theorem \ref{Theorem: Fractional Minimizers} and Proposition \ref{Proposition: Convergence of Minimizers}
are proved together in Sections \ref{Section: Variational Outline}--\ref{Section: Geometry of fcoord}.
Finally, as Theorems \ref{Theorem: Subcritical Improvement}, \ref{Theorem: Intermittency for Finite t Crt}, and \ref{Theorem: Intermittency for Finite t Sup}
are immediate consequences of Theorems \ref{Theorem: Sub}, \ref{Theorem: Crt},
and \ref{Theorem: Sup} and Proposition \ref{Proposition: Finite Time Intermittency}, they are not further
discussed in the paper.

\section{Notations and Heuristics for Moment Asymptotics}
\label{Section: Moment Heuristics}

In this section, our main purpose is to provide a heuristic derivation
and geometric interpretation of
Theorems \ref{Theorem: Sub}, \ref{Theorem: Crt},
and \ref{Theorem: Sup}. We take this opportunity to
set up some notations that will be used throughout the paper.
Among other things, this allows us to formulate a
statement that combines Theorems \ref{Theorem: Sub}, \ref{Theorem: Crt},
and \ref{Theorem: Sup} into a single result, namely, Theorem \ref{Theorem: All Moments}.
We then formally prove Theorem \ref{Theorem: All Moments} in Sections
\ref{Section: Moments Part 1}--\ref{Section: Moments Part 3}.

\begin{remark}
The main source of inspiration for the method
developed in this paper is the work of G\"artner and K\"onig \cite{GartnerKonig}.
Given that some technical
estimates used in this paper are cited directly from \cite{GartnerKonig},
most of our notations are meant to mirror that of \cite{GartnerKonig}
for sake of convenience. However, the difference between
the asymptotically singular setting considered herein and that of \cite{GartnerKonig} means that some notations
have to be suitably modified. Whenever there are significant differences
between the notations, we address it in a remark.
\end{remark}

\subsection{Semigroup Theory Notations}

The main tool used in the proof of our moment asymptotics is the Feynman-Kac
formula. For this, we use the following:
\begin{itemize}
\item Given any $r>0$, we denote the centered open box with side-length $2r$ as $Q_r:=(-r,r)^d$.
\item We use $\mathbbm 1$ to denote the function equal to one everywhere, and given a set
$K$, we use $\mathbbm 1_K$ to denote the indicator function of $K$.
\item We use $(W_t)_{t\geq0}$ to denote a Brownian motion with generator $\ka\De$.
We use $\mbb E_x$ and $\mbb P_x$ to denote the expectation
and probability law of $W$ conditioned on the starting point $W_0=x$.
({\bf Remark.} We always assume that $W$ is independent of the noise $\xi$.)
\item We use $(L_t)_{t\geq0}$ to denote $W$'s normalized occupation measures; that is,
\[L_t(K):=\frac1t\int_0^t\mathbbm 1_{\{W_s\in K\}}\d s\qquad\text{for any Borel set $K\subset\mbb R$}.\]
\item For every closed set $K\subset\mbb R^d$,
we use $\ms P(K)$ to denote the set of probability measures supported in $K$.
\item We use $(\cdot,\cdot)$ to denote both the $L^2$ inner product and integration against a measure.
That is, for two functions $f,g$ and a measure $\mu$, one has
\[( f,g):=\int_{\mbb R^d}f(x)g(x)\d x\qquad\text{and}\qquad(f,\mu):=\int_{\mbb R^d}f(x)\d\mu(x).\]
\item We denote the Dirichlet form of $\ka\De$ as
\begin{align}
\label{Equation: Dirichlet Form}
\ms S(f):=
\ka\int_{\mbb R^d}|\nabla f(x)|^2\d x,\qquad f\in H^1(\mbb R^d).
\end{align}
\end{itemize}

Henceforth, given a smooth function $V$ (possibly random), we use $u^V(t,x)$ to denote the solution of
\[\begin{cases}
\partial_t u^V(t,x)=\big(\ka\De+V(x)\big)u^V(t,x)\\
u^V(0,\cdot)=\mathbbm 1
\end{cases},\qquad t\geq0,~x\in\mbb R^d.\]
In particular, with the notation introduced in previous sections, we have that
\[u_\eps(t,x)=u^{\xi_\eps}(t,x).\]
For every smooth function $V$ and $r>0$ we use $u^V_r(t,x)$ to denote the solution of 
\[\begin{cases}
\partial_t u_r^V(t,x)=\big(\ka\De+V(x)\big)u_r^V(t,x)\\
u_r^V(0,\cdot)=\mathbbm 1_{Q_r}\\
u^V_r(t,x)\mathbbm1_{\mbb R^d\setminus Q_r}(x)=0
\end{cases},\qquad t\geq0,~x\in\mbb R^d.\]
By the Feynman-Kac formula,
\begin{align}
\label{Equation: Feynman-Kac}
u^V(t,x)=\mbb E_x\left[\mr e^{t( V,L_t)}\right]
\qquad\text{and}\qquad u^V_r(t,x)=\mbb E_x\left[\mr e^{t( V,L_t)}\mathbbm 1_{\{L_t\in\ms P(Q_r)\}}\right].
\end{align}

We let $\la^V_{k}(Q_r)$ ($k\in\mbb N$) denote the eigenvalues of the operator $\ka\De+V$ on $Q_r$ with Dirichlet
boundary conditions in decreasing order, and we let $e^V_{k}(Q_r)$ denote the corresponding
orthonormal eigenfunctions. In particular, we have the spectral expansion
\begin{align}
\label{Equation: Spectral Expansion}
u^V_r(t,\cdot)=\sum_{k=1}^\infty\mr e^{t\la^V_{k}(Q_r)}\big(e^V_{k}(Q_r),\mathbbm 1\big)e^V_{k}(Q_r).
\end{align}
Though the eigenfunctions $e^V_{k}(Q_r)$ are defined on $Q_r$,
we extend their domain to all of $\mbb R^d$ by setting their value to be zero outside of $Q_r$.

\subsection{Heuristic Derivation of Theorems \ref{Theorem: Sub}, \ref{Theorem: Crt},
and \ref{Theorem: Sup}}
\label{Section: Moment Heuristics Details}

If intermittency occurs in $u^{\xi_\eps}(t,\cdot)$,
then we expect that there exists sparse localized regions of $\mbb R^d$ inside which $u^{\xi_\eps}(t,\cdot)$ takes unusually large values.
In particular, the main contribution to $\langle u^{\xi_\eps}(t,x)^p\rangle$ comes from the event where a large
peak in the noise is present near $x$,
thus inducing a large peak in $u^{\xi_\eps}(t,\cdot)$ near $x$ as well.
Since $u^{\xi_\eps}(t,\cdot)$ is stationary, there is no loss of generality in
studying this phenomenon at $x=0$.
In this case, we expect that we can approximate
\begin{align}
\label{Equation: Intermittency Heuristic 1}
\langle u^{\xi_\eps}(t,0)^p\rangle\approx\big\langle u^{\xi_\eps}_{R\al_\eps(pt)}(t,0)^p\big\rangle=\left\langle\mbb E_0\left[\mr e^{t(\xi_{\eps},L_{t})}\mathbbm 1_{\{L_{t}\in\ms P(Q_{R\al_\eps(pt)})\}}\right]^p\right\rangle,
\end{align}
where $R>0$ is a large constant
and $\al_\eps(pt)>0$ is a scaling function (which depends on $p,\eps$, and $t$) that describes the diameter of $u^{\xi_\eps}(t,0)$'s large peak near the origin that contributes the most to the $p^{\mr{th}}$ moment.
Once we restrict the support of the occupation measure $L_{t}$ to the box $Q_{R\al_\eps(pt)}$,
we can use the spectral expansion \eqref{Equation: Spectral Expansion}. If we apply this to
\eqref{Equation: Intermittency Heuristic 1},
neglecting all terms but the leading eigenvalue (which, asymptotically, should dominate all other terms), then
we are led to
\begin{align}
\label{Equation: Intermittency Heuristic 2}
\langle u^{\xi_\eps}(t,0)^p\rangle\approx\left\langle\mr e^{pt\la^{\xi_\eps}_{1}(Q_{R\al_\eps(pt)})}\right\rangle.
\end{align}
Thus, we now wish to understand
how the geometry of large peaks in $\xi_{\eps}$ determines the 
behavior of the leading eigenvalue's moment generating function.

In order to study the asymptotic behavior of \eqref{Equation: Intermittency Heuristic 2},
for every $p,\eps,t>0$,
we introduce a rescaling of the noise of the form
\begin{align}
\label{Equation: Scaled Noise}
\Xi_{\eps,t}^p(x):=\al_\eps(pt)^2\left(\xi_{\eps}\big(\al_\eps(pt) x\big)-\frac{H_\eps(pt)}{pt}\right),\qquad x\in\mbb R^d
\end{align}
for an appropriate choice of scaling function $H_\eps$. By a rescaling of the eigenvalue,
this yields
\begin{align}
\label{Equation: Intermittency Heuristic 3}
\left\langle\mr e^{pt\la^{\xi_\eps}_{1}(Q_{R\al_\eps(pt)})}\right\rangle=\mr e^{H_\eps(pt)}\left\langle\mr e^{\be_\eps(pt)\la^{\Xi_{\eps,t}^p}_{1}(Q_R)}\right\rangle,
\end{align}
where we define $\be_\eps(pt):=pt/\al_\eps^2(pt)$.
Suppose that we can prove a large deviation principle for the rescaled noise $\Xi_{\msf e,\msf t}^p$
as $\msf m\to\infty$,
which, informally, takes the following form:

\begin{proposition}[Informal]
\label{Proposition: Informal LDP}
For any $``$shape$"$ function
$V\geq0$ supported on $Q_R$,
\begin{multline}
\label{Equation: Informal LDP}
\mr{Prob}\left[\xi_{\msf e}\text{ has a peak with shape }\tfrac{H_\msf e(p\msf t)}{p\msf t}+\tfrac{V(\cdot/\al_\msf e(p\msf t))}{\al_\msf e(p\msf t)^2}\text{ in }Q_{R\al_\msf e(p\msf t)}\right]\\
=\mr{Prob}\left[\Xi_{\msf e,\msf t}^p\text{ has a peak with shape }V\text{ in }Q_{R}\right]
\approx\mr e^{-\be_\msf e(p\msf t)\bs I^p(V)}
\end{multline}
as $\msf m\to\infty$ for some rate function $\bs I^p$.
\end{proposition}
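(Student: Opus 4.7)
The plan is to observe that, by construction, $\Xi_{\msf e,\msf t}^p$ is a centered Gaussian field on $\mbb R^d$ (up to a deterministic constant shift), so that the approximation in \eqref{Equation: Informal LDP} is in essence a Schilder-type large deviation principle for Gaussian fields at speed $\be_\msf e(p\msf t)=p\msf t/\al_\msf e(p\msf t)^2$, restricted to a ``shape'' neighborhood of $V$ inside $Q_R$. The first equality in \eqref{Equation: Informal LDP} is a direct consequence of the scaling in \eqref{Equation: Scaled Noise}; only the second (the LDP approximation) requires work.

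First I would compute the covariance of $\Xi_{\msf e,\msf t}^p$, which by \eqref{Equation: Scaled Noise} is simply
\[\al_\msf e(p\msf t)^4\,\ga_\msf e\big(\al_\msf e(p\msf t)(x-y)\big),\qquad x,y\in\mbb R^d,\]
and then choose the scale functions $\al_\msf e(p\msf t)$ and $H_\msf e(p\msf t)$ so that $\be_\msf e(p\msf t)^{-1}$ times this covariance converges, uniformly on $Q_R\times Q_R$, to a nontrivial positive-definite limit kernel $\Ga^p(x-y)$. The correct scalings are the ones that match the leading- and subleading-order terms in each of the target asymptotics \eqref{Equation: Sub-1}, \eqref{Equation: Sub-2}, \eqref{Equation: Sub-3}, \eqref{Equation: Crt-1}, \eqref{Equation: Crt-2}, and \eqref{Equation: Sup}. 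In the G\"artner-K\"onig-like regimes Sub-1, Crt-1, and Sup, the smoothness of $\ga_\msf e$ on the rescaled region legitimizes a Taylor expansion of $\ga_\msf e$ about $0$, producing a quadratic limit kernel encoding the constants $\ga_1(0)$ and $\Si$; in regime Sub-2 one obtains $\Ga^p\propto\ga_{p^{1/(2-\om)}\mf c}$; and in regimes Sub-3 and Crt-2 one obtains the unmollified singular kernel $\ga$ (respectively $\ga_1$) up to multiplicative constants.

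With the covariance stabilized, the general Schilder theory for Gaussian processes (via the Borell--Tsirelson--Ibragimov--Sudakov concentration inequality for the upper bound, and Cameron-Martin shifts for the matching lower bound) yields, for any smooth compactly supported shape $V$ on $Q_R$ and any sufficiently small neighborhood $\mc N_V$ of $V$ in the appropriate topology,
\[\lim_{\msf m\to\infty}\frac1{\be_\msf e(p\msf t)}\log\mr{Prob}\big[\Xi_{\msf e,\msf t}^p\in\mc N_V\big]=-\bs I^p(V),\qquad \bs I^p(V)=\tfrac12\|V\|_{\mc H^p}^2,\]
where $\mc H^p$ is the reproducing kernel Hilbert space associated to $\Ga^p$. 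The role of the deterministic shift $H_\msf e(p\msf t)$ is precisely to absorb the leading first-order term in the target moment asymptotic, so that the remaining rate function $\bs I^p(V)$, when combined via Varadhan's lemma with the principal Dirichlet eigenvalue maximization inherited from \eqref{Equation: Intermittency Heuristic 2}, reproduces the variational constants $\chi$, $\mbf M_{\mf c,p}$, $\mbf M$, and $\mbf M^{\mr{crt}}_{t,p}$ that appear in Theorems \ref{Theorem: Sub}, \ref{Theorem: Crt}, and \ref{Theorem: Sup}.

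The main obstacle will be twofold. First, formalizing the event ``$\Xi_{\msf e,\msf t}^p$ has a peak of shape $V$'' as an open neighborhood of $V$ in a topology that is simultaneously strong enough to combine with the Feynman-Kac formula and spectral expansion \eqref{Equation: Feynman-Kac}--\eqref{Equation: Spectral Expansion} (so as to extract moment asymptotics for $u^{\xi_\msf e}(\msf t,\cdot)$) and weak enough to retain Gaussian exponential tightness; this is routine but delicate. Second, and more substantively, in regimes Sub-2, Sub-3, and Crt-2 the limit covariance $\Ga^p$ is a bona fide singular kernel (a Schwartz distribution or a kernel with a pointwise singularity at $0$), so that the reproducing kernel Hilbert space $\mc H^p$ and the rate function $\bs I^p$ require careful interpretation. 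This will be handled by restricting the shape class to $C_0^\infty(Q_R)$ and using an approximation argument analogous to the one already needed to establish convergence of variational constants $\mbf M_{\mf c,p}\to\mbf M$ as $\mf c\to 0$ in Proposition \ref{Proposition: Convergence of Minimizers}, which ensures that the smooth test shapes are dense enough to saturate the variational problems in our main theorems.
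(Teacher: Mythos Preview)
The proposition is explicitly labeled ``Informal'' and is never proved in the paper as stated; it serves only as a heuristic for the derivation in Section~3.2. The paper's actual substitutes are Propositions~\ref{Proposition: Smooth LDP} and~\ref{Proposition: Sub-3 LDP}, which work in the \emph{dual} setting: rather than proving a Schilder-type LDP for the noise field $\Xi_{\msf e,\msf t}^p$ and then applying Varadhan's lemma to the eigenvalue functional, the paper first integrates out the Gaussian noise in the Feynman--Kac representation \eqref{Equation: Feynman-Kac} (an explicit quadratic computation, since $(\Xi_{\msf e,\msf t}^p,\mu)$ is Gaussian for each fixed occupation measure $\mu$), shows that the resulting cumulant generating function $\be_\msf e(p\msf t)^{-1}\log\langle\mr e^{\be_\msf e(p\msf t)(\Xi_{\msf e,\msf t}^p,\mu)}\rangle$ converges uniformly over $\mu\in\ms P(Q_R)$ to $-\bs J^p(\mu)$, and then applies Varadhan's lemma with the Donsker--Varadhan weak LDP for Brownian occupation measures. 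In other words, the ``rate function'' $\bs I^p$ you are constructing never appears; only its Legendre dual $\bs J^p$ does, and it is identified directly rather than via an RKHS norm.

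Your direct Schilder route is conceptually natural and could be carried out in the smooth regimes Sub-1, Crt-1, Crt-2, and Sup (with the minor correction that in Crt-2 the limit kernel is $\ga_1$, not the unmollified $\ga$). But in case Sub-3 it faces a genuine obstacle that the paper explicitly avoids: the limiting covariance is the singular kernel $\ga$ itself, so there is no continuous limiting field in any topology strong enough to make the principal eigenvalue map $V\mapsto\la_1^V(Q_R)$ continuous, and the very notion of the event ``$\Xi_{\msf e,\msf t}^p$ has shape $V$'' degenerates. The paper's Remark following Proposition~\ref{Proposition: Sub-3 LDP} acknowledges that even the dual uniform convergence \eqref{Equation: Smooth LDP} fails in Sub-3; its workaround is to sandwich the cumulant generating function between $-J_\eps$ and $-J_0$ via the Fourier monotonicity \eqref{Equation: Monotonicity of Jc} and then invoke the known $p=1$ asymptotic \eqref{Equation: Sub Known} as an external input for the upper bound. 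Your approximation scheme through $C_0^\infty(Q_R)$ shapes does not obviously furnish this upper bound, since the singular RKHS may be too small to capture the relevant optimizing profiles, and no exponential tightness in a usable topology is available.
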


By an informal application of the Laplace-Varadhan method, this implies that
\begin{multline}
\label{Equation: Intermittency Heuristic 4}
\mr e^{H_\msf e(p\msf t)}\left\langle\mr e^{\be_\msf e(p\msf t)\la^{\Xi_{\msf e,\msf t}}_1(Q_R))}\right\rangle
\approx\mr e^{H_\msf e(p\msf t)}\int\mr e^{\be_\msf e(p\msf t)(\la^{V}_1(Q_R)-\bs I^p(V))}~``\dd V"\\
\approx\mr e^{H_\msf e(p\msf t)+\be_\msf e(p\msf t)\sup_{V}(\la^{V}_1(Q_R)-\bs I^p(V))}.
\end{multline}
At this point, if we take $R\to\infty$ in \eqref{Equation: Intermittency Heuristic 4}
and combine the result with \eqref{Equation: Intermittency Heuristic 2}
and \eqref{Equation: Intermittency Heuristic 3}, then we are led to
the asymptotic
\[\langle u^{\xi_\msf e}(\msf t,0)^p\rangle=H_\msf e(p\msf t)-\be_\msf e(p\msf t)\bs\chi^p\big(1+o(1)\big)\qquad\text{as }\msf m\to\infty,\]
where
\begin{multline*}
\bs\chi^p
=-\sup_{V}\Big(\la^{V}_1(\mbb R^d)-\bs I^p(V)\Big)\\
=-\sup_V\Big(\sup_f\big\{(V,f^2)-\ms S(f)\big\}-\bs I^p(V)\Big)
=\inf_f\Big(\ms S(f)+\bs J^p(f^2)\Big),
\end{multline*}
and $-\bs J^p$ denotes $\bs I^p$'s Fenchel-Legendre transform.

With this in hand, the statements of Theorems \ref{Theorem: Sub}, \ref{Theorem: Crt},
and \ref{Theorem: Sup} are obtained by noting that
a formal version of something like Proposition \ref{Proposition: Informal LDP}
(see Propositions \ref{Proposition: Smooth LDP} and \ref{Proposition: Sub-3 LDP})
can be proved using the following dual rate function $\bs J^p$ and scaling functions
$\al_\eps,\be_\eps$, and $H_\eps$:

\begin{definition}
\label{Definition: Table}
For every $\eps,t>0$, we define $\al_\eps(t),\be_\eps(t)$, and $H_\eps(t)$ as follows:
\begin{center}
\begin{tabular}{c||c|c|c}
{Case}&${\al_\eps(t)}$&${\be_\eps(t)}$&${H_\eps(t)}$\\
\hline
\hline
Sub-1&$\eps^{(2+\om)/4}t^{-1/4}$&$\eps^{-(2+\om)/2}t^{3/2}$&$\eps^{-\om}t^2\frac{\ga_1(0)}{2}$\\
Sub-2&$t^{-1/(2-\om)}$&$t^{(4-\om)/(2-\om)}$&0\\
Sub-3&$t^{-1/(2-\om)}$&$t^{(4-\om)/(2-\om)}$&0\\
\hline
Crt-1&$\eps t^{-1/4}$&$\eps^{-2}t^{3/2}$&$\eps^{-2}t^2\frac{\ga_1(0)}{2}$\\
Crt-2&$\eps$&$\eps^{-2}t$&0\\
\hline
Sup&$\eps^{(2+\om)/4}t^{-1/4}$&$\eps^{-(2+\om)/2}t^{3/2}$&$\eps^{-\om}t^2\frac{\ga_1(0)}{2}$
\end{tabular}
\end{center}

Next, define
\[J_\infty(\mu):=\frac14\iint_{(\mbb R^d)^2}(x-y)^\top\Si(x-y)\d \mu(x)\dd \mu(y),
\qquad \mu\in \ms P(\mbb R^d),\]
recalling that $-\Si$ is the Hessian matrix of $\ga_1$ at $x=0$. For every $c\in[0,\infty)$, let
\[J_c(\mu):=-\frac12\iint_{(\mbb R^d)^2}\ga_c(x-y)\d \mu(x)\dd \mu(y),\qquad\mu\in\ms P(\mbb R^d).\]
In both cases above (i.e., for $c\in[0,\infty]$), if $f\in H^1(\mbb R^d)$ is such that
$\|f\|_2=1$, then we use the convention that $J_{c}(f^2)=J_{c}(f^2(x)\dd x)$.
For every $p>0$, we define the dual rate function $\bs J^p$ as
\begin{center}
\begin{tabular}{c||ccc|cc|c}
{Case}&Sub-1&Sub-2&Sub-3&Crt-1&Crt-2&Sup\\
\hline
\hline
${\bs J^p}$&
$J_\infty$&
$J_{p^{1/(2-\om)}\mf c}$&
$J_0$&
$J_\infty$&
$p tJ_1$&
$J_\infty$
\end{tabular}
\end{center}

Finally, for every $p>1$, we let
\[\bs\chi^p:=\inf_{f\in H^1(\mbb R^d),~\|f\|_2=1}\Big(\ms S(f)+\bs J^p(f^2)\Big)=\begin{cases}
\chi&\text{(Sub-1, Crt-1, Sup)}\\
-\mbf M_{\mf c,p}&\text{(Sub-2)}\\
-\mbf M&\text{(Sub-3)}\\
-\mbf M^{\mr{crt}}_{t,p}&\text{(Crt-2)}
\end{cases}.\]
\end{definition}

In particular,
Theorems \ref{Theorem: Sub}, \ref{Theorem: Crt},
and \ref{Theorem: Sup} can be summarized as follows:

\begin{theorem}
\label{Theorem: All Moments}
Suppose that one of Assumptions \ref{Assumption: Sub}, \ref{Assumption: Crt}, or
\ref{Assumption: Sup} holds.
For every $p>0$ and $x\in\mbb R^d$, one has
\[\lim_{\msf m\to\infty}\frac{\log\big\langle u^{\xi_\msf e}(\msf t,x)^p\big\rangle-H_\msf e(p\msf t)}{\be_\msf e(p\msf t)}=-\bs\chi^p.\]
\end{theorem}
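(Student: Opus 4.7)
The plan is to convert the heuristic of Section \ref{Section: Moment Heuristics Details} into rigorous upper and lower bounds whose leading-order terms match on the scale $\be_\msf e(p\msf t)$. I would first establish a \emph{box localization}: for every $\de>0$ there exists $R_\de>0$ large such that
\[
\big\langle u^{\xi_\msf e}(\msf t,0)^p\big\rangle
\le \mr e^{\de\be_\msf e(p\msf t)}\,\big\langle u^{\xi_\msf e}_{R_\de\al_\msf e(p\msf t)}(\msf t,0)^p\big\rangle,
\]
with a matching lower bound (the latter being immediate from the Feynman--Kac representation \eqref{Equation: Feynman-Kac}). The upper bound amounts to controlling the contribution of Brownian paths that leave $Q_{R_\de\al_\msf e(p\msf t)}$ before time $\msf t$, and will be obtained by combining a standard exit-time estimate with Gaussian supremum bounds on $\xi_\msf e$ that are uniform in $\msf m$ at the scale $R_\de\al_\msf e(p\msf t)$.

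Next, I would replace the restricted solution by the contribution of its leading Dirichlet eigenvalue using the spectral expansion \eqref{Equation: Spectral Expansion}. Weyl-type estimates on the number of eigenvalues inside $Q_{R\al_\msf e(p\msf t)}$, together with $L^2$-control of the corresponding eigenfunctions at $x=0$, give
\[
\log\big\langle u^{\xi_\msf e}_{R\al_\msf e(p\msf t)}(\msf t,0)^p\big\rangle
= \log\Big\langle \mr e^{p\msf t\la_1^{\xi_\msf e}(Q_{R\al_\msf e(p\msf t)})}\Big\rangle + o\big(\be_\msf e(p\msf t)\big).
\]
The change of variables to the rescaled field $\Xi_{\msf e,\msf t}^p$ in \eqref{Equation: Scaled Noise}, coupled with the natural scaling of $\ka\De$, then produces the identity $p\msf t\,\la_1^{\xi_\msf e}(Q_{R\al_\msf e(p\msf t)}) = H_\msf e(p\msf t)+\be_\msf e(p\msf t)\,\la_1^{\Xi_{\msf e,\msf t}^p}(Q_R)$; the entries in Definition \ref{Definition: Table} are calibrated so that this identity is exact in each regime.

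The heart of the argument is a large deviation principle for $\Xi_{\msf e,\msf t}^p$ on $Q_R$ with speed $\be_\msf e(p\msf t)$ and rate function $\bs I^p$ whose Fenchel--Legendre dual is $\bs J^p$ (Propositions \ref{Proposition: Smooth LDP} and \ref{Proposition: Sub-3 LDP}). Combining this LDP with a Varadhan-type lemma, using continuity of $V\mapsto\la_1^V(Q_R)$ in an appropriate topology, yields
\[
\lim_{\msf m\to\infty}\frac{1}{\be_\msf e(p\msf t)}\log\Big\langle\mr e^{\be_\msf e(p\msf t)\la_1^{\Xi_{\msf e,\msf t}^p}(Q_R)}\Big\rangle
= \sup_{V}\Big(\la_1^V(Q_R)-\bs I^p(V)\Big).
\]
A final limit $R\to\infty$, together with the Rayleigh variational characterization $\la_1^V(Q_R)=\sup_f\big((V,f^2)-\ms S(f)\big)$ over $f\in H^1(\mbb R^d)$ with $\|f\|_2=1$ and supported in $Q_R$, and a swap of suprema justified by convexity, produces $-\bs\chi^p$ as defined in Definition \ref{Definition: Table}.

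The main obstacle is the LDP itself, especially in the critical and supercritical regimes, where $\be_\msf e(p\msf t)$ diverges while $\msf t$ may stay bounded and where the covariance of $\Xi_{\msf e,\msf t}^p$ either concentrates onto the Hessian $-\Si$ of $\ga_1$ at the origin (cases Sub-1, Crt-1, Sup) or retains the singular Riesz/fractional scaling (cases Sub-2, Sub-3, Crt-2); handling these on a common footing is what forces the explicit casework in Definition \ref{Definition: Table}. A secondary difficulty is the absorption of the cutoff when $R\to\infty$: near-maximizers of $\la_1^V(Q_R)-\bs I^p(V)$ in cases Sub-2, Sub-3, and Crt-2 are unique only up to translation, so tightness must be recovered either by a shifting argument or by exploiting the translation invariance of $\bs J^p$ to reduce to a compact family of profiles before passing to the limit.
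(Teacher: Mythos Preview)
Your outline tracks the heuristic of Section~\ref{Section: Moment Heuristics Details} closely, but the paper's actual proof departs from it in two places where your plan would run into trouble.

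First, the reduction
\[
\log\big\langle u^{\xi_\msf e}_{R\al_\msf e(p\msf t)}(\msf t,0)^p\big\rangle
= \log\Big\langle \mr e^{p\msf t\la_1^{\xi_\msf e}(Q_{R\al_\msf e(p\msf t)})}\Big\rangle + o\big(\be_\msf e(p\msf t)\big)
\]
is precisely the step the paper avoids. As the second remark in Section~\ref{Section: Three Remarks} explains, pointwise control of eigenfunctions $e^V_k(Q_r)(x)$ at a single $x$ is delicate (no lower bound on the ground state at $0$, no uniform upper bound across $k$ without extra work). The paper instead replaces $u^{\xi_\msf e}_{\msf R}(\msf t,x)^p$ by the \emph{spatial average} $\big(u^{\xi_\msf e}_{\msf R}(\msf t,\cdot),\mathbbm 1\big)^p$, where the spectral coefficients become $(e^V_k,\mathbbm 1)^2$ and Jensen-type inequalities (in both directions, with different arguments for $p\ge 1$ and $0<p<1$) allow one to pass from the $p$th power at time $\msf t$ to the first power at time $p\msf t$; see Lemmas~\ref{Lemma: Jensen Moment Lower Bound}, \ref{Lemma: p to 1 Moment Lower Bound}, \ref{Lemma: Moment Lower Bound for 0<p<1 - 1}, and Proposition~\ref{Proposition: Averaged Upper Bound}. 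Your proposal does not address the $0<p<1$ regime at all, which in the paper requires a separate time-truncation $\msf t\mapsto\msf t-\msf d$ and reverse H\"older arguments.

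Second, Propositions~\ref{Proposition: Smooth LDP} and~\ref{Proposition: Sub-3 LDP} are \emph{not} an LDP for the field $\Xi_{\msf e,\msf t}^p$. They compute, uniformly in $\mu\in\ms P(Q_R)$, the Laplace asymptotics $\be_\msf e^{-1}\log\big\langle\mr e^{\be_\msf e(\Xi,\mu)}\big\rangle\to-\bs J^p(\mu)$, and this is then fed into Varadhan's lemma applied to the Donsker--Varadhan LDP for the \emph{Brownian occupation measure} $L_{\be_\msf e}$. In case Sub-3 the uniform asymptotic fails and is replaced by a sandwich (Proposition~\ref{Proposition: Sub-3 LDP}); a genuine LDP for $\Xi$ in a topology making $V\mapsto\la_1^V(Q_R)$ continuous would be hard to establish there, because the covariance of $\Xi$ degenerates to the singular kernel $\ga$. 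The ``dual'' route through occupation measures sidesteps this.

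Two smaller points: the upper-bound box localization (Lemma~\ref{Lemma: Moment Upper Bound 1}) uses a \emph{growing} radius $\msf R=\mr e^{o(\be_\msf e(p\msf t))}$ rather than $R\al_\msf e(p\msf t)$, and its proof invokes the already-established lower bound \eqref{Equation: All Moments Lower} to show the tail is negligible; and your ``uniform Gaussian supremum bounds on $\xi_\msf e$'' are unavailable in cases Sub-2, Sub-3, Crt-2, where $\ga_\msf e(0)\to\infty$.
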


\begin{remark}
In \cite{GartnerKonig}, the rescaled noise in \eqref{Equation: Scaled Noise}
is instead denoted $\xi_t$.
This is because in \cite{GartnerKonig}, the scaling parameters $\al,\be$ and $H$
only depend on time (in contrast to depending on both time and the parameter $\eps$ in our setting).
Given that $\xi_\eps$ already has an index that keeps track of its dependence on $\eps$,
we use the different notation $\Xi_{\eps,t}^p$.
\end{remark}

\begin{remark}
\label{Remark: H is not always cumulant}
In \cite{GartnerKonig}, the scaling function denoted by $H$ corresponds to the
cumulant generating function of the noise (i.e., $H(t)=\log\langle\mr e^{t\xi(0)}\rangle$).
In our paper this is still true in cases Sub-1, Crt-1, and Sup
(i.e., $H_\eps(t)=\log\langle\mr e^{t\xi_\eps(0)}\rangle$), but not in cases Sub-2, Sub-3, and Crt-2.
Our main purpose for defining $H_\eps(t)=0$ in those latter three cases
is to set up a convenient unified notation, which simplifies some statements.
That said, the fact that the leading-order asymptotics of the PAM's moments are not determined
by the cumulant generating function in case Sub-3 has important implications for the geometry
of intermittency; see Section \ref{Section: Motivation for Maximizers}.
\end{remark}

\begin{remark}
$\bs J^p$ and $\bs\chi^p$ only depend on $p$ in cases Sub-2 and Crt-2.
We nevertheless use the superscript in all cases to
avoid different notations in different cases.
We note that $\bs J^p$ and $\bs\chi^p$ actually depend on another
parameter in those two cases, respectively $\mf c$ and $t$.
Since these
parameters form an integral part of the very definition of cases Sub-2
and Crt-2
(see Assumptions \ref{Assumption: Sub} and \ref{Assumption: Crt}) and are otherwise fixed, we leave this dependence implicit.
\end{remark}

\subsection{Geometric Interpretation}
\label{Section: Motivation for Maximizers}

Let $x\in\mbb R^d$ and $p>0$ be fixed.
The large deviations heuristics provided in equations
\eqref{Equation: Intermittency Heuristic 1}--\eqref{Equation: Intermittency Heuristic 4}
and Proposition \ref{Proposition: Informal LDP}
suggests that the event that contributes the most to the asymptotics for $\langle u_\msf e(\msf t,x)^p\rangle$ (stated in Theorem \ref{Theorem: All Moments}) is the following:
\begin{enumerate}
\item A large peak in the noise $\xi_{\msf e}$ close to $x$ of the form
\begin{align}
\label{Equation: Noise Shape}
\xi_\msf e(y)\approx\frac{H_\msf e(p\msf t)}{p\msf t}+\frac{V_\star\big(y/\al_\msf e(p\msf t)\big)}{\al_\msf e(p\msf t)^2}
\qquad\text{for }y\approx x,
\end{align}
where
\[V_\star:=\arg\sup_V\Big(\sup_f\big\{(V,f^2)-\ms S(f)\big\}-\bs I^p(V)\Big).\]
\item Equivalently, a large peak in $u_\msf e(\msf t,\cdot)^p$ close to $x$ of the form
\[u_\msf e(\msf t,y)^p\approx\mr e^{H_\msf e(p\msf t)-\be_\msf e(p\msf t)\bs \chi^p(1+o(1))}f_\star\left(\frac{y}{\al_\msf e(p\msf t)}\right)^p
\qquad\text{for }y\approx x,\]
where
\[f_\star=\arg\inf_f\Big(\ms S(f)+\bs J^p(f^2)\Big).\]
\end{enumerate}

In Cases Sub-1, Crt-2, and Sup, this geometry is similar to the one that
gives rise to the classical
asymptotics \eqref{Equation: GK 2nd Order} uncovered in \cite{GartnerKonig}.
That is, as illustrated in Figure \ref{Figure: 2 Orders},
\begin{figure}[htbp]
\begin{center}
\input{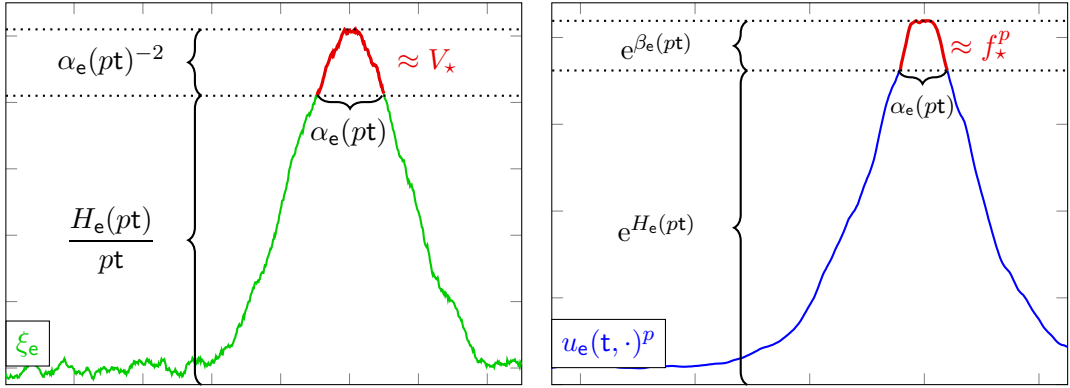}
\caption{Geometry of intermittent peaks in cases Sub-1, Crt-1, and Sup.
On the left is the configuration in the noise that contributes the most to $\langle u_\msf e(\msf t,x)^p\rangle$;
on the right is the corresponding configuration in the $p^{\mr{th}}$ power of the PAM.}
\label{Figure: 2 Orders}
\end{center}
\end{figure}
the geometry of intermittent peaks requires two distinct orders for its description: On the one hand,
the first-order scaling function $H_\eps$ (which, in those cases, is the cumulant generating function
of the noise; see Remark \ref{Remark: H is not always cumulant}) describes the height of the
intermittent peaks. On the other hand, the second-order scaling functions $\al_\eps$ and $\be_\eps$
and the minimizers $V_\star$ and $f_\star$ describe the
height, the width, and the shape of those peaks close to their summits.

As a continuation of Sections \ref{Section: Interpretation of Sub} and \ref{Section: Interpretation of Ctr},
we note that if $\msf e$ and $\msf t$ are such that $\be_\msf e(p\msf t)$ is of the
same order as the cumulant generating function of $\xi_\msf e$,
then the geometry of intermittent peaks undergoes a transition. This gives rise to
cases Sub-2 and Crt-2, which are illustrated in Figure \ref{Figure: Coalesce 1}.
\begin{figure}[htbp]
\begin{center}
\input{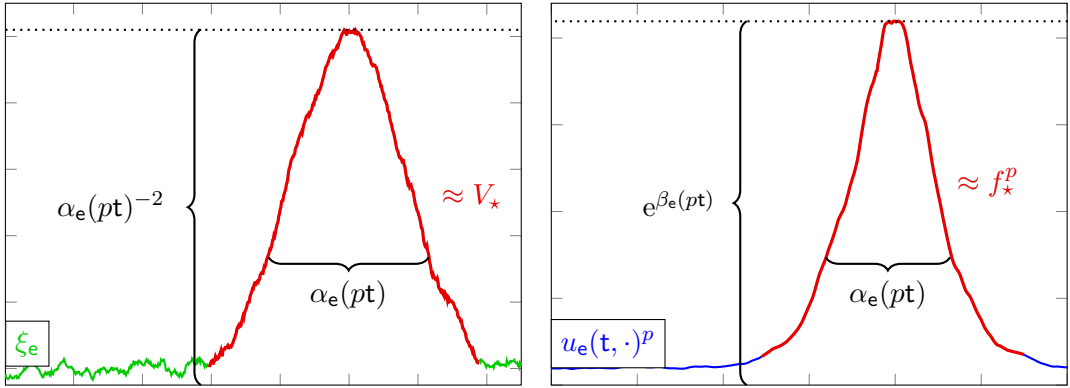}
\caption{Geometry of intermittent peaks in cases Sub-2 and Crt-2.
Once again the noise is on the left and the solution of the PAM on the right.}
\label{Figure: Coalesce 1}
\end{center}
\end{figure}
Therein, we note that we only need
$\al_\eps$ and $\be_\eps$ to describe the magnitude of the peaks, and $V_\star$
and $f_\star$ describe the shapes of the entire peaks, rather than just the summits.

\begin{remark}
In case Crt-2, if $\msf t\to t$, then we expect that Figure \ref{Figure: Coalesce 1}
only describes the geometry of intermittency when $p>\frac{2\ka}{t\mc G}$,
since this is the situation when $\mbf M^{\mr{crt}}_{t,p}\neq0$; see Proposition \ref{Proposition: Finite Time Intermittency}.
\end{remark}

Finally, if we take $\mf c\to0$ in case Sub-2, then
the geometry undergoes yet another transition, giving rise to case Sub-3.
The latter is illustrated in Figure \ref{Figure: Coalesce 2}.
\begin{figure}[htbp]
\begin{center}
\input{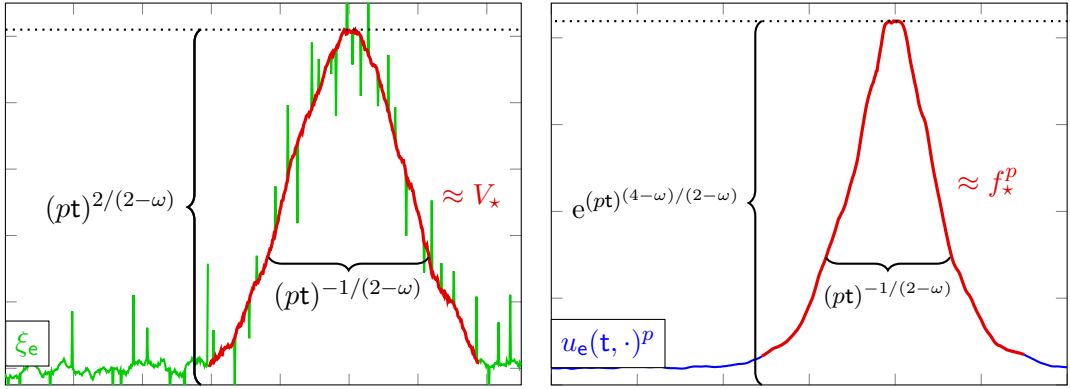}
\caption{Geometry of intermittent peaks in case Sub-3.}
\label{Figure: Coalesce 2}
\end{center}
\end{figure}
Similarly to cases Sub-2 and Crt-2, we only need $\al_\eps$ and $\be_\eps$
to describe the magnitude of intermittent peaks, and the optimizers of the variational
problems describe the shape of the entire peaks.
However, there is one meaningful difference between Figures \ref{Figure: Coalesce 1}
and \ref{Figure: Coalesce 2}: In case Sub-3, the approximate equality \eqref{Equation: Noise Shape}
should not be interpreted literally in the pointwise sense, but instead in the sense of
integration against the noise:
\[\big(\xi_\msf e,f\big)\approx
(p\msf t)^{2/(2-\om)}\Big(V_\star\big((p\msf t)^{1/(2-\om)}\cdot\big),f\Big),\qquad f\in C_0^\infty(\mbb R^d).\]
Indeed, in case Sub-3 the parameter $\msf e$ can vanish arbitrarily quickly, meaning
that the variance of $\xi_{\msf e(\msf m)}$ for large $\msf m$ can be arbitrarily large.
In particular, the random field $x\mapsto\xi_{\msf e(\msf m)}(x)$
becomes very singular as $\msf m\to\infty$ (as illustrated by the sharp oscillations in the noise
on the left-hand side of Figure \ref{Figure: Coalesce 2}, represented by green
``spikes"),
and the cumulant generating function of $\xi_\msf e$ does not appear to describe any
aspect of the geometry of intermittency. In particular, the geometry of
peaks in this case is independent of the parameter $\msf e$.

\section{Proof of Theorem \ref{Theorem: All Moments} Part 1 - Large Deviations}
\label{Section: Moments Part 1}

\subsection{Outline}

Our main purpose in this section, which consists of the
first step in the proof Theorem \ref{Theorem: All Moments},
is to formalize the informal large deviation principle
stated in Proposition \ref{Proposition: Informal LDP}.
This is done in {\bf Propositions \ref{Proposition: Smooth LDP} and \ref{Proposition: Sub-3 LDP}} below.
Much like in \cite{GartnerKonig}, the main difference between the informal statement
and Propositions \ref{Proposition: Smooth LDP} and \ref{Proposition: Sub-3 LDP} is that
the latter are proved in the dual setting, using the Varadhan lemma directly.
The immediate culmination of these results are the following two lemmas,
which are preliminary versions of Theorem \ref{Theorem: All Moments}:

\begin{definition}
For every $p,R>0$, define
\begin{align}
\label{Equation: Chi_R}
\bs\chi^p_R:=\inf_{\substack{f\in H^1(\mbb R^d),~\|f\|_2=1\\\mr{supp}(f)\subset Q_R}}\Big(\ka\ms S(f)+\bs J^p(f^2)\Big).
\end{align}
\end{definition}

\begin{lemma}
\label{Lemma: Averaged Asymptotics 1}
Let $p>0$ be fixed,
and let $\msf d=\msf d(\msf m)\geq0$ be such that $0\leq\msf d<\msf t$ and
\begin{align}
\label{Equation: Averaged Asymptotics 1 - Condition on d}
\lim_{\msf m\to\infty}\frac{\al_\msf e(p(\msf t-\msf d))}{\al_\msf e(p\msf t)}=1,
\quad
\lim_{\msf m\to\infty}\frac{\be_\msf e(p(\msf t-\msf d))}{\be_\msf e(p\msf t)}=1,
\quad
\lim_{\msf m\to\infty}\frac{H_\msf e(p(\msf t-\msf d))-H_\msf e(p\msf t)}{\be_\msf e(p\msf t)}=0.
\end{align}
For every $R>0$, one has
\begin{align}
\label{Equation: Averaged Asymptotics - Lower}
\liminf_{\msf m\to\infty}\frac{\log\Big\langle\big( u^{\xi_\msf e}_{R\al_\msf e(p\msf t)}(p(\msf t-\msf d),\cdot),\mathbbm 1\big)\Big\rangle-H_\msf e(p\msf t)}{\be_\msf e(p\msf t)}
\geq-\bs\chi^p_R.
\end{align}
\end{lemma}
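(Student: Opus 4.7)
The plan is to reduce the averaged moment to a deterministic test-function lower bound, then apply a Gaussian moment generating function computation to extract the asymptotic.

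First, fix a nonnegative $g\in C_0^\infty(Q_R)$ with $\|g\|_2=1$, set $\al:=\al_\msf e(p\msf t)$, and define $\tilde g(y):=\al^{-d/2}g(y/\al)\in C_0^\infty(Q_{R\al})$, which satisfies $\|\tilde g\|_2=1$. Writing $H:=\ka\De+\xi_\msf e$ with Dirichlet boundary conditions on $Q_{R\al}$, the pointwise inequality $\mathbbm 1_{Q_{R\al}}\geq\|\tilde g\|_\infty^{-1}\tilde g$, positivity of the Dirichlet heat kernel of $H$, and Jensen's inequality applied to the spectral representation of $(\tilde g,e^{sH}\tilde g)=\sum_k(e_k^{\xi_\msf e},\tilde g)^2 e^{s\la_k^{\xi_\msf e}}$ give
\[\big(u^{\xi_\msf e}_{R\al}(s,\cdot),\mathbbm 1\big)=(\mathbbm 1,e^{sH}\mathbbm 1)\geq\|\tilde g\|_\infty^{-2}(\tilde g,e^{sH}\tilde g)\geq\|\tilde g\|_\infty^{-2}\,e^{s[(\xi_\msf e,\tilde g^2)-\ms S(\tilde g)]}.\]

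Next, the change of variables in \eqref{Equation: Scaled Noise} gives $(\xi_\msf e,\tilde g^2)=\al^{-2}(\Xi^p_{\msf e,\msf t},g^2)+H_\msf e(p\msf t)/(p\msf t)$ and $\ms S(\tilde g)=\al^{-2}\ms S(g)$. Using the identity $p\msf t/\al^2=\be_\msf e(p\msf t)$, the hypotheses \eqref{Equation: Averaged Asymptotics 1 - Condition on d} (which in particular force $\msf d/\msf t\to 0$), and $\log\|\tilde g\|_\infty^{-2}=d\log\al+O(1)=o(\be_\msf e(p\msf t))$, one deduces at $s=p(\msf t-\msf d)$ that
\[\log\big(u^{\xi_\msf e}_{R\al}(s,\cdot),\mathbbm 1\big)\geq\big(1+o(1)\big)\be_\msf e(p\msf t)\big[(\Xi^p_{\msf e,\msf t},g^2)-\ms S(g)\big]+H_\msf e(p\msf t)+o\big(\be_\msf e(p\msf t)\big).\]
Taking $\langle\cdot\rangle$-expectation and invoking the Varadhan-type identity for $\Xi^p_{\msf e,\msf t}$ supplied by Propositions \ref{Proposition: Smooth LDP} and \ref{Proposition: Sub-3 LDP} applied to the linear functional $V\mapsto(V,g^2)$---a direct Gaussian moment generating function computation producing $-\bs J^p(g^2)$, the value at $g^2$ of the Fenchel--Legendre dual of the rate function $\bs I^p$---one obtains
\[\liminf_{\msf m\to\infty}\frac{\log\langle(u^{\xi_\msf e}_{R\al}(p(\msf t-\msf d),\cdot),\mathbbm 1)\rangle-H_\msf e(p\msf t)}{\be_\msf e(p\msf t)}\geq-\bs J^p(g^2)-\ms S(g).\]
The right-hand side is deterministic, so taking the supremum over $g\in C_0^\infty(Q_R)$ with $\|g\|_2=1$ (using density of $C_0^\infty(Q_R)$ in the restricted Sobolev space of $H^1$ functions supported in $Q_R$) produces $-\bs\chi^p_R$, yielding \eqref{Equation: Averaged Asymptotics - Lower}.

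The main obstacle is verifying the Gaussian MGF identity across the six regimes (Sub-1, Sub-2, Sub-3, Crt-1, Crt-2, Sup), each with its own scalings $(\al_\msf e,\be_\msf e,H_\msf e)$ and dual rate function $\bs J^p$. In Sub-3, one must justify $\al^4\ga_\msf e(\al\,\cdot)\to\ga(\cdot)$ in a sufficiently weak sense once $\eps/\al\to 0$, producing $-J_0(g^2)$; in the ``smooth'' regimes (Sub-1, Crt-1, Sup) the computation reduces to a Taylor expansion of $\ga_\msf e$ at the origin (with the linear term vanishing by symmetry), producing $-J_\infty(g^2)$; in Sub-2 and Crt-2 one obtains intermediate expressions $-J_{p^{1/(2-\om)}\mf c}(g^2)$ and $-p\,t\,J_1(g^2)$ respectively. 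These computations constitute the core of Propositions \ref{Proposition: Smooth LDP} and \ref{Proposition: Sub-3 LDP}, on which the present lemma relies as a black box.
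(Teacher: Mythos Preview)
Your proof is correct and takes a genuinely different route from the paper. The paper proceeds via the Feynman--Kac representation: after Brownian scaling it writes $(u^{\xi_\msf e}_{R\al_\msf e(p\msf t)}(p(\msf t-\msf d),\cdot),\mathbbm 1)$ as an integral of $\mbb E_x[\langle e^{\be(\Xi,L_\be)}\rangle\mathbbm 1_{\{L_\be\in\ms P(Q_R)\}}]$, applies Propositions~\ref{Proposition: Smooth LDP}/\ref{Proposition: Sub-3 LDP} to replace the inner expectation by $e^{-\be\bs J^p(L_\be)}$ (or $e^{-\be J_\eps(L_\be)}$ in Sub-3), and then invokes Varadhan's lemma together with the Donsker--Varadhan weak LDP for the Brownian occupation measures $L_\be$ to produce $-\bs\chi^p_R$. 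You instead bypass the LDP machinery entirely: a fixed test function $g$, the spectral/Jensen bound $(\tilde g,e^{sH}\tilde g)\geq e^{s(\tilde g,H\tilde g)}$, and a Gaussian MGF for the \emph{deterministic} measure $g^2$ (which is exactly what the computations inside Propositions~\ref{Proposition: Smooth LDP}/\ref{Proposition: Sub-3 LDP} provide when specialized to a fixed $\mu$) already yield the bound $-\ms S(g)-\bs J^p(g^2)$, and a supremum over $g\in C_0^\infty(Q_R)$ finishes. Your argument is more elementary and transparent for this lower bound; the paper's approach, by contrast, sets up the occupation-measure/Varadhan framework that is reused verbatim for the matching upper bounds \eqref{Equation: Averaged Asymptotics with alpha - Upper}--\eqref{Equation: Averaged Asymptotics with alpha - Trace} and Lemma~\ref{Lemma: Averaged Asymptotics 2}, where a test-function trick would not suffice. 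One small point worth making explicit in your writeup: the $(1+o(1))$ multiplying $\be_\msf e(p\msf t)$ inside the exponent interacts with the large mean and variance of $(\Xi^p_{\msf e,\msf t},g^2)$ in cases Sub-1, Crt-1, Sup (both of order $H_\msf e/\be_\msf e\to\infty$), and one must check that the cross-term $(\msf d/\msf t)\cdot\be_\msf e\cdot\mathrm{Var}/2$ vanishes; this is exactly guaranteed by the third limit in \eqref{Equation: Averaged Asymptotics 1 - Condition on d}.
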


\begin{lemma}
\label{Lemma: Averaged Asymptotics 2}
Let $p>0$ be fixed, and let $\msf R=\msf R(\msf m)>0$ be such that $\al_\msf e(p\msf t)=o(\msf R)$
and $\msf R=\mr e^{o(\be_\msf e(p\msf t))}$ as $\msf m\to\infty$. Then, it holds that
\begin{align}
\label{Equation: Averaged Asymptotics - Upper}
\limsup_{\msf m\to\infty}\frac{\log\Big\langle\big( u^{\xi_\msf e}_{\msf R}(p\msf t,\cdot),\mathbbm 1\big)\Big\rangle-H_\msf e(p\msf t)}{\be_\msf e(p\msf t)}
\leq-\bs\chi^p
\end{align}
and
\begin{align}
\label{Equation: Averaged Asymptotics - Trace}
\limsup_{\msf m\to\infty}\frac1{\be_\msf e(p\msf t)}\left(\log\left\langle\sum_{k=1}^\infty\mr e^{p\msf t\la^{\xi_\msf e}_{k}(Q_{\msf R})}\right\rangle-H_\msf e(p\msf t)\right)\leq-\bs\chi^p.
\end{align}
\end{lemma}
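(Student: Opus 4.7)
The plan is to establish (\ref{Equation: Averaged Asymptotics - Trace}) first, and then observe that (\ref{Equation: Averaged Asymptotics - Upper}) follows immediately from it. For the latter reduction: by the spectral expansion (\ref{Equation: Spectral Expansion}) and Cauchy--Schwarz applied to $(e^{\xi_\msf e}_k(Q_{\msf R}),\mathbbm 1)^2\leq(2\msf R)^d\|e^{\xi_\msf e}_k(Q_{\msf R})\|_2^2=(2\msf R)^d$, one has
\[
\big(u^{\xi_\msf e}_{\msf R}(p\msf t,\cdot),\mathbbm 1\big)=\sum_{k=1}^\infty \mr e^{p\msf t\la^{\xi_\msf e}_k(Q_{\msf R})}\big(e^{\xi_\msf e}_k(Q_{\msf R}),\mathbbm 1\big)^2\leq(2\msf R)^d\sum_{k=1}^\infty \mr e^{p\msf t\la^{\xi_\msf e}_k(Q_{\msf R})}.
\]
Taking $\langle\cdot\rangle$ and using $\msf R=\mr e^{o(\be_\msf e(p\msf t))}$ makes the $(2\msf R)^d$ prefactor negligible on the $\be_\msf e$-scale, so (\ref{Equation: Averaged Asymptotics - Trace}) implies (\ref{Equation: Averaged Asymptotics - Upper}).

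For the trace bound, the plan is to use a Dirichlet--Neumann bracketing to partition $Q_{\msf R}$ into $J\sim\big(\msf R/(R\al_\msf e(p\msf t))\big)^d$ translated subcubes of side $2R\al_\msf e(p\msf t)$, where $R>0$ is a free parameter to be sent to infinity at the end. The Dirichlet trace on $Q_{\msf R}$ is bounded above by the sum of Neumann traces on these subcubes, which in turn are comparable to Dirichlet traces on slightly enlarged subcubes via a standard reflection/extension argument. Invoking the translation-invariance of the law of $\xi$, this reduces the problem to bounding the expected Dirichlet trace on a single subcube of the form $Q_{R\al_\msf e(p\msf t)}$, with the combinatorial factor $J=\mr e^{o(\be_\msf e(p\msf t))}$ (guaranteed by $\al_\msf e(p\msf t)=o(\msf R)$ and $\msf R=\mr e^{o(\be_\msf e(p\msf t))}$, together with the polynomial growth of $\al_\msf e(p\msf t)^{-1}$ in the master parameter) absorbed into the $o(1)$ error.

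On a single subcube, applying the rescaling (\ref{Equation: Scaled Noise}) transforms the trace into
\[
\mr{Tr}_D\big(\mr e^{p\msf t(\ka\De+\xi_\msf e)}_{Q_{R\al_\msf e(p\msf t)}}\big)=\mr e^{H_\msf e(p\msf t)}\,\mr{Tr}_D\big(\mr e^{\be_\msf e(p\msf t)(\ka\De+\Xi^p_{\msf e,\msf t})}_{Q_R}\big).
\]
By the asymptotic Laplace principle, $\frac{1}{\be_\msf e}\log\mr{Tr}_D\big(\mr e^{\be_\msf e(\ka\De+V)}_{Q_R}\big)\to\la^V_1(Q_R)$ for any fixed continuous $V$, with a one-sided bound $\leq\la^V_1(Q_R)+o(1)$ that is uniform on sets of bounded $\|V\|_\infty$ (via Golden--Thompson and Weyl-type estimates on the Laplacian trace). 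Combining this with the LDP of Propositions \ref{Proposition: Smooth LDP}/\ref{Proposition: Sub-3 LDP} for $\Xi^p_{\msf e,\msf t}$ and Varadhan's lemma yields
\[
\limsup_{\msf m\to\infty}\frac{1}{\be_\msf e(p\msf t)}\log\Big\langle\mr{Tr}_D\big(\mr e^{\be_\msf e(p\msf t)(\ka\De+\Xi^p_{\msf e,\msf t})}_{Q_R}\big)\Big\rangle\leq\sup_V\big\{\la^V_1(Q_R)-\bs I^p(V)\big\}=-\bs\chi^p_R,
\]
where the last equality is the Fenchel duality computation from Section \ref{Section: Moment Heuristics Details}. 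Finally, sending $R\to\infty$ and using the monotone convergence $\bs\chi^p_R\downarrow\bs\chi^p$ (a variational fact established separately) concludes.

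The main obstacle will be the joint application of the Laplace principle and Varadhan's lemma alluded to above: one must verify that the trace-to-$\la_1$ reduction is uniform enough over the relevant $V$'s for the asymptotic expansion of $\frac{1}{\be_\msf e}\log\mr{Tr}_D$ and the LDP rate (both in $\be_\msf e$) to be exchanged with each other, which likely requires a cutoff to $\{\sup_{Q_R}|\Xi^p_{\msf e,\msf t}|\leq M\}$ together with crude Gaussian tail estimates to control the complementary event. A secondary technical issue arises in Case Sub-1, where the noise correlation length $\msf e$ exceeds the subcube side $\al_\msf e(p\msf t)$; however, since the Dirichlet--Neumann comparison on each subcube uses only pointwise operator inequalities and stationarity, and not independence of subcubes, the lack of decorrelation is harmless once the $\langle\cdot\rangle$-expectation is taken.
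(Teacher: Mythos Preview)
Your overall architecture—reduce the big box to many small boxes, control a single small box via the large-deviation machinery, then send $R\to\infty$—is exactly the paper's strategy, and your reduction of \eqref{Equation: Averaged Asymptotics - Upper} to \eqref{Equation: Averaged Asymptotics - Trace} via the spectral expansion and Cauchy--Schwarz is valid (the paper happens to go in the opposite order, deriving the trace bound from the averaged bound through a heat-kernel trick as in \eqref{Equation: Trace to Average}, but your direction works too).

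The substantive gap is in how you plan to obtain the single-subcube estimate. You propose to bound $\frac{1}{\be_\msf e}\log\mr{Tr}_D$ by $\la_1^V(Q_R)+o(1)$ uniformly over bounded $V$, and then apply Varadhan's lemma with ``the LDP of Propositions \ref{Proposition: Smooth LDP}/\ref{Proposition: Sub-3 LDP} for $\Xi^p_{\msf e,\msf t}$''. But those propositions are \emph{not} large deviation principles for the noise: they are uniform log-MGF asymptotics of the form $\frac{1}{\be}\log\langle\mr e^{\be(\Xi,\mu)}\rangle\to-\bs J^p(\mu)$ for $\mu\in\ms P(Q_R)$ (and in Case Sub-3 only the sandwich \eqref{Equation: Sub-3 LDP 1} is available, which does not even yield a G\"artner--Ellis LDP). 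Applying Varadhan to the nonlinear functional $V\mapsto\la_1^V(Q_R)$ would require an LDP for $\Xi^p_{\msf e,\msf t}$ in a topology where this functional is continuous; that is precisely what the paper calls the ``informal'' Proposition \ref{Proposition: Informal LDP} and deliberately avoids proving. Separately, your uniform trace-to-$\la_1$ bound is not as clean as you suggest: over $\{\|V\|_\infty\le M\}$ one gets $\frac{1}{\be}\log\mr{Tr}\le\la_1^V+O(M)$ rather than $+o(1)$, because the spectral gap of $\ka\De+V$ can degenerate.

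The paper circumvents both issues by dualizing via Feynman--Kac: the single-cube trace (established as \eqref{Equation: Averaged Asymptotics with alpha - Trace} inside the proof of Lemma \ref{Lemma: Averaged Asymptotics 1}) is bounded by a Brownian-bridge expectation, Fubini moves $\langle\cdot\rangle$ inside, and then Propositions \ref{Proposition: Smooth LDP}/\ref{Proposition: Sub-3 LDP} apply \emph{pointwise in the occupation measure}, after which Varadhan for the classical occupation-measure LDP finishes. With \eqref{Equation: Averaged Asymptotics with alpha - Trace} in hand, the paper's proof of Lemma \ref{Lemma: Averaged Asymptotics 2} is short: it cites a periodization/IMS-type localization estimate from \cite{GartnerKonig} to pass from $Q_{\msf R}$ to $Q_{(r+1)\al_\msf e(p\msf t)}$ at cost $\mr e^{K\be_\msf e/r}$, applies \eqref{Equation: Averaged Asymptotics with alpha - Trace}, and sends $r\to\infty$. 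Your Dirichlet--Neumann route could in principle replace that citation, but the Neumann-to-Dirichlet-on-enlarged-cube comparison you call ``standard'' is not; you would end up re-deriving something like the \cite{GartnerKonig} localization anyway.
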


\subsubsection{Three Remarks}
\label{Section: Three Remarks}

Before proving Lemmas \ref{Lemma: Averaged Asymptotics 1}
and \ref{Lemma: Averaged Asymptotics 2}, we comment on the differences
between the latter and the statement of Theorem \ref{Theorem: All Moments}:
Indeed, instead of concerning the asymptotics of the moments $\langle u^{\xi_\msf e}(\msf t,x)^p\rangle,$
Lemmas \ref{Lemma: Averaged Asymptotics 1}
and \ref{Lemma: Averaged Asymptotics 2} concern the asymptotics of 
\[\Big\langle\big( u^{\xi_\msf e}_{\msf R}(p(\msf t-\msf d),\cdot),\mathbbm 1\big)\Big\rangle
\qquad\text{and}\qquad
\left\langle\sum_{k=1}^\infty\mr e^{p\msf t\la^{\xi_\msf e}_{k}(Q_{\msf R})}\right\rangle\]
for an appropriate choice of functions $\msf R,\msf d>0$. The reasons for this are threefold:

Firstly, since Propositions \ref{Proposition: Smooth LDP} and \ref{Proposition: Sub-3 LDP}
rely on the {\it weak} large deviation principle for Brownian occupation measures
(e.g., \cite[Section 4.2]{DeuschelStroock}), it is necessary to restrict our asymptotics
to a compact domain. In Sections \ref{Section: Moments Part 2} and \ref{Section: Moments Part 3},
we rigorously establish that up to an error of order $\mr e^{o(\be_\msf e(p\msf t))}$,
we can replace $\langle u^{\xi_\msf e}(\msf t,x)^p\rangle$
by $\langle u^{\xi_\msf e}_{\msf R}(\msf t,x)^p\rangle$, provided $\msf R$ is not
too small.

Secondly, while Propositions \ref{Proposition: Smooth LDP} and \ref{Proposition: Sub-3 LDP} can
easily be used to obtain asymptotics for the first moment $\langle u^{\xi_\msf e}_{\msf R}(\msf t,x)\rangle$
(for an appropriate choice of $\msf R$), the same is not true for $\langle u^{\xi_\msf e}_{\msf R}(\msf t,x)^p\rangle$
when $p\neq1$. In short, this is because introducing a $p^{\mr{th}}$ power on
the solution of the PAM makes it more difficult to integrate the noise in the Feynman-Kac
formula and apply the large deviation principle, as done informally in \eqref{Equation: Intermittency Heuristic 4}.
In order to get around this issue, a crucial aspect of the strategy developed in \cite{GartnerKonig}
is to show that, up to an error of order $\mr e^{o(\be_\msf e(p\msf t))}$,
we can replace $\langle u^{\xi_\msf e}_{\msf R}(\msf t,x)^p\rangle$ by $\langle u^{\xi_\msf e}_{\msf R}(p\msf t,x)\rangle$.
Heuristically, this can be justified by using a spectral expansion and only keeping the leading
eigenvalue, as in \eqref{Equation: Intermittency Heuristic 2}.
However, given that the integral of the eigenfunctions $(e^V_k(Q_r),\mathbbm 1)$ is more convenient to
control than its value $e^V_k(Q_r)(x)$ at a single point $x\in\mbb R^d$, a formal version of
this heuristic is easier to achieve if we instead look at the integral of the solution
$\big( u^{\xi_\msf e}_{\msf R}(\msf t,\cdot),\mathbbm 1\big)^p.$
In Sections \ref{Section: Moments Part 2} and \ref{Section: Moments Part 3},
we rigorously prove that up to an error of order $\mr e^{o(\be_\msf e(p\msf t))}$,
one has
\begin{align}
\label{Equation: Average the Solution}
\langle u^{\xi_\msf e}_{\msf R}(\msf t,x)^p\rangle\approx\Big\langle\big( u^{\xi_\msf e}_{\msf R}(\msf t,\cdot),\mathbbm 1\big)^p\Big\rangle\approx\Big\langle\big( u^{\xi_\msf e}_{\msf R}(p\msf t,\cdot),\mathbbm 1\big)\Big\rangle.
\end{align}

Thirdly, the replacement of $\msf t$ by a truncated time $\msf t-\msf d$
in Lemma \ref{Lemma: Averaged Asymptotics 1} amounts to a technical
concern. More specifically, in order to establish \eqref{Equation: Average the Solution},
it is in many cases necessary to ``integrate out" the contribution of the PAM's solution on the
time interval $[0,\msf d]$; we refer to Sections \ref{Section: Moments Part 2} and \ref{Section: Moments Part 3}
for the details.

The remainder of Section \ref{Section: Moments Part 1} is now
devoted to the proof of Lemmas \ref{Lemma: Averaged Asymptotics 1}
and \ref{Lemma: Averaged Asymptotics 2}.

\subsection{Proof of Lemma \ref{Lemma: Averaged Asymptotics 1}}

In addition to \eqref{Equation: Averaged Asymptotics - Lower},
in this proof we also establish
\begin{align}
\label{Equation: Averaged Asymptotics with alpha - Upper}
\limsup_{\msf m\to\infty}\frac{\log\Big\langle\big( u^{\xi_\msf e}_{R\al_\msf e(p\msf t)}(p(\msf t-\msf d),\cdot),\mathbbm 1\big)\Big\rangle-H_\msf e(p\msf t)}{\be_\msf e(p\msf t)}
\leq-\bs\chi^p
\end{align}
and
\begin{align}
\label{Equation: Averaged Asymptotics with alpha - Trace}
\limsup_{\msf m\to\infty}\frac1{\be_\msf e(p\msf t)}\left(\log\left\langle\sum_{k=1}^\infty\mr e^{p(\msf t-\msf d)\la^{\xi_\msf e}_{k}(Q_{R\al_\msf e(p\msf t)})}\right\rangle-H_\msf e(p\msf t)\right)\leq-\bs\chi^p.
\end{align}
While these two results are not directly related to Lemma \ref{Lemma: Averaged Asymptotics 1},
they are needed in the proof of Lemma \ref{Lemma: Averaged Asymptotics 2} and use the same ideas
as the proof of \eqref{Equation: Averaged Asymptotics - Lower}.

By Brownian scaling (see \cite[(1.3)]{GartnerKonig}) one has
\[u^{\xi_\msf e}_{R\al_\msf e(p\msf t)}(p(\msf t-\msf d),\cdot)=\mr e^{H_\msf e(p(\msf t-\msf d))}u^{\Xi_{\msf e,\msf t-\msf d}^p}_{R\al_\msf e(p\msf t)/\al_\msf e(p(\msf t-\msf d))}\left(\frac{p(\msf t-\msf d)}{\al_\msf e(p(\msf t-\msf d))^2},\frac\cdot{\al_\msf e(p(\msf t-\msf d))}\right)\]
for every $R>0$ and $p>0$,
where we recall that $\Xi_{\eps,t}^p$ is defined as in \eqref{Equation: Scaled Noise}.
For any $\theta>0$,
if $\msf m$ is large enough, then the first limit in \eqref{Equation: Averaged Asymptotics 1 - Condition on d}
implies that
\[R-\theta<R\al_\msf e(p\msf t)/\al_\msf e(p(\msf t-\msf d))<R+\theta.\]
Consequently (recalling that $\be_\eps(pt)=pt/\al_\eps(pt)^2$), for every $\theta>0$, one has
\begin{multline}
\label{Equation: Lower Bound Scaling 0}
\mr e^{H_\msf e(p(\msf t-\msf d))}u^{\Xi_{\msf e,\msf t-\msf d}^p}_{R-\theta}\left(\be_\msf e(p(\msf t-\msf d)),\frac\cdot{\al_\msf e(p(\msf t-\msf d))}\right)\leq u^{\xi_\msf e}_{R\al_\msf e(p\msf t)}(p(\msf t-\msf d),\cdot)\\\leq\mr e^{H_\msf e(p(\msf t-\msf d))}u^{\Xi_{\msf e,\msf t-\msf d}^p}_{R+\theta}\left(\be_\msf e(p(\msf t-\msf d)),\frac\cdot{\al_\msf e(p(\msf t-\msf d))}\right),
\end{multline}
provided $\msf m$ is large enough.
Moreover, by the Feynman-Kac formula and Tonelli's theorem,
\begin{multline}
\left\langle \Big( u^{\Xi_{\msf e,\msf t-\msf d}^p}_{R\pm\theta}\big(\be_\msf e(p(\msf t-\msf d)),\cdot/\al_\msf e(p(\msf t-\msf d))\big),\mathbbm1\Big)\right\rangle\\
\label{Equation: Lower Bound Scaling}
=\al_\msf e(p(\msf t-\msf d))^d\int_{Q_{R\pm\theta}}\mbb E_x\left[\left\langle\mr e^{\be_\msf e(p(\msf t-\msf d))(\Xi_{\msf e,\msf t-\msf d}^p,L_{\be_\msf e(p(\msf t-\msf d))})}\right\rangle\mathbbm 1_{\{L_{\be_\msf e(p(\msf t-\msf d))}\in\ms P(Q_{R\pm\theta})\}}\right]\d x,
\end{multline}
where the term $\al_\msf e(p(\msf t-\msf d))^d$ comes from a change of variables in the $\dd x$ integral.
In order to control the exponential moment of $\be_\msf e(p(\msf t-\msf d))(\Xi_{\msf e,\msf t-\msf d}^p,L_{\be_\msf e(p(\msf t-\msf d))})$,
we use the formal version of the large deviation principle stated in Proposition
\ref{Proposition: Informal LDP}, which amounts to the following:

\begin{proposition}
\label{Proposition: Smooth LDP}
Let $\msf d$ satisfy \eqref{Equation: Averaged Asymptotics 1 - Condition on d},
let $R>0$ and $p>0$, and suppose that
we are in any case except Sub-3. It holds that
\begin{align}
\label{Equation: Smooth LDP}
\lim_{\msf m\to\infty}\sup_{\mu\in \ms P(Q_R)}\left|\frac1{\be_\msf e(p(\msf t-\msf d))}\log\left\langle\mr e^{\be_\msf e(p(\msf t-\msf d))(\Xi_{\msf e,\msf t-\msf d}^p,\mu)}\right\rangle+\bs J^p(\mu)\right|=0.
\end{align}
Moreover, the map
\[\mu\mapsto \bs J^p(\mu)\]
is continuous with respect to the topology of weak convergence on $\ms P(Q_R)$.
\end{proposition}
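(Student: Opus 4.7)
The strategy is to exploit the Gaussianity of $\xi_\msf e$ to compute the moment generating function on the left-hand side of \eqref{Equation: Smooth LDP} essentially explicitly. Abbreviating $t':=\msf t-\msf d$, the random variable $(\Xi_{\msf e,t'}^p,\mu)$ is Gaussian with mean $-\al_\msf e(pt')^2 H_\msf e(pt')/(pt')$ and variance $\al_\msf e(pt')^4\iint\ga_\msf e(\al_\msf e(pt')(x-y))d\mu(x)d\mu(y)$, so the identity $\be_\msf e(pt')\al_\msf e(pt')^2=pt'$ (which holds in every case in Definition \ref{Definition: Table}) yields
\[
\frac{1}{\be_\msf e(pt')}\log\left\langle\mr e^{\be_\msf e(pt')(\Xi_{\msf e,t'}^p,\mu)}\right\rangle=\frac{pt'\al_\msf e(pt')^2}{2}\iint\ga_\msf e\bigl(\al_\msf e(pt')(x-y)\bigr)d\mu(x)d\mu(y)-\frac{H_\msf e(pt')}{\be_\msf e(pt')}.
\]
Hypothesis \eqref{Equation: Averaged Asymptotics 1 - Condition on d} permits replacing $pt'$ by $p\msf t$ in each scaling function at the cost of a $o(1)$ error, reducing the problem to showing that the right-hand side, rewritten with $p\msf t$, tends to $-\bs J^p(\mu)$ uniformly in $\mu\in\ms P(Q_R)$.

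In cases Sub-1, Crt-1, and Sup one has $\al_\msf e(p\msf t)/\msf e\to 0$, so the scaling identity $\ga_\msf e(w)=\msf e^{-\om}\ga_1(w/\msf e)$ together with the Taylor expansion of the smooth symmetric function $\ga_1$ around zero (whose Hessian at $0$ is $-\Si$) gives
\[
\frac{p\msf t\al_\msf e(p\msf t)^2}{2}\ga_\msf e\bigl(\al_\msf e(p\msf t)z\bigr)=\frac{p\msf t\al_\msf e(p\msf t)^2\msf e^{-\om}\ga_1(0)}{2}-\frac{p\msf t\al_\msf e(p\msf t)^4\msf e^{-\om-2}}{4}z^\top\Si z+r(z),
\]
where the remainder $r(z)$ is $O((\al_\msf e(p\msf t)/\msf e)^2)$ times the quadratic term, uniformly for $z\in Q_{2R}$. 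A direct check against Definition \ref{Definition: Table} shows that $p\msf t\al_\msf e(p\msf t)^2\msf e^{-\om}\ga_1(0)/2=H_\msf e(p\msf t)/\be_\msf e(p\msf t)$ identically (this is precisely how $H_\msf e$ is chosen: it matches the cumulant generating function of $\xi_\msf e(0)$), and that $p\msf t\al_\msf e(p\msf t)^4\msf e^{-\om-2}=1$ identically. Integrating against $\mu\otimes\mu$ therefore produces $-\tfrac{1}{4}\iint(x-y)^\top\Si(x-y)d\mu d\mu=-J_\infty(\mu)=-\bs J^p(\mu)$, with uniformity in $\mu$ following from the uniform bound on $r$ on $Q_{2R}$.

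In cases Sub-2 and Crt-2 one has $H_\msf e\equiv 0$, while the ratio $\al_\msf e(p\msf t)/\msf e$ converges to a positive constant ($1/(p^{1/(2-\om)}\mf c)$ in Sub-2, $1$ in Crt-2). The same scaling identity now gives the exact equality
\[
\frac{p\msf t\al_\msf e(p\msf t)^2}{2}\ga_\msf e\bigl(\al_\msf e(p\msf t)z\bigr)=\frac{p\msf t\al_\msf e(p\msf t)^2\msf e^{-\om}}{2}\ga_1\bigl(\al_\msf e(p\msf t)z/\msf e\bigr),
\]
whose pointwise limit, uniform on $z\in Q_{2R}$ by smoothness of $\ga_1$, equals $\tfrac{1}{2}\ga_{p^{1/(2-\om)}\mf c}(z)$ in Sub-2 (computing $p\msf t\al_\msf e(p\msf t)^2\msf e^{-\om}\to p^{-\om/(2-\om)}\mf c^{-\om}$ and invoking $\ga_c(z)=c^{-\om}\ga_1(z/c)$) and $\tfrac{pt}{2}\ga_1(z)$ in Crt-2. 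After integrating against $\mu\otimes\mu$, these limits coincide with $-\bs J^p(\mu)$, and uniformity in $\mu$ is immediate from the uniform-in-$z$ convergence on the compact set $Q_{2R}$. Continuity of $\mu\mapsto\bs J^p(\mu)$ on $\ms P(Q_R)$ follows by the same compactness argument: in every case not equal to Sub-3, $\bs J^p$ is the integral against $\mu\otimes\mu$ of a continuous bounded function on $Q_R\times Q_R$ (a polynomial for Sub-1/Crt-1/Sup, a smooth function $\ga_c$ with $c>0$ for Sub-2/Crt-2), and weak convergence $\mu_n\to\mu$ on $Q_R$ transfers to $\mu_n\otimes\mu_n\to\mu\otimes\mu$ on $Q_R\times Q_R$. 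The main technical obstacle is the case-by-case verification of the scaling identities, but the key structural observation is that the definitions of $\al_\msf e$, $\be_\msf e$, and $H_\msf e$ in Definition \ref{Definition: Table} are engineered precisely to produce the correct cancellation and limit in each regime.
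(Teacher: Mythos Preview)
Your proposal is correct and follows essentially the same approach as the paper: compute the Gaussian moment generating function explicitly (this is the paper's equation \eqref{Equation: General Exponential Moment}), then verify case by case that the scaling constants in Definition \ref{Definition: Table} produce the desired cancellation and limit, using a second-order Taylor expansion of $\ga_1$ in cases Sub-1/Crt-1/Sup and uniform convergence of $\ga_c$ on compacts in cases Sub-2/Crt-2. The only minor imprecision is your remainder bound ``$O((\al_\msf e(p\msf t)/\msf e)^2)$ times the quadratic term'': the Lagrange remainder from the second-order expansion is actually $O(\al_\msf e(p\msf t)/\msf e)$ times a bounded factor on $Q_{2R}$ (the paper writes it as $h^3 O(|z|^3)$ before the prefactor), but this does not affect the conclusion since either power vanishes.
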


\begin{proposition}
\label{Proposition: Sub-3 LDP}
Let $\msf d$ satisfy \eqref{Equation: Averaged Asymptotics 1 - Condition on d}.
Suppose that we are in case Sub-3,
and let $\eps>0$ be arbitrary. For $\msf m$ large enough we have that
\begin{align}
\label{Equation: Sub-3 LDP 1}
\mr e^{-\be_\msf e(p(\msf t-\msf d))J_\eps(L_{\be_\msf e(p(\msf t-\msf d))})}\leq\left\langle\mr e^{\be_\msf e(p(\msf t-\msf d))(\Xi_{\msf e,\msf t-\msf d}^p,L_{\be_\msf e(p(\msf t-\msf d))})}\right\rangle
\leq\mr e^{-\be_\msf e(p(\msf t-\msf d))J_0(L_{\be_\msf e(p(\msf t-\msf d))})}.
\end{align}
Moreover, for every $R\in(0,\infty]$ (using the convention that $Q_\infty=\mbb R^d$),
\begin{align}
\label{Equation: Sub-3 LDP 2}
\lim_{\eps\to0}\inf_{\substack{f\in H^1(\mbb R^d),~\|f\|_2=1\\\mr{supp}(f)\subset Q_R}}\Big(\ka\ms S(f)+J_\eps(f^2)\Big)
=\inf_{\substack{f\in H^1(\mbb R^d),~\|f\|_2=1\\\mr{supp}(f)\subset Q_R}}\Big(\ka\ms S(f)+J_0(f^2)\Big).
\end{align}
\end{proposition}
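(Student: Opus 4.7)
The plan is to reduce both claims to one explicit computation of the Gaussian moment generating function of $(\Xi_{\msf e,\msf t-\msf d}^p,\mu)$, combined with an elementary monotonicity property of $\delta\mapsto J_\delta(\mu)$. Throughout, I abbreviate $\al=\al_\msf e(p(\msf t-\msf d))$ and $\be=\be_\msf e(p(\msf t-\msf d))$. The first step is to observe that, conditional on the Brownian motion $W$ (and hence on $L_\be$), the random variable $(\Xi_{\msf e,\msf t-\msf d}^p,L_\be)=\int \al^2\xi_\msf e(\al x)\dd L_\be(x)$ is centred Gaussian in $\xi$, so its exponential moment at level $\be$ equals the exponential of half its variance. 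A change of variables using the scaling $\ga(cx)=c^{-\om}\ga(x)$ together with the identity $\al^d p_\msf e(\al w)=p_{\msf e/\al}(w)$ yields $\ga_\msf e(\al z)=\al^{-\om}\ga_{\msf e/\al}(z)$, so the variance equals $\al^{4-\om}\iint\ga_{\msf e/\al}(x-y)\dd L_\be(x)\dd L_\be(y)$. In the Sub-3 case, Definition \ref{Definition: Table} gives $\al^{4-\om}=(p(\msf t-\msf d))^{-(4-\om)/(2-\om)}=\be^{-1}$, collapsing the MGF to exactly $\mr e^{-\be J_{\msf e/\al}(L_\be)}$.

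The second step is to establish the monotonicity of $\delta\mapsto J_\delta(\mu)$ on $[0,\infty)$. Since $\ga$ is a positive definite tempered distribution with non-negative Fourier transform (true for the white, Riesz, and fractional kernels in Assumption \ref{Assumption: Noise}), Plancherel yields for compactly supported $\mu\in\ms P(\mbb R^d)$
\[J_\delta(\mu)=-\tfrac12(2\pi)^{-d}\int_{\mbb R^d}\hat\ga(\xi)\,\mr e^{-\delta^2|\xi|^2/4}\,|\hat\mu(\xi)|^2\,\dd\xi,\]
whose integrand is non-negative and pointwise non-increasing in $\delta$. Hence $J_\delta(\mu)$ is non-decreasing in $\delta$, and $J_\delta(\mu)\downarrow J_0(\mu)$ as $\delta\downarrow0$ by monotone convergence. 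In Sub-3, Assumption \ref{Assumption: Sub} together with \eqref{Equation: Averaged Asymptotics 1 - Condition on d} yield $\msf e/\al=\msf e(p(\msf t-\msf d))^{1/(2-\om)}\to 0$, so for any fixed $\eps>0$ one has $0<\msf e/\al<\eps$ for $\msf m$ large. The monotonicity then gives $J_0(L_\be)\leq J_{\msf e/\al}(L_\be)\leq J_\eps(L_\be)$, and combining this sandwich with the MGF identity of the previous paragraph yields \eqref{Equation: Sub-3 LDP 1}.

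Finally, for \eqref{Equation: Sub-3 LDP 2}, the monotonicity $J_\eps\geq J_0$ gives the $\geq$ direction immediately. For the reverse inequality, I would fix any admissible test function $f$ with $\ka\ms S(f)+J_0(f^2)<\infty$ (such $f$ exist by finiteness of $\mbf M$ in \eqref{Equation: Sub Variational}), and use the monotone convergence just established to conclude $J_\eps(f^2)\downarrow J_0(f^2)$ as $\eps\downarrow0$. Taking $\limsup$ in $\eps$ and then the infimum over admissible $f$ completes the argument. I expect this proof to be essentially mechanical once the MGF identity is in place; the only point requiring some care is verifying the Fourier-side hypotheses (positive definiteness of $\ga$ and integrability of $\hat\ga\,\mr e^{-\delta^2|\xi|^2/4}|\hat\mu|^2$ for $\delta>0$), but these are routine for the three kernels in Assumption \ref{Assumption: Noise} and do not present a genuine obstacle.
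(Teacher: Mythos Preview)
Your proposal is correct and follows essentially the same approach as the paper: both compute the Gaussian moment generating function exactly to obtain $\langle\mr e^{\be(\Xi^p_{\msf e,\msf t-\msf d},L_\be)}\rangle=\mr e^{-\be J_{\msf e/\al}(L_\be)}$ with $\msf e/\al=\msf e(p(\msf t-\msf d))^{1/(2-\om)}\to 0$, then establish the monotonicity of $\delta\mapsto J_\delta(\mu)$ via the Fourier/Parseval representation $-J_\delta(\mu)=\int\hat\ga\,\widehat{p_\delta}\,|\hat\mu|^2$ with $\widehat{p_\delta}$ decreasing in $\delta$, and finally deduce \eqref{Equation: Sub-3 LDP 2} from the same Fourier representation plus monotone convergence. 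Your scaling derivation $\ga_\msf e(\al z)=\al^{-\om}\ga_{\msf e/\al}(z)$ is slightly more explicit than the paper's (which refers back to the general computation \eqref{Equation: General Exponential Moment}), but the substance is identical.
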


\begin{remark}
The reason why case Sub-3 has its own proposition is that the uniform asymptotic
\eqref{Equation: Smooth LDP} is difficult to achieve when the noise is singular.
We circumvent this with Proposition \ref{Proposition: Sub-3 LDP}.
In particular, controlling the upper bound in \eqref{Equation: Sub-3 LDP 1}
requires the use of \eqref{Equation: Sub Known} in the case
$p=1$, which is one of the ways in which the argument used here differs from that of
\cite{GartnerKonig}. Nevertheless, the fact that the lower bound in \eqref{Equation: Smooth LDP}
can be controlled using the same theory as in \cite{GartnerKonig} makes it possible to uncover
the geometric interpretation of the moment asymptotics explained in Section \ref{Section: Motivation for Maximizers}.
\end{remark}

With these two propositions in hand, \eqref{Equation: Averaged Asymptotics - Lower} and
\eqref{Equation: Averaged Asymptotics with alpha - Upper} now follow from
standard large deviations arguments: Suppose first that we are in any case except Sub-3.
If we combine \eqref{Equation: Averaged Asymptotics 1 - Condition on d}
(whereby we can replace $\msf t-\msf d$ by $\msf t$ in the asymptotics),
\eqref{Equation: Lower Bound Scaling 0}, and \eqref{Equation: Lower Bound Scaling} with the fact that
$\al_\msf e(p(\msf t-\msf d))=\mr e^{o(\be_\msf e(p(\msf t-\msf d)))}$, then
\eqref{Equation: Averaged Asymptotics - Lower} and \eqref{Equation: Averaged Asymptotics with alpha - Upper} will be proved if we show that
\[\frac1{\be_\msf e(p(\msf t-\msf d))}\log\int_{Q_{R\pm\theta}}\mbb E_x\left[\left\langle\mr e^{\be_\msf e(p(\msf t-\msf d))(\Xi_{\msf e,\msf t-\msf d}^p,L_{\be_\msf e(p(\msf t-\msf d))})}\right\rangle\mathbbm 1_{\{L_{\be_\msf e(p(\msf t-\msf d))}\in\ms P(Q_{R\pm\theta})\}}\right]\d x\to-\bs\chi^p_{R\pm\theta}\]
as $\msf m\to\infty$.
Indeed, we note that $-\bs\chi^p_{R+\theta}\leq-\bs\chi^p$, and that $\bs\chi^p_{R-\theta}\to\bs\chi^p_R$ as $\theta\to0$
by Lemma \ref{Lemma: Lower Bound Variational Limit}.
By \eqref{Equation: Smooth LDP}, this is equivalent to
\[\lim_{\msf m\to\infty}\frac1{\be_\msf e(p(\msf t-\msf d))}\log\int_{Q_{R\pm\theta}}\mbb E_x\left[\mr e^{-\be_\msf e(p(\msf t-\msf d))\bs J^p(L_{\be_\msf e(p(\msf t-\msf d))})}\mathbbm 1_{\{L_{\be_\msf e(p(\msf t-\msf d))}\in\ms P(Q_{R\pm\theta})\}}\right]\d x=-\bs\chi^p_{R\pm\theta},\] which is a consequence of Varadhan's lemma
and the weak large deviation principle for Brownian local time
with a uniform starting point on $Q_{R\pm\theta}$ (e.g., \cite[Section 4.2]{DeuschelStroock}).

Consider now case Sub-3.
On the one hand,
by the lower bound in \eqref{Equation: Sub-3 LDP 1},
for every $\eps>0$, the following holds as $\msf m\to\infty$:
\begin{align*}
&\int_{Q_{R-\theta}}\mbb E_x\left[\left\langle\mr e^{\be_\msf e(p(\msf t-\msf d))(\Xi_{\msf e,\msf t-\msf d}^p,L_{\be_\msf e(p(\msf t-\msf d))})}\right\rangle\mathbbm 1_{\{L_{\be_\msf e(p(\msf t-\msf d))}\in\ms P(Q_{R-\theta})\}}\right]\d x\\
&\hspace{1in}\geq
\int_{Q_{R-\theta}}\mbb E_x\left[\mr e^{-\be_\msf e(p(\msf t-\msf d))J_\eps(L_{\be_\msf e(p(\msf t-\msf d))})}\mathbbm 1_{\{L_{\be_\msf e(p(\msf t-\msf d))}\in\ms P(Q_{R-\theta})\}}\right]\d x\\
&\hspace{1in}=\exp\left(\be_\msf e(p(\msf t-\msf d))\inf_{\substack{f\in H^1(\mbb R^d),~\|f\|_2=1\\\mr{supp}(f)\subset Q_{R-\theta}}}\Big(\ka\ms S(f)+J_\eps(f^2)\Big)+o(\be_\msf e(p(\msf t-\msf d)))\right),
\end{align*}
where the third line follows from the same application of Varadhan's lemma as in the
previous paragraph. Taking $\eps\to0$ using \eqref{Equation: Sub-3 LDP 2} then yields
\begin{multline*}
\liminf_{\msf m\to\infty}\frac{1}{\be_\msf e(p(\msf t-\msf d))}\log\int_{Q_{R-\theta}}\left\langle\mbb E_x\left[\mr e^{\be_\msf e(p(\msf t-\msf d))(\Xi_{\msf e,\msf t-\msf d}^p,L_{\be_\msf e(p(\msf t-\msf d))})}\mathbbm 1_{\{L_{\be_\msf e(p(\msf t-\msf d))}\in\ms P(Q_{R-\theta})\}}\right]\right\rangle\d x\\\geq-\bs\chi^p_{R-\theta}.
\end{multline*}
This concludes the proof of \eqref{Equation: Averaged Asymptotics - Lower} for Sub-3 by taking $\theta\to0$ using Lemma \ref{Lemma: Lower Bound Variational Limit}.
On the other hand, by the upper bound in \eqref{Equation: Sub-3 LDP 1} and removing the indicator of $L_{\be_\msf e(p(\msf t-\msf d))}\in\ms P(Q_{R+\theta})$,
\begin{multline*}
\int_{Q_{R+\theta}}\left\langle\mbb E_x\left[\mr e^{\be_\msf e(p(\msf t-\msf d))(\Xi_{\msf e,\msf t-\msf d}^p,L_{\be_\msf e(p(\msf t-\msf d))})}\mathbbm 1_{\{L_{\be_\msf e(p(\msf t-\msf d))}\in\ms P(Q_{R+\theta})\}}\right]\right\rangle\d x\\
\leq\int_{Q_{R+\theta}}\mbb E_x\left[\mr e^{-\be_\msf e(p(\msf t-\msf d))J_0(L_{\be_\msf e(p(\msf t-\msf d))})}\right]\d x
=(2(R+\theta))^d\mbb E_0\left[\mr e^{-\be_\msf e(p(\msf t-\msf d))J_0(L_{\be_\msf e(p(\msf t-\msf d))})}\right]
\end{multline*}
for large enough $\msf m$,
where the last equality follows from the fact that $J_0$ is translation invariant.
\eqref{Equation: Averaged Asymptotics with alpha - Upper} for Sub-3 then follows from
\eqref{Equation: Sub Known} with $p=1$, which implies that
\[\lim_{t\to\infty}\frac1t\log\mbb E_0\left[\mr e^{-tJ_0(L_t)}\right]=-\bs\chi^p.\]
Indeed, by a Brownian scaling,
\begin{multline*}
-tJ_0(L_t)=\frac{t^2}{t}\iint_{(\mbb R^d)^2}\ga(x-y)\d L_t(x)\dd L_t(y)
=\frac1t\iint_{[0,t]^2}\ga(W_r-W_s)\d r\dd s\\
\deq\iint_{[0,t^{(2-\om)/(4-\om)}]^2}\ga(W_r-W_s)\d r\dd s,
\end{multline*}
and
\[\big\langle u(t^{(2-\om)/(4-\om)},0)\big\rangle=\mbb E_0\left[\exp\left(\iint_{[0,t^{(2-\om)/(4-\om)}]^2}\ga(W_r-W_s)\d r\dd s\right)\right].\]

Next, we consider
\eqref{Equation: Averaged Asymptotics with alpha - Trace}.
By combining \cite[(2.7)--(2.10)]{GartnerKonig}
with the argument used to get to
\eqref{Equation: Lower Bound Scaling}, we know that for every $\de>0$,
if $\msf m$ is large enough (it suffices that $\be_\msf e(p(\msf t-\msf d))>\de$, which is always
possible since $\be_\msf e(p\msf t)\to\infty$
as $\msf m\to\infty$ in every case), then
\begin{multline}
\label{Equation: Trace to Average}
\left\langle\sum_{k=1}^\infty\mr e^{p(\msf t-\msf d)\la^{\xi_\msf e}_{k}(Q_{R\al_\msf e(p\msf t)})}\right\rangle\leq(4\pi\ka\de)^{-d/2}\mr e^{H_\msf e(p(\msf t-\msf d))}\\\cdot\al_\msf e(p(\msf t-\msf d))^d\int_{Q_{R+\theta}}\left\langle\mbb E_x\left[\mr e^{(\be_\msf e(p(\msf t-\msf d))-\de)(\Xi_{\msf e,\msf t-\msf d}^p,L_{\be_\msf e(p(\msf t-\msf d))-\de})}\mathbbm 1_{\{L_{\be_\msf e(p(\msf t-\msf d))-\de}\in\ms P(Q_{R+\theta})\}}\right]\right\rangle\d x.
\end{multline}
As subtracting
$\delta$ from $\be_{\msf e}(p(\msf t-\msf d))$ has no effect on the asymptotics,
\eqref{Equation: Averaged Asymptotics with alpha - Trace} follows
from \eqref{Equation: Averaged Asymptotics with alpha - Upper}.

We now conclude the proof of Lemma \ref{Lemma: Averaged Asymptotics 1} by
establishing Propositions \ref{Proposition: Smooth LDP} and \ref{Proposition: Sub-3 LDP}.

\begin{proof}[Proof of Proposition \ref{Proposition: Smooth LDP}]
We begin with the continuity of $\bs J^p$. Since we are not considering case Sub-3,
we need only prove the continuity of $J_c$ for $c\in(0,\infty]$.
The continuity of $J_\infty$ is established in \cite[(0.8) and the following paragraph; see also (4.4)]{GartnerKonig}.
Now suppose that $c\in(0,\infty)$. If $\mu_n$ converges weakly to $\mu$, then $\mu_n\otimes\mu_n$
converges weakly to $\mu\otimes\mu$. The continuity of $J_c$ then follows from the fact that
the two-dimensional function $(x,y)\mapsto\ga_c(x-y)$ is bounded and continuous when $c\in(0,\infty)$.

We now prove the asymptotic \eqref{Equation: Smooth LDP}.
Recall the definition of $\Xi_{\eps,t}^p$ in \eqref{Equation: Scaled Noise},
and the fact that $\xi_\eps$ has covariance $\ga_\eps=\eps^{-\om}\ga_1(\cdot/\eps)$ for $\eps>0$.
With this, it is easy to see by
a Gaussian moment generating function calculation that
\begin{align}
\label{Equation: General Exponential Moment}
\nonumber
\left\langle\mr e^{\be_\eps(pt)(\Xi_{\eps,t}^p,\mu)}\right\rangle
&=\mr e^{-\be_\eps(pt)\al_\eps(pt)^2H_\eps(pt)/pt}
\left\langle\mr e^{\be_\eps(pt)\al_\eps(pt)^2\big(\xi_\eps(\al_\eps(pt)\cdot),\mu\big)}\right\rangle\\
&=\exp\bigg(\be_\eps(pt)\bigg(-\frac{\al_\eps(pt)^2H_\eps(pt)}{pt}\\
\nonumber
&\qquad+\frac{\be_\eps(pt)\al_\eps(pt)^4\eps^{-\om}}{2}\int_{(\mbb R^d)^2}\ga_1\left(\frac{\al_\eps(pt)(x-y)}{\eps}\right)\d\mu(x)\dd\mu(y)
\bigg)\bigg).
\end{align}
From this point on, we argue on a case-by-case basis:

\noindent {\it Case Sub-1: }For sake of readability, let us denote
\[\msf h=\msf h(\msf m):=\msf e^{(\om-2)/4}(p\msf t)^{-1/4}=\big(\msf e (p\msf t)^{1/(2-\om)}\big)^{-(2-\om)/4}.\]
By definition of case Sub-1 (recall Assumption \ref{Assumption: Sub}), we note that $\msf h\to0$ as $\msf m\to\infty$.
By definition of $\al_\eps$, $\be_\eps$, and $H_\eps$ in case Sub-1
(recall Definition \ref{Definition: Table}), we also have that
\[\frac{\al_\msf e(p\msf t)^2H_\msf e(p\msf t)}{p\msf t}=\msf h^{-2}\frac{\ga_1(0)}{2},
\quad\frac{\be_\msf e(p\msf t)\al_\msf e(p\msf t)^4\msf e^{-\om}}2=\frac{\msf h^{-2}}{2},
\quad
\frac{\al_\msf e(p\msf t)}{\msf e}=\msf h.\]
Applying this to \eqref{Equation: General Exponential Moment}, and using \eqref{Equation: Averaged Asymptotics 1 - Condition on d} to
replace $F_\msf e(p(\msf t-\msf d))=F_\msf e(p\msf t)\big(1+o(1)\big)$
for $F=\al,\be,H$, we get that for Sub-1,
\begin{multline*}
\frac1{\be_\msf e(p(\msf t-\msf d))}\log\left\langle\mr e^{\be_\msf e(p(\msf t-\msf d))(\Xi_{\msf e,\msf t-\msf d}^p,\mu)}\right\rangle\\
=\frac{\msf h^{-2}}2(1+o(1))\left(-\ga_1(0)+\int_{(\mbb R^d)^2}\ga_1\big(\msf h(1+o(1))\,(x-y)\big)\d\mu(x)\dd\mu(y)\right).
\end{multline*}
Since $\ga_1(x)$ is smooth and has a strict maximum at zero,
we can approximate its values for small $x$ using a Taylor expansion of order 2
(e.g., \cite[Theorem 1.23]{Guler}), which yields
\[\ga_1(h x)=\ga_1(0)-h^2\frac{x^\top\Si x}2+h^{3}O\big(|x|^3\big),\qquad x\in\mbb R^d,~h>0.\]
Therefore, given that
\[\sup_{\mu\in\ms P(Q_R)}\iint_{(\mbb R^d)^2}O(|x|^3)\d\mu(x)\dd\mu(y)<\infty\]
for every $R>0$, we conclude that \eqref{Equation: Smooth LDP} holds for case Sub-1.

\noindent {\it Case Sub-2: }Let us denote
\[\msf c=\msf c(\msf m):=\msf e (p\msf t)^{1/(2-\om)}.\]
Since we are in case Sub-2, we note that $\msf c\to p^{1/(2-\om)}\mf c$
as $\msf m\to\infty$; moreover,
according to Definition \ref{Definition: Table} in that same case, we have that
\[\frac{\al_\msf e(p\msf t)^2H_\msf e(p\msf t)}{p\msf t}=0,
\qquad\frac{\be_\msf e(p\msf t)\al_\msf e(p\msf t)^4\msf e^{-\om}}2=\frac{\msf c^{-\om}}{2},
\qquad\text{and}\qquad
\frac{\al_\msf e(p\msf t)}{\msf e}=\frac1{\msf c}.\]
Thus, by \eqref{Equation: Averaged Asymptotics 1 - Condition on d} and \eqref{Equation: General Exponential Moment}, we have
\begin{align}
\nonumber
&\frac{1}{\be_\msf e(p(\msf t-\msf d))}\log\left\langle\mr e^{\be_\msf e(p(\msf t-\msf d))(\Xi_{\msf e,\msf t-\msf d}^p,\mu)}\right\rangle\\
\nonumber
&=\frac{1+o(1)}2\int_{(\mbb R^d)^2}\ga_{\msf c(1+o(1))}(x-y)\d\mu(x)\dd\mu(y)\\
\label{Equation: Gamma ceps versus Gamma c}
&=-J_{p^{1/(2-\om)}\mf c}(\mu)+o(1)+\frac12\int_{(\mbb R^d)^2}\Big(\ga_{\msf c(1+o(1))}(x-y)-\ga_{p^{1/(2-\om)}\mf c}(x-y)\Big)\d\mu(x)\dd\mu(y).
\end{align}
As $\ga_1$ is smooth and bounded and $\ga_c(x)=c^{-\om}\ga_1(x/c)$, one has $\ga_{\msf c(1+o(1))}\to\ga_{p^{1/(2-\om)}\mf c}$ uniformly on compacts
as $\msf m\to\infty$.
Thus, the integral in \eqref{Equation: Gamma ceps versus Gamma c} is uniformly small
over $\mu\in \ms P(Q_R)$ as $\msf m\to\infty$, concluding the proof of \eqref{Equation: Smooth LDP} in this case.

\noindent {\it Case Crt-1: }In this case, according to Definition \ref{Definition: Table}, we have that
\[\frac{\al_\msf e(p\msf t)^2H_\msf e(p\msf t)}{p\msf t}=(p\msf t)^{1/2}\frac{\ga_1(0)}{2},
\qquad\frac{\be_\msf e(p\msf t)\al_\msf e(p\msf t)^4\msf e^{-\om}}2=\frac{(p\msf t)^{1/2}}{2},
\qquad
\frac{\al_\msf e(p\msf t)}{\msf e}=(p\msf t)^{-1/4}.\]
The proof then follows from the same argument as for case Sub-1,
with the only difference that we replace the function denoted $\msf h$ therein by $(p\msf t)^{-1/4}$.

\noindent {\it Case Crt-2: }In this case, according to Definition \ref{Definition: Table}, we have that
\[\frac{\al_\msf e(p\msf t)^2H_\msf e(p\msf t)}{p\msf t}=0,
\qquad\frac{\be_\msf e(p\msf t)\al_\msf e(p\msf t)^4\msf e^{-\om}}2=\frac{p\msf t}2,
\qquad\text{and}\qquad
\frac{\al_\msf e(p\msf t)}{\msf e}=1.\]
Thus, by \eqref{Equation: Averaged Asymptotics 1 - Condition on d} and \eqref{Equation: General Exponential Moment}, we have
\[\frac{1}{\be_\msf e(p(\msf t-\msf d))}\log\left\langle\mr e^{\be_\msf e(p(\msf t-\msf d))(\Xi_{\msf e,\msf t-\msf d}^p,\mu)}\right\rangle=\frac{-p\msf t}{2}J_{1+o(1)}(\mu)\big(1+o(1)\big).\]
The result then follows from the facts that $\msf t\to t$ as $\msf m\to\infty$
(by definition of case Crt-2; see Assumption \ref{Assumption: Crt})
and, arguing as \eqref{Equation: Gamma ceps versus Gamma c},
\[\lim_{\msf m\to\infty}\sup_{\mu\in \ms P(Q_R)}\left|J_{1+o(1)}(\mu)-J_1(\mu)\right|=0.\]

\noindent {\it Case Sup: }This follows from the same argument
as Sub-1 with
\[\msf h=\msf h(\msf m):=\msf e^{(\om-2)/4}(p\msf t)^{-1/4}=\big(\msf e (p\msf t)^{-1/(\om-2)}\big)^{(\om-2)/4},\]
which vanishes as $\msf m\to\infty$ thanks to Assumption \ref{Assumption: Sup}.
With this case in hand, the proof of \eqref{Equation: Smooth LDP}, and thus Proposition \ref{Proposition: Smooth LDP}, is now complete.
\end{proof}

\begin{proof}[Proof of Proposition \ref{Proposition: Sub-3 LDP}]
In Case Sub-3, the same calculation as in \eqref{Equation: Gamma ceps versus Gamma c}
implies that
\begin{align*}
&\frac1{\be_\msf e(p(\msf t-\msf d))}\log\left\langle\mr e^{\be_\msf e(p(\msf t-\msf d))(\Xi_{\msf e,\msf t-\msf d}^p,L_{\be_\msf e(p(\msf t-\msf d))})}\right\rangle\\
&\qquad=\frac12\int_{(\mbb R^d)^2}\ga_{\msf e (p(\msf t-\msf d))^{1/(2-\om)}}(x-y)\d L_{\be_\msf e(p(\msf t-\msf d))}(x)\dd L_{\be_\msf e(p(\msf t-\msf d))}(y)
\\&\qquad=\frac{-J_{\msf e(p(\msf t-\msf d))^{1/(2-\om)}}(L_{\be_\msf e(p(\msf t-\msf d))})}{2}.
\end{align*}
By definition of case Sub-3 and \eqref{Equation: Averaged Asymptotics 1 - Condition on d}, $\msf e (p(\msf t-\msf d))^{1/(2-\om)}\to0$ as $\msf m\to\infty$;
thus the proof of \eqref{Equation: Sub-3 LDP 1} is simply a matter of noting that
for every $\eps>\eps'>0$ and $t>0$,
one has
\begin{align}
\label{Equation: Monotonicity of Jc}
-J_\eps(L_t)\leq -J_{\eps'}(L_t)\leq -J_0(L_t).
\end{align}
Letting $\widehat{\cdot}$ denote the Fourier transform, we have by definition of $L_t$, the Parseval formula,
and the convolution theorem that
\begin{align}
\label{Equation: JepsLt Fourier Transform}
-J_\eps(L_t)=\int_{\mbb R^d}\left|\frac1t\int_0^t\mr e^{-2\pi\mr i(W_s,x)}\d s\right|^2\widehat\ga(x)\widehat{p_\eps}(x)\d x.
\end{align}
Since $\ga$ is a covariance function, $\widehat\ga\geq0$.
Computing explicitly that
\[\widehat{p_\eps}(x)=\begin{cases}
\mr e^{-2\pi ^2 |x|^2\eps^2}&\eps>0\\
1&\eps=0
\end{cases},\]
and noting that this function is decreasing in $\eps$ for all $x$,
we therefore conclude \eqref{Equation: Monotonicity of Jc}.

We now prove \eqref{Equation: Sub-3 LDP 2}. Once again using Fourier transforms,
we note that
\begin{align}
\label{Equation: f^2 Fourier Transform}
-J_\eps(f^2)=\int_{\mbb R^d}|\widehat{f^2}(x)|^2\widehat\ga(x)\widehat{p_\eps}(x)\d x.
\end{align}
Consequently, $J_\eps(f^2)\geq J_0(f^2)$
for every function $f\in H^1(\mbb R^d)$ such that $\|f\|_2=1$ and $\eps>0$. With this in hand, in order
to prove \eqref{Equation: Sub-3 LDP 2}, it suffices to show that for every
such $f$, one has
\[\lim_{\eps\to0}J_\eps(f^2)=J_0(f^2).\]
This follows from \eqref{Equation: f^2 Fourier Transform} by monotone convergence.
\end{proof}

\subsection{Proof of Lemma \ref{Lemma: Averaged Asymptotics 2}}

We begin with \eqref{Equation: Averaged Asymptotics - Upper}.
Let us define $\tilde{\msf R}=\tilde{\msf R}(\msf m):=\msf R/\al_\msf e(p\msf t)$.
By applying the same scaling identities used to obtain \eqref{Equation: Lower Bound Scaling},
we get
\[\Big\langle\big( u^{\xi_\msf e}_{\msf R}(p\msf t,\cdot),\mathbbm 1\big)\Big\rangle
=\mr e^{H_\msf e(p\msf t)+o(\be_\msf e(p\msf t))}\Big\langle\big(u^{\Xi_{\msf e,\msf t}^p}_{\tilde{\msf R}}(p\msf t,\cdot),\mathbbm 1\big)\Big\rangle.\]
Thanks to \cite[(3.29), (3.31) and (3.32)]{GartnerKonig},
there exists some constant $K>0$ (independent of $\msf m$) such that
for every fixed $r\geq2$, one has
\[\mr e^{H_\msf e(p\msf t)}\Big\langle\big(u^{\Xi_{\msf e,\msf t}^p}_{\tilde{\msf R}}(p\msf t,\cdot),\mathbbm 1\big)\Big\rangle
\leq O(\tilde{\msf R}^d)\mr e^{K\be_\msf e(p\msf t)/r}\left\langle\sum_{k=1}^\infty\mr e^{p\msf t\la^{\xi_\msf e}_k(Q_{(r+1)\al_\msf e(p\msf t)})}\right\rangle\]
as $\msf m\to\infty$.
Since $\tilde{\msf R}^d=\mr e^{o(\be_\msf e(p\msf t))}$ (as $\al_\msf e(p\msf t),\msf R=\mr e^{o(\be_\msf e(p\msf t))}$),
an application of \eqref{Equation: Averaged Asymptotics with alpha - Trace} yields
\[\limsup_{\msf m\to\infty}\frac{\log\Big\langle\big(u^{\xi_\msf e}_{\be_\msf e(p\msf t)}(p\msf t,\cdot),\mathbbm 1\big)\Big\rangle-H_\msf e(p\msf t)}{\be_\msf e(p\msf t)}\leq\frac{K}{r}-\bs\chi^p.\]
Since $r\geq2$ was arbitrary, we can take $r\to\infty$, which then yields
\eqref{Equation: Averaged Asymptotics - Upper}.
We then obtain \eqref{Equation: Averaged Asymptotics - Trace}
from \eqref{Equation: Averaged Asymptotics - Upper} by using the same argument
as in \eqref{Equation: Trace to Average}, thus concluding the proof of Lemma \ref{Lemma: Averaged Asymptotics 2}.

\section{Proof of Theorem \ref{Theorem: All Moments} Part 2 - Lower Bound}
\label{Section: Moments Part 2}

In this section, we begin implementing the program outlined in 
Section \ref{Section: Three Remarks}, which allows us to turn Lemmas
\ref{Lemma: Averaged Asymptotics 1} and \ref{Lemma: Averaged Asymptotics 2}
into Theorem \ref{Theorem: All Moments}.
More specifically,
in this section we provide the following lower bound for Theorem \ref{Theorem: All Moments}:
Suppose that one of Assumptions \ref{Assumption: Sub}, \ref{Assumption: Crt}, or
\ref{Assumption: Sup} holds.
For every $p>0$ and $x\in\mbb R^d$, one has
\begin{align}
\label{Equation: All Moments Lower}
\liminf_{\msf m\to\infty}\frac{\log\big\langle u^{\xi_\msf e}(\msf t,x)^p\big\rangle-H_\msf e(p\msf t)}{\be_\msf e(p\msf t)}\geq-\bs\chi^p.
\end{align}
The remainder of this section is structured as follows:
In Sections \ref{Section: Outline for 0<p<1} and \ref{Section: Outline for pgeq1},
we provide an outline of the proof of \eqref{Equation: All Moments Lower}
for $p\geq1$ and $0<p<1$ respectively. This outline relies
on five technical results, namely, Lemmas
\ref{Lemma: Jensen Moment Lower Bound}--\ref{Lemma: Holder Variational Convergence R}.
We then end the section by proving these technical lemmas, in Sections
\ref{Section: Lower Bound Lemmas b}--\ref{Section: Lower Bound Lemmas e}.

\subsection{Outline for $p\geq1$}
\label{Section: Outline for pgeq1}

The proof of \eqref{Equation: All Moments Lower} for $p\geq1$ relies on the following:

\begin{lemma}
\label{Lemma: Jensen Moment Lower Bound}
For every $x\in\mbb R^d$, $p\geq1$, and $R>0$,
\[\big\langle u^{\xi_\msf e}(\msf t,x)^p\big\rangle\geq\mr e^{o(\be_\msf e(p\msf t))}\Big\langle\big( u^{\xi_\msf e}_{R\al_\msf e(p\msf t)}(\msf t,\cdot),\mathbbm 1\big)^p\Big\rangle\qquad\text{as }\msf m\to\infty.\]
\end{lemma}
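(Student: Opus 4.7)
The inequality is an elementary three-step manipulation: a stationarity averaging, a Feynman--Kac monotonicity, and a Jensen (power mean) inequality valid because $p\geq 1$. For brevity write $r=R\al_\msf e(p\msf t)$ and $|Q_r|=(2r)^d$.

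First, because $\xi_\msf e$ is stationary, the moment $\langle u^{\xi_\msf e}(\msf t,y)^p\rangle$ is independent of $y\in\mbb R^d$, so it equals its average over $Q_r$. Using Tonelli to commute the $\dd y$ integral with $\langle\cdot\rangle$, this yields
\[
\big\langle u^{\xi_\msf e}(\msf t,x)^p\big\rangle=\frac{1}{|Q_r|}\left\langle\int_{Q_r}u^{\xi_\msf e}(\msf t,y)^p\d y\right\rangle.
\]
Second, the Feynman--Kac representation \eqref{Equation: Feynman-Kac} shows that inserting the indicator $\mathbbm 1_{\{L_t\in\ms P(Q_r)\}}$ can only make the solution smaller, so $u^{\xi_\msf e}(\msf t,y)\geq u^{\xi_\msf e}_r(\msf t,y)\geq 0$ pointwise in $y$. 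Since $u^{\xi_\msf e}_r(\msf t,\cdot)$ is supported in $Q_r$, this gives
\[
\int_{Q_r}u^{\xi_\msf e}(\msf t,y)^p\d y\geq \int_{Q_r}u^{\xi_\msf e}_r(\msf t,y)^p\d y=\int_{\mbb R^d} u^{\xi_\msf e}_r(\msf t,y)^p\d y.
\]
Third, since $p\geq 1$ the map $u\mapsto u^p$ is convex on $[0,\infty)$, so Jensen's inequality applied with respect to the uniform probability measure on $Q_r$ to the nonnegative function $u^{\xi_\msf e}_r(\msf t,\cdot)$ yields
\[
\int_{\mbb R^d} u^{\xi_\msf e}_r(\msf t,y)^p\d y\geq |Q_r|^{1-p}\left(\int_{Q_r} u^{\xi_\msf e}_r(\msf t,y)\d y\right)^{\!p}=|Q_r|^{1-p}\big(u^{\xi_\msf e}_r(\msf t,\cdot),\mathbbm 1\big)^p.
\]

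Combining the three displays,
\[
\big\langle u^{\xi_\msf e}(\msf t,x)^p\big\rangle\geq |Q_r|^{-p}\Big\langle\big(u^{\xi_\msf e}_{R\al_\msf e(p\msf t)}(\msf t,\cdot),\mathbbm 1\big)^p\Big\rangle.
\]
It remains to verify that the prefactor $|Q_r|^{-p}=(2R\al_\msf e(p\msf t))^{-dp}$ is of order $\mr e^{o(\be_\msf e(p\msf t))}$. An inspection of Definition~\ref{Definition: Table} shows that in every one of the six cases $\log\al_\msf e(p\msf t)$ is at most logarithmic in $\msf t$ and $\msf e^{-1}$, whereas $\be_\msf e(p\msf t)$ grows polynomially (in particular $\be_\msf e(p\msf t)\to\infty$ under Assumptions~\ref{Assumption: Sub}, \ref{Assumption: Crt}, or \ref{Assumption: Sup}). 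Hence $|\log\al_\msf e(p\msf t)|=o(\be_\msf e(p\msf t))$, which finishes the proof. The argument is entirely elementary; the only place where $p\geq 1$ enters is Jensen's inequality, which is precisely why a separate argument will be needed in Section~\ref{Section: Outline for 0<p<1} for $0<p<1$.
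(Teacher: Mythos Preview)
Your proof is correct and follows essentially the same approach as the paper: stationarity to average over $Q_r$, Jensen's inequality for $p\geq 1$, the Feynman--Kac monotonicity $u^{\xi_\msf e}\geq u^{\xi_\msf e}_r\geq 0$, and finally $\al_\msf e(p\msf t)=\mr e^{o(\be_\msf e(p\msf t))}$. The only cosmetic difference is that the paper applies Jensen to $u^{\xi_\msf e}$ first and then invokes the monotonicity inside the $p$-th power, whereas you apply the monotonicity at the level of $u^{\xi_\msf e}(\msf t,y)^p$ and then use Jensen on $u^{\xi_\msf e}_r$; both orderings yield the identical bound $|Q_r|^{-p}\langle(u^{\xi_\msf e}_r,\mathbbm 1)^p\rangle$.
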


\begin{lemma}
\label{Lemma: p to 1 Moment Lower Bound}
For every $p\geq1$ and $R>0$,
\begin{multline*}
\liminf_{\msf m\to\infty}\frac{\log\Big\langle\big( u^{\xi_\msf e}_{R\al_\msf e(p\msf t)}(\msf t,\cdot),\mathbbm 1\big)^p\Big\rangle-H_\msf e(p\msf t)}{\be_\msf e(p\msf t)}\\
\geq
p\big(\bs\chi^p-\bs\chi^p_R\big)+\liminf_{\msf m\to\infty}\frac{\log\Big\langle\big( u^{\xi_\msf e}_{R\al_\msf e(p\msf t)}(p\msf t,\cdot),\mathbbm 1\big)\Big\rangle-H_\msf e(p\msf t)}{\be_\msf e(p\msf t)},
\end{multline*}
where we recall the definition of $\bs\chi^p_R$ in \eqref{Equation: Chi_R}.
\end{lemma}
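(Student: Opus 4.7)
The plan is to reduce the claim to two separate one-sided asymptotic estimates on the numerators, then combine them using a simple inequality between liminf and limsup together with the hypothesis $p \geq 1$. Setting
\[
A := \frac{\log\langle(u^{\xi_\msf e}_{R\al_\msf e(p\msf t)}(\msf t,\cdot),\mathbbm 1)^p\rangle - H_\msf e(p\msf t)}{\be_\msf e(p\msf t)},\qquad B := \frac{\log\langle(u^{\xi_\msf e}_{R\al_\msf e(p\msf t)}(p\msf t,\cdot),\mathbbm 1)\rangle - H_\msf e(p\msf t)}{\be_\msf e(p\msf t)},
\]
the claim is $\liminf A \geq p(\bs\chi^p-\bs\chi^p_R) + \liminf B$. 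Since $\bs\chi^p_R$ is an infimum over a more restrictive class, $\bs\chi^p_R \geq \bs\chi^p$, so for $p \geq 1$ we have $p(\bs\chi^p-\bs\chi^p_R) \leq \bs\chi^p - \bs\chi^p_R$. It therefore suffices to establish the cleaner bound $\liminf A - \liminf B \geq \bs\chi^p - \bs\chi^p_R$, which I will do by separately proving $\liminf A \geq -\bs\chi^p_R$ and appealing to the already-proved $\limsup B \leq -\bs\chi^p$ from \eqref{Equation: Averaged Asymptotics with alpha - Upper}, applied with $\msf d=0$.

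For the lower bound on $\liminf A$, the idea is a test-function argument in the spirit of \cite{GartnerKonig}. Fix $\phi_0 \in H^1(\mbb R^d)$ with $\phi_0 \geq 0$, $\mr{supp}(\phi_0)\subset Q_R$, $\|\phi_0\|_2=1$ approximately realizing the infimum in \eqref{Equation: Chi_R}, and rescale to $\phi(x):=\al^{-d/2}\phi_0(x/\al)$ with $\al := \al_\msf e(p\msf t)$. Then $\phi$ is supported on $Q_{R\al}$, has $\|\phi\|_2=1$, and $M := \sup\phi = \al^{-d/2}\sup\phi_0$. Since $\mathbbm 1_{Q_{R\al}} \geq \phi/M$ pointwise and $T^{\xi_\msf e}_\msf t$ is positivity-preserving on $Q_{R\al}$, two applications of this bound give
\[
(u^{\xi_\msf e}_{R\al}(\msf t,\cdot),\mathbbm 1) = (T^{\xi_\msf e}_\msf t \mathbbm 1_{Q_{R\al}},\mathbbm 1_{Q_{R\al}}) \geq M^{-2}(T^{\xi_\msf e}_\msf t \phi,\phi) \geq M^{-2}\mr e^{\msf t((\xi_\msf e,\phi^2)-\ms S(\phi))},
\]
where the last inequality is the standard Jensen-type bound for self-adjoint semigroups applied via the spectral theorem (which is tight on eigenfunctions). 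Raising to the $p$-th power, taking $\xi_\msf e$-expectation, and evaluating the centered Gaussian moment generating function explicitly,
\[
\langle\mr e^{p\msf t(\xi_\msf e,\phi^2)}\rangle = \exp\!\Big(\tfrac{(p\msf t)^2}{2}\iint\phi^2(x)\ga_\msf e(x-y)\phi^2(y)\d x\dd y\Big).
\]
Substituting the rescaling and using $\ga_\msf e(\al u) = \al^{-\om}\ga_{\msf e/\al}(u)$ just as in the case-by-case computations that gave \eqref{Equation: General Exponential Moment}, the exponent collapses to $H_\msf e(p\msf t) - \be_\msf e(p\msf t)\big(\ms S(\phi_0) + \bs J^p(\phi_0^2)\big) + o(\be_\msf e(p\msf t))$; the prefactor $M^{-2p}$ contributes only $O(\log \al)=o(\be_\msf e(p\msf t))$. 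Dividing by $\be_\msf e(p\msf t)$ and optimizing $\phi_0$ yields $\liminf A \geq -\bs\chi^p_R$.

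Combining the two one-sided bounds using $\liminf(A) - \liminf(B) \geq \liminf(A) - \limsup(B)$ gives
\[
\liminf A - \liminf B \geq -\bs\chi^p_R + \bs\chi^p = \bs\chi^p - \bs\chi^p_R \geq p(\bs\chi^p-\bs\chi^p_R),
\]
which is exactly the claim. The main technical obstacle lies entirely in the Gaussian MGF step: verifying the claimed asymptotic $H_\msf e(p\msf t) - \be_\msf e(p\msf t)(\ms S(\phi_0) + \bs J^p(\phi_0^2)) + o(\be_\msf e(p\msf t))$ requires treating each of the cases Sub-1/2/3, Crt-1/2, and Sup separately with the appropriate scaling functions from Definition \ref{Definition: Table}. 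Case Sub-3 is the most delicate because $\ga_{\msf e/\al}$ does not converge uniformly to $\ga_0$; there one mimics Proposition \ref{Proposition: Sub-3 LDP}, using the monotonicity $J_{\msf e/\al} \downarrow J_0$ and the convergence of infima \eqref{Equation: Sub-3 LDP 2} to pass to the limit.
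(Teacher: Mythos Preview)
Your proof is correct and takes a genuinely different route from the paper's. The paper invokes a spectral inequality from \cite{GartnerKonig} (their (2.11)--(2.12)), namely
\[
\Big\langle\big( u^{\xi_\msf e}_{R\al_\msf e(p\msf t)}(\msf t,\cdot),\mathbbm 1\big)^p\Big\rangle
\geq(2R\al_\msf e(p\msf t))^{-d}\left(\frac{\Big\langle\big( u^{\xi_\msf e}_{R\al_\msf e(p\msf t)}(p\msf t,\cdot),\mathbbm 1\big)\Big\rangle}{\left\langle\sum_{k}\mr e^{p\msf t\la^{\xi_\msf e}_{k}(Q_{R\al_\msf e(p\msf t)})}\right\rangle}\right)^p\Big\langle\big( u^{\xi_\msf e}_{R\al_\msf e(p\msf t)}(p\msf t,\cdot),\mathbbm 1\big)\Big\rangle,
\]
and then feeds in the already-established bounds \eqref{Equation: Averaged Asymptotics - Lower} and \eqref{Equation: Averaged Asymptotics with alpha - Trace} for the numerator and denominator on the right. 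You instead decouple the two sides entirely: you prove $\liminf A\ge-\bs\chi^p_R$ directly via a single test function and the semigroup Jensen bound $(T^V_\msf t\phi,\phi)\ge\mr e^{\msf t((V,\phi^2)-\ms S(\phi))}$, and pair this with $\limsup B\le-\bs\chi^p$ from \eqref{Equation: Averaged Asymptotics with alpha - Upper}. The reduction $p(\bs\chi^p-\bs\chi^p_R)\le\bs\chi^p-\bs\chi^p_R$ (valid since the quantity is nonpositive and $p\ge1$) then closes the argument.

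The trade-off: your route is more elementary in that it avoids the Varadhan step behind \eqref{Equation: Averaged Asymptotics - Lower}, replacing the occupation-measure large deviations by a single test function; the Gaussian MGF step you describe is exactly Proposition~\ref{Proposition: Smooth LDP} (or Proposition~\ref{Proposition: Sub-3 LDP} in case Sub-3) applied at the deterministic measure $\mu=\phi_0^2$ with $\msf d=0$, so no new case analysis is actually needed. The paper's route is more modular once Lemmas~\ref{Lemma: Averaged Asymptotics 1}--\ref{Lemma: Averaged Asymptotics 2} are in place, and it yields the lemma in the sharper form written (with the factor $p$ in front rather than $1$), though as you observe this makes no difference for the downstream application. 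One small point worth making explicit: for $M=\sup\phi<\infty$ you need $\phi_0$ bounded, so take $\phi_0$ smooth and compactly supported; this suffices to approximate $\bs\chi^p_R$ by the density argument underlying Lemma~\ref{Lemma: Lower Bound Variational Limit}.
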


With these results in hand, \eqref{Equation: All Moments Lower} for $p\geq1$ is proved as follows:
If we combine Lemma \ref{Lemma: Jensen Moment Lower Bound}, Lemma \ref{Lemma: p to 1 Moment Lower Bound},
and \eqref{Equation: Averaged Asymptotics - Lower}, then we obtain that
\[\liminf_{\msf m\to\infty}\frac{\log\big\langle u^{\xi_\msf e}(\msf t,x)^p\big\rangle-H_\msf e(p\msf t)}{\be_\msf e(p\msf t)}
\geq p\big(\bs\chi^p-\bs\chi^p_R\big)-\bs\chi^p_R\]
for every $x\in\mbb R^d$, $p\geq1$ and $R>0$.
We then obtain \eqref{Equation: All Moments Lower} for all $p\geq1$ by taking $R\to\infty$
using the following:

\begin{lemma}
\label{Lemma: Lower Bound Variational Limit}
For every $p>0$ and $R'\in(0,\infty]$, one has
\[\lim_{R\nearrow R'}\bs\chi^p_R=\bs\chi^p_{R'},\]
where we use the convention that $\bs\chi^p_\infty=\bs\chi^p$.
\end{lemma}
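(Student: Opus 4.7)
The plan is to prove both inequalities, starting with the easy one. Since enlarging $R$ enlarges the class of admissible $f$ in the definition \eqref{Equation: Chi_R}, the map $R\mapsto\bs\chi^p_R$ is non-increasing, so $\lim_{R\nearrow R'}\bs\chi^p_R$ exists and trivially satisfies $\lim_{R\nearrow R'}\bs\chi^p_R\geq\bs\chi^p_{R'}$. The work is therefore in the reverse direction, which I would establish by exhibiting, for an arbitrary near-minimizer of $\bs\chi^p_{R'}$, an explicit approximating family of competitors supported in $Q_R$ for $R<R'$ whose energy tends to that of the near-minimizer as $R\nearrow R'$.

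When $R'<\infty$, given $\eta>0$, pick $f^\star\in H^1(\mbb R^d)$ with $\|f^\star\|_2=1$, $\mr{supp}(f^\star)\subset Q_{R'}$, and $\ka\ms S(f^\star)+\bs J^p((f^\star)^2)<\bs\chi^p_{R'}+\eta$. For $R<R'$ let $\la_R:=R'/R>1$ and define the rescaled competitor $f_R(x):=\la_R^{d/2}f^\star(\la_R x)$, so that $\|f_R\|_2=1$ and $\mr{supp}(f_R)\subset Q_R$. A direct change of variables gives $\ms S(f_R)=\la_R^2\ms S(f^\star)$, which converges to $\ms S(f^\star)$ as $\la_R\to 1$. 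For the nonlocal term I would argue case by case: when $\bs J^p=J_\infty$ the quadratic scaling of the Hessian yields $J_\infty(f_R^2)=\la_R^{-2}J_\infty((f^\star)^2)$; when $\bs J^p=J_0$ (case Sub-3), the identity $\ga(x/\la_R)=\la_R^\om\ga(x)$ from Remark \ref{Remark: Scaling Parameter} gives $J_0(f_R^2)=\la_R^\om J_0((f^\star)^2)$; and when $\bs J^p$ is built from $\ga_c$ with $c\in(0,\infty)$ (cases Sub-2 and Crt-2), the kernel $\ga_c$ is bounded and continuous, so dominated convergence applied to $\ga_c((u-v)/\la_R)\to\ga_c(u-v)$ against $(f^\star)^2(u)(f^\star)^2(v)\d u\dd v$ yields $J_c(f_R^2)\to J_c((f^\star)^2)$. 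Combining these, $\ka\ms S(f_R)+\bs J^p(f_R^2)\to\ka\ms S(f^\star)+\bs J^p((f^\star)^2)$, and since $\eta>0$ was arbitrary we conclude $\lim_{R\nearrow R'}\bs\chi^p_R\leq\bs\chi^p_{R'}$.

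When $R'=\infty$, the same strategy applies with truncation replacing rescaling. Pick a near-minimizer $f^\star$ of $\bs\chi^p_\infty$ (which we may assume has finite functional, else there is nothing to prove), fix a smooth cutoff $\phi_n\in C_0^\infty(\mbb R^d)$ with $\phi_n\equiv 1$ on $Q_{n-1}$ and $\mr{supp}(\phi_n)\subset Q_n$, and set $f_n:=f^\star\phi_n/\|f^\star\phi_n\|_2$. A standard density argument gives $f_n\to f^\star$ in $H^1(\mbb R^d)$, hence $\ms S(f_n)\to\ms S(f^\star)$. The convergence $\bs J^p(f_n^2)\to\bs J^p((f^\star)^2)$ is again case-by-case: $J_\infty$ by dominated convergence (using finiteness of $\int(1+|x|^2)(f^\star)^2\d x$, implied by finiteness of the functional at $f^\star$); $J_c$ with $c\in(0,\infty)$ by boundedness of $\ga_c$ together with $L^1$-convergence of $f_n^2$; and $J_0$ via the Hardy--Littlewood--Sobolev inequality (or its fractional analogue for the fractional kernel) combined with the Sobolev embedding $H^1(\mbb R^d)\hookrightarrow L^q(\mbb R^d)$ for a suitable exponent $q$, which is valid throughout the subcritical range $0<\om<2$. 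I expect the main obstacle to be this last singular case in the $R'=\infty$ scenario, which requires identifying the correct Sobolev exponent and carefully treating the long-range tail of $\ga$ under the truncation $\phi_n$; but once the embedding is set up, the convergence reduces to a standard bilinear estimate.
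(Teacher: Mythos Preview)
Your argument is correct, but it takes a more hands-on route than the paper's. The paper handles both $R'<\infty$ and $R'=\infty$ uniformly: after the same monotonicity observation you make, it picks a near-minimizer $f$ supported in $Q_{R'}$ and approximates it by $f_R$ supported in $Q_R$ via smooth cutoff functions (the standard $H^1$-density construction), then disposes of the nonlocal term in one stroke by monotone convergence---since $(f\phi_R)^2$ increases pointwise to $f^2$ and the integrands defining each $\bs J^p$ are of fixed sign, $\bs J^p(f_R^2)\to\bs J^p(f^2)$ follows without any kernel-specific analysis.

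By contrast, you split into two regimes and treat the nonlocal term case by case: dilation scaling for $R'<\infty$ (with explicit scaling exponents for $J_\infty$ and $J_0$, dominated convergence for $J_c$), and for $R'=\infty$ cutoffs together with Hardy--Littlewood--Sobolev plus a Sobolev embedding for the singular $J_0$ term. All of this works, and your scaling computations are clean, but the HLS/embedding machinery is heavier than needed; the sign structure of the kernels already gives monotone convergence for free once you set up the cutoffs to be pointwise increasing. The paper's approach is shorter and kernel-agnostic; yours has the minor advantage that the rescaling for finite $R'$ makes the Dirichlet and nonlocal behavior fully explicit.
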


\subsection{Outline for $0<p<1$}
\label{Section: Outline for 0<p<1}

The proof of \eqref{Equation: All Moments Lower} for $0<p<1$ relies on the following:

\begin{lemma}
\label{Lemma: Moment Lower Bound for 0<p<1 - 1}
Let $x\in\mbb R^d$, $0<p<1$, and $R>0$ be arbitrary.
On the one hand,
\begin{align}
\label{Equation: Moment Lower Bound for 0<p<1 - 1 Two Orders}
\big\langle u^{\xi_\msf e}(\msf t,x)^p\big\rangle\geq\mr e^{o(\be_\msf e(p\msf t))}\left(\Big\langle\big( u^{\xi_\msf e}_{R\al_\msf e(p\msf t)}(p(\msf t-\be_\msf e(p\msf t)^{-1}),\cdot),\mathbbm 1\big)\Big\rangle-\mr e^{pH_\msf e(\msf t)-\msf e^{-\om}\msf t^{2}\cdot\msf e^{-3/4}\msf t^{1/4}}\right)
\end{align}
as $\msf m\to\infty$ in cases Sub-1, Crt-1, and Sup. On the other hand,
in Cases Sub-2, Sub-3, and Crt-2
there exists a function $\msf d=\msf d(\msf m)$ satisfying \eqref{Equation: Averaged Asymptotics 1 - Condition on d}
such that for every $\theta>1$, one has
\begin{align}
\label{Equation: Moment Lower Bound for 0<p<1 - 1 One Order}
\big\langle u^{\xi_\msf e}(\msf t,x)^p\big\rangle\geq\mr e^{o(\be_\msf e(p\msf t))}\Big\langle\big( u^{\theta^{-1}\xi_\msf e}_{R\al_\msf e(p\msf t)}(p(\msf t-\msf d),\cdot),\mathbbm 1\big)\Big\rangle^\theta\qquad\text{as }\msf m\to\infty.
\end{align}
\end{lemma}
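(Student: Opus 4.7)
The plan is to establish both bounds starting from a common reduction to integrated moments, then diverging into case-specific moment manipulations.

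Two reductions valid for all $p \in (0,1)$ form the common starting point. First, by Feynman--Kac positivity, $u^{\xi_\msf e}(\msf t,x) \geq u^{\xi_\msf e}_{R\al_\msf e(p\msf t)}(\msf t,x)$. Second, combining the reverse integral Jensen inequality (concavity of $y \mapsto y^p$)
\[
\Big(\int_{Q_R} f(y)\,\mathrm dy\Big)^p \geq |Q_R|^{p-1}\int_{Q_R} f(y)^p\,\mathrm dy
\]
with the stationarity identity $\langle u^{\xi_\msf e}_{B_{R\al}(z)}(\msf t,z)^p\rangle = \langle u^{\xi_\msf e}_{R\al_\msf e(p\msf t)}(\msf t,0)^p\rangle$ yields
\[
\langle u^{\xi_\msf e}(\msf t,x)^p\rangle \geq (2R\al_\msf e(p\msf t))^{-dp}\Big\langle\Big(\int u^{\xi_\msf e}_{R\al_\msf e(p\msf t)}(\msf t,y)\,\mathrm dy\Big)^p\Big\rangle,
\]
with the prefactor being $\mathrm e^{o(\be_\msf e(p\msf t))}$ since $\log\al_\msf e(p\msf t) = o(\be_\msf e(p\msf t))$.

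For \eqref{Equation: Moment Lower Bound for 0<p<1 - 1 Two Orders} (cases Sub-1, Crt-1, Sup), I would set $X := \int u^{\xi_\msf e}_{R\al_\msf e(p\msf t)}(\msf t,y)\,\mathrm dy$ and use the elementary pointwise bound $X^p \geq M^{p-1}X - M^{p-1}X\mathbbm 1_{\{X>M\}}$, valid for $p<1$ and any $M > 0$. The threshold $M = M(\msf m)$ is chosen so that the leading term $M^{p-1}\langle X\rangle$ matches $\langle\int u^{\xi_\msf e}_{R\al_\msf e(p\msf t)}(p(\msf t-\be_\msf e(p\msf t)^{-1}),y)\,\mathrm dy\rangle$ up to $\mathrm e^{o(\be_\msf e(p\msf t))}$ factors, via the spectral identity $\int u^V_R(t)^2\,\mathrm dy = \int u^V_R(2t)\,\mathrm dy$ and the leading-eigenvalue dominance argument behind Lemma~\ref{Lemma: p to 1 Moment Lower Bound} (the small time shift $\be_\msf e(p\msf t)^{-1}$ absorbing the heat-kernel corrections). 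The complementary tail $M^{p-1}\langle X\mathbbm 1_{\{X>M\}}\rangle$ is bounded by Cauchy--Schwarz together with the Borell--TIS Gaussian concentration inequality applied to $X$ as a Cameron--Martin Lipschitz functional of $\xi_\msf e$; a careful computation of the Lipschitz constant produces the subtracted term of the claimed form $\mathrm e^{pH_\msf e(\msf t) - \msf e^{-\om}\msf t^2\msf e^{-3/4}\msf t^{1/4}}$.

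For \eqref{Equation: Moment Lower Bound for 0<p<1 - 1 One Order} (cases Sub-2, Sub-3, Crt-2), where $H_\msf e \equiv 0$, I would instead exploit Jensen's inequality on the inner Brownian expectation in the Feynman--Kac formula: for $\theta > 1$, convexity of $y\mapsto y^\theta$ gives
\[
u^{\xi_\msf e}_{R\al_\msf e(p\msf t)}(\msf t,y) = \mbb E_y\big[\mathrm e^{\msf t(\xi_\msf e,L_\msf t)}\mathbbm 1_{\{L_\msf t \in \ms P(Q_{R\al_\msf e(p\msf t)})\}}\big] \geq u^{\theta^{-1}\xi_\msf e}_{R\al_\msf e(p\msf t)}(\msf t,y)^\theta,
\]
which combined with $\int f^\theta\,\mathrm dy \geq |Q|^{1-\theta}(\int f\,\mathrm dy)^\theta$ yields the pointwise bound
\[
\int u^{\xi_\msf e}_{R\al_\msf e(p\msf t)}(\msf t,y)\,\mathrm dy \geq |Q_{R\al_\msf e(p\msf t)}|^{1-\theta}\Big(\int u^{\theta^{-1}\xi_\msf e}_{R\al_\msf e(p\msf t)}(\msf t,y)\,\mathrm dy\Big)^\theta.
\]
Raising to $p$, taking $\langle\cdot\rangle$, and applying a further convexity-based Jensen step to convert the outer $p$-th moment into the $\theta$-th power of a first moment (together with the time-transfer $\msf t \to p(\msf t-\msf d)$ via the same spectral argument as above), I arrive at the target $\langle\int u^{\theta^{-1}\xi_\msf e}_{R\al_\msf e(p\msf t)}(p(\msf t-\msf d),y)\,\mathrm dy\rangle^\theta$, with $\msf d = \msf d(\msf m)$ chosen to satisfy \eqref{Equation: Averaged Asymptotics 1 - Condition on d} and absorb the heat-kernel corrections from that spectral transfer.

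The principal obstacle in both parts is the spectral/time-transformation step that recovers a first moment at the smaller time $p(\msf t-\msf d)$ from a $p$-th or $\theta$-th moment of the integrated truncated solution at time $\msf t$. This requires sharp control of the spectral sum of $u^{\xi_\msf e}_{R\al}$ by its leading eigenvalue, combined with Gaussian concentration to discard the atypical configurations of $\xi_\msf e$. In the two-orders case, matching the precise subtracted term additionally demands a careful tracking of the Borell--TIS Lipschitz constant of $X$ in terms of $\msf e$ and $\msf t$.
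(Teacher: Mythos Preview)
Your common reduction is the wrong way round. For $0<p<1$ the concavity Jensen inequality you quote, $\bigl(\int_Q f\bigr)^p\geq |Q|^{p-1}\int_Q f^p$, gives an \emph{upper} bound on $\int_Q f^p$, not a lower bound. Combined with stationarity you obtain
\[
\big\langle u^{\xi_\msf e}(\msf t,x)^p\big\rangle=(2R\al_\msf e(p\msf t))^{-d}\Big\langle\int_{Q_{R\al_\msf e(p\msf t)}}u^{\xi_\msf e}(\msf t,y)^p\,\mathrm dy\Big\rangle\leq(2R\al_\msf e(p\msf t))^{-dp}\Big\langle\Big(\int u^{\xi_\msf e}(\msf t,y)\,\mathrm dy\Big)^p\Big\rangle,
\]
which is useless as a lower bound. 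There is no Jensen/stationarity route from the pointwise $p$-th moment to a lower bound by the $p$-th moment of the integral when $p<1$; this is exactly the obstruction that forces a different argument from the $p\geq 1$ case.

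The paper avoids this by never forming $\langle(\int u_{R\al}(\msf t,\cdot))^p\rangle$ at all. It works directly at the Feynman--Kac level with the \emph{relaxed} constraint $A_{\msf d,\msf t}(Q_{R\al_\msf e(p\msf t)})=\{W_s\in Q_{R\al_\msf e(p\msf t)}\text{ for }s\in[\msf d,\msf t]\}$, which only restricts the path after time $\msf d$. The Markov property at time $\msf d$ then produces the spatial integral (weighted by the heat kernel), and a heat-kernel lower bound gives the $\mr e^{o(\be_\msf e(p\msf t))}$ factor. For \eqref{Equation: Moment Lower Bound for 0<p<1 - 1 Two Orders} the $[0,\msf d]$ piece is handled by splitting on $\{\int_0^{\msf d}\xi_\msf e(W_s)\,\mathrm ds\geq -\msf c\}$ and using $(x-y)^p\geq x^p-y^p$; for \eqref{Equation: Moment Lower Bound for 0<p<1 - 1 One Order} the $[0,\msf d]$ piece is isolated via the reverse H\"older inequality with exponents $\theta^{-1}$ and $\theta'=-1/(\theta-1)$. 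The time transfer $\msf t-\msf d\to p(\msf t-\msf d)$ is then a separate spectral Jensen step applied to $\langle(\int u_{R\al}(\msf t-\msf d,\cdot))^p\rangle$.

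A second gap: your Borell--TIS step cannot work as stated. The functional $X=\int u^{\xi_\msf e}_{R\al_\msf e(p\msf t)}(\msf t,y)\,\mathrm dy$ is exponential in $\xi_\msf e$, not Lipschitz in the Cameron--Martin norm, so Gaussian concentration does not apply directly to $X$. The paper's subtracted term $\mr e^{pH_\msf e(\msf t)-\msf e^{-\om}\msf t^2\cdot\msf e^{-3/4}\msf t^{1/4}}$ arises from an entirely different mechanism: a Jensen inequality applied to a carefully weighted time average, after inserting the exponential Chebyshev factor $\mr e^{-\msf h\msf c}$ with explicit choices $\msf c=\msf e^{1/4}\msf t^{-1/4}\be_\msf e(p\msf t)$ and $\msf h=\msf e^{-\om/2}\msf t/p^{5/2}$.
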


We may now prove \eqref{Equation: All Moments Lower} for all $0<p<1$:
Consider first Cases Sub-1, Crt-1, and Sup.
Since $\be_\msf e(p\msf t)^{-1}=o(1)$,
it is clear that $\msf d=\be_\msf e(p\msf t)^{-1}$ satisfies 
the first two limits in \eqref{Equation: Averaged Asymptotics 1 - Condition on d}.
As for the third, if we combine
\eqref{Equation: e bounded and t away from zero}, Definition \ref{Definition: Table},
and the fact that we always have $\msf e\to0$ or $\msf t\to\infty$,
\[\frac{H_\msf e(p(\msf t-\be_\msf e(p\msf t)^{-1}))-H_\msf e(p\msf t)}{\be_\msf e(p\msf t)}=\frac{\ga_1(0)\msf e^{\om/2+3}}{2 p^{5/2}\msf t^{9/2}}-\frac{\ga_1(0)\msf e^2}{p\msf t^2}=o(1).\]
Thus, if we apply \eqref{Equation: Averaged Asymptotics - Lower} with $\msf d=\be_\msf e(p\msf t)^{-1}$ to
\eqref{Equation: Moment Lower Bound for 0<p<1 - 1 Two Orders}, then we obtain the lower bound
\begin{multline}
\label{Equation: Moment Lower Bound for 0<p<1 - 1 Two Orders reduced}
\big\langle u^{\xi_\msf e}(\msf t,x)^p\big\rangle\geq\mr e^{o(\be_\msf e(p\msf t))}\left(\mr e^{H_\msf e(p\msf t)-\bs\chi^p_R\be_\msf e(p\msf t)(1+o(1))}-\mr e^{pH_\msf e(\msf t)-\msf e^{-\om}\msf t^{2}\cdot\msf e^{-3/4}\msf t^{1/4}}\right)\\
=\mr e^{H_\msf e(p\msf t)-\bs\chi^p_R\be_\msf e(p\msf t)(1+o(1))}\left(1-\mr e^{pH_\msf e(\msf t)-H_\msf e(p\msf t)+\bs\chi^p_R\be_\msf e(p\msf t)(1+o(1))-\msf e^{-\om}\msf t^{2}\cdot\msf e^{-3/4}\msf t^{1/4}}\right)
\end{multline}
for large $\msf m$.
In Cases Sub-1, Crt-1, and Sup, for every $q>0$, it is the case that
\begin{align}
\label{Equation: Faster than H and beta}
\lim_{\msf m\to\infty}\frac{H_\msf e(q\msf t)}{\msf e^{-\om}\msf t^{2}\cdot\msf e^{-3/4}\msf t^{1/4}}=0
\qquad\text{and}\qquad
\lim_{\msf m\to\infty}\frac{\be_\msf e(q\msf t)}{\msf e^{-\om}\msf t^{2}\cdot\msf e^{-3/4}\msf t^{1/4}}=0;
\end{align}
we therefore obtain \eqref{Equation: All Moments Lower} for all $0<p<1$ from \eqref{Equation: Moment Lower Bound for 0<p<1 - 1 Two Orders reduced}, as well as Lemma \ref{Lemma: Lower Bound Variational Limit}
to take the limit $R\to\infty$ in the variational constant $\bs\chi^p_R$.

We now conclude the proof of \eqref{Equation: All Moments Lower} for $0<p<1$
by dealing with cases Sub-2, Sub-3, and Crt-2. In these cases, we apply
\eqref{Equation: Averaged Asymptotics - Lower} (except that we replace
the noise $\xi_\msf e$ by $\theta^{-1}\xi_\msf e$) to \eqref{Equation: Moment Lower Bound for 0<p<1 - 1 One Order}.
This yields that for every $\theta>1$, one has
\[\liminf_{\msf m\to\infty}\frac{\log\big\langle u^{\xi_\msf e}(\msf t,x)^p\big\rangle}{\be_\msf e(p\msf t)}
\geq \theta\bs\chi^p_R(\theta^{-1}),\]
where for any constant $c>0$, we define
\[\bs\chi^p_R(c):=\inf_{\substack{f\in H^1(\mbb R^d),~\|f\|_2=1\\\mr{supp}(f)\subset Q_R}}\Big(\ka\ms S(f)+c^2\bs J^p(f^2)\Big).\]
Recalling that $H_\eps=0$ in cases Sub-2, Sub-3, and Crt-2,
we then obtain \eqref{Equation: All Moments Lower} by applying the following lemma,
together with the limits $R\to\infty$ and $\theta\searrow1$ (taken in that order):

\begin{lemma}
\label{Lemma: Holder Variational Convergence R}
In cases Sub-2, Sub-3, and Crt-2, for every $p>0$, we have that
\begin{align}
\label{Equation: Convergence of theta variational}
\lim_{c\to1}\lim_{R\to\infty}\bs\chi^p_R(c)=\bs\chi^p.
\end{align}
\end{lemma}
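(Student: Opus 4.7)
The strategy is to split the iterated limit by introducing the intermediate quantity
\[\bs\chi^p(c) := \inf_{\substack{f \in H^1(\mbb R^d) \\ \|f\|_2 = 1}} \big(\ms S(f) + c^2 \bs J^p(f^2)\big),\]
so that $\bs\chi^p(1) = \bs\chi^p$, and to establish separately the two limits $\lim_{R\to\infty}\bs\chi^p_R(c) = \bs\chi^p(c)$ and $\lim_{c\to 1}\bs\chi^p(c) = \bs\chi^p$. I plan to treat case Sub-3 by a scaling reduction to Lemma \ref{Lemma: Lower Bound Variational Limit}, and cases Sub-2 and Crt-2 together by direct continuity estimates that exploit the boundedness of the corresponding mollified kernels.

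For Sub-3, in which $\bs J^p = J_0$, the substitution $g(y) := \la^{d/2} f(\la y)$ for any $\la > 0$ preserves $\|g\|_2 = \|f\|_2$, carries support in $Q_R$ to support in $Q_{R/\la}$, and, via the homogeneity $\ga(\la^{-1}z) = \la^\om \ga(z)$, gives $\ms S(f) = \la^{-2}\ms S(g)$ and $J_0(f^2) = \la^{-\om}J_0(g^2)$. Substituting into $\bs\chi^p_R(c)$ and choosing $\la = c^{-2/(2-\om)}$ so that the two scaling factors $\la^{-2}$ and $c^2\la^{-\om}$ agree produces the identity
\[\bs\chi^p_R(c) = c^{4/(2-\om)}\,\bs\chi^p_{R c^{2/(2-\om)}}.\]
Sending $R\to\infty$ and invoking Lemma \ref{Lemma: Lower Bound Variational Limit} yields $\lim_{R\to\infty}\bs\chi^p_R(c) = c^{4/(2-\om)}\bs\chi^p$, after which $c\to 1$ delivers $\bs\chi^p$.

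For Sub-2 and Crt-2 the mollified kernels $\ga_{p^{1/(2-\om)}\mf c}$ and $\ga_1$ are bounded, continuous, and maximized at the origin. Consequently, for every $f$ with $\|f\|_2 = 1$,
\[|\bs J^p(f^2)| \leq M,\qquad M := \tfrac12\ga_{p^{1/(2-\om)}\mf c}(0)\ \text{(Sub-2)},\quad M := \tfrac12 p t\,\ga_1(0)\ \text{(Crt-2)}.\]
For the $R\to\infty$ limit I would approximate any $f$ by $f_R := \phi_R f/\|\phi_R f\|_2$, where $\phi_R$ is a smooth cutoff supported in $Q_R$ equal to $1$ on $Q_{R-1}$; then $f_R \to f$ in $H^1$ (so $\ms S(f_R) \to \ms S(f)$), and the kernel bound gives
\[|\bs J^p(f_R^2) - \bs J^p(f^2)| \leq M\|f_R^2 - f^2\|_1 \leq 2M\|f_R - f\|_2 \longrightarrow 0.\]
For the $c\to 1$ limit, testing a near-minimizer of $\bs\chi^p$ against $\bs\chi^p(c)$ (and conversely), combined with $|\bs J^p| \leq M$, gives $|\bs\chi^p(c) - \bs\chi^p| \leq |c^2-1|M \to 0$.

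The main point to verify is the finiteness $\ga_c(0) < \infty$ used in Sub-2 and Crt-2, which reduces to integrability of $\ga$ against the Gaussian density $p_c$ and holds because $\om < d$ for Riesz noise, $\om_i < 1$ for each coordinate of the fractional kernel, and $p_c(0) < \infty$ for white noise in the relevant dimension $d = 1$. The Sub-3 scaling identity is algebraic, so I anticipate no substantive obstacle beyond this bookkeeping.
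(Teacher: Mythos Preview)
Your proposal is correct and follows essentially the same approach as the paper: split into the intermediate quantity $\bs\chi^p(c)$, handle the $c\to1$ limit in Sub-2/Crt-2 via the uniform bound $|\bs J^p(f^2)|\leq M$ coming from boundedness of the mollified kernel, and handle Sub-3 via the scaling identity $\bs\chi^p(c)=c^{4/(2-\om)}\bs\chi^p$. The only cosmetic differences are that the paper dispatches the $R\to\infty$ limit in all cases by citing Lemma~\ref{Lemma: Lower Bound Variational Limit} (with $\xi_{\msf e}$ replaced by $c\xi_{\msf e}$) rather than redoing a cutoff argument, and it applies the Sub-3 scaling to $\bs\chi^p(c)$ after the $R$-limit rather than to $\bs\chi^p_R(c)$ before it; your version is equally valid.
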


\subsection{Proof of Lemma \ref{Lemma: Jensen Moment Lower Bound}}
\label{Section: Lower Bound Lemmas b}

Since $u^{\xi_\eps}(t,\cdot)$ is stationary for every $\eps,t>0$, we can write
\begin{align}
\label{Equation: Stationary Integral}
\big\langle u^{\xi_\msf e}(\msf t,x)^p\big\rangle=\big\langle u^{\xi_\msf e}(\msf t,0)^p\big\rangle
=\left\langle\big(2R\al_\msf e(p\msf t)\big)^{-d}
\int_{Q_{R\al_\msf e(p\msf t)}}u^{\xi_\msf e}(\msf t,y)^p\d y\right\rangle.
\end{align}
If $p\geq1$, then Jensen's inequality yields
\[\big(2R\al_\msf e(p\msf t)\big)^{-d}
\int_{Q_{R\al_\msf e(p\msf t)}}u^{\xi_\msf e}(\msf t,y)^p\d y
\geq\left(\big(2R\al_\msf e(p\msf t)\big)^{-d}
\int_{Q_{R\al_\msf e(p\msf t)}}u^{\xi_\msf e}(\msf t,y)\d y\right)^p.\]
We then get Lemma \ref{Lemma: Jensen Moment Lower Bound} by
noting that $u^{\xi_\msf e}(\msf t,\cdot)\geq u^{\xi_\msf e}_{R\al_\msf e(p\msf t)}(\msf t,\cdot)$
and $\al_\msf e(p\msf t)=\mr e^{o(\be_\msf e(p\msf t))}$.

\subsection{Proof of Lemma \ref{Lemma: p to 1 Moment Lower Bound}}

By \cite[(2.11) and (2.12)]{GartnerKonig} , if  $p\geq1$, then we have that
\[\Big\langle\big( u^{\xi_\msf e}_{R\al_\msf e(p\msf t)}(\msf t,\cdot),\mathbbm 1\big)^p\Big\rangle
\geq(2R\al_\msf e(p\msf t))^{-d}\left(\frac{\Big\langle\big( u^{\xi_\msf e}_{R\al_\msf e(p\msf t)}(p\msf t,\cdot),\mathbbm 1\big)\Big\rangle}{\left\langle\sum_{k=1}^\infty\mr e^{p\msf t\la^{\xi_\msf e}_{k}(Q_{R\al_\msf e(p\msf t)})}\right\rangle}\right)^p\Big\langle\big( u^{\xi_\msf e}_{R\al_\msf e(p\msf t)}(p\msf t,\cdot),\mathbbm 1\big)\Big\rangle.\]
We then obtain Lemma \ref{Lemma: p to 1 Moment Lower Bound} by an application of \eqref{Equation: Averaged Asymptotics - Lower}
and \eqref{Equation: Averaged Asymptotics - Trace}, and $\al_\msf e(p\msf t)=\mr e^{o(\be_\msf e(p\msf t))}$.

\subsection{Proof of Lemma \ref{Lemma: Lower Bound Variational Limit}}

Note that $\bs\chi^p_{R}\geq\bs\chi^p_{R'}$ whenever $0<R<R'$. Thus, it suffices to show that for every $f\in H^1(\mbb R^d)$ such that
$\|f\|_2=1$, $\mr{supp}(f)\subset Q_{R'}$ (using the convention that $Q_\infty=\mbb R^d$),
and $\ka\ms S(f)+\bs J^p(f^2)$ is finite, there exists a sequence $f_R$ of functions such that for each $R$,
one has $\mr{supp}(f)\subset Q_R$ and $\|f_R\|_2=1$, and moreover
\[\lim_{R\nearrow R'}=\ka\ms S(f_R)+\bs J^p(f_R^2)=\ka\ms S(f)+\bs J^p(f^2).\]
This follows from the standard proof that smooth and compactly supported functions are dense in $H^1(\mbb R^d)$
using smooth cutoff functions, together with the monotone convergence theorem for the convergence
$\bs J^p(f^2_R)\to \bs J^p(f^2)$.

\subsection{Proof of Lemma \ref{Lemma: Moment Lower Bound for 0<p<1 - 1}}

We begin with a general proposition:

\begin{proposition}
\label{Proposition: Average Lower Bound for 0<p<1}
Let $0<p<1$, $R>0$, and $\theta\geq1$. For any function $\msf d=\msf d(\msf m)$ such that $0\leq\msf d<\msf t$, one has
\[\Big\langle\big( u^{\theta^{-1}\xi_\msf e}_{R\al_\msf e(p\msf t)}(\msf t-\msf d,\cdot),\mathbbm 1\big)^p\Big\rangle
\geq\mr e^{o(\be_\msf e(p\msf t))}\Big\langle\big( u^{\theta^{-1}\xi_\msf e}_{R\al_\msf e(p\msf t)}(p(\msf t-\msf d),\cdot),\mathbbm 1\big)\Big\rangle\qquad\text{as }\msf m\to\infty.\]
\end{proposition}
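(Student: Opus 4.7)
The plan is to prove the inequality pointwise in the realization of $\xi$ (hence in $V:=\theta^{-1}\xi_\msf e$) and then take the expectation $\langle\cdot\rangle$. Set $r:=R\al_\msf e(p\msf t)$ and $s:=\msf t-\msf d$, and pair the spectral expansion \eqref{Equation: Spectral Expansion} with $\mathbbm 1$ to obtain
\[X:=\big(u^V_r(s,\cdot),\mathbbm 1\big)=\sum_{k=1}^{\infty}\mr e^{s\la^V_k(Q_r)}\big(e^V_k(Q_r),\mathbbm 1\big)^2.\]
The decisive observation is that, since $(e^V_k(Q_r))_{k\geq 1}$ is an orthonormal basis of $L^2(Q_r)$ and $\mathbbm 1_{Q_r}\in L^2(Q_r)$, Parseval's identity yields
\[\sum_{k=1}^{\infty}\big(e^V_k(Q_r),\mathbbm 1\big)^2=\|\mathbbm 1_{Q_r}\|_2^2=\vol(Q_r).\]
Writing $B:=\vol(Q_r)$ and $\mu_k:=\big(e^V_k(Q_r),\mathbbm 1\big)^2/B$, the sequence $(\mu_k)_{k\geq 1}$ is a probability distribution on $\mbb N$, and $X=B\sum_{k}\mr e^{s\la^V_k(Q_r)}\mu_k$ is a positive multiple of an average.

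Next, since $0<p<1$, the function $x\mapsto x^p$ is concave, so Jensen's inequality applied with respect to the probability weights $(\mu_k)_k$ gives
\[X^p=B^p\Bigl(\sum_{k}\mr e^{s\la^V_k(Q_r)}\mu_k\Bigr)^p\geq B^p\sum_{k}\mr e^{ps\la^V_k(Q_r)}\mu_k=B^{p-1}\big(u^V_r(ps,\cdot),\mathbbm 1\big).\]
This pointwise bound is valid for every realization of $\xi$; taking $\langle\cdot\rangle$ on both sides yields the statement of the proposition, with the explicit constant $\vol(Q_r)^{p-1}=(2R\al_\msf e(p\msf t))^{d(p-1)}$ in place of $\mr e^{o(\be_\msf e(p\msf t))}$.

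To finish, it remains to verify that $\vol(Q_r)^{p-1}=\mr e^{o(\be_\msf e(p\msf t))}$. This is a direct consequence of the identity $\be_\msf e(p\msf t)=p\msf t/\al_\msf e(p\msf t)^2$ (see Definition \ref{Definition: Table}) together with the fact that $\be_\msf e(p\msf t)\to\infty$ under each of Assumptions \ref{Assumption: Sub}, \ref{Assumption: Crt}, and \ref{Assumption: Sup}, from which one deduces $\log\al_\msf e(p\msf t)=o(\be_\msf e(p\msf t))$ and hence the claim, since $d(p-1)$ is a fixed negative constant. No serious obstacle is anticipated here; the only clever step is the observation that in the regime $0<p<1$ the ``wrong-direction'' Jensen inequality can be turned into an ally once one represents $X$ as a constant multiple of an average (via Parseval) rather than as a general positive sum, because then the unit-mass weights $\mu_k$ are purely geometric and the resulting volume prefactor $\vol(Q_r)^{p-1}$ is polynomial in $r$, hence sub-exponential on the scale $\be_\msf e(p\msf t)$.
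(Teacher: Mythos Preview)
Your proof is correct and follows essentially the same approach as the paper: spectral expansion, normalization of the weights $(e^V_k,\mathbbm 1)^2$ into a probability measure on $\mbb N$, and Jensen's inequality for the concave map $x\mapsto x^p$. Your use of Parseval's identity to obtain the exact normalization $\vol(Q_r)=(2r)^d$ is in fact slightly sharper than the paper's, which normalizes by $(2r)^{2d}$ (this only yields a sub-probability, but the concave Jensen inequality still holds in that case, and either prefactor is $\mr e^{o(\be_\msf e(p\msf t))}$).
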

\begin{proof}
For any $\theta\geq1$, the function
\[f(k):=\frac{\big(e^{\theta^{-1}\xi_\msf e}_k(Q_{R\al_\msf e(p\msf t)}),\mathbbm 1\big)^2}{(2R\al_\msf e(p\msf t))^{2d}},\qquad k\in\mbb N\]
is a probability measure on $\mbb N$.
If we use a spectral expansion and then apply Jensen's inequality
to this probability measure (since $0<p<1$), we are led to
\begin{align*}
\big(u^{\theta^{-1}\xi_\msf e}_{R\al_\msf e(p\msf t)}(\msf t-\msf d,\cdot),\mathbbm 1\big)^p
&=\left(\sum_{k=1}^\infty\mr e^{(\msf t-\msf d)\la^{\theta^{-1}\xi_\msf e}_k(Q_{R\al_\msf e(p\msf t)})}\big(e^{\theta^{-1}\xi_\msf e}_k(Q_{R\al_\msf e(p\msf t)}),\mathbbm 1\big)^2\right)^p
\\&\geq(2R\al_\msf e(p\msf t))^{2d(p-1)}\sum_{k=1}^\infty\mr e^{p(\msf t-\msf d)\la^{\theta^{-1}\xi_\msf e}_k(Q_{R\al_\msf e(p\msf t)})}\big(e^{\theta^{-1}\xi_\msf e}_k(Q_{R\al_\msf e(p\msf t)}),\mathbbm 1\big)^2\\
&=\mr e^{o(\be_\msf e(p\msf t))}\big(u^{\theta^{-1}\xi_\msf e}_{R\al_\msf e(p\msf t)}(p(\msf t-\msf d),\cdot),\mathbbm 1\big),
\end{align*}
concluding the proof.
\end{proof}

For every $0\leq\msf d<\msf t$ and $r>0$, we denote the event
\begin{align}
\label{Equation: A Event}
A_{\msf d,\msf t}(Q_{r}):=\{W_s\in Q_{r}\text{ for every }s\in[\msf d,\msf t]\}.
\end{align}
By stationarity and the Feynman-Kac formula, we have that
\[\big\langle u^{\xi_\msf e}(\msf t,x)^p\big\rangle=\big\langle u^{\xi_\msf e}(\msf t,0)^p\big\rangle\geq
\mbb E_0\left[\exp\left(\int_{0}^{\msf t}\xi_\msf e(W_s)\d s\right)\mathbbm 1_{A_{\msf d,\msf t}(Q_{R\al_\msf e(p\msf t)})}\right].\]
From this point on, we split the proof into two steps.

\subsubsection{Proof of \eqref{Equation: Moment Lower Bound for 0<p<1 - 1 Two Orders}}

Suppose that we are in one of cases Sub-1, Crt-1, or Sup.
For simplicity, throughout the proof of \eqref{Equation: Moment Lower Bound for 0<p<1 - 1 Two Orders}, we denote the functions
\begin{align}
\label{Equation: d, c and h}
\msf d=\be_\msf e(p\msf t)^{-1},\qquad
\msf c=\msf e^{1/4}\msf t^{-1/4}\be_\msf e(p\msf t)
=p^{3/2}\msf e^{-(3+2\om)/4}\msf t^{5/4},\qquad
\text{and}\qquad
\msf h=\frac{\msf e^{-\om/2}\msf t}{p^{5/2}}.
\end{align}
Then,
\begin{align*}
&\mbb E_0\left[\exp\left(\int_{0}^{\msf t}\xi_\msf e(W_s)\d s\right)\mathbbm 1_{A_{\msf d,\msf t}(Q_{R\al_\msf e(p\msf t)})}\right]\\
&\geq\mbb E_0\left[\exp\left(\int_{0}^{\msf t}\xi_\msf e(W_s)\d s\right)\mathbbm 1_{A_{\msf d,\msf t}(Q_{R\al_\msf e(p\msf t)})}\mathbbm 1_{\{\int_0^{\msf d}\xi_\msf e(W_s)\d s\geq-\msf c\}}\right]\\
&\geq\mr e^{-\msf c}\mbb E_0\left[\exp\left(\int_{\msf d}^{\msf t}\xi_\msf e(W_s)\d s\right)\mathbbm 1_{A_{\msf d,\msf t}(Q_{R\al_\msf e(p\msf t)})}\mathbbm 1_{\{\int_0^{\msf d}\xi_\msf e(W_s)\d s\geq-\msf c\}}\right]\\
&=\mr e^{-\msf c}\mbb E_0\left[\exp\left(\int_{\msf d}^{\msf t}\xi_\msf e(W_s)\d s\right)\mathbbm 1_{A_{\msf d,\msf t}(Q_{R\al_\msf e(p\msf t)})}\right]\\
&\qquad-\mr e^{-\msf c}\mbb E_0\left[\exp\left(\int_{\msf d}^{\msf t}\xi_\msf e(W_s)\d s\right)\mathbbm 1_{A_{\msf d,\msf t}(Q_{R\al_\msf e(p\msf t)})}\mathbbm 1_{\{\int_0^{\msf d}\xi_\msf e(W_s)\d s<-\msf c\}}\right].
\end{align*}
Since it is always the case that $\msf e\to0$ or $\msf t\to\infty$,
one has $\msf c=o(\be_\msf e(p\msf t))$. Therefore,
given Proposition \ref{Proposition: Average Lower Bound for 0<p<1} and
the fact that $(x-y)^p\geq x^p-y^p$ for all $0<y<x$ when $0<p<1$,
in order to prove \eqref{Equation: Moment Lower Bound for 0<p<1 - 1 Two Orders} it suffices to show that as $\msf m\to\infty$, one has
\begin{align}
\label{Equation: Moment Lower Bound for 0<p<1 - 1 Two Orders 1}
\left\langle\mbb E_0\left[\exp\left(\int_{\msf d}^{\msf t}\xi_\msf e(W_s)\d s\right)\mathbbm 1_{A_{\msf d,\msf t}(Q_{R\al_\msf e(p\msf t)})}\right]^p\right\rangle\geq
\mr e^{o(\be_\msf e(p\msf t))}\Big\langle\big( u^{\xi_\msf e}_{R\al_\msf e(p\msf t)}(\msf t-\msf d,\cdot),\mathbbm 1\big)^p\Big\rangle
\end{align}
and
\begin{align}
\label{Equation: Moment Lower Bound for 0<p<1 - 1 Two Orders 2}
\left\langle\mbb E_0\left[\exp\left(\int_{\msf d}^{\msf t}\xi_\msf e(W_s)\d s\right)\mathbbm 1_{A_{\msf d,\msf t}(Q_{R\al_\msf e(p\msf t)})}\mathbbm 1_{\{\int_0^{\msf d}\xi_\msf e(W_s)\d s<-\msf c\}}\right]^p\right\rangle\leq
\mr e^{-p\msf h\msf c+pH_\msf e(\msf t)}.
\end{align}
(Indeed, $p\msf h\msf c=\msf e^{-\om}\msf t^{2}\cdot\msf e^{-3/4}\msf t^{1/4}$.)

We begin with \eqref{Equation: Moment Lower Bound for 0<p<1 - 1 Two Orders 1}.
By the strong Markov property and definition of $A_{\msf d,\msf t}(Q_{R\al_\msf e(p\msf t)})$,
\[\mbb E_0\left[\exp\left(\int_{\msf d}^{\msf t}\xi_\msf e(W_s)\d s\right)\mathbbm 1_{A_{\msf d,\msf t}(Q_{R\al_\msf e(p\msf t)})}\right]
=\int_{R\al_\msf e(p\msf t)}\Pi_\msf d(x)u^{\xi_\msf e}_{R\al_\msf e(p\msf t)}(\msf t-\msf d,x)\d x,\]
where $(\Pi_c)_{c>0}$ is $W$'s transition density, that is,
\[\Pi_c(x):=\frac{\mr e^{-|x|^2/4\ka c}}{(4\ka c\pi)^{d/2}},\qquad c>0,~x\in\mbb R^d.\]
We then obtain \eqref{Equation: Moment Lower Bound for 0<p<1 - 1 Two Orders 1} by
noting that, since $\msf d=\mr e^{o(\be_\msf e(p\msf t))}$ and $\al_\msf e(p\msf t)^2/\msf d=o(\be_\msf e(p\msf t))$,
one has
\[\inf_{x\in Q_{R\al_\msf e(p\msf t)}}\Pi_\msf d(x)\geq\mr e^{o(\be_\msf e(p\msf t))}.\]

We now prove \eqref{Equation: Moment Lower Bound for 0<p<1 - 1 Two Orders 2}.
Given that
\[\lim_{\msf m\to\infty}(\msf h-1)\msf d=\lim_{\msf m\to\infty}\big(p^{-4}\msf e\msf t^{-1/2}-\msf e^{(2+\om)/2}(p\msf t)^{-3/2}\big)=0,\]
for large enough $\msf m$ we have that $(\msf h-1)\msf d\leq \msf t$ (recall that $\inf_{\msf m\geq0} \msf t>0$). Therefore,
\begin{align*}
&\mbb E_0\left[\exp\left(\int_{\msf d}^{\msf t}\xi_\msf e(W_s)\d s\right)\mathbbm 1_{A_{\msf d,\msf t}(Q_{R\al_\msf e(p\msf t)})}\mathbbm 1_{\{\int_0^{\msf d}\xi_\msf e(W_s)\d s<-\msf c\}}\right]\\
&\leq\mbb E_0\left[\exp\left(\int_{\msf d}^{\msf t}\xi_\msf e(W_s)\d s\right)\mathbbm 1_{\{\int_0^{\msf d}\xi_\msf e(W_s)\d s<-\msf c\}}\right]\\
&\leq\mr e^{-\msf h\msf c}\mbb E_0\left[\exp\left(\int_0^{\msf t}\big(1-(\msf h-1)\mathbbm1_{\{0\leq s\leq\msf d\}}\big)\xi_\msf e(W_s)\d s\right)\right].
\end{align*}
If we apply Jensen's inequality to the probability measure with density function
\[s\mapsto\frac{\big(1-(\msf h-1)\mathbbm1_{\{0\leq s\leq\msf d\}}\big)}{\big(\msf t-(\msf h-1)\msf d\big)},\qquad s\in[0,\msf t],\]
then we are led to
\begin{multline}
\label{Equation: Upper Bound Without Holder}
\mbb E_0\left[\exp\left(\int_{\msf d}^{\msf t}\xi_\msf e(W_s)\d s\right)\mathbbm 1_{A_{\msf d,\msf t}(Q_{R\al_\msf e(p\msf t)})}\mathbbm 1_{\{\int_0^{\msf d}\xi_\msf e(W_s)\d s<-\msf c\}}\right]\\
\leq\mr e^{-\msf h\msf c}\int_0^{\msf t}\frac{\big(1-(\msf h-1)\mathbbm1_{\{0\leq s\leq\msf d\}}\big)}{\big(\msf t-(\msf h-1)\msf d\big)}\mbb E_0\left[\mr e^{(\msf t-(\msf h-1)\msf d)\xi_\msf e(W_s)}\right]\d s.
\end{multline}
At this point, we apply Jensen's inequality to the expectation $\langle(\cdot)^p\rangle\leq\langle\cdot\rangle^p$
(as $0<p<1$) together with the fact that $\xi_\msf e$ is stationary with cumulant generating function $H_\msf e$ to obtain
\[\left\langle\mbb E_0\left[\exp\left(\int_{\msf d}^{\msf t}\xi_\msf e(W_s)\d s\right)\mathbbm 1_{A_{\msf d,\msf t}(Q_{R\al_\msf e(p\msf t)})}\mathbbm 1_{\{\int_0^{\msf d}\xi_\msf e(W_s)\d s<-\msf c\}}\right]^p\right\rangle\\
\leq\mr e^{-p\msf h\msf c+pH_\msf e(\msf t-(\msf h-1)\msf d)}.\]
The inequality \eqref{Equation: Moment Lower Bound for 0<p<1 - 1 Two Orders 2}
then follows from $H_\msf e(\msf t-(\msf h-1)\msf d)\leq H_\msf e(\msf t)$.

\subsubsection{Proof of \eqref{Equation: Moment Lower Bound for 0<p<1 - 1 One Order}}

By the reverse H\"older inequality,
for every $\theta>1$ and $0<p<1$, one has
\begin{align*}
&\left\langle\mbb E_0\left[\exp\left(\int_0^{\msf t}\xi_\msf e(W_s)\d s\right)\mathbbm 1_{A_{\msf d,\msf t}(Q_{R\al_\msf e(p\msf t)})}\right]^p\right\rangle\\
&\geq\left\langle\mbb E_0\left[\exp\left(\int_0^\msf d \theta'\xi_\msf e(W_s)\d s\right)\right]^{p/\theta'}
\mbb E_0\left[\exp\left(\int_{\msf d}^{\msf t}\theta^{-1}\xi_\msf e(W_s)\d s\right)\mathbbm 1_{A_{\msf d,\msf t}(Q_{R\al_\msf e(p\msf t)})}\right]^{\theta p}
\right\rangle\\
&\geq\left\langle\mbb E_0\left[\exp\left(\int_0^\msf d \theta'\xi_\msf e(W_s)\d s\right)\right]^p\right\rangle^{1/\theta'}\left\langle\mbb E_0\left[\exp\left(\int_{\msf d}^{\msf t}\theta^{-1}\xi_\msf e(W_s)\d s\right)\mathbbm 1_{A_{\msf d,\msf t}(Q_{R\al_\msf e(p\msf t)})}\right]^{p}
\right\rangle^{\theta},\\
&\geq\left\langle\mbb E_0\left[\exp\left(\int_0^\msf d \theta'\xi_\msf e(W_s)\d s\right)\right]\right\rangle^{p/\theta'}\left\langle\mbb E_0\left[\exp\left(\int_{\msf d}^{\msf t}\theta^{-1}\xi_\msf e(W_s)\d s\right)\mathbbm 1_{A_{\msf d,\msf t}(Q_{R\al_\msf e(p\msf t)})}\right]^{p}
\right\rangle^{\theta},
\end{align*}
where $\theta'=\frac{-1}{\theta-1}$ and the last
inequality follows from an application of Jensen's inequality to
to the expectation $\langle(\cdot)^p\rangle\leq\langle\cdot\rangle^p$.
Thus, by arguing
as in \eqref{Equation: Moment Lower Bound for 0<p<1 - 1 Two Orders 1}
(except that $\xi_\msf e$ is replaced by $\theta^{-1}\xi_\msf e$),
so long as we choose $\msf d$ in such a way that
$\msf d=\mr e^{o(\be_\msf e(p\msf t))}$ and $\al_\msf e(p\msf t)^2/\msf d=o(\be_\msf e(p\msf t))$
in addition to \eqref{Equation: Averaged Asymptotics 1 - Condition on d},
it suffices to show that
\begin{align}
\label{Equation: Moment Lower Bound for 0<p<1 - 1 One Order Bounded Part}
\left\langle\mbb E_0\left[\exp\left(\int_0^\msf d \theta'\xi_\msf e(W_s)\d s\right)\right]\right\rangle\leq\mr e^{o(\be_\msf e(p\msf t))}.
\end{align}

On the one hand, in cases Sub-2 and Sub-3, it suffices to take any
small enough constant $\msf d:=\de$ (so that $\inf_{\msf m\geq0}\msf t>\de$):
Since $\msf t\to\infty$ and $H_\msf e=0$ in cases Sub-2 and Sub-3, it is easily checked using
Definition \ref{Definition: Table} that \eqref{Equation: Averaged Asymptotics 1 - Condition on d}
is met for that choice.
Moreover, $\al_\msf e(p\msf t)^2/\de=o(1)=o(\be_\msf e(p\msf t))$.
Finally, in the subcritical regime, \eqref{Equation: Subcritical Convergence} implies that
\[\lim_{\msf m\to\infty}\left\langle\mbb E_0\left[\exp\left(\int_0^{\de}\theta'\xi_\msf e(W_s)\d s\right)\right]\right\rangle=\left\langle\mbb E_0\left[\exp\left(\int_0^{\de}\theta'\xi(W_s)\d s\right)\right]\right\rangle<\infty;\]
hence \eqref{Equation: Moment Lower Bound for 0<p<1 - 1 One Order Bounded Part} holds.

On the other hand, in case Crt-2, we note by Fubini's theorem that
\begin{multline*}
\left\langle\mbb E_0\left[\exp\left(\int_0^{\msf d} \theta'\xi_\msf e(W_s)\d s\right)\right]\right\rangle\\
=\mbb E_0\left[\exp\left(\frac{(\theta'\msf e)^2}{2}\int_{[0,\msf d]^2}\ga_1\big((W_u-W_v)/\msf e\big)\d u\dd v\right)\right]\leq\mr e^{(\theta')^2\msf d^{2}\msf e^{-2}\|\ga_1\|_\infty/2};
\end{multline*}
with this inequality in hand, we can take, for example, $\msf d=\msf e^{3/2}=\mr e^{o(\be_\msf e(p\msf t))}$.
Indeed, since $\msf e^{3/2}=o(1)$ in that case,
it is clear by Definition \ref{Definition: Table} that \eqref{Equation: Averaged Asymptotics 1 - Condition on d}
holds and that $\al_\msf e(p\msf t)^2/\msf d=\msf e^{1/2}=o(1)=o(\be_\msf e(p\msf t))$.
This concludes the proof of \eqref{Equation: Moment Lower Bound for 0<p<1 - 1 One Order},
and thus also of Lemma \ref{Lemma: Moment Lower Bound for 0<p<1 - 1}.

\subsection{Proof of Lemma \ref{Lemma: Holder Variational Convergence R}}
\label{Section: Lower Bound Lemmas e}

First, Lemma \ref{Lemma: Lower Bound Variational Limit}
(where we replace $\xi_\msf e$ by $c\xi_\msf e$) implies that
\[\lim_{c\to1}\lim_{R\to\infty}\bs\chi^p_R(c)=\lim_{c\to1}\bs\chi^p(c),\]
where we define
\[\bs\chi^p(c):=\inf_{f\in H^1(\mbb R^d),~\|f\|_2=1}\Big(\ka\ms S(f)+c^2\bs J^p(f^2)\Big).\]
Thus, it now suffices to prove that
\begin{align}
\label{Equation: Convergence of theta variational}
\lim_{c\to1}\bs\chi^p(c)=\bs\chi^p
\end{align}
for all $p>0$ in cases Sub-2, Sub-3, and Crt-2.

We begin by proving \eqref{Equation: Convergence of theta variational} in cases Sub-2 and Crt-2,
in which case we have that $\bs J^p=\ze(p)J_\eps$ for some $0<\eps<\infty$
(recall Defintion \ref{Definition: Table}) and constant $\ze(p)>0$.
Since $J_\eps\leq0$ for all $0<\eps<\infty$, we have in
those cases that $\bs\chi^p(c)\leq\bs\chi^p$ when $c\geq1$
and
$\bs\chi^p(c)\geq\bs\chi^p$ when $c\leq1$.
Given that $\ga_\eps$ is bounded
\[J_\eps(f^2)=-\frac12\iint_{(\mbb R^d)^2}f(x)^2\ga_\eps(x-y)f(y)^2\d x\dd y\geq-\frac{\|\ga_\eps\|_\infty\|f\|_2^4}2=-\frac{\|\ga_\eps\|_\infty}2\]
for every $f$ such that $\|f\|_2=1$.
Thus, if $c\geq1$, then
\[\bs\chi^p\geq\bs\chi^p(c)=\inf_{f\in H^1(\mbb R^d),~\|f\|_2=1}\Big(\ka\ms S(f)+\bs J^p(f^2)+(c^2-1)\bs J^p(f^2)\Big)\geq\bs\chi^p-(c^2-1)\ze(p)\frac{\|\ga_\eps\|_\infty}2;\]
similarly, if $c\leq1$, then we have that
\[\bs\chi^p\leq\bs\chi^p(c)\leq\bs\chi^p+(1-c^2)\ze(p)\frac{\|\ga_\eps\|_\infty}2.\]
This implies \eqref{Equation: Convergence of theta variational}.

We now consider case Sub-3, wherein we recall that
\[\bs J^p(f^2)=-\frac12\iint_{\mbb R^d}f(x)^2\ga(x-y)f(y)^2\d x\dd y.\]
By using the scaling property $\ga(cx)=c^{-\om}\ga(x)$ for all $c>0$,
it can be shown (e.g., \cite[(1.13)]{ChenHuSongSong}) that
$\bs\chi^p(c)=c^{4/(2-\om)}\bs\chi^p$ for every $c>0$, from which
\eqref{Equation: Convergence of theta variational} immediately follows.

\section{Proof of Theorem \ref{Theorem: All Moments} Part 3 - Upper Bound}
\label{Section: Moments Part 3}

In this section, we prove the following upper bound,
which, together with \eqref{Equation: All Moments Lower},
concludes the proof of Theorem \ref{Theorem: All Moments}:
Suppose that one of Assumptions \ref{Assumption: Sub}, \ref{Assumption: Crt}, or
\ref{Assumption: Sup} holds.
For every $p>0$ and $x\in\mbb R^d$, one has
\begin{align}
\label{Equation: All Moments Upper}
\limsup_{\msf m\to\infty}\frac{\log\big\langle u^{\xi_\msf e}(\msf t,x)^p\big\rangle-H_\msf e(p\msf t)}{\be_\msf e(p\msf t)}\leq-\bs\chi^p.
\end{align}
The remainder of this section is structured as follows:
In Section \ref{Section: Upper Outline},
we provide an outline of the proof of \eqref{Equation: All Moments Upper}.
This outline relies on two technical results, namely, Lemmas
\ref{Lemma: Moment Upper Bound 1} and \ref{Lemma: Moment Upper Bound 2}.
These are proved in Sections \ref{Section: Upper 1} and \ref{Section: Upper 2},
respectively.

\subsection{Outline}
\label{Section: Upper Outline}
The proof of \eqref{Equation: All Moments Upper} relies on the following two lemmas:

\begin{lemma}
\label{Lemma: Moment Upper Bound 1}
Let $x\in\mbb R^d$ and $p>0$.
There exists a function $\msf R$ that satisfies $\al_\msf e(p\msf t)=o(\msf R)$
and $\msf R=\mr e^{o(\be_\msf e(p\msf t))}$ such that
\begin{align}
\label{Equation: Box Upper Bound}
\big\langle u^{\xi_\msf e}(\msf t,x)^p\big\rangle\leq\big(1+o(1)\big)\big\langle u^{\xi_\msf e}_{\msf R}(\msf t,x)^p\big\rangle\qquad\text{as }\msf m\to\infty.
\end{align}
\end{lemma}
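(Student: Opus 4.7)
The plan is to decompose
\[u^{\xi_\msf e}(\msf t, 0) = u^{\xi_\msf e}_{\msf R}(\msf t, 0) + r(\msf t, 0), \qquad r(\msf t, 0) := \mbb E_0\Bigl[\mr e^{\int_0^{\msf t}\xi_\msf e(W_s)\,\dd s}\,\mathbbm 1_{\{\tau_{Q_\msf R}\le\msf t\}}\Bigr],\]
into the Dirichlet-restricted PAM on $Q_\msf R$ plus the contribution of Brownian paths that exit $Q_\msf R$ before time $\msf t$; by stationarity of $\xi_\msf e$ we may restrict to $x=0$. For $p\ge 1$ I would apply Minkowski's inequality on $\langle\cdot\rangle$, and for $0<p<1$ the pointwise subadditivity $(a+b)^p\le a^p+b^p$. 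In either case, \eqref{Equation: Box Upper Bound} reduces to the key estimate
\[\langle r(\msf t, 0)^p\rangle = o\bigl(\langle u^{\xi_\msf e}_\msf R(\msf t, 0)^p\rangle\bigr)\qquad\text{as } \msf m \to \infty.\]

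To bound $r$ I would apply H\"older on the Brownian expectation with an exponent $q=p(1+\eta(\msf m))$, $\eta\to 0$, then raise to the $p$-th power, take $\langle\cdot\rangle$, and use Jensen (valid since $p/q\le1$) to obtain
\[\langle r(\msf t, 0)^p\rangle \le \langle u^{q\xi_\msf e}(\msf t, 0)\rangle^{p/q}\cdot \mbb P_0[\tau_{Q_\msf R}\le \msf t]^{p/q'}.\]
The exit probability enjoys the Gaussian tail bound $\mbb P_0[\tau_{Q_\msf R}\le \msf t]\le 2d\,\mr e^{-\msf R^2/(2\kappa d\msf t)}$, and the generic Gaussian moment bound $\langle u^{q\xi_\msf e}(\msf t, 0)\rangle\le \exp(q^2\msf t^2\ga_\msf e(0)/2)$ (from $\ga_\msf e\le\ga_\msf e(0)$) is the naive input. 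Pairing these with the lower bound $\langle u^{\xi_\msf e}_\msf R(\msf t,0)^p\rangle\ge \exp(H_\msf e(p\msf t)-\bs\chi^p\be_\msf e(p\msf t)(1+o(1)))$, which is obtained by running the proof of \eqref{Equation: All Moments Lower} with $u^{\xi_\msf e}_\msf R$ in place of $u^{\xi_\msf e}$ (admissible since $\msf R\ge R\al_\msf e(p\msf t)$ eventually, and then sending $R\to\infty$ via Lemma~\ref{Lemma: Lower Bound Variational Limit}), turns the key estimate into a pointwise inequality between explicit exponents.

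In Sub-1, Crt-1, and Sup, where $H_\msf e(t)=t^2\ga_\msf e(0)/2$ is the log-cumulant of the noise and $\be_\msf e(p\msf t)=o(H_\msf e(p\msf t))$, choosing $\eta$ with $\eta H_\msf e(p\msf t)=o(\be_\msf e(p\msf t))$ collapses the remaining condition to $\msf R^2/\msf t\gg\be_\msf e(p\msf t)$, fulfilled for instance by $\msf R:=\sqrt{\msf t\,\be_\msf e(p\msf t)}\,\log(\msf e^{-1}+\msf t)$; this choice also satisfies both $\msf R=\mr e^{o(\be_\msf e(p\msf t))}$ and $\al_\msf e(p\msf t)=o(\msf R)$, the latter because $\msf t\be_\msf e(p\msf t)=p\msf t^2/\al_\msf e(p\msf t)^2$. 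The main technical obstacle lies in the singular regimes Sub-2, Sub-3, and Crt-2, where $H_\msf e=0$ but $\ga_\msf e(0)=\msf e^{-\om}\ga_1(0)$ may diverge, so that in Sub-3 especially the naive Jensen bound $\exp(q^2\msf t^2\ga_\msf e(0)/2)$ grossly overshoots the target scale $\mr e^{O(\be_\msf e(p\msf t))}$. The remedy is to replace it by the sharp first-moment asymptotic \eqref{Equation: Sub Known} applied to the PAM with noise $q\xi$ and transported across the mollification $\msf e\to 0$ via \eqref{Equation: Subcritical Convergence}, yielding $\langle u^{q\xi_\msf e}(\msf t,0)\rangle\le \exp(q^{4/(2-\om)}\mbf M\,\msf t^{(4-\om)/(2-\om)}(1+o(1)))$, which is now of the same order as $\be_\msf e(p\msf t)$; the same $\msf R^2/\msf t\gg\be_\msf e(p\msf t)$ closes the argument. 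In Crt-2 the bounded time $\msf t\to t$ already caps $\msf t^2\ga_\msf e(0)=O(\msf e^{-2})=O(\be_\msf e(p\msf t))$ directly, so no subcritical input is needed.
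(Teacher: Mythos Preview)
Your strategy is the paper's: decompose $u^{\xi_\msf e}=u^{\xi_\msf e}_{\msf R}+r$, reduce via Minkowski/subadditivity to $\langle r^p\rangle=o(\langle u^{\xi_\msf e}_{\msf R}(\msf t,0)^p\rangle)$, separate the exit indicator from the exponential, pair the reflection bound with either the naive cumulant bound (Sub-1/Crt-1/Sup) or the subcritical first-moment input \eqref{Equation: Sub Known} (Sub-2/Sub-3) or the direct $\msf e^{-2}$ bound (Crt-2), and compare against the lower bound from Section~\ref{Section: Moments Part 2}. Two technical slips in your execution, both easily repaired:

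\emph{(i)} For $0<p<1$ your H\"older exponent $q=p(1+\eta)$ is $<1$, so H\"older on $\mbb E_0$ does not give an upper bound. The paper handles this by first applying Jensen on $\langle\cdot\rangle$ (i.e., $\langle r^p\rangle\le\langle r\rangle^p$) and then using a fixed exponent (Cauchy--Schwarz) on $\mbb E_0$.

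\emph{(ii)} In Sub-1/Crt-1/Sup the paper does not use H\"older at all: after Jensen on $\mbb E_0$ (for $p\ge1$) it swaps $\langle\cdot\rangle$ and $\mbb E_0$ by Fubini, bounds the conditional moment pointwise by $\mr e^{H_\msf e(p\msf t)}$, and obtains $\langle r^p\rangle\le \mr e^{H_\msf e(p\msf t)-C\msf R^2/\msf t}$ directly; this permits the clean choice $\msf R=H_\msf e(p\msf t)$. Your H\"older route works too, but your claimed reduction to ``$\msf R^2/\msf t\gg\be_\msf e(p\msf t)$'' is incorrect at $p=1$: there $p/q'=\eta/(1+\eta)\to0$, so the exit-probability factor contributes only $\eta\,\msf R^2/\msf t$ and you actually need $\msf R^2/\msf t\gg\be_\msf e(p\msf t)/\eta$, forcing a larger $\msf R$ than you wrote. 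This is fixable (e.g., take $\eta$ and $\msf R$ jointly), but the Fubini route sidesteps the issue entirely.

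In the singular cases the paper takes a fixed H\"older exponent and $\msf R=\be_\msf e(p\msf t)$; since $H_\msf e=0$ there, letting $\eta\to0$ buys nothing, and the paper's monotonicity argument (the Fourier bound $-J_\eps(L_t)\le -J_0(L_t)$) transports \eqref{Equation: Sub Known} to $\xi_\msf e$ more directly than invoking \eqref{Equation: Subcritical Convergence}.
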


\begin{lemma}
\label{Lemma: Moment Upper Bound 2}
Let $x\in\mbb R^d$, $p>0$, and $\msf R=\mr e^{o(\be_\msf e(p\msf t))}$. On the one hand, as $\msf m\to\infty$,\begin{align}
\label{Equation: Averaged Noise Upper Bound - Regular}
\big\langle u^{\xi_\msf e}_{\msf R}(\msf t,x)^p\big\rangle\leq
\begin{cases}\mr e^{o(\be_\msf e(p\msf t))}\left(\Big\langle\big(u^{\xi_\msf e}_{\msf R}(p\msf t,\cdot),\mathbbm 1\big)\Big\rangle+1\right)+\mr e^{4H_\msf e(p\msf t)-\msf e^{-\om}\msf t^{2}\cdot\msf e^{-3/4}\msf t^{1/4}}&\text{if }p\geq1
\vspace{3pt}\\
\mr e^{o(\be_\msf e(p\msf t))}\left(\left\langle\sum_{k=1}^\infty\mr e^{p\msf t\la^{\xi_\msf e}_k(Q_{\msf R})}\right\rangle+1\right)+\mr e^{4pH_\msf e(\msf t)-\msf e^{-\om}\msf t^{2}\cdot\msf e^{-3/4}\msf t^{1/4}}&\text{if }0<p<1
\end{cases}
\end{align}
in cases Sub-1, Crt-1, and Sup. On the other hand, for every $\theta>1$, it holds as $\msf m\to\infty$ that
\begin{align}
\label{Equation: Averaged Noise Upper Bound - Singular}
\big\langle u^{\xi_\msf e}_{\msf R}(t,x)^p\big\rangle\leq
\begin{cases}\mr e^{o(\be_\msf e(p\msf t))}\left(\Big\langle\big(u^{\theta\xi_\msf e}_{\msf R}(p\msf t,\cdot),\mathbbm 1\big)\Big\rangle+1\right)^{1/\theta}&\text{if }p\geq1\\
\mr e^{o(\be_\msf e(p\msf t))}\left(\left\langle\sum_{k=1}^\infty\mr e^{p\msf t\la^{\theta\xi_\msf e}_k(Q_{\msf R})}\right\rangle+1\right)^{1/\theta}&\text{if }0<p<1
\end{cases}
\end{align}
in cases Sub-2, Sub-3, and Crt-2.
\end{lemma}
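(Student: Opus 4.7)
\textbf{Proof proposal for Lemma \ref{Lemma: Moment Upper Bound 2}.}

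The overall strategy mirrors the lower-bound argument in Section \ref{Section: Moments Part 2}, but with the roles of the inequalities reversed. By stationarity and a spectral expansion argument, the plan is to convert the $p$-th moment $\langle u^{\xi_\msf e}_\msf R(\msf t,x)^p\rangle$ into a first-moment type quantity: either $\langle(u^{\xi_\msf e}_\msf R(p\msf t,\cdot),\mathbbm 1)\rangle$ when $p\geq 1$ (where Jensen's inequality in the spectral decomposition produces the right direction) or $\langle\sum_k e^{p\msf t\la_k^{\xi_\msf e}(Q_\msf R)}\rangle$ when $0<p<1$ (where the inequality $(\sum a_k)^p\leq \sum a_k^p$ applies). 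The main technical step is splitting the Feynman--Kac time interval $[0,\msf t]$ into a small initial slice $[0,\msf d]$ and the bulk $[\msf d,\msf t]$; on the initial slice we pay a controlled price to remove the noise, and on the bulk we apply the spectral decomposition and average the eigenfunctions against $\mathbbm 1$ using the strong Markov property. The choice of $\msf d$, together with the choice of how to control the initial slice, distinguishes the two regimes of the lemma.

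For cases Sub-1, Crt-1, and Sup, the key fact is that $H_\msf e$ is literally the pointwise cumulant generating function of $\xi_\msf e$, so for any threshold $\msf c$ and $\msf d>0$, Markov's inequality applied to $\mr e^{\int_0^\msf d \xi_\msf e(W_s)\d s}$ combined with Jensen yields a tail estimate of the form $\mbb P[\int_0^\msf d \xi_\msf e(W_s)\d s>\msf c]\leq \mr e^{\msf d H_\msf e(1/\msf d)-\msf c}$, from which a suitable choice $\msf c=\msf e^{-\om}\msf t^2\cdot\msf e^{-3/4}\msf t^{1/4}/p$ and $\msf d=\be_\msf e(p\msf t)^{-1}$ produces the additive remainder $\mr e^{4H_\msf e(p\msf t)-\msf e^{-\om}\msf t^2\cdot\msf e^{-3/4}\msf t^{1/4}}$ (the factor $4$ absorbing the combined effect of the $p$-th power and of an $L^4$ type Cauchy--Schwarz step). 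On the complementary event, the $[0,\msf d]$ contribution is bounded by $\mr e^{o(\be_\msf e(p\msf t))}$, and on $[\msf d,\msf t]$ we insert a spectral expansion via the strong Markov property at time $\msf d$. For $p\geq 1$ one uses Jensen relative to the probability measure proportional to $(e_k^{\xi_\msf e}(Q_\msf R),\mathbbm 1)^2$ to pass from $(u^{\xi_\msf e}_\msf R(\msf t-\msf d,\cdot),\mathbbm 1)^p$ to $(u^{\xi_\msf e}_\msf R(p(\msf t-\msf d),\cdot),\mathbbm 1)$, then removes the shift from $\msf t-\msf d$ to $\msf t$ using \eqref{Equation: Averaged Asymptotics 1 - Condition on d}-type bounds on $H_\msf e$ and $\be_\msf e$; for $0<p<1$ one instead uses $(\sum a_k)^p\leq\sum a_k^p$ to land on the trace $\sum_k \mr e^{p\msf t\la_k^{\xi_\msf e}(Q_\msf R)}$.

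For cases Sub-2, Sub-3, and Crt-2, one has $H_\msf e\equiv 0$, but $\xi_\msf e$ may become so singular that the cumulant approach breaks down; instead we use H\"older's inequality with exponents $\theta>1$ and $\theta'=\theta/(\theta-1)$,
\[
\mr e^{\int_0^\msf t\xi_\msf e(W_s)\d s}\leq \mr e^{\theta\int_\msf d^\msf t\xi_\msf e(W_s)\d s\cdot\frac{1}{\theta}}\cdot\mr e^{\theta'\int_0^\msf d\xi_\msf e(W_s)\d s\cdot\frac{1}{\theta'}},
\]
so that after applying H\"older twice (once inside $\mbb E_x$ to split the two time intervals, and once inside $\langle\cdot\rangle$ to split the product of these two factors), the small-time factor takes the form $\langle\mbb E_x[\mr e^{\theta'\int_0^\msf d\xi_\msf e(W_s)\d s}]\rangle^{1/\theta'}$, which is bounded by $\mr e^{o(\be_\msf e(p\msf t))}$ for an appropriate choice of $\msf d$: a small constant $\msf d=\de$ in Sub-2/Sub-3 (using \eqref{Equation: Subcritical Convergence}) or $\msf d=\msf e^{3/2}$ in Crt-2 (using the Gaussian Fubini computation as in the proof of \eqref{Equation: Moment Lower Bound for 0<p<1 - 1 One Order Bounded Part}). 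The remaining factor then involves $u^{\theta\xi_\msf e}_\msf R$, and the same Jensen/spectral-expansion trick as in the previous paragraph converts it into the claimed first-moment or trace expression.

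The main obstacle I expect is the passage from $(u^{\xi_\msf e}_\msf R(\msf t,\cdot),\mathbbm 1)^p$ to the first moment $(u^{\xi_\msf e}_\msf R(p\msf t,\cdot),\mathbbm 1)$ for $p\geq 1$: the Jensen step requires that the normalizing factor $(2R\al_\msf e(p\msf t))^{2d(p-1)}$ be absorbed into $\mr e^{o(\be_\msf e(p\msf t))}$, which works because $\msf R=\mr e^{o(\be_\msf e(p\msf t))}$, but one must also ensure the spectral gap is not pathological and that the $k=1$ term indeed dominates asymptotically. A secondary difficulty is the bookkeeping of the various time scales $\msf t,\msf d,\msf t-\msf d$ and the corresponding $\al_\msf e,\be_\msf e,H_\msf e$ evaluated at shifted arguments; this is why the hypothesis $\msf R=\mr e^{o(\be_\msf e(p\msf t))}$ appears and why a buffer additive term $+1$ is included in \eqref{Equation: Averaged Noise Upper Bound - Regular} and \eqref{Equation: Averaged Noise Upper Bound - Singular} to absorb boundary terms in the spectral sum.
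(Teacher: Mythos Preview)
Your proposal follows essentially the same route as the paper's proof: a time-splitting $[0,\msf d]\cup[\msf d,\msf t]$, the strong Markov property at time $\msf d$ combined with the transition density bound $\sup_x\Pi_\msf d(x)=O(\msf d^{-d/2})$ to pass to an integrated quantity, and then a spectral expansion handled via Jensen (for $p\geq1$) or subadditivity of $x\mapsto x^p$ (for $0<p<1$). The singular cases are handled by H\"older exactly as you describe, with the same choices of $\msf d$ as in the proof of \eqref{Equation: Moment Lower Bound for 0<p<1 - 1 One Order Bounded Part}.

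Two points where your description diverges from what the paper actually does. First, for the tail term in the regular cases, the paper does \emph{not} use a Markov-inequality probability bound combined with an $L^4$ Cauchy--Schwarz. Instead it uses the exponential Chebyshev bound $\mathbbm 1_{\{\int_0^{\msf d}\xi_\msf e>\msf c\}}\leq \mr e^{-\msf h\msf c+\msf h\int_0^{\msf d}\xi_\msf e}$ with the same $\msf h$ as in \eqref{Equation: d, c and h}, and then applies Jensen's inequality over $[0,\msf t]$ with the probability density $s\mapsto(1+\msf h\mathbbm 1_{[0,\msf d]}(s))/(\msf t+\msf h\msf d)$, exactly mirroring the argument leading to \eqref{Equation: Upper Bound Without Holder}. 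This yields $H_\msf e(p(\msf t+\msf h\msf d))$ (respectively $pH_\msf e(\msf t+\msf h\msf d)$), and the factor $4$ comes solely from the elementary inequality $(\msf t+\msf h\msf d)^2\leq 2(\msf t^2+(\msf h\msf d)^2)\leq 4\msf t^2$, valid because $\msf h\msf d\leq\msf t$ for large $\msf m$---it has nothing to do with the $p$-th power or any Cauchy--Schwarz step. Your proposed route via Markov $+$ Cauchy--Schwarz would require controlling $\langle\mbb E_0[\mr e^{2\int_0^{\msf t}\xi_\msf e}]^{p/2}\rangle$ jointly with a tail probability that still depends on $\xi_\msf e$, which does not cleanly produce the stated bound.

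Second, your concern about the spectral gap is misplaced: neither the paper nor your argument needs the leading eigenvalue to dominate. The Jensen step operates on the \emph{entire} spectral sum, and the $+1$ arises from the contribution of nonpositive eigenvalues (the $(2\msf R)^{2d}$ term in \cite[(3.37)]{GartnerKonig}), which is absorbed into $\mr e^{o(\be_\msf e(p\msf t))}$ by the hypothesis on $\msf R$.
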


We may now prove \eqref{Equation: All Moments Upper}.
On the one hand, in cases Sub-1, Crt-1, and Sup, a combination of
\eqref{Equation: Averaged Asymptotics - Upper}/\eqref{Equation: Averaged Asymptotics - Trace},
\eqref{Equation: Faster than H and beta},
\eqref{Equation: Box Upper Bound}, and
\eqref{Equation: Averaged Noise Upper Bound - Regular}
implies \eqref{Equation: All Moments Upper}. On the other hand,
in cases Sub-2, Sub-3, and Crt-2 a combination of
\eqref{Equation: Averaged Asymptotics - Upper}/\eqref{Equation: Averaged Asymptotics - Trace}
(where we replace $\xi_\msf e$ by $\theta\xi_\msf e$),
\eqref{Equation: Box Upper Bound}, and
\eqref{Equation: Averaged Noise Upper Bound - Singular}
implies that for every $\theta>1$,
\begin{align*}
\limsup_{\msf m\to\infty}\log\frac{\big\langle u^{\xi_\msf e}(\msf t,x)^p\big\rangle}{\be_\msf e(p\msf t)}\leq-\frac{\bs\chi^p(\theta)}{\theta},
\end{align*}
where we recall that
\[\bs\chi^p(\theta):=\inf_{f\in H^1(\mbb R^d),~\|f\|_2=1}\Big(\ka\ms S(f)+\theta^2\bs J^p(f^2)\Big).\]
We then obtain \eqref{Equation: All Moments Upper} by taking the limit $\theta\to1$
using \eqref{Equation: Convergence of theta variational}.

\subsection{Proof of Lemma \ref{Lemma: Moment Upper Bound 1}}
\label{Section: Upper 1}

We claim that it suffices to show that
\begin{align}
\label{Equation: Moment Upper Bound 1 Reduction}
\lim_{\msf m\to\infty}\frac{\big\langle\big(u^{\xi_\msf e}(\msf t,0)-u^{\xi_\msf e}_{\msf R}(\msf t,0)\big)^p\big\rangle}{\langle u^{\xi_\msf e}(\msf t,0)^p\rangle}=0.
\end{align}
On the one hand, for $p\geq1$, as remarked in \cite[(3.16)]{GartnerKonig}, combining the triangle inequality with \eqref{Equation: Moment Upper Bound 1 Reduction}
and the fact that $u^{\xi_\eps}(t,x)\deq u^{\xi_\eps}(t,0)\geq u^{\xi_\eps}_{R}(t,0)\geq0$
implies that
\[0\leq\big\langle u^{\xi_\msf e}(\msf t,0)^p\big\rangle^{1/p}-\big\langle u^{\xi_\msf e}_{\msf R}(\msf t,0)^p\big\rangle^{1/p}
\leq\big\langle\big(u^{\xi_\msf e}(\msf t,0)-u^{\xi_\msf e}_{\msf R}(\msf t,0)\big)^p\big\rangle^{1/p}=o\big(\langle u^{\xi_\msf e}(\msf t,0)^p\rangle^{1/p}\big).\]
If we divide all terms in the above inequality by $\langle u^{\xi_\msf e}(\msf t,0)^p\rangle^{1/p}$, then we obtain \eqref{Equation: Box Upper Bound}.
On the other hand, for $0<p<1$ we have that
$(x+y)^p\leq x^p+y^p$ for all $x,y\geq0$, hence
\[0\leq\big\langle u^{\xi_\msf e}(\msf t,x)^p\big\rangle-\big\langle u^{\xi_\msf e}_{\msf R}(\msf t,0)^p\big\rangle
\leq\big\langle\big(u^{\xi_\msf e}(\msf t,0)-u^{\xi_\msf e}_{\msf R}(\msf t,0)\big)^p\big\rangle=o\big(\langle u^{\xi_\msf e}(\msf t,0)^p\rangle\big);\]
this also implies \eqref{Equation: Box Upper Bound}.

In order to establish \eqref{Equation: Moment Upper Bound 1 Reduction}, we use two tools:
Let $\tau_r:=\inf\{t\geq0:W_t\not\in Q_r\}$
denote the first hitting time of $Q_r$'s boundary by $W$.
By the Feynman-Kac formula, one has
\begin{align}
\label{Equation: Moment Upper Bound 1 Reduction - FK}
u^{\xi_\msf e}(\msf t,0)-u^{\xi_\msf e}_{\msf R}(\msf t,0)=\mbb E_0\left[\exp\left(\int_0^{\msf t}\xi_\msf e(W_s)\d s\right)\mathbbm1_{\{\tau_{\msf R}\leq \msf t\}}\right],
\end{align}
and by the reflection principle,
\begin{align}
\label{Equation: Moment Upper Bound 1 Reduction - Reflection}
\mbb P_0[\tau_{\msf R}\leq \msf t]\leq\mr e^{-C\msf R^2/\msf t}
\end{align}
for some constant $C>0$ independent of $\msf m$.
The details of the proof of \eqref{Equation: Moment Upper Bound 1 Reduction}
using these depend on which case we are considering,
and thus we split the remainder of the proof into two steps.

\subsubsection{Sub-1, Crt-1, and Sup}

Suppose first that $p\geq1$. In that case,
Thanks to \eqref{Equation: Moment Upper Bound 1 Reduction - FK} and \cite[(3.17)]{GartnerKonig}
(note that, in the notation of \cite{GartnerKonig}, $R(pt)$ corresponds to $\msf R$ in our setting),
we have that for $p\geq1$ and large $\msf m$,
\begin{align}
\label{Equation: Moment Upper Bound 1 Reduction - p geq 1}
\big\langle\big(u^{\xi_\msf e}(\msf t,0)-u^{\xi_\msf e}_{\msf R}(\msf t,0)\big)^p\big\rangle\leq\mr e^{H_\msf e(p\msf t)-C\msf R^2/\msf t}.
\end{align}
If we combine this with the fact that
\begin{align}
\label{Equation: Moment Upper Bound 1 Reduction Lower - p geq 1}
\langle u^{\xi_\msf e}(\msf t,0)^p\rangle\geq\mr e^{-H_\msf e(p\msf t)+\be_\msf e(p\msf t)\bs\chi^p(1+o(1))}\qquad\text{as }\msf m\to\infty
\end{align}
thanks to \eqref{Equation: All Moments Lower}, we have that
\[\frac{\big\langle\big(u^{\xi_\msf e}(\msf t,0)-u^{\xi_\msf e}_{\msf R}(\msf t,0)\big)^p\big\rangle}{\langle u^{\xi_\msf e}(\msf t,0)^p\rangle}\leq\mr e^{\be_\msf e(p\msf t)\bs\chi^p(1+o(1))-C\msf R^2/\msf t}\qquad\text{as }\msf m\to\infty.\]
When $0<p<1$, we use Jensen's inequality and \eqref{Equation: Moment Upper Bound 1 Reduction - p geq 1}
in the case $p=1$ to get
\[\big\langle\big(u^{\xi_\msf e}(\msf t,0)-u^{\xi_\msf e}_{\msf R}(\msf t,0)\big)^p\big\rangle
\leq\big\langle\big(u^{\xi_\msf e}(\msf t,0)-u^{\xi_\msf e}_{\msf R}(\msf t,0)\big)\big\rangle^p
\leq\mr e^{pH_\msf e(\msf t)-pC\msf R^2/\msf t}.\]
Once again combining this with \eqref{Equation: All Moments Lower} yields
\[\frac{\big\langle\big(u^{\xi_\msf e}(\msf t,0)-u^{\xi_\msf e}_{\msf R}(\msf t,0)\big)^p\big\rangle}{\langle u^{\xi_\msf e}(\msf t,0)^p\rangle}\leq\mr e^{pH_\msf e(\msf t)-H_\msf e(p\msf t)+\be_\msf e(p\msf t)\bs\chi^p(1+o(1))-pC\msf R^2/\msf t}\qquad\text{as }\msf m\to\infty.\]
Since $\be_\msf e(p\msf t)=o(H_\msf e(p\msf t))=o(H_\msf e(\msf t))$ in cases Sub-1, Crt-1, and Sup,
in order to prove \eqref{Equation: Moment Upper Bound 1 Reduction}
it suffices to find a function $\msf R$ that satisfies $\al_\msf e(p\msf t)=o(\msf R)$,
$\msf R=\mr e^{o(\be_\msf e(p\msf t))}$, and $H_\msf e(p\msf t)=o(\msf R^2/\msf t)$.
It is easily seen to be the case that $\msf R=H_\msf e(p\msf t)$ satisfies all three of these conditions.

\subsubsection{Sub-2, Sub-3, and Crt-2}

If $p\geq1$, then by Jensen's inequality and \eqref{Equation: Moment Upper Bound 1 Reduction - FK}, we have that
\[\big\langle\big(u^{\xi_\msf e}(\msf t,0)-u^{\xi_\msf e}_{\msf R}(\msf t,0)\big)^p\big\rangle\leq\left\langle\mbb E_0\left[\exp\left(\int_0^{\msf t}p\xi_\msf e(W_s)\d s\right)\mathbbm1_{\{\tau_{\msf R}\leq \msf t\}}\right]\right\rangle.\]
If we then apply H\"older's inequality and \eqref{Equation: Moment Upper Bound 1 Reduction - Reflection}, we get that
\[\big\langle\big(u^{\xi_\msf e}(\msf t,0)-u^{\xi_\msf e}_{\msf R}(\msf t,0)\big)^p\big\rangle\leq\left\langle\mbb E_0\left[\exp\left(\int_0^{\msf t}2p\xi_\msf e(W_s)\d s\right)\right]\right\rangle^{1/2}
\mr e^{-C\msf R^2/2\msf t}\]
as $\msf m\to\infty$.
Similarly, when $0<p<1$ we have that
\[\big\langle\big(u^{\xi_\msf e}(\msf t,0)-u^{\xi_\msf e}_{\msf R}(\msf t,0)\big)^p\big\rangle\leq\left\langle\mbb E_0\left[\exp\left(\int_0^{\msf t}2\xi_\msf e(W_s)\d s\right)\right]\right\rangle^{p/2}
\mr e^{-pC\msf R^2/2\msf t}.\]

In cases Sub-2, Sub-3, and Crt-2, it is always the case that
$\al_\msf e(p\msf t)=o(\be_\msf e(p\msf t))$,
$\be_\msf e(p\msf t)=\mr e^{o(\be_\msf e(p\msf t))}$, and
$\be_\msf e(p\msf t)=o(\be_\msf e(p\msf t)^2/\msf t)$.
Moreover, since $H_\eps=0$ in those cases, we know from 
\eqref{Equation: All Moments Lower}
that $\langle u^{\xi_\msf e}(\msf t,0)^p\rangle^{-1}=\mr e^{O(\be_\msf e(p\msf t))}$.
Thus, by choosing $\msf R=\be_\msf e(p\msf t)$, in order to prove \eqref{Equation: Moment Upper Bound 1 Reduction}, it is enough to show that
\[\left\langle\mbb E_0\left[\exp\left(\int_0^{\msf t}\theta\xi_\msf e(W_s)\d s\right)\right]\right\rangle\leq\mr e^{O(\be_\msf e(p\msf t))}\]
for every $\theta>0$.
On the one hand, in cases Sub-1 and Sub-2, this follows from the fact that
\[\left\langle\mbb E_0\left[\exp\left(\int_0^{\msf t}\theta\xi_\msf e(W_s)\d s\right)\right]\right\rangle\leq\left\langle\mbb E_0\left[\exp\left(\int_0^{\msf t}\theta\xi(W_s)\d s\right)\right]\right\rangle\leq\mr e^{O(\be_\msf e(p\msf t))},\]
where the first inequality follows from the same argument as in the proof of \eqref{Equation: Monotonicity of Jc},
and the second inequality follows from \eqref{Equation: Sub Known} (except that we replace $\ga$ by $\theta^2\ga$
in the variational problem $\mbf M$).
On the other hand, in case Crt-2, we simply note that
\[\left\langle\mbb E_0\left[\exp\left(\int_0^{\msf t}\theta\xi_\msf e(W_s)\d s\right)\right]\right\rangle
=\mbb E_0\left[\exp\left(\int_{[0,\msf t]^2}\theta^2\ga_\msf e(W_u-W_v)\d u\dd v\right)\right]
\leq\mr e^{\theta^2\msf t^2\msf e^{-2}\|\ga_1\|_\infty},\]
recalling that $\msf e^{-2}=O(\be_\msf e(p\msf t))$ and that $\msf t$ is bounded in that case.
With this, the proof of Lemma \ref{Lemma: Moment Upper Bound 1} is now complete.

\subsection{Proof of Lemma \ref{Lemma: Moment Upper Bound 2}}
\label{Section: Upper 2}

The proof of this result is very similar
to that of Lemma \ref{Lemma: Moment Lower Bound for 0<p<1 - 1}.
We begin with the following general bound:

\begin{proposition}
\label{Proposition: Averaged Upper Bound}
Let $p>0$, $\theta\geq1$ and $\msf R=\mr e^{o(\be_\msf e(p\msf t))}$.
For any function $\msf d$ such that $0\leq\msf d<\msf t$,
\[\Big\langle\big(u^{\theta\xi_\msf e}_{\msf R}(\msf t-\msf d,\cdot),\mathbbm 1\big)^p\Big\rangle\leq
\begin{cases}
\mr e^{o(\be_\msf e(p\msf t))}\left(\Big\langle\big(u^{\theta\xi_\msf e}_{\msf R}(p\msf t,\cdot),\mathbbm 1\big)\Big\rangle+1\right)&\text{if }p\geq1
\vspace{3pt}\\
\mr e^{o(\be_\msf e(p\msf t))}\left(\sum_{k=1}^\infty\mr e^{p\msf t\la^{\theta\xi_\msf e}_k(Q_{\msf R})}+1\right)
&\text{if }0<p<1
\end{cases}.
\]
\end{proposition}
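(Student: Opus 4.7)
The plan is to bound the averaged solution $(u^{\theta\xi_\msf e}_{\msf R}(\msf t-\msf d,\cdot),\mathbbm 1)$ by exploiting the Dirichlet spectral expansion on the box $Q_{\msf R}$. Writing $\la_k=\la^{\theta\xi_\msf e}_k(Q_{\msf R})$ and $e_k=e^{\theta\xi_\msf e}_k(Q_{\msf R})$, the expansion \eqref{Equation: Spectral Expansion} gives $(u^{\theta\xi_\msf e}_{\msf R}(t',\cdot),\mathbbm 1)=\sum_{k=1}^\infty\mr e^{t'\la_k}(e_k,\mathbbm 1)^2$, and Parseval's identity implies $\sum_{k=1}^\infty(e_k,\mathbbm 1)^2=\|\mathbbm 1_{Q_{\msf R}}\|_2^2=(2\msf R)^d$, which will serve as the fundamental normalization. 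Since $\msf R=\mr e^{o(\be_\msf e(p\msf t))}$, all polynomial-in-$\msf R$ prefactors produced along the way will be absorbed into the desired $\mr e^{o(\be_\msf e(p\msf t))}$.

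For the case $p\geq1$, I would apply the standard Jensen-type power-mean inequality to the weights $(e_k,\mathbbm 1)^2/(2\msf R)^d$, which are a probability measure on $\mbb N$, yielding the pointwise bound
\[\big(u^{\theta\xi_\msf e}_{\msf R}(\msf t-\msf d,\cdot),\mathbbm 1\big)^p\leq(2\msf R)^{d(p-1)}\big(u^{\theta\xi_\msf e}_{\msf R}(p(\msf t-\msf d),\cdot),\mathbbm 1\big).\]
To then pass from time $p(\msf t-\msf d)$ to time $p\msf t$, I would split the resulting spectral sum into the parts $\{\la_k>0\}$ and $\{\la_k\leq0\}$: on the former we use $\mr e^{p(\msf t-\msf d)\la_k}\leq\mr e^{p\msf t\la_k}$ to reconstruct a piece bounded by $(u^{\theta\xi_\msf e}_{\msf R}(p\msf t,\cdot),\mathbbm 1)$, while on the latter we use $\mr e^{p(\msf t-\msf d)\la_k}\leq1$ together with $\sum_k(e_k,\mathbbm 1)^2=(2\msf R)^d$.

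For $0<p<1$, I would instead bound the spectral sum directly by its leading term: since $\la_k\leq\la_1^+:=\max(\la_1,0)$ for all $k$ and $\msf t-\msf d>0$, one has $(u^{\theta\xi_\msf e}_{\msf R}(\msf t-\msf d,\cdot),\mathbbm 1)\leq(2\msf R)^d\mr e^{(\msf t-\msf d)\la_1^+}$, and raising to the $p$-th power gives $(2\msf R)^{dp}\mr e^{p(\msf t-\msf d)\la_1^+}$. A case split again handles the time shift: if $\la_1\leq0$ the exponential equals $1$, and if $\la_1>0$ then $\mr e^{p(\msf t-\msf d)\la_1^+}\leq\mr e^{p\msf t\la_1}\leq\sum_k\mr e^{p\msf t\la_k}$. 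Either way $\mr e^{p(\msf t-\msf d)\la_1^+}\leq\sum_k\mr e^{p\msf t\la_k}+1$, giving the claim after absorbing the polynomial factor $(2\msf R)^{dp}$.

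In both cases the desired inequality follows by taking expectation in $\xi_\msf e$. The one point requiring care is the clean handling of the time shift from $\msf t-\msf d$ or $p(\msf t-\msf d)$ up to $p\msf t$, which must be carried out without any specific relationship between $\msf d$ and $\msf t$ beyond $0\leq\msf d<\msf t$ (in particular $p\msf t$ and $p(\msf t-\msf d)$ could a priori be on either side of each other); this is handled uniformly by the elementary positive/non-positive eigenvalue split, using the trivial bounds $\sum_k(e_k,\mathbbm 1)^2=(2\msf R)^d$ and $\mr e^{p\msf t\la_1}\leq\sum_k\mr e^{p\msf t\la_k}$. Everything else is routine manipulation of the spectral expansion, with the additive ``$+1$'' term in the statement of the proposition absorbing the contribution of the non-positive part of the spectrum.
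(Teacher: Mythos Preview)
Your proposal is correct and follows essentially the same strategy as the paper: expand spectrally on $Q_{\msf R}$, use Parseval $\sum_k(e_k,\mathbbm 1)^2=(2\msf R)^d$, and handle the time shift via a positive/non-positive eigenvalue split, absorbing all polynomial-in-$\msf R$ prefactors into $\mr e^{o(\be_\msf e(p\msf t))}$. The differences are purely in the bookkeeping. For $p\geq1$ the paper performs the time shift first (citing \cite[(3.37)]{GartnerKonig}, which is precisely your eigenvalue split, yielding the inequality \eqref{Equation: Averaged Upper Bound Sum}) and then applies Jensen; you apply Jensen first and shift afterward. For $0<p<1$ the paper uses subadditivity $(\sum_k x_k)^p\leq\sum_k x_k^p$ on the spectral sum and then bounds each $(e_k,\mathbbm 1)^{2p}$ by Cauchy--Schwarz; your route, bounding the whole sum by $(2\msf R)^d\mr e^{(\msf t-\msf d)\la_1^+}$ before taking the $p$-th power, is slightly more direct and avoids the subadditivity step altogether. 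Either ordering works.

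One small correction: your parenthetical remark that ``$p\msf t$ and $p(\msf t-\msf d)$ could a priori be on either side of each other'' is not right, since $\msf d\geq0$ forces $p(\msf t-\msf d)\leq p\msf t$ always. The concern you may have had in mind is that $\msf t-\msf d$ and $p\msf t$ (for $0<p<1$) can indeed be on either side, but your argument never compares those two quantities directly, so this does not matter.
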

\begin{proof}
By arguing as in \cite[(3.37)]{GartnerKonig},
for every $p>0$, we have that
\begin{align}
\label{Equation: Averaged Upper Bound Sum}
\big(u^{\theta\xi_\msf e}_{\msf R}(\msf t-\msf d,\cdot),\mathbbm 1\big)^p\leq\left(\sum_{k=1}^\infty\mr e^{\msf t\la^{\theta\xi_\msf e}_k(Q_{\msf R})}\big(e^{\theta\xi_\msf e}_k(Q_{\msf R}),\mathbbm 1\big)^2+\big(2\msf R\big)^{2d}\right)^p.
\end{align}
Suppose first that $p\geq1$. Since
\[f(k):=\frac{\big(e^{\theta\xi_\msf e}_k(Q_{\msf R}),\mathbbm 1\big)^2}{(2\msf R)^{2d}},\qquad k\in\mbb N\]
is a probability measure on $\mbb N$, an application of Jensen's inequality yields
\begin{align*}
\left(\sum_{k=1}^\infty\mr e^{\msf t\la^{\theta\xi_\msf e}_k(Q_{\msf R})}\big(e^{\theta\xi_\msf e}_k(Q_{\msf R}),\mathbbm 1\big)^2\right)^p
&\leq(2\msf R)^{2d(p-1)}\sum_{k=1}^\infty\mr e^{p\msf t\la^{\theta\xi_\msf e}_k(Q_{\msf R})}\big(e^{\theta\xi_\msf e}_k(Q_{\msf R}),\mathbbm 1\big)^2\\
&=(2\msf R)^{2d(p-1)}\big(u^{\theta\xi_\msf e}_{\msf R}(p\msf t,\cdot),\mathbbm 1\big).
\end{align*}
If we combine this with
$\msf R=\mr e^{o(\be_\msf e(p\msf t))}$, the inequality $(x+y)^p\leq2^p(x^p+y^p)$
for all $x,y>0$ and $p\geq1$, and \eqref{Equation: Averaged Upper Bound Sum},
then we obtain the claimed result for $p\geq1$.

Now suppose that $0<p<1$.
Since $(\sum_kx_k)^p\leq\sum_kx_k^p$ for every $0<p<1$ and $x_k\geq0$, we get
from \eqref{Equation: Averaged Upper Bound Sum} that
\[\big(u^{\theta\xi_\msf e}_{\msf R}(\msf t-\msf d,\cdot),\mathbbm 1\big)^p\leq\sum_{k=1}^\infty\mr e^{p\msf t\la^{\theta\xi_\msf e}_k(Q_{\msf R})}\big(e^{\theta\xi_\msf e}_k(Q_{\msf R}),\mathbbm 1\big)^{2p}+\big(2\msf R\big)^{2dp}.\]
Given that $\msf R=\mr e^{o(\be_\msf e(p\msf t))}$ and
$\big(e^{\theta\xi_\msf e}_k(Q_{\msf R}),\mathbbm 1\big)\leq(2\msf R)^d$, the result follows.
\end{proof}
With this in hand, we may now prove \eqref{Equation: Averaged Noise Upper Bound - Regular} and \eqref{Equation: Averaged Noise Upper Bound - Singular}:

\subsubsection{Proof of \eqref{Equation: Averaged Noise Upper Bound - Regular}}

Let $A_{\msf d,\msf t}(Q_{r})$ be as in \eqref{Equation: A Event},
and let $\msf d$, $\msf c$, and $\msf h$ be as in \eqref{Equation: d, c and h}.
By the Feynman-Kac formula,
\begin{multline*}
u_{\msf R}^{\xi_\msf e}(\msf t,0)\leq\mbb E_0\left[\exp\left(\int_{0}^{\msf t}\xi_\msf e(W_s)\d s\right)\mathbbm 1_{\{A_{\msf d,\msf t}(Q_{\msf R})\}}(\mathbbm 1_{\{\int_0^{\msf d}\xi_\msf e(W_s)\d s\leq\msf c\}}+\mathbbm 1_{\{\int_0^{\msf d}\xi_\msf e(W_s)\d s>\msf c\}})\right]\\
\leq\mr e^{\msf c}\mbb E_0\left[\exp\left(\int_{\msf d}^{\msf t}\xi_\msf e(W_s)\d s\right)\mathbbm 1_{\{A_{\msf d,\msf t}(Q_{\msf R})\}}\right]+\mbb E_0\left[\exp\left(\int_{0}^{\msf t}\xi_\msf e(W_s)\d s\right)\mathbbm 1_{\{\int_0^{\msf d}\xi_\msf e(W_s)\d s>\msf c\}}\right].
\end{multline*}
Since $\msf c=o(\be_\msf e(p\msf t))$ and $(x+y)^p\leq\ze(p)(x^p+y^p)$
for some constant $\ze(p)>0$
whenever $x,y\geq0$, thanks to Proposition \ref{Proposition: Averaged Upper Bound},
\eqref{Equation: Averaged Noise Upper Bound - Regular} will be proved if we show that,
for large enough $\msf m$, one has
 \begin{align}
\label{Equation: Moment Upper Bound - Two Orders 1}
\left\langle\mbb E_0\left[\exp\left(\int_{\msf d}^{\msf t}\xi_\msf e(W_s)\d s\right)\mathbbm 1_{\{A_{\msf d,\msf t}(Q_{\msf R})\}}\right]^p\right\rangle\leq
\mr e^{o(\be_\msf e(p\msf t))}\Big\langle\big( u^{\xi_\msf e}_{\msf R}(\msf t-\msf d,\cdot),\mathbbm 1\big)^p\Big\rangle
\end{align}
and
\begin{align}
\label{Equation: Moment Upper Bound - Two Orders 2}
\left\langle\mbb E_0\left[\exp\left(\int_{0}^{\msf t}\xi_\msf e(W_s)\d s\right)\mathbbm 1_{\{\int_0^{\msf d}\xi_\msf e(W_s)\d s>\msf c\}}\right]^p\right\rangle\leq
\begin{cases}
\mr e^{-p\msf h\msf c+4H_\msf e(p\msf t)}&\text{if }p\geq1\\
\mr e^{-p\msf h\msf c+4pH_\msf e(\msf t)}&\text{if }0<p<1
\end{cases}.
\end{align}

\eqref{Equation: Moment Upper Bound - Two Orders 1} can be proved using essentially the same argument as \eqref{Equation: Moment Lower Bound for 0<p<1 - 1 Two Orders 1}, except that this
time we use the upper bound $\sup_{x\in\mbb R^d}\Pi_\msf d(x)=O(\msf d^{-d/2})=\mr e^{o(\be_\msf e(p\msf t))}$.
As for \eqref{Equation: Moment Upper Bound - Two Orders 2}, essentially the same argument used
to arrive at \eqref{Equation: Upper Bound Without Holder} together with Jensen's inequality yields
\begin{multline*}
\left\langle\mbb E_0\left[\exp\left(\int_{0}^{\msf t}\xi_\msf e(W_s)\d s\right)\mathbbm 1_{\{\int_0^{\msf d}\xi_\msf e(W_s)\d s>\msf c\}}\right]^p\right\rangle\\
\leq\mr e^{-p\msf h\msf c}\left\langle\left(\int_0^{\msf t}\frac{1+\msf h\mathbbm1_{\{0\leq s\leq\msf d\}}}{\msf t+\msf h\msf d}\mbb E_0\left[\mr e^{(\msf t+\msf h\msf d)\xi_\msf e(W_s)}\right]\d s\right)^p\right\rangle
=\begin{cases}
\mr e^{-p\msf h\msf c+H_\msf e(p(\msf t+\msf h\msf d))}&\text{if }p\geq1\\
\mr e^{-p\msf h\msf c+pH_\msf e(\msf t+\msf h\msf d)}&\text{if }0<p<1
\end{cases}.
\end{multline*}
Then, we obtain \eqref{Equation: Moment Upper Bound - Two Orders 2} by
noting that since $\msf h\msf d\leq\msf t$ for large enough $\msf m$, we have the inequality
$(\msf t+\msf h\msf d)^2\leq2(\msf t^2+(\msf h\msf d)^2)\leq 4\msf t^2$ for large enough $\msf m$;
hence $H_\msf e(q(\msf t+\msf h\msf d))\leq\msf 4H_\msf e(q\msf t)$ for all $q>0$.

\subsubsection{Proof of \eqref{Equation: Averaged Noise Upper Bound - Singular}}

Let $\theta>1$ be arbitrary, and let $\theta'>1$ be such that $1/\theta'+1/\theta=1$.
For every function $\msf d=\msf d(\msf m)$ such that $0\leq\msf d<\msf t$,
the Feynman-Kac formula followed by two applications of H\"older's inequality yields
\begin{align*}
&\big\langle u^{\xi_\msf e}_{\msf R}(t,x)\big\rangle^p
=\left\langle\mbb E_0\left[\exp\left(\int_0^{\msf t}\xi_\msf e(W_s)\d s\right)\mathbbm 1_{\{L_{\msf t}\in\ms P(Q_{\msf R})\}}\right]^p\right\rangle\\
&\leq\left\langle\mbb E_0\left[\exp\left(\int_0^\msf d \theta'\xi_\msf e(W_s)\d s\right)\right]^{p/\theta'}\mbb E_0\left[\exp\left(\int_{\msf d}^{\msf t}\theta\xi_\msf e(W_s)\d s\right)\mathbbm 1_{\{A_{\msf d,\msf t}(Q_{\msf R})\}}\right]^{p/\theta}
\right\rangle\\
&\leq\left\langle\mbb E_0\left[\exp\left(\int_0^\msf d \theta'\xi_\msf e(W_s)\d s\right)\right]^p\right\rangle^{1/\theta'}\left\langle\mbb E_0\left[\exp\left(\int_{\msf d}^{\msf t}\theta\xi_\msf e(W_s)\d s\right)\mathbbm 1_{\{A_{\msf d,\msf t}(Q_{\msf R})\}}\right]^{p}
\right\rangle^{1/\theta}.
\end{align*}
By the Markov property,
\[\mbb E_0\left[\exp\left(\int_{\msf d}^{\msf t}\theta\xi_\msf e(W_s)\d s\right)\mathbbm 1_{\{A_{\msf d,\msf t}(Q_{\msf R})\}}\right]^p
\leq O(\msf d^{-d/2})\big(u^{\theta\xi_\msf e}_{\msf R}(\msf t-\msf d,\cdot),\mathbbm 1\big)^p.\]
Therefore, by Proposition \ref{Proposition: Averaged Upper Bound}, it suffices to
make sure that it is possible to choose $\msf d$ such that $\msf d=\mr e^{o(\be_\msf e(p\msf t))}$
and
\[\left\langle\mbb E_0\left[\exp\left(\int_0^{\msf d} \theta'\xi_\msf e(W_s)\d s\right)\right]^p\right\rangle
\leq\mr e^{o(\be_\msf e(p\msf t))}.\]
This can be proved using the
same argument as in \eqref{Equation: Moment Lower Bound for 0<p<1 - 1 One Order Bounded Part}
(up to applying Jensen's inequality to move $p$ inside of $\mbb E_0$ or outside of $\langle\cdot\rangle$,
depending on whether $p\geq1$ or $0<p<1$).

With this, the proof of Lemma \ref{Lemma: Moment Upper Bound 2}---and
therefore also \eqref{Equation: All Moments Upper}---is now complete.
In particular, combining this with \eqref{Equation: All Moments Lower}, this
concludes the proof of Theorem \ref{Theorem: All Moments}.

\section{Variational Problems}
\label{Section: Variational}

In this section, we argue
that the constant $\mc G$ in \eqref{Equation: GNS} is finite for the Riesz noise
(see Section \ref{Section: G is Finite}),
we prove Proposition \ref{Proposition: Finite Time Intermittency} (see Section \ref{Section: Finite Time Intermittency}),
and we prove Theorem \ref{Theorem: Fractional Minimizers} and Proposition \ref{Proposition: Convergence of Minimizers}
(see Sections \ref{Section: Variational Outline}--\ref{Section: Geometry of fcoord}). Since some of these
proofs involve a large number of cumbersome integrals we use two shorthands
throughout this section: On the one hand,
the notation
\eqref{Equation: Dirichlet Form} for the Dirichlet form, and on the other hand, the form
\[\mc J_c(f):=-J_c(f^2)=\frac12\iint f(x)^2\ga_c(x-y)f(y)^2\d x\dd y\]
for $c\in[0,\infty)$, recalling that $\ga_0=\ga$. Moreover, whenever we omit
the domain of integration in some integral, it can be assumed that the integral is over
all of $\mbb R^d$.

\subsection{The Constant $\mc G$ for Riesz Noise}
\label{Section: G is Finite}

We begin with the following statement:

\begin{proposition}
\label{Proposition: Riesz Best Constant}
Let $d>2$ and $\si>0$, and
take $\ga(x)=\si^2|x|^{-2}$.
There exists a finite constant $C>0$ such that
for every $f\in H^1(\mbb R^d)$ with $\|f\|_2=1$, one has
$\mc J_0(f)\leq C\ms S(f)$.
\end{proposition}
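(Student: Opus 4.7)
The plan is to prove the inequality by reducing to the classical Hardy inequality applied pointwise in one of the two spatial variables. Concretely, I will write
\[
\mc J_0(f)=\frac{\si^2}{2}\iint\frac{f(x)^2f(y)^2}{|x-y|^2}\d x\dd y
=\frac{\si^2}{2}\int f(y)^2\left(\int\frac{f(x)^2}{|x-y|^2}\d x\right)\d y,
\]
using Tonelli's theorem (everything is nonnegative), and estimate the inner integral uniformly in $y$.

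The key tool is the Hardy inequality: for $d>2$ and every $u\in H^1(\mbb R^d)$,
\[
\int\frac{u(x)^2}{|x|^2}\d x\leq\frac{4}{(d-2)^2}\int|\nabla u(x)|^2\d x.
\]
Since Lebesgue measure and $|\nabla u|^2$ are translation invariant, applying this to the shifted function $u(\cdot+y)$ yields, for every $y\in\mbb R^d$,
\[
\int\frac{f(x)^2}{|x-y|^2}\d x\leq\frac{4}{(d-2)^2}\int|\nabla f(x)|^2\d x.
\]
This estimate is uniform in $y$, which is the crucial point.

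Substituting back and using the normalization $\|f\|_2=1$ gives
\[
\mc J_0(f)\leq\frac{\si^2}{2}\cdot\frac{4}{(d-2)^2}\|f\|_2^2\,\|\nabla f\|_2^2
=\frac{2\si^2}{(d-2)^2}\|\nabla f\|_2^2
=\frac{2\si^2}{\ka(d-2)^2}\,\ms S(f),
\]
so the proposition holds with $C=\tfrac{2\si^2}{\ka(d-2)^2}$. There is no real obstacle here beyond invoking Hardy's inequality correctly; the only thing worth noting is that the $|x-y|^{-2}$ kernel has the same scaling as $|\nabla|^2$ in $d>2$, which is precisely why Hardy (rather than Hardy--Littlewood--Sobolev plus Sobolev embedding, which would also work but gives a less transparent constant) produces the bound in one step.
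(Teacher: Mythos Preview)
Your proof is correct and more direct than the paper's. The paper does not argue via Hardy's inequality; instead it cites from the literature the finiteness of
\[
\sup_{f\in H^1(\mbb R^d),~\|f\|_2=1}\Big(\mc J_0(f)^{1/2}-\ms S(f)\Big)<\infty
\]
(a result from Bass--Chen--Rosen) and then invokes a standard scaling argument to upgrade this to the homogeneous inequality $\mc J_0(f)\leq C\ms S(f)$. Your route via Hardy is self-contained, requires no scaling step, and yields an explicit constant $C=\tfrac{2\si^2}{\ka(d-2)^2}$; it exploits precisely the fact that for $\om=2$ the kernel $|x|^{-2}$ is controlled pointwise by the Dirichlet form through Hardy, rather than only in an integrated sense. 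The paper's approach, by contrast, is more in line with how the other critical noises (white and fractional) are handled elsewhere in the text, where one typically passes through a non-homogeneous variational bound first.
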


While we did not find a proof for this exact statement in the literature, it follows
from standard scaling arguments (e.g., \cite[Lemma A.4]{Chen14}) and the
fact that
\[\sup_{f\in H^1(\mbb R^d),~\|f\|_2=1}\left(\mc J_0(f)^{1/2}
-\ms S(f)\right)<\infty\]
(e.g., apply a trivial change of variables to \cite[(1.19)]{BassChenRosen}).

\subsection{Proof of Proposition \ref{Proposition: Finite Time Intermittency}}
\label{Section: Finite Time Intermittency}

The fact that $\mbf M^{\mr{crt}}_{t,p}<\infty$ follows from
its definition in \eqref{Equation: Crt-2 Variational}, and noting that if $\|f\|_2=1$, then
\begin{align}
\label{Equation: Finiteness of M crt}
\mc J_1(f)\leq\frac{\|\ga_1\|_\infty\|f\|_2^4}2=\frac{\|\ga_1\|_\infty}2<\infty.
\end{align}
Next, we show that $\mbf M^{\mr{crt}}_{t,p}$ is positive if $p>\frac{2\kappa}{t\mc G}$.
	The proof is based on scaling arguments.
	Given $f\in H^{1}(\mathbb{R}^d)$ with $\Vert f\Vert_{2}=1$, we denote
	$f_\eps(x):=\eps^{d/2}f(\eps x)$.
By definition of $\gamma_1$, and since $\om=2$, we have 
\begin{align*}
	\gamma_1(\varepsilon^{-1}x)= \gamma*p_1(\varepsilon^{-1}x)= \varepsilon^{2}\gamma_\varepsilon(x).
\end{align*}
Consequently, for any $\eps>0$ and $f\in H^{1}(\mathbb{R}^d)$ with $\Vert f\Vert_{2}=1$, one has (by a change
of variables)
\begin{align}
	pt\mc J_1(f_\eps)-\ms S(f_\eps)
	= pt\varepsilon^{2d}\mc J_1\big(f(\eps\cdot)\big)-\varepsilon^2\ms S(f)
	= \varepsilon^{2}\big(pt\mc J_\eps(f)-\ms S(f)\big).
	\label{Equation: relatingtwoVP0}
\end{align}
Thus, an application of monotone convergence (using Fourier transforms as in \eqref{Equation: f^2 Fourier Transform})
yields
\begin{align}\label{Equation: relatingtwoVP}
		\lim_{\varepsilon\to 0}\varepsilon^{-2}\big(pt\mc J_1(f_\eps)-\ms S(f_\eps)\big)
		= pt\mc J_0(f)-\ms S(f).
\end{align}
This relates the critical variational problem \eqref{Equation: Crt-2 Variational} to the following new variational problem:
\begin{equation}
	\overline{\mbf M}^{\mr{crt}}_{t,p}:= \sup_{f\in H^1(\mbb R^d),~\|f\|_2=1}\big(pt\mc J_0(f)-\ms S(f)\big).
\end{equation}
Indeed, if $p>\frac{2\kappa}{t\mc G}$, then by definition of $\mc G$
we have that $\overline{\mbf M}^{\mr{crt}}_{t,p}>0$ (in fact, we actually have that $\overline{\mbf M}^{\mr{crt}}_{t,p}=\infty$ by a similar scaling argument as above). Therefore there exists $f\in H^1(\mbb R^d)$ with $\|f\|_2=1$ such that 
\begin{align*}
	pt\mc J_0(f)-\ms S(f)>0.
\end{align*}
Hence by \eqref{Equation: relatingtwoVP} we have for $\varepsilon>0$ small enough 
\begin{align*}
	pt\mc J_1(f_\eps)-\ms S(f_\eps)>0,
\end{align*}
which implies that $\mbf M^{\mr{crt}}_{t,p}$ is positive since $f_\eps\in H^1(\mbb R^d)$
with $\|f_\eps\|_2=1$ for every $f\in H^{1}(\mathbb{R}^d)$ with $\Vert f\Vert_{2}=1$.

We now argue that if $p<\frac{2\kappa}{t\mc G}$, then $\mbf M^{\mr{crt}}_{t,p}=0$.
In this case by definition of $\mc G$, for any $f\in H^1(\mathbb{R}^d)$ with $\Vert f\Vert_2=1$ we have 
$pt\mc J_0(f)-\ms S(f)\leq 0.$
Since
$\mc J_1(f)\leq\mc J_0(f)$
(this is easily checked using a Fourier transform as in \eqref{Equation: f^2 Fourier Transform}),
this implies that $\mbf M^{\mr{crt}}_{t,p}\leq0$.
The fact that $\mbf M^{\mr{crt}}_{t,p}=0$ then immediately follows from
\eqref{Equation: relatingtwoVP}.
	
We now finish the proof of Proposition \ref{Proposition: Finite Time Intermittency} by showing that $\mbf M^{\mr{crt}}_{t,p}$ is strictly increasing in $p$
whenever $\mbf M^{\mr{crt}}_{t,p}>0$.
Take $p>q>\frac{2\kappa}{t\mc G}$.
For every $\de>0$, we can find some $g\in H^1(\mbb R^d)$ such that
$\|g\|_2=1$ and
\[qt\mc J_1(g)-\ms S(g)\geq\mbf M^{\mr{crt}}_{t,q}-\de.\]
Since $\ms S(g)\geq0$, this also implies that
\[qt\mc J_1(g)\geq\mbf M^{\mr{crt}}_{t,q}-\de.\]
In particular,
\begin{multline*}
\mbf M^{\mr{crt}}_{t,p}-\mbf M^{\mr{crt}}_{t,q}
\geq pt\mc J_1(g)-\ms S(g)-\big(qt\mc J_1(g)-\ms S(g)+\de\big)\\
=(p-q)t\mc J_1(g)-\de\geq\frac{(p-q)(\mbf M^{\mr{crt}}_{t,q}-\de)}{q}-\de.
\end{multline*}
If we take $\de\to0$ in the above, this yields
\[\mbf M^{\mr{crt}}_{t,p}-\mbf M^{\mr{crt}}_{t,q}\geq\frac{p-q}{q}\mbf M^{\mr{crt}}_{t,q}>0,\]
concluding the proof.

\subsection{Outline of Proof of Theorem \ref{Theorem: Fractional Minimizers} and Proposition \ref{Proposition: Convergence of Minimizers}}
\label{Section: Variational Outline}

The remainder of Section \ref{Section: Variational} is devoted to the proof of
Theorem \ref{Theorem: Fractional Minimizers} and Proposition \ref{Proposition: Convergence of Minimizers}.
In Section \ref{Section: Variational Outline}, we provide an outline of the proof,
which relies on a number of technical lemmas (i.e., Lemmas \ref{Lemma: M c p is finite and positive}--\ref{Lemma: Geometry of fcoord}). Then, in Sections
\ref{Section: M c p is finite and positive}--\ref{Section: Geometry of fcoord}, we prove these technical lemmas.

\subsubsection{Step 1. Finiteness, Existence, and Convergence}
\label{Section: Variational Outline Step 1}

We begin by proving that $\mbf M$ and $\mbf M_{\mf c,p}$ are positive, finite,
and have maximizers, as well as the two limits \eqref{Equation: Convergence to M}
and \eqref{Equation: Convergence to fstar}.
The fact that $\mbf M$ is positive and finite can be proved by using the same argument as in
 \cite[Lemma A.2 with $\al_0=0$]{ChenHuSongXing}. As for $\mbf M_{\mf c,p}$, we have the
 following:

\begin{lemma}
\label{Lemma: M c p is finite and positive}
Let Assumption \ref{Assumption: Noise} hold
with $0<\om<2$.
$\mbf M_{\mf c,p}\in(0,\infty)$ for every $\mf c,p>0$.
\end{lemma}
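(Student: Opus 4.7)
The plan is to split the argument into the two separate claims, finiteness and positivity, and to handle them using very different mechanisms. Let me abbreviate $c := p^{1/(2-\omega)}\mathfrak{c}$ so that the kernel appearing in the definition is simply $\gamma_c = \gamma * p_c$, and recall $\mathcal{J}_c(f) = \frac{1}{2}\iint f(x)^2\gamma_c(x-y)f(y)^2\,dx\,dy$ and $\mathcal{S}(f) = \kappa\int|\nabla f|^2\,dx$.

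For finiteness, I would first argue that $\gamma_c$ is bounded for each of the three covariance kernels of Assumption~\ref{Assumption: Noise}. For white noise this is immediate since $\gamma_c = \sigma^2 p_c$. For the Riesz kernel $\gamma(x)=\sigma^2|x|^{-\omega}$ with $0<\omega<d$, one has $\gamma_c(x) = \sigma^2\int|x-y|^{-\omega}p_c(y)\,dy$, which is uniformly bounded since the local singularity is integrable against the Gaussian. For the fractional kernel, boundedness follows from the product structure and Fubini, with the Gaussian $p_c$ factoring coordinate-wise. Given that $\|\gamma_c\|_\infty<\infty$, whenever $\|f\|_2=1$ we immediately have $\mathcal{J}_c(f)\leq \|\gamma_c\|_\infty/2$ and $\mathcal{S}(f)\geq 0$, so $\mathbf{M}_{\mathfrak{c},p}\leq \|\gamma_c\|_\infty/2<\infty$.

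For positivity, the key idea is a subcritical scaling argument that exploits $\omega<2$. For a fixed $f\in H^1(\mathbb{R}^d)$ with $\|f\|_2=1$, consider the family $f_\lambda(x):=\lambda^{d/2}f(\lambda x)$, which also has unit $L^2$-norm. A direct change of variables gives $\mathcal{S}(f_\lambda)=\lambda^2\mathcal{S}(f)$, and a careful change of variables combined with the scaling identity $\gamma(x/\lambda)=\lambda^\omega\gamma(x)$ together with the rescaling $p_c(w/\lambda)=\lambda^d p_{c\lambda}(w)$ yields
\begin{equation*}
\gamma_c(x/\lambda)=\lambda^\omega\gamma_{c\lambda}(x),\qquad\text{hence}\qquad\mathcal{J}_c(f_\lambda)=\lambda^\omega\mathcal{J}_{c\lambda}(f).
\end{equation*}
Consequently,
\begin{equation*}
\mathcal{J}_c(f_\lambda)-\kappa\mathcal{S}(f_\lambda)=\lambda^\omega\mathcal{J}_{c\lambda}(f)-\kappa\lambda^2\mathcal{S}(f).
\end{equation*}
Now choose any smooth, compactly supported $f$ with $\|f\|_2=1$ and $\mathcal{S}(f)<\infty$; for each of the three kernels $\mathcal{J}_0(f)\in(0,\infty)$ by local integrability, and by Parseval with $\widehat{\gamma_{c\lambda}}=\widehat\gamma\,\widehat{p_{c\lambda}}$, monotone convergence gives $\mathcal{J}_{c\lambda}(f)\nearrow\mathcal{J}_0(f)>0$ as $\lambda\to0$. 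Thus $\mathcal{J}_{c\lambda}(f)\geq\tfrac12\mathcal{J}_0(f)$ for all sufficiently small $\lambda$, and since $2-\omega>0$,
\begin{equation*}
\mathcal{J}_c(f_\lambda)-\kappa\mathcal{S}(f_\lambda)\geq\lambda^\omega\left(\tfrac12\mathcal{J}_0(f)-\kappa\lambda^{2-\omega}\mathcal{S}(f)\right)>0
\end{equation*}
for small $\lambda$, which forces $\mathbf{M}_{\mathfrak{c},p}>0$.

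The only point that requires a bit of care is the scaling identity $\gamma_c(x/\lambda)=\lambda^\omega\gamma_{c\lambda}(x)$: it combines the $\omega$-homogeneity of the singular kernel $\gamma$ with the fact that rescaling the argument of the Gaussian density is equivalent to changing its variance parameter, and deserves to be written out carefully. Beyond that, the proof is entirely routine, the subcritical scaling exponent $\omega<2$ doing all the nontrivial work.
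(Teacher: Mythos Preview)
Your proof is correct and takes essentially the same approach as the paper: boundedness of $\gamma_c$ gives finiteness, and the $L^2$-normalized rescaling $f_\lambda(x)=\lambda^{d/2}f(\lambda x)$ together with the scaling identity $\gamma_c(x/\lambda)=\lambda^\omega\gamma_{c\lambda}(x)$ and monotone convergence $\mathcal J_{c\lambda}(f)\to\mathcal J_0(f)$ gives positivity via $\omega<2$. One small notational slip: since you defined $\mathcal S(f)=\kappa\int|\nabla f|^2$, the expressions $\mathcal J_c(f_\lambda)-\kappa\mathcal S(f_\lambda)$ should read $\mathcal J_c(f_\lambda)-\mathcal S(f_\lambda)$.
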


The proof of this lemma, which we provide in Section \ref{Section: M c p is finite and positive}, uses a scaling argument
similar to \eqref{Equation: relatingtwoVP0} and \eqref{Equation: relatingtwoVP}.
Next, since $\ga_{p^{1/(2-\om)}\mf c}\in L^\infty(\mbb R^d)$ for every $\mf c,p\in(0,\infty)$,
a standard application of Lions' concentration-compactness principle
(e.g., combine Lemma \ref{Lemma: M c p is finite and positive} with \cite[Theorem III.2]{Lions})
implies that maximizers of $\mbf M_{\mf c,p}$ exist. Regarding the existence of maximizers of $\mbf M$,
we need the following two lemmas:

\begin{lemma}
\label{Lemma: M is continuous}
Let Assumption \ref{Assumption: Noise} hold
with $0<\om<2$. For every $R>0$, the map $f\mapsto\mc J_0(f)$
is continuous with respect to the $L^2$ norm on the set $\{f\in L^2(\mbb R^d):\|f\|_{H^1(\mbb R^d)}<R\}$.
\end{lemma}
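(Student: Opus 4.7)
The plan is to show that if $f_n \to f$ in $L^2(\mbb R^d)$ with $\sup_n \|f_n\|_{H^1(\mbb R^d)} < R$, then $\mc J_0(f_n) \to \mc J_0(f)$. First, by weak compactness of bounded sets in $H^1$ and uniqueness of limits, the $L^2$ limit $f$ lies in $H^1$ with $\|f\|_{H^1} \leq R$. I would then use the algebraic identity
\[
f_n(x)^2 f_n(y)^2 - f(x)^2 f(y)^2 = \bigl(f_n(x)^2 - f(x)^2\bigr) f_n(y)^2 + f(x)^2 \bigl(f_n(y)^2 - f(y)^2\bigr)
\]
together with $a^2 - b^2 = (a-b)(a+b)$ to reduce the proof to showing that expressions of the form
\[
I_n := \iint |f_n(x) - f(x)|\bigl(|f_n(x)|+|f(x)|\bigr)\, g(y)^2\, \ga(x-y) \d x \dd y
\]
tend to zero as $n\to\infty$, where $g$ is any function with $\|g\|_{H^1} \leq R$.

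The second step is to bound $I_n$ using a bilinear inequality adapted to each kernel in Assumption \ref{Assumption: Noise}. For white noise ($d=1$, $\ga = \si^2\de_0$), we have $\mc J_0(f) = \tfrac{\si^2}{2}\|f\|_4^4$, so the Sobolev embedding $H^1(\mbb R) \hookrightarrow L^\infty(\mbb R)$ makes the reduction trivial. For Riesz noise $\ga(x) = \si^2|x|^{-\om}$ with $0<\om<2$, I would invoke the Hardy--Littlewood--Sobolev inequality with exponent $p = \tfrac{2d}{2d-\om}$, namely $\iint |\phi(x)||\psi(y)||x-y|^{-\om} \d x \dd y \leq C\|\phi\|_p\|\psi\|_p$, and apply H\"older to factor $\phi = (f_n - f)(f_n + f)$ and $\psi = g^2$ into $L^{2p}$ norms. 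For the fractional kernel $\ga(x) = \si^2\prod_i |x_i|^{-\om_i}$, I would use the tensorized analogue obtained by iterating the one-dimensional HLS inequality with mixed $L^p$ norms coordinate by coordinate, yielding $\iint |\phi(x)||\psi(y)|\prod_i|x_i-y_i|^{-\om_i}\d x \dd y \leq C \|\phi\|_p\|\psi\|_p$ with the same $p$ as above (equivalently, one can pass through the Fourier side using $\widehat{\ga}(\xi) \propto \prod_i|\xi_i|^{\om_i - 1}$).

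The third step is interpolation. Since $\om < 2 \leq 4$, the exponent $2p = \tfrac{4d}{2d-\om}$ lies strictly below the Sobolev critical exponent $\tfrac{2d}{d-2}$ when $d\geq 3$ (and below every finite exponent when $d\leq 2$). Hence there is some $q \in (2p, 2^*)$ for which the Sobolev embedding bounds $\sup_n \|f_n\|_{L^q} < \infty$, and standard $L^p$-interpolation $\|f_n - f\|_{L^{2p}} \leq \|f_n - f\|_2^\theta \|f_n - f\|_{L^q}^{1-\theta}$ converts $L^2$ convergence into $L^{2p}$ convergence. Feeding this back into the bound from step two, $I_n \to 0$, which finishes the proof.

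The main obstacle will be justifying the tensorized HLS inequality in the fractional case cleanly; although this is classical, spelling out the iterated one-dimensional HLS with mixed norms requires some care, and one may prefer a smoother route via the Fourier representation $\mc J_0(f) = \tfrac{1}{2}\int |\widehat{f^2}(\xi)|^2 \widehat\ga(\xi) \d \xi$ and monotone convergence in $\eps$ after mollifying $\ga \to \ga_\eps$ (for which continuity is immediate since $\ga_\eps \in L^\infty$) and passing the limit using the uniform HLS-type constant. All other steps are essentially routine once the right integral inequality is in hand.
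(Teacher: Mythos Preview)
Your overall strategy---algebraic decomposition, bilinear integral inequality, then interpolation via Sobolev embedding---is sound and works for the white and Riesz kernels. For the fractional kernel, however, the tensorized Hardy--Littlewood--Sobolev inequality you state, with a \emph{single} Lebesgue exponent $p=\tfrac{2d}{2d-\om}$, is false whenever the $\om_i$ are not all equal. A one-parameter anisotropic scaling $\phi(x)\mapsto\phi(\lambda x_1,x_2,\ldots,x_d)$ shows that the two sides scale as $\lambda^{-2+\om_1}$ and $\lambda^{-2/p}=\lambda^{-2+\om/d}$ respectively, so the ratio blows up as $\lambda\to\infty$ once $\om_1>\om/d$. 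Iterating one-dimensional HLS does give a valid bound, but only in \emph{mixed} Lebesgue norms $L^{p_1}_{x_1}\cdots L^{p_d}_{x_d}$ with $p_i=\tfrac{2}{2-\om_i}$; pushing your argument through would then require anisotropic Sobolev embeddings and interpolation in mixed-norm spaces, which is possible but considerably more work than you indicate. The Fourier/mollification alternative you sketch at the end does not sidestep this, since passing the $\eps\to 0$ limit uniformly is precisely the missing estimate.

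The paper takes a more direct route that handles all three kernels at once. It observes that $\langle f,g\rangle_\ga:=\tfrac12\iint f(x)\ga(x-y)g(y)\,\d x\dd y$ is a semi-inner product, so the reverse triangle inequality gives $\bigl|\sqrt{\mc J_0(f_1)}-\sqrt{\mc J_0(f_2)}\bigr|\leq \sqrt{\langle f_1^2-f_2^2,f_1^2-f_2^2\rangle_\ga}$. The key tool is then the \emph{pointwise} bound
\[
\sup_{x\in\mbb R^d}\int\ga(x-y)f(y)^2\,\d y \leq C\,\|f\|_2^{2-\om}\|\nabla f\|_2^{\om},
\]
valid for all three noises (this is \cite[(A.17), (A.29)]{Chen14} for Riesz and fractional; for white noise with $d=1$ it is the elementary estimate $\|f\|_\infty^2\leq 2\|f\|_2\|f'\|_2$). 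Applying this and Cauchy--Schwarz to $\int|f_1^2-f_2^2|=\int|f_1-f_2||f_1+f_2|$ yields a bound proportional to $\|f_1-f_2\|_2^{1/2}$ with constants depending only on the $H^1$ bound $R$, giving $L^2$ continuity directly without any interpolation or Sobolev embedding. The main advantage over your approach is that this pointwise inequality is already adapted to the anisotropic structure of the fractional kernel, so no mixed-norm machinery is needed.
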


\begin{lemma}
\label{Lemma: M compactness}
Let Assumption \ref{Assumption: Noise} hold
with $0<\om<2$. Let $(f_n)_{n\in\mbb N}\subset H^1(\mbb R^d)$ be a sequence
of functions such that $\|f_n\|_2=1$ for all $n\in\mbb N$ and
\begin{align}
\label{Equation: M compactness hyp}
\lim_{n\to\infty}\big(\mc J_0(f_n)-\ms S(f_n)\big)=\mbf M.
\end{align}
There exists a subsequence $(n_k)_{k\in\mbb N}$,
a function $f_\star\in H^1(\mbb R^d)$, and a sequence $(z_k)_{k\in\mbb N}\subset\mbb R^d$ such that
\begin{align}
\label{Equation: M compactness}
\lim_{k\to\infty}\|f_{n_k}(\cdot-z_n)-f_\star\|_{H^1(\mbb R^d)}=0.
\end{align}
\end{lemma}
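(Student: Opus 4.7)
The plan is to apply Lions' concentration-compactness principle to the probability densities $\rho_n := f_n^2$. The first step is to show that $(f_n)_{n \in \mbb N}$ is bounded in $H^1(\mbb R^d)$. The scaling identities $\mc J_0(f_\mu) = \mu^\om \mc J_0(f)$ and $\ms S(f_\mu) = \mu^2 \ms S(f)$ for $f_\mu(x) := \mu^{d/2} f(\mu x)$ suggest a scale-invariant Gagliardo-Nirenberg-type inequality
\[\mc J_0(f) \leq C \|f\|_2^{4-\om} \ms S(f)^{\om/2}, \qquad f \in H^1(\mbb R^d).\]
For the anisotropic fractional kernel $\ga(x) = \si^2 \prod_{i=1}^d |x_i|^{-\om_i}$, this inequality can be established by iterating the one-dimensional Hardy-Littlewood-Sobolev inequality coordinate by coordinate together with Fubini-H\"older bounds, followed by interpolation between $L^2$ and $\dot H^1$. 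Since $\om/2 < 1$ in the subcritical regime, plugging $\|f_n\|_2 = 1$ into this estimate and using $\mc J_0(f_n) - \ms S(f_n) \to \mbf M$ yields the sought $H^1$-boundedness.

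With this in hand, Lions' concentration-compactness lemma applied to $\rho_n$ yields, along a subsequence, one of three alternatives: vanishing, dichotomy, or compactness up to translations $(z_n) \subset \mbb R^d$. I would rule out vanishing as follows: if $\sup_{y \in \mbb R^d} \int_{|x-y| \leq R} f_n^2 \to 0$ for every $R > 0$, then Lions' vanishing lemma gives $f_n \to 0$ in $L^p(\mbb R^d)$ for every $p$ in the relevant Sobolev range, which via the HLS-type bound above forces $\mc J_0(f_n) \to 0$; together with $\ms S(f_n) \geq 0$ this contradicts $\lim(\mc J_0(f_n) - \ms S(f_n)) = \mbf M > 0$. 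To rule out dichotomy, I would use strict superadditivity of
\[\mbf M_\la := \sup_{f \in H^1(\mbb R^d),\, \|f\|_2^2 = \la} \big(\mc J_0(f) - \ms S(f)\big).\]
The change of variables $f(x) \mapsto \la^{a} g(\la^b x)$ with $b = 1/(2-\om)$ and $a$ chosen so that $\|f\|_2^2 = \la$ gives the scaling $\mbf M_\la = \la^{(4-\om)/(2-\om)} \mbf M$; since $(4-\om)/(2-\om) > 1$ for $0 < \om < 2$, one has $\mbf M > \mbf M_\la + \mbf M_{1-\la}$ for every $\la \in (0,1)$, which is incompatible with the subadditive splitting of energy that dichotomy would produce.

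In the remaining compactness case, $g_n := f_n(\cdot - z_n)$ is tight in $L^2$ and bounded in $H^1$. Passing to a further subsequence, $g_{n_k} \rightharpoonup f_\star$ weakly in $H^1$ and strongly in $L^2_{\mathrm{loc}}$; tightness upgrades the latter to strong convergence in $L^2(\mbb R^d)$, so in particular $\|f_\star\|_2 = 1$. Applying Lemma \ref{Lemma: M is continuous} (with any $R$ exceeding the uniform $H^1$ bound) yields $\mc J_0(g_{n_k}) \to \mc J_0(f_\star)$, while weak lower semicontinuity gives $\ms S(f_\star) \leq \liminf_k \ms S(g_{n_k})$. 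Combining these with $\lim(\mc J_0(g_{n_k}) - \ms S(g_{n_k})) = \mbf M$ forces both $\mc J_0(f_\star) - \ms S(f_\star) = \mbf M$ and $\ms S(g_{n_k}) \to \ms S(f_\star)$. Together with weak convergence in $H^1$ and the Radon-Riesz property of the Hilbert space $H^1(\mbb R^d)$, this upgrades to strong $H^1$ convergence, completing the proof.

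The main obstacle is the anisotropy and non-integrability of the fractional kernel, which rules out symmetric decreasing rearrangement and the direct use of the isotropic HLS inequality. Concretely, I expect the hardest technical point to be the product-kernel Gagliardo-Nirenberg inequality underpinning both the $H^1$-boundedness and the ruling out of vanishing; handling the singularities along each hyperplane $\{x_i = y_i\}$ requires slicing coordinate by coordinate and inserting the one-dimensional HLS inequality into H\"older estimates in the remaining variables. By contrast, the scaling and dichotomy arguments only depend on the global homogeneity degree $\om$ of $\ga$ and transfer unchanged from the Riesz setting.
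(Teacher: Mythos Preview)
Your overall strategy---Lions' concentration--compactness, ruling out vanishing and dichotomy, then upgrading to strong $H^1$ via Radon--Riesz---is exactly the paper's. The strict superadditivity $\mbf M_\la=\la^{(4-\om)/(2-\om)}\mbf M$ you derive is a cleaner version of the paper's $\mbf M(a)<a\mbf M$, and your final compactness step spells out what the paper cites from Lions. Two technical points, however, do not go through as stated.

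First, the dichotomy argument does \emph{not} transfer unchanged from the Riesz setting. Strict superadditivity is only half the story: you must also show the cross term $\big\langle (f_k^{(1)})^2,(f_k^{(2)})^2\big\rangle_\ga\to 0$ as the supports separate. For the Riesz kernel this is immediate since $|x-y|^{-\om}$ decays with Euclidean distance, but the product kernel $\prod_i|x_i-y_i|^{-\om_i}$ does not: two supports can be far in $\mbb R^d$ yet close in one coordinate, making that factor blow up. The paper handles this by observing that at least one coordinate separation must diverge, pulling out the corresponding factor $d_k^{-\om_{i(k)}}$, and bounding the remaining $(d-1)$-variable integral via the same coordinate-by-coordinate slicing you flag for the Gagliardo--Nirenberg inequality. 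So the anisotropic work is needed here too, not just for the a priori bound.

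Second, the inequality $\mc J_0(f)\le C\|f\|_2^{4-\om}\ms S(f)^{\om/2}$ does not by itself force $\mc J_0(f_{n_k})\to 0$ under vanishing, since $\|f_{n_k}\|_2=1$ and $\ms S(f_{n_k})$ is merely bounded. To run your argument through Lions' $L^p$ vanishing lemma you would need a bound of the form $\mc J_0(f)\le C\|f\|_p^\alpha\|f\|_{H^1}^\beta$ with $p>2$, which for the product kernel is again a coordinate-wise exercise. The paper sidesteps this: it splits $\mc J_0(f_{n_k})$ into a near-diagonal piece $\{|x_i-y_i|\le\theta\ \forall i\}$, bounded via the pointwise estimate \eqref{Equation: Chen's bound} applied to a localized cutoff of $f_{n_k}$ (so that the vanishing hypothesis enters directly as $\sup_x\|f_{n_k}(x+\cdot)\mathbbm 1_{|\cdot|<R}\|_2\to 0$), and far-in-one-coordinate pieces controlled by $\theta^{-\om_j}$.
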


Lemma \ref{Lemma: M is continuous}, which we prove in Section \ref{Section: M is continuous},
is straightforward. The statement of Lemma \ref{Lemma: M compactness} is only new in the case of fractional noise;
in the case of white and Riesz noises, see \cite[Theorem III.2]{Lions} (note that
\cite[Theorem III.2]{Lions} does not apply to fractional noise because of condition \cite[(24)]{Lions}). Thus, our proof of Lemma \ref{Lemma: M compactness},
which we provide in Section \ref{Section: M compactness}, only covers the fractional noise.
As explained in Section \ref{Section: M compactness}, the proof relies on the version of
Lions' concentration-compactness principle stated in \cite[Lemma III.1]{Lions}.

The existence of Maximizers of $\mbf M$ can now be argued as follows:
Let $f_{n_k}$ and $f_\star$ be as in the statement of Lemma \ref{Lemma: M compactness}.
Since $\mbf M$ is positive and $\om<2$, it is clear by \eqref{Equation: M compactness hyp} and \eqref{Equation: Chen's bound} that
there exists some $R>0$ large enough such that $f_{n_k}\in\{f\in L^2(\mbb R^d):\|f\|_{H^1(\mbb R^d)}<R\}$
for all $k\in\mbb N.$
In particular, by combining Lemma \ref{Lemma: M is continuous} with \eqref{Equation: M compactness hyp} and \eqref{Equation: M compactness},
we get that
\[\mbf M=\lim_{k\to\infty}\big(\mc J_0(f_{n_k})-\ms S(f_{n_k})\big)
=\big(\mc J_0(f_\star)-\ms S(f_\star)\big);\]
hence $f_\star$ is a maximizer of $\mbf M$.

Finally, \eqref{Equation: Convergence to M} was proved earlier in \eqref{Equation: Sub-3 LDP 2},
and the limit \eqref{Equation: Convergence to fstar} follows
from a combination of Lemmas \ref{Lemma: M is continuous} and \ref{Lemma: M compactness}
with the fact that for every $\mf c,p>0$, one has
\[\mbf M\geq\Big(\mc J_0(f^{(\mf c)}_\star)-\ms S(f^{(\mf c)}_\star)\Big)
\geq\Big(\mc J_{p^{1/(2-\om)}\mf c}(f^{(\mf c)}_\star)-\ms S(f^{(\mf c)}_\star)\Big)=\mbf M_{\mf c,p},\]
the latter of which implies by \eqref{Equation: Convergence to M} that the sequence
$f^{(\mf c)}_\star$ satisfies \eqref{Equation: M compactness hyp} as $\mf c\to0$.

\subsubsection{Step 2. Geometric Properties}

Following-up on Section \ref{Section: Variational Outline Step 1},
in order to prove Theorem \ref{Theorem: Fractional Minimizers} and
Proposition \ref{Proposition: Convergence of Minimizers},
it only remains to establish properties (1)--(3) in the statement of Theorem \ref{Theorem: Fractional Minimizers}.

We begin with the proof of properties (1) and (2),
which follows the outline provided in \cite[Theorem 2]{FrohlichLenzmann}.
Let $f_\star$ be a maximizer of $\mbf M$ with $\ga(x)=\si^2\prod_{i=1}^d|x_i|^{-\om_i}$.
Define the function $W_\star:=\ga*f_\star^2$. By definition of $\mbf M$, the function $f_\star$
is the ground state (i.e., the eigenvector of the smallest eigenvalue) of the Schr\"odinger operator $-\ka\De-W_\star$,
with eigenvalue $-\mbf M<0$. Since $\ga$ is locally integrable and $\|f_\star\|_2=1$,
$W_\star$ is locally integrable. Therefore,
by \cite[Theorem XIII.48 (a)]{ReedSimonIV},
$f_\star$ is either positive or negative; and by \cite[Theorem C.3.3]{SimonSemigroup},
$f_\star$ has exponential decay.
Since we can write $f_\star=\mr e^{t\mbf M}\mr e^{-t(-\ka\De-W_\star)}f_\star$,
\cite[Theorem B.3.2]{SimonSemigroup} implies that $f_\star$ is continuous.
Finally, since $f_\star$ is continuous and decays exponentially, the
fact that it is smooth follows from a standard elliptic bootstraping argument,
such as \cite[Theorem 8 (iii)]{Lieb} (more precisely, we can replace replace $V_\phi$ in \cite{Lieb}
by $W_\star$, and replace $Y_e$ in \cite{Lieb} by the Green function of $(\ka\De-\mbf M)^{-1}$,
and then the same argument used to prove \cite[Theorem 8 (iii)]{Lieb} can be applied).

It now only remains to prove Theorem \ref{Theorem: Fractional Minimizers} (3).
We recall some standard definitions and terminology:
Given a measurable set $A\subset\mbb R^d$ and a unit vector $u\in\mbb R^d$,
we use $S_u(A)$ to denote the Steiner symmetrization of $A$ with respect
to the plane perpendicular to $u$ (see, e.g., \cite[Definition 2.3]{EvansGariepy}).
We recall from \cite[Theorem 1.13]{LiebLoss} that any nonnegative measurable function
$f:\mbb R^d\to[0,\infty)$ can be written using the layer cake representation
\[f(x)=\int_0^\infty\mathbbm1_{\{y\in\mbb R^d:f(y)>\ell\}}(x)\d \ell,\]
and that the Steiner symmetrization of $f$ along any unit vector $u$ is defined as
\[S_u(f)(x):=\int_0^\infty\mathbbm1_{S_u(\{y\in\mbb R^d:f(y)>\ell\})}(x)\d \ell\]
(e.g., \cite[Page 87]{LiebLoss}).
Finally,
let $e_1,e_2,\ldots,e_d$ denote the standard basis vectors in
$\mbb R^d$. Given a nonnegative measurable $f$, we denote
$f_{\mr{coord}}:=S_{e_d}(S_{e_{d-1}}(\cdots (S_{e_1}(f))).$
We have the following two results regarding $f_{\mr{coord}}$,
which are proved in Sections \ref{Section: Minimizer is fcoord} and \ref{Section: Geometry of fcoord}, respectively:

\begin{lemma}
\label{Lemma: Minimizer is fcoord}
Let $\ga(x)=\si^2\prod_{i=1}^d|x_i|^{-\om_i}$ with $0<\om<2$.
Let $f\in H^1(\mbb R^d)$ be nonnegative, smooth and such that $\|f\|_2=1$.
If there is no $z\in\mbb R^d$ such that $f=f_{\mr{coord}}(\cdot-z)$, then
\[\mc J_0(f)-\ms S(f)
<\mc J_0(f_{\mr{coord}})-\ms S(f_{\mr{coord}}).\]
\end{lemma}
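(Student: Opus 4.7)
The plan is to prove the lemma by applying Steiner symmetrization successively along each coordinate axis, combining the Riesz rearrangement inequality (which pushes $\mc J_0$ upward) with the P\'olya--Szeg\H{o} inequality (which pushes $\ms S$ downward). Concretely, set $f_0 := f$ and $f_i := S_{e_i}(f_{i-1})$ for $i = 1,\ldots,d$, so that $f_d = f_{\mr{coord}}$. The crucial algebraic observation is that the kernel $\ga(x) = \si^2\prod_{i=1}^d |x_i|^{-\om_i}$ is an even and decreasing function of each $|x_i|$ separately, hence $S_{e_i}(\ga) = \ga$ for every $i$. Combining this with the identity $S_{e_i}(f_{i-1}^2) = f_i^2$ (squaring preserves level sets of nonnegative functions), the Steiner version of the Riesz rearrangement inequality yields $\mc J_0(f_{i-1}) \leq \mc J_0(f_i)$, while the Steiner version of the P\'olya--Szeg\H{o} inequality gives $\ms S(f_i) \leq \ms S(f_{i-1})$ (recall that Steiner symmetrization preserves $L^2$ norms, so $\|f_i\|_2 = 1$ throughout). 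Telescoping over $i = 1, \ldots, d$ produces the weak version $\mc J_0(f) - \ms S(f) \leq \mc J_0(f_{\mr{coord}}) - \ms S(f_{\mr{coord}})$.

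To upgrade to strict inequality, I will argue the contrapositive: if equality holds, then $f = f_{\mr{coord}}(\cdot - z)$ for some $z \in \mbb R^d$. The assumed equality forces simultaneous equality $\mc J_0(f_{i-1}) = \mc J_0(f_i)$ and $\ms S(f_i) = \ms S(f_{i-1})$ at each step $i$. By the Brothers--Ziemer characterization of equality in the P\'olya--Szeg\H{o} inequality for Steiner symmetrization, each such equality forces $f_{i-1}$ to coincide with $f_i$ up to a translation in direction $e_i$ by an amount $a_i(\bar{x}_i)$ that a priori depends on the remaining coordinates $\bar{x}_i \in \mbb R^{d-1}$. The remaining task is to upgrade each $a_i$ to a constant: this should come from the strict version of the Steiner--Riesz rearrangement inequality (in the style of Burchard's work), which, for the kernel $\ga$ strictly positive off the coordinate hyperplanes $\{x_i = 0\}$, rules out equality unless $f_{i-1}^2$ is rigidly translated from its $e_i$-symmetrization. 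Once each $a_i$ is constant, iterating in $i$ produces a single global translation vector $z$.

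The main obstacle is the non-degeneracy hypothesis in Brothers--Ziemer's equality case, which requires that the set $\{|\nabla f_{i-1}| = 0\}\cap\{0 < f_{i-1} < \sup f_{i-1}\}$ has zero Lebesgue measure. Although $f$ is smooth and in $H^1$, this condition can fail in principle for $f$ itself, and even when it holds for $f_0$ it must be carried through to each intermediate $f_{i-1}$. My expectation is that the joint use of both strict equality cases (P\'olya--Szeg\H{o} and Riesz) at every step, together with the strict positivity of $\ga$ off the coordinate axes, will rule out the pathological plateaus: a plateau of $f_{i-1}^2$ at an intermediate value that could defeat the P\'olya--Szeg\H{o} strict inequality would be detected by the Riesz strict inequality, because such a plateau would prevent $f_{i-1}^2$ from being rigidly aligned with its $e_i$-symmetrization. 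The bookkeeping required to carry out this interplay between the two strict inequalities across the $d$ successive symmetrizations is the technical heart of the proof.
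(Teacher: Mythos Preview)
Your outline for the weak inequality is correct and matches the paper exactly: iterate Steiner symmetrization along $e_1,\ldots,e_d$, use the one-dimensional Riesz rearrangement inequality (after Fubini, with the strictly decreasing kernel $|x_i|^{-\om_i}$) to push $\mc J_0$ up, and use P\'olya--Szeg\H{o} to push $\ms S$ down.

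Where you diverge from the paper is in the strictness argument, and here you are making your life much harder than necessary. You propose to extract rigidity from the Brothers--Ziemer equality case for P\'olya--Szeg\H{o}, which forces you to confront the plateau hypothesis $|\{|\nabla f_{i-1}|=0\}\cap\{0<f_{i-1}<\sup f_{i-1}\}|=0$ and then to propagate it through the iteration. This is the ``technical heart'' you anticipate, and it is entirely avoidable.

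The paper instead extracts strictness purely from the $\mc J_0$ side. Equality in the full chain forces $\mc J_0(f_{i-1})=\mc J_0(f_i)$ at each step (since both $\mc J_0$ and $-\ms S$ move monotonically). Now fix $i=1$: after Fubini, equality in the inner one-dimensional Riesz integral for almost every pair $(\tilde x,\tilde y)$, with the \emph{strictly} decreasing kernel $|x_1|^{-\om_1}$, invokes \cite[Theorem~3.9]{LiebLoss}. That theorem requires no plateau hypothesis whatsoever; it says directly that $f(\cdot,\tilde x)^2$ and $f(\cdot,\tilde y)^2$ are translates of their rearrangements by a \emph{common} shift $z_1(\tilde x,\tilde y)$. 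Since this shift must agree with the (unique, when $f(\cdot,\tilde x)\not\equiv 0$) shift determined by $\tilde x$ alone and also with the one determined by $\tilde y$ alone, it is constant. Hence $f=S_{e_1}(f)(\cdot-(z_1,0,\ldots,0))$, and iteration gives $f=f_{\mr{coord}}(\cdot-z)$.

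So drop Brothers--Ziemer entirely: the strict one-dimensional Riesz equality case already delivers the rigid translation, and the proof becomes a short paragraph rather than the delicate interplay you describe.
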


\begin{lemma}
\label{Lemma: Geometry of fcoord}
Let $f:\mbb R^d\to[0,\infty)$ be measurable.
For every fixed $1\leq i\leq d$ and $x_j\in\mbb R$ (for all $j\neq i$),
there exists a nonincreasing function $\rho:[0,\infty)\to[0,\infty)$ such that
\[f_{\mr{coord}}(x_1,\ldots,x_{i-1},r,x_{i+1},\ldots,x_d)=\rho(|r|)\qquad\text{for every }r\in\mbb R.\]
\end{lemma}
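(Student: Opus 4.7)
My plan is to reduce the statement to a geometric claim about sets via the layer-cake representation, and then to deduce it by induction on the number of Steiner symmetrizations applied. The key technical fact I will need is:

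\textbf{Claim} $(\star)$. If a measurable set $A\subset\mathbb{R}^d$ has the property that for every fixed $(x_k)_{k\neq i}$ the $e_i$-slice $\{r:(x_1,\ldots,r,\ldots,x_d)\in A\}$ is a symmetric interval about $0$, then for every $j\neq i$ the Steiner symmetrization $S_{e_j}(A)$ still has the same property.

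Given $(\star)$, the lemma will follow by an induction on $g_k:=S_{e_k}(\cdots S_{e_1}(f))$ (so $g_0=f$ and $g_d=f_{\mathrm{coord}}$). The preliminary observation is that, for any nonnegative measurable $g$, the conclusion of the lemma in direction $e_i$ is equivalent to the assertion that every superlevel set $\{g>\ell\}$ enjoys the slice property in the hypothesis of $(\star)$: the forward implication is immediate, and the reverse follows from the fact that if $g(\ldots,r,\ldots)>\ell$ depends on $r$ only through $|r|$ (and is monotone in $\ell$), then $g(\ldots,r,\ldots)$ itself depends only on $|r|$ and is nonincreasing in $|r|$. Combined with the layer-cake definition of $S_{e_1}$, this immediately yields that $g_1$ is $e_1$-symmetric-decreasing. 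In the inductive step, assuming $g_k$ is $e_i$-symmetric-decreasing for all $i\le k$, I apply $(\star)$ (with $j=k+1$) to each superlevel set $\{g_k>\ell\}$ to propagate the slice property in directions $e_1,\ldots,e_k$ to $\{g_{k+1}>\ell\}=S_{e_{k+1}}(\{g_k>\ell\})$, while the defining action of $S_{e_{k+1}}$ supplies it in direction $e_{k+1}$. Iterating to $k=d$ yields the lemma.

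To prove $(\star)$ I would split coordinates as $\mathbb{R}^d=\mathbb{R}_y\times\mathbb{R}_z\times\mathbb{R}^{d-2}_u$ with $y=x_i$ and $z=x_j$. The hypothesis on $A$ furnishes a measurable $a:\mathbb{R}\times\mathbb{R}^{d-2}\to[0,\infty]$ with $(y,z,u)\in A$ iff $|y|\le a(z,u)$, and by the definition of $S_{e_j}$ one has $(y,z,u)\in S_{e_j}(A)$ iff $|z|\le\tfrac12 L(y,u)$, where
\[
L(y,u) \;:=\; \bigl|\{z\in\mathbb{R}:a(z,u)\ge|y|\}\bigr|.
\]
For each fixed $u$ the function $L(y,u)$ depends on $y$ only through $|y|$ and is nonincreasing in $|y|$ (enlarging $|y|$ shrinks the superlevel set of $a(\cdot,u)$), so the $e_i$-slice $\{y:L(y,u)\ge 2|z|\}$ is a symmetric interval about $0$, which is precisely what $(\star)$ asserts.

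I do not anticipate any serious obstacle in this program. The only step requiring real care is the preliminary equivalence between the function-level and set-level formulations of ``$e_i$-symmetric-decreasing'' used at the start of the induction, together with the observation that this set-level property is preserved when one reconstructs $g_k$ from its superlevel sets via the layer-cake integral; once these bookkeeping issues are settled, $(\star)$ itself reduces to the one-line geometric observation about the monotonicity of $L(\cdot,u)$ given above.
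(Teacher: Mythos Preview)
Your proposal is correct and follows essentially the same route as the paper: reduce to superlevel sets via the layer-cake formula, then use that successive Steiner symmetrizations along the coordinate axes preserve the ``symmetric-interval slice'' property in the directions already symmetrized. The only difference is that the paper outsources this last fact to \cite[Claim~\#1 in Theorem~2.4]{EvansGariepy}, whereas you supply a direct self-contained proof via the function $L(y,u)=|\{z:a(z,u)\ge|y|\}|$; your argument for $(\star)$ is exactly the content of that cited claim, so nothing is genuinely different beyond the level of detail. One small point worth tightening in a final write-up: the identification $\{g_{k+1}>\ell\}=S_{e_{k+1}}(\{g_k>\ell\})$ need not hold literally (only up to null sets), but as you note in your closing remark this is irrelevant, since the layer-cake integral $g_{k+1}(x)=\int_0^\infty \mathbbm 1_{S_{e_{k+1}}(\{g_k>\ell\})}(x)\,d\ell$ directly gives the desired monotonicity in $|x_i|$ from the nested symmetric-interval slices without ever needing that identification.
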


We are now in a position to prove Theorem \ref{Theorem: Fractional Minimizers} (3):
By Lemma \ref{Lemma: Geometry of fcoord}, it suffices to prove that if $f_\star$
is a maximizer of $\mbf M$ for the Riesz noise, then $f_\star=(f_\star)_{\mr{coord}}(\cdot-z)$
for some $z\in\mbb R^d$. This follows from Lemma
\ref{Lemma: Minimizer is fcoord}. With this, we have now completed the proof
of Theorem \ref{Theorem: Fractional Minimizers} and Proposition \ref{Proposition: Convergence of Minimizers}
(up to proving Lemmas \ref{Lemma: M c p is finite and positive}--\ref{Lemma: Geometry of fcoord},
which we now carry out).

\subsection{Proof of Lemma \ref{Lemma: M c p is finite and positive}}
\label{Section: M c p is finite and positive}

The finiteness of $\mbf M_{\mf c,p}$ follows from the same argument as in
\eqref{Equation: Finiteness of M crt}. As for positivity,
let $f$ be such that $\|f\|_2=1$, and let $f_\eps(x):=\eps^{d/2}f(\eps x)$.
Arguing as in \eqref{Equation: relatingtwoVP0} and \eqref{Equation: relatingtwoVP},
except that in the subcritical case one has
\[\ga_{p^{1/(2-\om)}\mf c}(\eps^{-1} x)=\eps^{\om}\ga_{\eps p^{1/(2-\om)}\mf c}(\eps^{-1} x),\]
we see that as $\eps\to0$,
\begin{align*}
\mc J_1(f_\eps)-\ms S(f_\eps)
=\varepsilon^{2}\Big(\eps^{\om-2}\mc J_{\eps p^{1/(2-\om)}\mf c}(f)-\ms S(f)\Big)
=\varepsilon^{2}\Big(\eps^{\om-2}\big(1+o(1)\big)\mc J_0(f)-\ms S(f)\Big).
\end{align*}
Since $\eps^{\om-2}\to\infty$ when $\eps\to0$, the positivity of $\mbf M_{\mf c,p}$
follows from the existence of a function $f\in H^1(\mbb R^d)$ such that
$\mc J_0(f)>0$, which is trivial.

\subsection{Proof of Lemma \ref{Lemma: M is continuous}}
\label{Section: M is continuous}

Since $\ga$ is a covariance function, the bilinear map
\begin{align}
\label{Equation: Bilinear Map}
\langle f,g\rangle_\ga:=\frac12\iint f(x)\ga(x-y)g(y)\d x\dd y
\end{align}
is a semi-inner product. Thus, its induced semi-norm satisfies the (reverse) triangle inequality, whence
for every functions $f_1$ and $f_2$ (noting that $\mc J_0(f)=\langle f^2,f^2\rangle_\ga$), one has
\begin{align}
\left|\sqrt{\mc J_0(f_1)}-
\sqrt{\mc J_0(f_2)}\right|
\leq\sqrt{\langle f_1^2-f_2^2,f_1^2-f_2^2\rangle_\ga}
\label{Equation: Continuity of Semi-Inner-Product}
\leq\sqrt{\sum_{i=1}^2\left|\langle f_1^2-f_2^2,f_i^2\rangle_\ga\right|}.
\end{align}
The claimed result then follows from the fact that in the subcritical regime, there exists a constant $C>0$ such that for every $f\in H^1(\mbb R^d)$
and $x\in\mbb R^d$, one has
\begin{align}
\label{Equation: Chen's bound}
\int\ga(x-y)f(y)^2\d y\leq C\|f\|_2^{2-\om}\|\nabla f\|_2^{\om}.
\end{align}
Indeed, 
if we combine \eqref{Equation: Chen's bound} with \eqref{Equation: Continuity of Semi-Inner-Product},
we get that \eqref{Equation: Continuity of Semi-Inner-Product} is bounded above by
\begin{multline}
\label{Equation: Continuity of Semi-Inner-Product 2-pre}
\sqrt{C\left(\int|f_1(x)^2-f_2(x)^2|\d x\right)\sum_{i=1}^2\|f_i\|_2^{2-\om}\|\nabla f_i\|_2^\om}\\
=\sqrt{C\left(\int|f_1(x)-f_2(x)||f_1(x)+f_2(x)|\d x\right)\sum_{i=1}^2\|f_i\|_2^{2-\om}\|\nabla f_i\|_2^\om},
\end{multline}
If we now apply Cauchy-Schwarz in the $\dd x$ integral in \eqref{Equation: Continuity of Semi-Inner-Product 2-pre},
then we get that \eqref{Equation: Continuity of Semi-Inner-Product} is bounded above by
\begin{align}
\label{Equation: Continuity of Semi-Inner-Product 2}
\sqrt{C\|f_1-f_2\|_2\|f_1+f_2\|_2\sum_{i=1}^2\|f_i\|_2^{2-\om}\|\nabla f_i\|_2^\om}
\end{align}
which immediately implies the result.
To conclude, we note that
the inequality \eqref{Equation: Chen's bound} in the case of Riesz and fractional noise is proved in \cite[(A.17) and (A.29)]{Chen14}.
In the case of one-dimensional white noise, we remark that every $f\in H^1(\mbb R)$
is uniformly H\"older-$1/2$ (since $|f(y)-f(x)|\leq\left|\int_x^yf'(z)\d z\right|\leq\|f'\|_2\sqrt{y-x}$
for all $y>x$).
Therefore, since $f$ is square-integrable, $f(x)\to0$ as $x\to\pm\infty$, which then implies by Cauchy-Schwarz that
\[\int\de_0(x-y)f(y)^2\d y= f(x)^2=-\int_x^\infty(f(y)^2)'=-2\int_x^\infty f(y)f'(y)\d y\leq2\|f\|_2\|f'\|_2.\]

\subsection{Proof of Lemma \ref{Lemma: M compactness}}
\label{Section: M compactness}

As mentioned in Section \ref{Section: Variational Outline Step 1},
we only need to prove the result in the case of subcritical fractional
noise; we thus henceforth assume that $\ga(x)=\si^2\prod_{i=1}^d|x_i|^{-\om_i}$ with $0<\om<2$.
Our proof relies on the version of the concentration-compactness
principle stated in \cite[Lemma III.1]{Lions}, which is as follows:

\begin{lemma}[\cite{Lions}]
\label{Lemma: Lions}
Let $(f_n)_{n\in\mbb N}$ be such that $f_n\geq0$ and $\|f_n\|_2=1$
for every $n$. There exists a subsequence $(n_k)_{k\in\mbb N}$
that satisfies one of the following three possibilities:
\begin{enumerate}
\item ({\bf Compactness}) There exists a sequence $(z_k)_{k\in\mbb N}\subset\mbb R^d$ such that for every $\eps>0$,
there exists $R>0$ such that
\begin{align}
\label{Equation: Lions Compactness}
\int_{\{|x|<R\}}f_{n_k}(x-z_k)^2\d x\geq1-\eps,\qquad k\in\mbb N.
\end{align}
\item ({\bf Vanishing}) For every $R>0$,
\begin{align}
\label{Equation: Lions Vanishing}
\lim_{k\to\infty}\sup_{z\in\mbb R^d}\int_{\{|x|<R\}}f_{n_k}(x-z)^2\d x=0.
\end{align}
\item ({\bf Dichotomy}) There exists $0<a<1$ such that for every $\eps>0$,
there is some $k_0\in\mbb N$ and nonnegative $f_k^{(1)},f_k^{(2)}\in L^2(\mbb R^d)$ such that
for every $k\geq k_0$:
\begin{itemize}
\item $\|f_{n_k}-(f_k^{(1)}+f_k^{(2)})\|_{p}\leq\de_p(\eps)$ for $2\leq p\leq 6$, where $\de_p(\eps)\to0$ as $\eps\to0$; and
\item $\big| \|f_k^{(1)}\|_2^2 - a \big|\leq\eps$ and $\big| \|f_k^{(2)}\|_2^2 - (1-a) \big|\leq\eps$.
\end{itemize}
Moreover,
\begin{itemize}
\item $\displaystyle\sup_{\eps\in(0,1)}\sup_{k\in\mbb N}\|f_k^{(i)}\|_{H^1(\mbb R^d)}<\infty$ for $i=1,2$;
\item for every $\eps>0$, $\displaystyle\lim_{k\to\infty}\inf\left\{|x-y|:x\in\mr{supp}(f_k^{(1)})\text{ and }y\in\mr{supp}(f_k^{(2)})\right\}=\infty$; and
\item for every $\eps>0$, $\displaystyle\liminf_{k\to\infty}\int|\nabla f_{n_k}(x)|^2-|\nabla f_k^{(1)}(x)|^2-|\nabla f_k^{(2)}(x)|^2\d x\geq0$.
\end{itemize}
({\bf Remark.} Though this is not explicit in the notation,
$k_0$ and $f_k^{(i)}$ depend on $\eps$.)
\end{enumerate}
\end{lemma}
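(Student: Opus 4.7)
The plan is to follow Lions' classical argument based on his concentration function. For each $n$, define
\[
Q_n(t):=\sup_{z\in\mbb R^d}\int_{z+Q_t}f_n(x)^2\d x,\qquad t\geq0,
\]
where $Q_t=(-t,t)^d$. Each $Q_n$ is nondecreasing, takes values in $[0,1]$ (by $\|f_n\|_2=1$), and satisfies $\lim_{t\to\infty}Q_n(t)=1$. By Helly's selection theorem applied to this uniformly bounded family of monotone functions, there is a subsequence $(n_k)$ along which $Q_{n_k}(t)$ converges at every continuity point of the limit to a nondecreasing $Q\colon[0,\infty)\to[0,1]$. Set $a:=\lim_{t\to\infty}Q(t)\in[0,1]$. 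The trichotomy will correspond to the cases $a=1$, $a=0$, and $0<a<1$.

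In the case $a=1$ (compactness), for every $\eps>0$ choose a continuity point $R$ of $Q$ with $Q(R)>1-\eps/2$; for $k$ large, $Q_{n_k}(R)>1-\eps$, and one can select $z_k\in\mbb R^d$ attaining the supremum up to $\eps$. A diagonal extraction along a sequence $\eps_j\searrow0$ with corresponding $R_j\nearrow\infty$ produces a single sequence $(z_k)$ satisfying \eqref{Equation: Lions Compactness}; the point is to verify that once $z_k$ is chosen to concentrate mass at some scale $R_j$, the additional concentration at the next scale $R_{j+1}$ does not force $z_k$ to move by an unbounded amount, which follows because the total mass is $1$ (two well-separated cubes cannot both carry mass $>1/2$). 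The case $a=0$ (vanishing) is immediate: $Q(t)=0$ for all $t$ forces $\sup_z\int_{z+Q_t}f_{n_k}^2\d x\to0$ for each fixed $t$, which implies \eqref{Equation: Lions Vanishing} for balls of any radius $R$ by covering with finitely many translates of $Q_t$.

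The core of the argument is the dichotomy case $0<a<1$. Fix $\eps>0$ small and choose a continuity point $R_1$ of $Q$ with $a-\eps<Q(R_1)\leq a$. Then pick a sequence $R_k\nearrow\infty$ slowly enough that $Q_{n_k}(R_k)\to a$ as well (possible by a further diagonal extraction). For each $k$, select $z_k$ with $\int_{z_k+Q_{R_1}}f_{n_k}^2\d x\geq a-2\eps$. Fix two smooth cutoffs $\phi,\psi\colon\mbb R^d\to[0,1]$ with $\phi\equiv1$ on $Q_{R_1}$, $\mr{supp}(\phi)\subset Q_{2R_1}$, and $\psi\equiv1$ outside $Q_{R_k/2}$, $\mr{supp}(\psi)\subset \mbb R^d\setminus Q_{R_k/4}$, and set
\[
f_k^{(1)}(x):=\phi(x-z_k)f_{n_k}(x),\qquad f_k^{(2)}(x):=\psi(x-z_k)f_{n_k}(x).
\]
The supports are separated by distance $\gtrsim R_k\to\infty$, giving the separation clause. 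The choice of $R_1$ and $R_k$ forces $\|f_k^{(1)}\|_2^2\to a$ and $\|f_k^{(2)}\|_2^2\to 1-a$ up to errors controlled by $\eps$, and the annulus $Q_{R_k/2}\setminus Q_{2R_1}$ carries $L^2$-mass tending to $0$, which controls $\|f_{n_k}-f_k^{(1)}-f_k^{(2)}\|_2$. The $L^p$ bounds for $2\leq p\leq 6$ then follow by interpolating between this $L^2$ control and the uniform $H^1$ bound on $f_{n_k}$ via the Gagliardo--Nirenberg/Sobolev inequalities (the range $p\in[2,6]$ is precisely where $H^1(\mbb R^d)$ embeds continuously, at least for $d\leq 3$; in the general argument one uses the natural $H^1$-subcritical range). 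The $H^1$ boundedness of $f_k^{(i)}$ is a direct Leibniz computation using $\|\nabla\phi\|_\infty,\|\nabla\psi\|_\infty<\infty$. For the gradient inequality, write
\[
|\nabla f_{n_k}|^2-|\nabla f_k^{(1)}|^2-|\nabla f_k^{(2)}|^2=(1-\phi^2-\psi^2)|\nabla f_{n_k}|^2+E_k,
\]
where $E_k$ collects terms involving $\nabla\phi$ and $\nabla\psi$; since $\phi,\psi$ are supported in disjoint annuli on which $\int f_{n_k}^2\d x\to0$, and $\|\nabla f_{n_k}\|_2$ is uniformly bounded along this subsequence if it has bounded $H^1$ norm (if not, the liminf inequality is trivial after passing to a further subsequence), the cross terms vanish in the liminf and $(1-\phi^2-\psi^2)\geq0$ delivers the nonnegativity.

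The main obstacle is the dichotomy case: coordinating the diagonal extraction of scales $R_1,R_k$ so that the decomposition works uniformly in $\eps$ while keeping the supports of $f_k^{(1)}$ and $f_k^{(2)}$ diverging, and verifying the $L^p$ approximation statement across the full range $2\leq p\leq 6$ using only $H^1$ boundedness and vanishing on the annulus. Once this balancing is achieved, the $H^1$ liminf inequality and the mass-splitting estimates are routine consequences of the cutoff construction.
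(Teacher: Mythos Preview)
The paper does not prove this lemma at all: it is quoted verbatim as \cite[Lemma III.1]{Lions} and used as a black box in the proof of Lemma~\ref{Lemma: M compactness}. Your sketch is precisely Lions' original concentration-function argument (Helly selection on the L\'evy concentration functions $Q_n$, trichotomy on $a=\lim_{t\to\infty}Q(t)$, smooth cutoffs in the dichotomy case), so there is nothing to compare---you have reconstructed the cited proof rather than an alternative to something in the paper.

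One remark: the lemma as stated implicitly assumes the sequence $(f_n)$ is bounded in $H^1(\mbb R^d)$, since otherwise neither the $H^1$-boundedness of the pieces $f_k^{(i)}$ nor the $L^p$ approximation for $p>2$ can hold. In the paper this is harmless because the lemma is only invoked on maximizing sequences for $\mbf M$, which are $H^1$-bounded by \eqref{Equation: Uniformly Bounded Gradient}, but you are right to flag it. Your parenthetical about the Sobolev range $2\le p\le 6$ being tied to $d\le 3$ is also accurate; Lions' original statement in \cite{Lions} is for $d=3$, and the paper's restatement keeps the range $[2,6]$ even though it works in general $d$---in the application only $p=2$ is actually used (see \eqref{Equation: Dichotomy Condition 3}), so this cosmetic mismatch does not affect anything downstream.
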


Our objective is to show that any sequence $(f_n)_{n\in\mbb N}$ that satisfies
\eqref{Equation: M compactness hyp} must also satisfy the compactness
condition stated in \eqref{Equation: Lions Compactness}. Indeed, if such is
the case, then a direct application of the argument contained in the two
paragraphs following \cite[Remark III.3]{Lions} yields \eqref{Equation: M compactness}.
Thanks to Lemma \ref{Lemma: Lions}, for this it suffices to prove that
any sequence satisfying \eqref{Equation: M compactness hyp} cannot also
satisfy the vanishing or dichotomy conditions stated in Lemma \ref{Lemma: Lions}.

\subsubsection{Vanishing}

We begin by proving that vanishing does not occur.
Suppose that the sequence $(f_n)_{n\in\mbb N}$ satisfies \eqref{Equation: M compactness hyp}, and
that we can find a subsequence such that \eqref{Equation: Lions Vanishing} occurs.
We claim that this implies that $\mbf M\leq0$, which is a contradiction.
Thanks to \eqref{Equation: M compactness hyp}, in order to prove $\mbf M\leq 0$,
it is enough to show that the vanishing condition \eqref{Equation: Lions Vanishing}
implies that
\[\limsup_{k\to\infty}\mc J_0(f_{n_k})=0.\]
For this purpose, for every $\theta>0$, this limsup is bounded above by
\begin{align}
\label{Equation: Vanishing 1}
&\limsup_{k\to\infty}\frac{\si^2}{2}\iint_{\{|x_i-y_i|\leq \theta~\forall1\leq i\leq d\}}f_{n_k}(x)^2\prod_{i=1}^d|x_i-y_i|^{-\om_i}f_{n_k}(y)^2\d x\dd y\\
\label{Equation: Vanishing 2}
&\qquad+\limsup_{k\to\infty}\sum_{j=1}^d\frac{\si^2}{2}\iint_{\{|x_j-y_j|>\theta\}}f_{n_k}(x)^2\prod_{i=1}^d|x_i-y_i|^{-\om_i}f_{n_k}(y)^2\d x\dd y.
\end{align}
We now prove that \eqref{Equation: Vanishing 1} vanishes for all $\theta>0$,
and that \eqref{Equation: Vanishing 2} can be made arbitrarily small by taking $\theta\to\infty$.

We begin with the claim regarding \eqref{Equation: Vanishing 1}.
By a straightforward change of variables and the fact that $\|f_{n_k}\|_2=1$, we have that
\begin{multline*}
\iint_{\{|x_i-y_i|\leq \theta~\forall1\leq i\leq d\}}f_{n_k}(x)^2\prod_{i=1}^d|x_i-y_i|^{-\om_i}f_{n_k}(y)^2\d x\dd y\\
\leq\sup_{x\in\mbb R^d}\int_{\{|y_i|\leq \theta~\forall1\leq i\leq d\}}\prod_{i=1}^d|y_i|^{-\om_i}f_{n_k}(x+y)^2\d y.
\end{multline*}
For every $\theta>0$, we can find a smooth and compactly supported function $g_\theta$ and
a large enough $R_\theta>0$ such that
\[\mathbbm 1_{\{|y_i|\leq \theta~\forall1\leq i\leq d\}}\leq g_\theta(y)\leq\mathbbm 1_{\{|y|<R_\theta\}},\qquad y\in\mbb R^d.\]
In particular, an application of \eqref{Equation: Chen's bound} with the function
\[f(y)^2=f_{n_k}(x+y)^2g_\theta(y)\leq f_{n_k}(x+y)^2\mathbbm 1_{\{|y|<R_\theta\}}\]
yields
\begin{multline}
\label{Equation: Vanishing 1 Bound}
\sup_{x\in\mbb R^d}\int_{\{|y_i|\leq \theta~\forall1\leq i\leq d\}}\prod_{i=1}^d|y_i|^{-\om_i}f_{n_k}(x+y)^2\d y\\
\leq C\sup_{x\in\mbb R^d}\|f_{n_k}(x+\cdot)\mathbbm 1_{\{|\cdot|<R_\theta\}}\|_2^{2-\om}\cdot\sup_{x\in\mbb R^d}\Big\|\nabla \big(f_{n_k}(x+\cdot)\sqrt{g_\theta}\big)\Big\|_2^\om.
\end{multline}
The first supremum on the right-hand side of \eqref{Equation: Vanishing 1 Bound} 
goes to zero as $k\to\infty$ for every $R_\theta>0$ thanks to the vanishing condition \eqref{Equation: Lions Vanishing}.
Thus, in order to show that the lim sup in \eqref{Equation: Vanishing 1} is zero, it suffices to show
that the second supremum on the right-hand side of \eqref{Equation: Vanishing 1 Bound} remains
bounded as $k\to\infty$. For this purpose, we apply the product rule and $(x+y)^2\leq 2(x^2+y^2)$, which yields
\begin{align*}
&\sup_{k\geq0}\Big\|\nabla \big(f_{n_k}(x+\cdot)\sqrt{g_\theta}\big)\Big\|_2^2\\
&\leq\sup_{k\geq0}2\int\sum_{i=1}^d\left(\frac{\partial}{\partial y_i}f_{n_k}(x+y)\right)^{2}g_\theta(y)
+\sum_{i=1}^d\left(\frac{\partial}{\partial y_i}\sqrt{g_\theta(y)}\right)^{2}f_{n_k}(x+y)^2\d y\\
&\leq \sup_{k\geq0}2\|\nabla f_{n_k}\|_2^2+2d\max_{1\leq i\leq d}\left\|\left(\tfrac{\partial}{\partial y_i}\sqrt{g_\theta}\right)^{2}\right\|_\infty.
\end{align*}
By combining \eqref{Equation: Chen's bound} with $\mbf M>0$ and $\om<2$,
the fact that $(f_n)_{n\in\mbb N}$ satisfies \eqref{Equation: M compactness hyp} implies that
\begin{align}
\label{Equation: Uniformly Bounded Gradient}
\sup_n\|\nabla f_n\|_2<\infty,
\end{align}
as desired.

We now conclude the proof that vanishing does not occur by
showing that \eqref{Equation: Vanishing 2} can be made arbitrarily small by taking $\theta\to\infty$.
Note that, for any $\theta>0$, one has
\begin{multline}
\label{eq:pre-Chen}
\iint_{\{|x_1-y_1|>\theta\}}f_{n_k}(x)^2\prod_{i=1}^d|x_i-y_i|^{-\om_i}f_{n_k}(y)^2\d x\dd y\\
\leq\theta^{-\om_1}\iint f_{n_k}(x)^2\prod_{i=2}^d|x_i-y_i|^{-\om_i}f_{n_k}(y)^2\d x\dd y.
\end{multline}
Let us denote $\tilde x:=(x_2,\ldots,x_d)$ and similarly for $\tilde y$, as well as $\tilde\om:=\sum_{i=2}^d\om_i$. 
If we apply \eqref{Equation: Chen's bound} with $\ga(x)=\prod_{i=2}^d|x_i|^{-\om_i}$, then we get that for every fixed $x_1\in\mbb R$
\begin{align}\label{eq:Chen15}
\sup_{\tilde y\in\mbb R^{d-1}}\int f_{n_k}(x_1,\tilde x)^2\prod_{i=2}^d|x_i-y_i|^{-\om_i}\d\tilde x
\leq C\|f_{n_k}(x_1,\cdot)\|_2^{2-\tilde\om}\|\nabla f_{n_k}(x_1,\cdot)\|_2^{\tilde\om}.
\end{align}
for some constant $C>0$ independent of $x_1$, $k$, and $\theta$.
In particular,
the right-hand side of \eqref{eq:pre-Chen} is bounded above by
\[C\theta^{-\om_1}\iint\|f_{n_k}(x_1,\cdot)\|_2^{2-\tilde\om}\|\nabla f_{n_k}(x_1,\cdot)\|_2^{\tilde\om}f_{n_k}(y)^2\d x_1\dd y.\]
Now integrate out $\dd y$ from the above display, which yields
\[C\theta^{-\om_1}\int\|f_{n_k}(x_1,\cdot)\|_2^{2-\tilde\om}\|\nabla f_{n_k}(x_1,\cdot)\|_2^{\tilde\om}\d x_1.\]
Next, applying H\"older's inequality with $p=\frac1{1-\tilde\om/2}$ and $q=\frac1{\tilde\om/2}$ gives the upper bound
\[C\theta^{-\om_1}\int\|f_{n_k}(x_1,\cdot)\|_2^{2-\tilde\om}\|\nabla f_{n_k}(x_1,\cdot)\|_2^{\tilde\om}\d x_1\leq C\theta^{-\om_1}\|\nabla f_{n_k}\|_2^{\tilde\om}.\]
Combining this with \eqref{Equation: Uniformly Bounded Gradient} and \eqref{eq:pre-Chen}, we conclude that
\[\limsup_{k\to\infty}\iint_{\{|x_1-y_1|>\theta\}}f_{n_k}(x)^2\prod_{i=1}^d|x_i-y_i|^{-\om_i}f_{n_k}(y)^2\d x\dd y=O(\theta^{-\om_1}).\]
Using the same argument (but interchanging the roles of $x_1$ and $y_1$ with $x_j$ and $y_j$), one has
\begin{align}
\label{Equation: Controlling One Component}
\limsup_{k\to\infty}\iint_{\{|x_j-y_j|>\theta\}}f_{n_k}(x)^2\prod_{i=1}^d|x_i-y_i|^{-\om_i}f_{n_k}(y)^2\d x\dd y=O(\theta^{-\om_j})
\end{align}
for all $2\leq j\leq d$.
With this, we obtain that
\[\lim_{\theta\searrow 0}\limsup_{k\to\infty}\sum_{j=1}^d\iint_{\{|x_j-y_j|>\theta\}}f_{n_k}(x)^2\prod_{i=1}^d|x_i-y_i|^{-\om_i}f_{n_k}(y)^2\d x\dd y=0,\]
thus concluding the proof that vanishing cannot occur at the same time as \eqref{Equation: M compactness hyp}.

\subsubsection{Dichotomy}

We now conclude the proof of Lemma \ref{Lemma: M compactness}
by showing that the dichotomy condition stated in Lemma \ref{Lemma: Lions}
cannot occur at the same time as \eqref{Equation: M compactness hyp}.
Suppose then that $(f_n)_{n\in\mbb N}$ satisfies both \eqref{Equation: M compactness hyp}
and the dichotomy condition. The contradiction that we obtain from this assumption
is based on the following quantities: For every $0<a<1$, we define
\[\mbf M(a):=\sup_{f\in H^1(\mbb R^d),~\|f\|_2^2=a}\big(\mc J_0(f)-\ms S(f)\big).\]
By the straightforward change of variables $f\mapsto f/\sqrt a$, we note that for any $a<1$, one has
\[\mbf M(a):=\sup_{f\in H^1(\mbb R^d),~\|f\|_2=1}a\big(a\mc J_0(f)-\ms S(f)\big)<a\mbf M.\]
Therefore,
\begin{align}
\label{Equation: Dichotomy Contradiction}
\mbf M(a)+\mbf M(1-a)<a\mbf M+(1-a)\mbf M=\mbf M
\qquad\text{for every }0<a<1.
\end{align}
We now prove that the assumption that $(f_n)_{n\in\mbb N}$ satisfies both
\eqref{Equation: M compactness hyp}
and the dichotomy condition
contradicts \eqref{Equation: Dichotomy Contradiction}.

%

Let $0<a<1$ be as in the dichotomy statement in Lemma \ref{Lemma: Lions},
and for every $\eps>0$, recall the definitions of $f^{(i)}_k$ and $\de_p(\eps)$
in the same statement.
By \eqref{Equation: M compactness hyp} and the definition of the dichotomy condition, we can write
\begin{align}
\nonumber
\mbf M&=\limsup_{k\to\infty}\Big(\mc J_0(f_{n_k})-\ms S(f_{n_k})\Big)\\
\nonumber
&=\lim_{\eps\searrow0}\limsup_{k\to\infty}\Bigg(\sum_{i=1}^2\Big(\mc J_0(f_k^{(i)})-\ms S(f_k^{(i)})\Big)
+\Big(\ms S(f_{k}^{(1)})+\ms S(f_{k}^{(2)})-\ms S(f_{n_k})\Big)\\
\nonumber
&\qquad+\Big(\mc J_0(f_{n_k})-\mc J_0(f_k^{(1)})-\mc J_0(f_k^{(2)})\Big)\Bigg)\\
\label{Equation: Dichotomy Condition}
&\leq\mbf M(a)+\mbf M(1-a)+\lim_{\eps\searrow0}\limsup_{k\to\infty}\Big(\mc J_0(f_{n_k})-\mc J_0(f_k^{(1)})-\mc J_0(f_k^{(2)})\Big).
\end{align}
Thus, to get a contradiction, it suffices to prove that the remaining limit in \eqref{Equation: Dichotomy Condition} is zero.

Recall the bilinear map notation introduced in \eqref{Equation: Bilinear Map}.
For any $\eps>0$ and $k\geq0$, we can write
\begin{multline}
\label{Equation: Dichotomy Condition 2}
\mc J_0(f_{n_k})-\mc J_0(f_k^{(1)})-\mc J_0(f_k^{(2)})
=\Big(\mc J_0(f_{n_k})-\Big\langle(f_k^{(1)}+f_k^{(2)})^2,(f_k^{(1)}+f_k^{(2)})^2\Big\rangle_\ga\Big)\\
+\Big(\Big\langle(f_k^{(1)}+f_k^{(2)})^2,(f_k^{(1)}+f_k^{(2)})^2\Big\rangle_\ga-\mc J_0(f_k^{(1)})-\mc J_0(f_k^{(2)})\Big).
\end{multline}
We begin by controlling the first term on the right-hand side of \eqref{Equation: Dichotomy Condition 2}.
If we combine \eqref{Equation: Continuity of Semi-Inner-Product} and \eqref{Equation: Continuity of Semi-Inner-Product 2}
with the fact that $\|f_{n_k}\|_{H^1(\mbb R^d)}$ and $\|f_k^{(i)}\|_{H^1(\mbb R^d)}$ are uniformly
bounded in $\eps\in(0,1)$ and $k\geq0$, then we get that
\[\Big|\sqrt{\mc J_0(f_{n_k})}-\sqrt{\Big\langle(f_k^{(1)}+f_k^{(2)})^2,(f_k^{(1)}+f_k^{(2)})^2\Big\rangle_\ga}\Big|
\leq C\|f_{n_k}-(f_k^{(1)}+f_k^{(2)})\|_2\]
for every $\eps\in(0,1)$ and $k\geq0$, where the constant $C>0$ is independent of $k$ and $\eps$.
By definition of the dichotomy condition, for every $\eps>0$, there is some $k_0\geq0$ such that
\[\|f_{n_k}-(f_k^{(1)}+f_k^{(2)})\|_2\leq\de_2(\eps)\] whenever $k\geq k_0$.
Recalling that $\de_2(\eps)\to0$ as $\eps\to0$, we therefore conclude that
\begin{align}
\label{Equation: Dichotomy Condition 3}
\lim_{\eps\searrow0}\limsup_{k\to\infty}\Big(\mc J_0(f_{n_k})-\Big\langle(f_k^{(1)}+f_k^{(2)})^2,(f_k^{(1)}+f_k^{(2)})^2\Big\rangle_\ga\Big)=0.
\end{align}

It now only remains to control the second term on the right-hand side of \eqref{Equation: Dichotomy Condition 2}.
For every $\eps>0$, we have that
\begin{align}
\label{Equation: Dichotomy Distance Condition}
\lim_{k\to\infty}\inf\left\{|x-y|:x\in\mr{supp}(f_k^{(1)})\text{ and }y\in\mr{supp}(f_k^{(2)})\right\}=\infty.
\end{align}
In particular, if $k$ is large enough, then $f_k^{(1)}f_k^{(2)}=0$;
hence $(f_k^{(1)}+f_k^{(2)})^2=[f_k^{(1)}]^2+[f_k^{(2)}]^2$.
Therefore, since $\mc J_0(f)=\langle f^2,f^2\rangle_\ga$, we can expand
\[\Big\langle(f_k^{(1)}+f_k^{(2)})^2,(f_k^{(1)}+f_k^{(2)})^2\Big\rangle_\ga-\mc J_0(f_k^{(1)})-\mc J_0(f_k^{(2)})
=2\Big\langle[f_k^{(1)}]^2,[f_k^{(1)}]^2\Big\rangle_\ga.\]
Since the $\ell_2$ and $\ell_\infty$ norms are equivalent on $\mbb R^d$, \eqref{Equation: Dichotomy Distance Condition}
implies that for every $\eps>0$, there exists a sequence of indices $1\leq i(k)\leq d$ and numbers $d_k>0$ such that
\begin{itemize}
\item $|x_{i(k)}-y_{i(k)}|>d_k$ for every $x\in\mr{supp}(f_k^{(1)})$ and $y\in\mr{supp}(f_k^{(2)})$; and
\item $d_k\to\infty$ as $k\to\infty$.
\end{itemize}
In particular, we can write
\[\Big\langle[f_k^{(1)}]^2,[f_k^{(1)}]^2\Big\rangle_\ga\leq\frac{\si^2}{2}\iint_{\{|x_{i(k)}-y_{i(k)}|>d_k\}}[f_k^{(1)}(x)]^2\prod_{i=1}^d|x_i-y_i|^{-\om_i}[f_k^{(2)}(y)]^2\d x\dd y.\]
As $\sup_{k\in\mbb N}\|f_k^{(i)}\|_{H^1(\mbb R^d)}<\infty$ for $i=1,2$, by replicating the argument leading up to
\eqref{Equation: Controlling One Component}, we conclude that $\Big\langle[f_k^{(1)}]^2,[f_k^{(1)}]^2\Big\rangle_\ga\leq Cd_k^{-\om_{i(k)}}$
for some constant $C>0$ independent of $k$. Therefore, for every $\eps>0$,
\[\limsup_{k\to\infty}\Big\langle[f_k^{(1)}]^2,[f_k^{(1)}]^2\Big\rangle_\ga=0.\]
If we combine this with \eqref{Equation: Dichotomy Condition}, \eqref{Equation: Dichotomy Condition 2}, and \eqref{Equation: Dichotomy Condition 3},
then we finally obtain that dichotomy cannot occur simultaneously with \eqref{Equation: M compactness hyp},
thus concluding the proof of Lemma \ref{Lemma: M compactness}.

\subsection{Proof of Lemma \ref{Lemma: Minimizer is fcoord}}
\label{Section: Minimizer is fcoord}

For every fixed $x_2,\ldots,x_d\in\mbb R$ and nonnegative measurable function $g$,
we note that the one-dimensional function $x_1\mapsto S_{e_1}(g)(x_1,x_2,\ldots,x_d)$ is the symmetric
decreasing rearrangement of the function $x_1\mapsto g(x_1,x_2,\ldots,x_d)$
(see, e.g., \cite[Page 80]{LiebLoss}). Therefore, by the P\'olya-Szeg\H{o} inequality (e.g., \cite[Lemma 7.17]{LiebLoss}),
\[\int \left(\frac{\partial f(x)}{\partial x_1}\right)^2\d x\geq\int \left(\frac{\partial S_{e_1}(f)(x)}{\partial x_1}\right)^2\d x,\]
and since the symmetric decreasing rearrangement preserves the $L^p$-norm
(\cite[Page 81]{LiebLoss}),
\[\int \sum_{i=2}^d\left(\frac{\partial f(x)}{\partial x_1}\right)^2\d x=\int \sum_{i=2}^d\left(\frac{\partial S_{e_1}(f)(x)}{\partial x_1}\right)^2\d x.\]
In particular,
\[-\ms S(f)\leq-\ka\int |\nabla S_{e_1}(f)(x)|_2^2\d x.\]

Next, by Fubini's theorem, we can write
\begin{multline*}
\iint f(x)^2\prod_{i=1}^d|x_i-y_i|^{-\om_i}f(y)^2\d x\dd y\\
=\iint \prod_{i=2}^d|x_i-y_i|^{-\om_i}\left(\int f(x_1,\tilde x)^2|x_1-y_1|^{-\om_1}f(y_1,\tilde y)^2\d x_1\dd y_1\right)\d\tilde x\dd\tilde y,
\end{multline*}
where we denote $\tilde x:=(x_2,\ldots,x_d)$ and similarly for $\tilde y$.
Let $\tilde x,\tilde y\in\mbb R^{d-1}$ be fixed.
By the one-dimensional Riesz rearrangement inequality (e.g., \cite[Lemma 3.6]{LiebLoss}),
we have that
\begin{multline*}
\int f(x_1,\tilde x)^2|x_1-y_1|^{-\om_1}f(y_1,\tilde y)^2\d x_1\dd y_1\\
\leq
\int [S_{e_1}(f)(x_1-z_1,\tilde x)]^2|x_1-y_1|^{-\om_1}[S_{e_1}(f)(y_1-z_1,\tilde y)]^2\d x_1\dd y_1.
\end{multline*}
for any $z_1\in\mbb R$.
Moreover, since $x_1\mapsto |x_1|^{-\om_1}$ is strictly decreasing in $|x_1|$,
it follows from \cite[Theorem 3.9]{LiebLoss} that the above inequality is an equality only if
there exists $z_1\in\mbb R$ such that
\[f(x_1,\tilde x)=S_{e_1}(f)(x_1-z_1,\tilde x)
\qquad\text{and}\qquad
f(y_1,\tilde y)=S_{e_1}(f)(y_1-z_1,\tilde y)\]
for almost every $x_1,y_1\in\mbb R$.
If we choose the same $z_1$ for all
$\tilde x$ and $\tilde y$, this implies that
\begin{multline*}
\iint f(x)^2\prod_{i=1}^d|x_i-y_i|^{-\om_i}f(y)^2\d x\dd y\\
\leq\iint [S_{e_1}(f)(x_1-z_1,\tilde x)]^2\prod_{i=1}^d|x_i-y_i|^{-\om_i}[S_{e_2}(f)(y_1-z_1,\tilde y)]^2\d x\dd y,
\end{multline*}
for any $z_1\in\mbb R$, and if there is no $z_1\in\mbb R$ such that
$f=S_{e_1}(f)\big(\cdot-(z_1,0,\ldots,0)\big)$ almost everywhere, then the inequality is strict.

If we then iterate the above argument with the Steiner symmetrizations with respect to the axes $e_2,\ldots,e_d$,
we conclude the proof of Lemma \ref{Lemma: Minimizer is fcoord}.

\subsection{Proof of Lemma \ref{Lemma: Geometry of fcoord}}
\label{Section: Geometry of fcoord}

By definition,
\[S_{\mr{coord}}(f)(x):=\int_0^\infty\mathbbm1_{S_{e_d}(S_{e_{d-1}}(\cdots (S_{e_1}(\{y\in\mbb R^d:f(y)>\ell\}))))}(x)\d \ell.\]
Thus, the lemma follows from the fact that for any measurable set $A\subset\mbb R$, the set
\[S_{e_d}(S_{e_{d-1}}(\cdots (S_{e_1}(A))))\]
is symmetric with respect to every coordinate axis
(e.g., \cite[Claim \#1 in Theorem 2.4]{EvansGariepy}).

\section*{Acknowledgements}

P.Y.G.L. gratefully acknowledges the Centre de recherches math\'ematiques (CRM) for
providing a productive research environment during the Probability and PDEs thematic semester,
during which much of the writing of this paper took place. 
The work of Y.L. was supported by EPSRC through grant EP/R024456/1.

The authors thank Xia Chen and Wolfgang K\"onig
for helpful references and insightful conversations on the contents of this paper.

Last but not least, the authors thank anonymous referees for carefully reading the manuscript,
pointing out misprints and errors, and providing suggestions that improved the presentation of the paper.

\bibliographystyle{plain}
\bibliography{Bibliography}

\end{document}